\documentclass{amsart}
\usepackage{amsmath,amssymb,amsthm}

\usepackage[scale=.85]{geometry}

\usepackage{tikz-cd}

\usepackage{enumitem}
\usepackage{etoolbox}

\usepackage[colorlinks=true,
            linkcolor=blue,
            citecolor=red,
            urlcolor=magenta]{hyperref}
\usepackage[capitalise,nameinlink]{cleveref}

\newtheorem{theorem}{Theorem}[section]
\newtheorem{lem}[theorem]{Lemma}
\newtheorem{prop}[theorem]{Proposition}
\newtheorem{cor}[theorem]{Corollary}

\newtheorem{hyp}[theorem]{Hypothesis}

\theoremstyle{remark}
\newtheorem{rem}[theorem]{Remark}

\theoremstyle{definition}
\newtheorem{defn}[theorem]{Definition}

\AtBeginEnvironment{lem}{\crefalias{theorem}{lem}}
\AtBeginEnvironment{prop}{\crefalias{theorem}{prop}}
\AtBeginEnvironment{cor}{\crefalias{theorem}{cor}}
\AtBeginEnvironment{exa}{\crefalias{theorem}{exa}}
\AtBeginEnvironment{claim}{\crefalias{theorem}{claim}}
\AtBeginEnvironment{rem}{\crefalias{theorem}{rem}}
\AtBeginEnvironment{defn}{\crefalias{theorem}{defn}}

\crefname{theorem}{Thm}{Theorems}
\crefname{lem}{Lem}{Lemmas}
\crefname{prop}{Prop}{Propositions}
\crefname{cor}{Cor}{Corollaries}
\crefname{exa}{Exa}{Examples}
\crefname{claim}{Claim}{Claims}
\crefname{rem}{Rem}{Remarks}
\crefname{defn}{Def}{Definitions}
\crefname{section}{Sec.}{Secs.}
\crefname{subsection}{Sec.}{Secs.}
\crefname{equation}{Eq.}{Eqs.}

\newcommand{\F}{\mathbb{F}}
\newcommand{\N}{\mathbb{N}}
\newcommand{\Z}{\mathbb{Z}}
\newcommand{\Q}{\mathbb{Q}}
\newcommand{\R}{\mathbb{R}}
\newcommand{\C}{\mathbb{C}}
\newcommand{\A}{\mathbb{A}}
\newcommand{\bP}{\mathbb{P}}

\newcommand{\Gm}{\mathbb{G}_m}

\newcommand{\Kfam}{\mathcal{K}}
\newcommand{\Qform}{\mathcal{Q}}
\newcommand{\Hur}{\mathcal{H}}
\newcommand{\Gaut}{\mathcal{G}}

\newcommand{\Ypar}[1][r,s,\bd]{\mathcal{Y}_{#1}}
\newcommand{\Yparo}[1][r,s,\bd]{\mathcal{Y}^{\circ}_{#1}}

\newcommand{\bd}{\mathbf{d}}
\newcommand{\bw}{\mathbf{w}}

\DeclareMathOperator{\Jac}{Jac}
\DeclareMathOperator{\wt}{wt}

\DeclareMathOperator{\Aut}{Aut}
\DeclareMathOperator{\Gal}{Gal}
\DeclareMathOperator{\Span}{span}

\DeclareMathOperator{\Spec}{Spec}
\newcommand{\Ni}{\mathrm{Ni}}

\newcommand{\Qbar}{\overline{\Q}}

\newcommand{\Semi}{S_{\infty}}

\newcommand{\detJ}[1]{\det J#1}

\newcommand{\Jacuv}{\Jac_{u,v}}
\newcommand{\jb}[2]{\{#1,#2\}}

\title{Orbits and fields of definition for graded Keller maps}

\author{K. Kistner}
\email{kyle@ulam.capital}

\author{T. Shaska}
\email{shaska@oakland.edu}

\date{\today}

\begin{document}

\begin{abstract}
In \cite{shaska131} were proposed classifying equivariant polynomial maps by the signature of the weight vector, and asked which hyperbolic weights carry graded Keller counterexamples and how the nonempty ones are stratified. We construct two families. Cyclic cones populate $(1,-1,-1)$ at every composite generic fibre degree $N\ge6$, with a six-sheeted member over $\Q(\sqrt{-15})$ and a $33$-sheeted member over $\Q$.
A diagonal family gives $\Kfam(3,(1,-p,-p))\neq\emptyset$ for every $p\ge2$, with $\detJ{F_p}=-p$ and odd generic fibre degree $2p-1$. The geometric monodromy of every member of both families is alternating or symmetric of full degree, so the solvable case 
of \cite{shaska131} occurs only at generic degree three.

At generic degree twelve the normalized quartic cyclic seeds are cut out by a smooth plane cubic $E$ over $\Q$ whose points give pairwise distinct $\Gaut^{\sharp}$-orbits. Two of the associated quotient covers are left-right equivalent exactly when $J=9b^{2}/(8ac)$ agrees, and $J\colon E\to\bP^{1}$ has degree six: it forgets the lift-point marking. Hence a generic quotient class has six graded lifts, $\Kfam(3,(1,-1,-1))$ carries infinitely many orbits, and quotient invariants are not complete for graded classification. An alternating-shear member lies in a separate stratum, with monodromy $A_{12}$ and branch semigroup $\langle6,26,27\rangle$ against $\langle4,11\rangle$; its braid orbit is unique, of size $76  545$, and its arithmetic monodromy over $\Q$ is $S_{12}$ with constants extension $\Q(\sqrt{-3})$.  

We resolve the flagship bounded scheme of the original counterexample scheme-theoretically: $\Yparo[1,2,(4,3,1),K]\cong\Gaut_{K}\times\Spec K[\eta]/(\eta^{2})$ equivariantly for the degree-bound-preserving gauge group $\Gaut=B_{\mathrm{lin}}\times(\Gm)^{3}$, so its reduction is a single orbit and its orbit quotient is the dual point. Finally, the second lift condition is the vanishing of $\int_0^{\epsilon}\varphi(W)^{k}W^{-2}\,dW$ with $\epsilon=(-1)^{k}$. It has no solution over a field with a real embedding when $k$ is even, and at $k=2$ it diagonalizes over $\Q$ as $\langle3,5,\dots,2n-1\rangle$, which forces $\Q(\sqrt{-15})$ at the first level.
\end{abstract}

\maketitle

\setcounter{tocdepth}{1}
\tableofcontents

\section{Introduction}
\label{sec:1}

The Jacobian Conjecture is false for $n\ge3$ \cite{alpoge,tao}. In
\cite{shaska131} the second author organized the graded case by the signature
of the weight vector $\bw$, that is by its sign pattern: if $\bw$ is elliptic
then an equivariant Keller map is an automorphism, and for $n=2$ no signature
admits an equivariant counterexample.  The present paper restricts to the
three-dimensional unit-positive sector: primitive integral weights already
written as
\(
  \bw=(1,-r,-s),\qquad r,s\ge1.
\)
This is not every primitive hyperbolic weight; for example $(2,-1,-1)$ cannot
be integrally rescaled to this form.  Within the stated sector we prove
nonemptiness for every weight and study orbits, invariants, braid classes, and
fields of definition.

Fix a field $K$ of characteristic zero. We write $\Kfam_K(n,\bw)$ for the set of noninjective Keller self-maps of $\A^n_K$ equivariant for $\bw$, and abbreviate $\Kfam(n,\bw)=\Kfam_{\Qbar}(n,\bw)$. We write $\Yparo[r,s,\bd]$ for the parameter scheme of \cite[\S8]{shaska131}, and $\Gaut^{\sharp}$ for the group of liftable automorphisms acting on it. \Cref{sec:2} fixes these conventions, derives the master equation and the identity $\detJ{F}=-\kappa$ relating the Keller constant to the determinant (\cref{cor:master}), and records that the monodromy of a graded map may be computed on its quotient cover (\cref{lem:quotientdegree}).

\Cref{sec:3} constructs the cyclic cone family of weight $(1,-1,-1)$. The construction takes a level $k\ge2$ and a seed $\varphi$ of degree $d$, subject to two lift conditions, and returns a graded Keller map of generic fibre degree $N=kd$. No seed of degree two satisfies the conditions (\cref{cor:dtwo}), so $d\ge3$; with that constraint every composite $N\ge6$ is attained (\cref{thm:spectrum}). The member of least degree produced this way is six-sheeted, defined over $\Q(\sqrt{-15})$, with threefold component degrees $(15,27,3)$ (\cref{thm:six}); there is a $33$-sheeted member over $\Q$ with a rational collision (\cref{thm:thirtythree}). The complementary lower bound for this signature is the differential obstruction of \cite[Cor.\ 1.5]{sh-133}, which forces $\deg A+\deg B+\deg\Lambda\ge14$ for any member. No member descends to a two-variable Keller pair, and \cref{prop:nodescent} isolates the single coefficient responsible.

\Cref{sec:4} constructs the beta--Hermite family. For every pair $2\le r\le s$ it produces a map $F_{r,s}$ over a number field $K_{r}$, equivariant for source weights $(1,-r,-s)$ and target weights $(-s,-r,1)$, with $\detJ{F_{r,s}}=-r$ and generic fibre degree $2r-1$ (\cref{thm:diagonal}); the degrees of $A$ and $B$ are read off the degree of $\rho$ in \cref{prop:diagclass}, and on the diagonal $r=s=p$ the class is $\Yparo[p,p,(p+2,2p^{2}-1,1)]$. Hence $\Kfam(3,(1,-r,-s))\neq\emptyset$ whenever $2\le r\le s$, and in particular for every repeated weight. The underlying one-variable cover is the beta polynomial family (\cref{rem:beta}), so the new content is the graded realisation, not the cover type. The row $r=2$ is rational, and for odd $r$ the field $K_{r}$ is totally imaginary (\cref{prop:fields}).

\Cref{sec:5} separates orbits by invariants of the quotient cover. Left-right equivalence transports the finite normalization, the branch divisor, the inertia, the monodromy, and the branch and local value semigroups; these are intrinsic, whereas a passport or pole partition attached to a chosen pencil is not. The action of $\Gaut^{\sharp}$ descends to left-right equivalence, so intrinsic invariants are orbit invariants (\cref{lem:descendequiv}). We exhibit an alternating shear member of degree twelve whose branch semigroup is $\langle6,26,27\rangle$, while every cyclic member of degree twelve has $11$ in its branch semigroup. Since $11\notin\langle6,26,27\rangle$, the two are inequivalent under arbitrary polynomial left-right equivalence, 
and $\Kfam(3,(1,-1,-1))$ carries at least two $\Gaut^{\sharp}$-orbits at generic degree twelve (\cref{cor:orbits}).

\Cref{sec:6} computes the geometric monodromy of every member of both families, alternating or symmetric of full degree. \Cref{sec:7} shows that at generic degree twelve there are infinitely many orbits: the normalised quartic seeds form a smooth plane cubic $E$ of pairwise distinct $\Gaut^{\sharp}$-orbits, and the forgetful map $J\colon E\to\bP^{1}$ to unpointed quotient classes has degree six (\cref{thm:pointedmain}). 

\Cref{sec:8} resolves the flagship bounded scheme of weight $(1,-1,-2)$: assuming \cref{hyp:flagshipcharts}, its reduction is a single orbit of the bound-preserving gauge group (\cref{thm:flagship}), and puts its quotient cover in the cubic normal form used next. \Cref{sec:unitboundary} propagates that cubic along the unit-weight boundary: for every $s\ge3$ there is a rational member of weight $(1,-1,-s)$ with determinant $-16$, generic fibre degree three and monodromy $S_3$ (\cref{thm:unitboundary}), which completes the three-dimensional unit-positive geography (\cref{cor:hyperbolic}). 

\Cref{sec:9} treats the Nielsen classes, braid orbits, and arithmetic monodromy of the two families, and locates the alternating shear locus as a curve in the four-dimensional reduced Hurwitz space (\cref{prop:sheardim}). \Cref{sec:shearcomp} cuts that curve out exactly; assuming \cref{hyp:stablered}, it also identifies the normalised admissible-cover orbicurve and its rational intersection class (\cref{thm:shearlocus,thm:stablered,thm:shearclass}). 
\Cref{sec:10} determines the fields of definition through the second lift form, which diagonalises over $\Q$ as $\langle3,5,\dots,2n-1\rangle$ (\cref{thm:hilbert}).

\section{Preliminaries}
\label{sec:2}

We keep the conventions of \cite{shaska131}. This section fixes notation and collects the facts used throughout.

\subsection{Gradings and equivariant maps}
\label{subsec:gradings}

Let $K$ be a field of characteristic zero. A weight vector $\bw=(w_1,\dots,w_n)\in\Z^n$ with all $w_i\neq0$ defines an action of $\Gm$ on $\A^n$ by $t\cdot(x_1,\dots,x_n)=(t^{w_1}x_1,\dots,t^{w_n}x_n)$, and a grading of $K[x_1,\dots,x_n]$ with $\wt(x_i)=w_i$. The signature of $\bw$ is the sign pattern of its entries, taken up to permutation and up to $\bw\mapsto-\bw$. We call $\bw$ elliptic if all $w_i$ have the same sign, and hyperbolic otherwise.

A polynomial map $F\colon\A^n\to\A^n$ is equivariant for source weights $\bw$ and target weights $\bw'$ if $F(t\cdot p)=t\ast F(p)$, where $\ast$ is the action of $\bw'$. Equivalently, $F_i$ is homogeneous of weight $w'_i$ for the grading $\bw$. A Keller map is a polynomial map with $\detJ{F}\in K^{\times}$. We write $\Kfam_K(n,\bw)$ for the set of noninjective Keller self-maps of $\A^n_K$ equivariant for $\bw$, and $\Kfam(n,\bw)=\Kfam_{\Qbar}(n,\bw)$. A collision is a pair of distinct points with the same image.

The generic fibre degree of a dominant $F$ is the degree of the field extension $K(x_1,\dots,x_n)/K(F_1,\dots,F_n)$, equivalently the number of preimages of a general point. A Keller map is noninjective if and only if this degree exceeds one.

\subsection{Invariant coordinates and descent}
\label{subsec:descent}

From now on $n=3$ and $\bw=(1,-r,-s)$ with $r,s\ge1$. Write the coordinates as $x,y,z$, so $\wt(x)=1$, $\wt(y)=-r$, $\wt(z)=-s$. Put
\[
  u=x^{r}y, \qquad v=x^{s}z .
\]
These are invariant, and $K[x,y,z]^{\Gm}=K[u,v]$, since an invariant monomial $x^{a}y^{b}z^{c}$ has $a=rb+sc$ and equals $u^{b}v^{c}$.

Let $F$ be equivariant for source weights $\bw$ and target weights $(-s,-r,1)$. Then $x^{s}F_1$, $x^{r}F_2$ and $x^{-1}F_3$ are invariant, so there are unique $A,B,\Lambda\in K[u,v]$ with
\begin{equation}
\label{eq:Fshape}
  F=\Bigl(\frac{A(u,v)}{x^{s}},\ \frac{B(u,v)}{x^{r}},\ x\Lambda(u,v)\Bigr).
\end{equation}
We call $(A,B,\Lambda)$ the descended triple. The pairing is $A$ with $s$ and $B$ with $r$; it is invisible when $r=s$, and matters in the stratum $\Yparo[1,2,(4,3,1)]$.

\begin{lem}
\label{lem:polynomiality}
$F$ in \cref{eq:Fshape} is polynomial if and only if every monomial $u^{i}v^{j}$ occurring in $A$ satisfies $ri+sj\ge s$, and every monomial occurring in $B$ satisfies $ri+sj\ge r$. For $r=s$ both conditions read $A,B\in(u,v)$, that is $A(0,0)=B(0,0)=0$.
\end{lem}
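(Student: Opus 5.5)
The plan is to substitute $u=x^{r}y$ and $v=x^{s}z$ monomial by monomial and to use the fact that distinct monomials $u^{i}v^{j}$ go to distinct Laurent monomials in $x,y,z$. No cancellation can then occur, so polynomiality becomes a condition on each monomial separately.

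First, for the first component, write $A=\sum a_{ij}u^{i}v^{j}$. Then
\[
  \frac{A(u,v)}{x^{s}}=\sum a_{ij}\,x^{ri+sj-s}y^{i}z^{j}.
\]
The map $(i,j)\mapsto x^{ri+sj-s}y^{i}z^{j}$ is injective, because the exponents of $y$ and $z$ recover $(i,j)$. So the terms with $a_{ij}\ne0$ are pairwise distinct Laurent monomials in $K[x^{\pm1},y,z]$. An element of $K[x^{\pm1},y,z]$ lies in $K[x,y,z]$ exactly when every monomial occurring in it has nonnegative $x$-exponent. Hence $A/x^{s}$ is polynomial if and only if $ri+sj\ge s$ for every monomial of $A$.

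Second, the same computation applies to $B/x^{r}$ and gives the condition $ri+sj\ge r$.

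Third, the component $x\Lambda(u,v)$ is always polynomial, since $u$ and $v$ are polynomials. So $F$ is polynomial if and only if the first two conditions hold.

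For $r=s$, divide both conditions by $r$: each reads $i+j\ge1$. That is exactly the statement that no constant monomial occurs, i.e.\ $A,B\in(u,v)$, or equivalently $A(0,0)=B(0,0)=0$.

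There is no real obstacle. The only step that needs stating explicitly is the injectivity of the monomial substitution, which rules out cancellation between terms. Uniqueness of $(A,B,\Lambda)$ is already recorded in \cref{eq:Fshape}, so the criterion is well posed.
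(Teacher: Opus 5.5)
Your proof is correct and follows the paper's argument: substitute $u^{i}v^{j}=x^{ri+sj}y^{i}z^{j}$ and read off the $x$-exponent of each term of $A/x^{s}$ and $B/x^{r}$. You also state explicitly that $(i,j)$ is recovered from the $y$- and $z$-exponents, so distinct terms cannot cancel; the paper uses this step without saying so.
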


\begin{proof}
$u^{i}v^{j}=x^{ri+sj}y^{i}z^{j}$, so $x^{-s}u^{i}v^{j}$ is polynomial exactly when $ri+sj\ge s$, and likewise for $B$.
\end{proof}

We refer to the conditions of \cref{lem:polynomiality} as the lift conditions, and to $(u,v)=(0,0)$ as the base point.
%
For $f,g\in K[u,v]$ write
\begin{equation}
\label{subsec:master}
  \jb{f}{g}=\Jacuv(f,g)=f_{u}g_{v}-f_{v}g_{u} .
\end{equation}
This bracket is antisymmetric and is a derivation in each argument.
Set
\begin{equation}
\label{eq:PQ}
  Q=\Lambda^{s}A, \qquad P=\Lambda^{r}B .
\end{equation}
These are the invariant coordinates of the target grading, namely $Q=F_1F_3^{\,s}$ and $P=F_2F_3^{\,r}$.

\begin{lem}
\label{lem:bracketidentity}
For $A,B,\Lambda\in K[u,v]$ and integers $r,s\ge1$,
\[
  \Jacuv(P,Q)=\Lambda^{r+s-1}\bigl(\Lambda\jb{B}{A}+rB\jb{\Lambda}{A}+sA\jb{B}{\Lambda}\bigr).
\]
\end{lem}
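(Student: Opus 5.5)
Since $\jb{\cdot}{\cdot}$ is bilinear, antisymmetric and a derivation in each argument, the plan is to expand $\jb{P}{Q}=\jb{\Lambda^{r}B}{\Lambda^{s}A}$ by the Leibniz rule and then collect terms.

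First, I record the power rule $\jb{\Lambda^{m}}{g}=m\Lambda^{m-1}\jb{\Lambda}{g}$, which follows from $\partial(\Lambda^{m})=m\Lambda^{m-1}\partial\Lambda$ for $\partial=\partial_u,\partial_v$. I also note $\jb{\Lambda}{\Lambda}=0$ by antisymmetry.

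Next, I expand in the first slot:
\[
\jb{\Lambda^{r}B}{\Lambda^{s}A}=\Lambda^{r}\jb{B}{\Lambda^{s}A}+B\jb{\Lambda^{r}}{\Lambda^{s}A}.
\]
Then I expand each piece in the second slot. The first gives $\Lambda^{r}\bigl(\Lambda^{s}\jb{B}{A}+sA\Lambda^{s-1}\jb{B}{\Lambda}\bigr)$. The second gives $B\bigl(r\Lambda^{r+s-1}\jb{\Lambda}{A}+rA\Lambda^{r-1}\jb{\Lambda}{\Lambda^{s}}\bigr)$. Its last term vanishes because $\jb{\Lambda}{\Lambda^{s}}=s\Lambda^{s-1}\jb{\Lambda}{\Lambda}=0$.

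Factoring out $\Lambda^{r+s-1}$ then yields exactly
\[
\Lambda^{r+s-1}\bigl(\Lambda\jb{B}{A}+rB\jb{\Lambda}{A}+sA\jb{B}{\Lambda}\bigr).
\]
Here I should keep in mind that $\Jacuv(P,Q)=\jb{P}{Q}$ with $P$ in the first slot, which matches the order $\jb{B}{A}$ in the target formula.

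There is no real obstacle. The only care needed is bookkeeping of signs and argument order. In particular, the term $sA\jb{B}{\Lambda}$ keeps $B$ first, coming from the first-slot factor $B$, while $rB\jb{\Lambda}{A}$ keeps $\Lambda$ first. The argument is a purely formal polynomial identity valid over any commutative ring, and it needs $r,s\ge1$ only so that $\Lambda^{r-1}$ and $\Lambda^{s-1}$ are polynomials.
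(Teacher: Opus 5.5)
Your proposal is correct and follows the paper's own proof: both expand $\jb{\Lambda^{r}B}{\Lambda^{s}A}$ by the Leibniz rule in each slot, apply the power rule, drop $\jb{\Lambda^{r}}{\Lambda^{s}}=0$, and factor out $\Lambda^{r+s-1}$. Your bookkeeping of the argument order, with $B$ first in $sA\jb{B}{\Lambda}$ and $\Lambda$ first in $rB\jb{\Lambda}{A}$, matches the target formula.
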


\begin{proof}
Expanding $\Jacuv(\Lambda^{r}B,\Lambda^{s}A)$ by the derivation property in each argument gives
\[
\begin{split}
  \Jacuv(\Lambda^{r}B,\Lambda^{s}A)
   &=\Lambda^{r}\bigl(\Lambda^{s}\jb{B}{A}+A\jb{B}{\Lambda^{s}}\bigr)
     +B\bigl(\Lambda^{s}\jb{\Lambda^{r}}{A}+A\jb{\Lambda^{r}}{\Lambda^{s}}\bigr)\\
   &=\Lambda^{r+s}\jb{B}{A}+s\Lambda^{r+s-1}A\jb{B}{\Lambda}
     +r\Lambda^{r+s-1}B\jb{\Lambda}{A},
\end{split}
\]
since $\jb{\Lambda^{r}}{\Lambda^{s}}=0$, $\jb{B}{\Lambda^{s}}=s\Lambda^{s-1}\jb{B}{\Lambda}$ and $\jb{\Lambda^{r}}{A}=r\Lambda^{r-1}\jb{\Lambda}{A}$.
\end{proof}

\begin{prop}
\label{prop:detkappa}
Let $F$ be as in \cref{eq:Fshape} with $\Lambda\neq0$. Then
\[
  \detJ{F}=-\frac{\Jacuv(P,Q)}{\Lambda^{r+s-1}} .
\]
\end{prop}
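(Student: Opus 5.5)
The plan is to compute $\detJ{F}$ by factoring $F$ through a change of coordinates on the open set $x\neq0$, rather than by expanding the $3\times3$ determinant directly. On the torus chart $x\neq0$ the map $\psi\colon(x,y,z)\mapsto(x,u,v)=(x,x^{r}y,x^{s}z)$ is invertible, with Jacobian determinant $x^{r+s}$ (the matrix is triangular with diagonal $1,x^{r},x^{s}$). On the target side we use $\chi\colon(F_1,F_2,F_3)\mapsto(F_3,P,Q)=(F_3,F_2F_3^{\,r},F_1F_3^{\,s})$. Its Jacobian, ordered as $(F_3,P,Q)$ against $(F_1,F_2,F_3)$, is the permutation matrix sending $(F_1,F_2,F_3)\mapsto(F_3,F_2,F_1)$ corrected by triangular terms. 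Its determinant is $-F_3^{\,r}F_3^{\,s}=-F_3^{\,r+s}$, the sign coming from the transposition of $F_1$ and $F_3$.

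Next we compute the composite $\chi\circ F\circ\psi^{-1}\colon(x,u,v)\mapsto(x\Lambda(u,v),P(u,v),Q(u,v))$ by \cref{eq:Fshape,eq:PQ}. The last two components do not depend on $x$, so the Jacobian matrix has first column $(\Lambda,0,0)^{T}$. Its determinant is therefore $\Lambda\cdot\Jacuv(P,Q)$.

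By the chain rule, $\det J\chi\cdot\detJ{F}\cdot(\det J\psi)^{-1}=\Lambda\Jacuv(P,Q)$, that is,
\[
  -F_3^{\,r+s}\,\detJ{F}=x^{r+s}\Lambda\,\Jacuv(P,Q).
\]
Substituting $F_3=x\Lambda$ gives $-x^{r+s}\Lambda^{r+s}\detJ{F}=x^{r+s}\Lambda\Jacuv(P,Q)$. Dividing by $x^{r+s}\Lambda^{r+s}$, which is legitimate in the fraction field since $\Lambda\neq0$, yields the claimed formula. As an identity of rational functions on a dense open set, it then holds as polynomials.

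The only delicate points are bookkeeping. First, the sign of $\det J\chi$ must be right: we check it by writing out the matrix rows $\partial F_3=(0,0,1)$, $\partial P=(0,F_3^{\,r},\ast)$, $\partial Q=(F_3^{\,s},0,\ast)$ and expanding. Second, each variable change must be applied on the correct side of the chain rule. Alternatively, one could multiply $JF$ on the right by the explicit matrix for $\psi^{-1}$ and verify directly, but the factorization keeps this to a few lines.
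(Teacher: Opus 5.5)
Your proof is correct and is essentially the paper's argument: both factor $F$ through the invariant coordinates $(u,v,x)$ on the source and $(Q,P,F_3)$ on the target, and apply the chain rule with the Jacobian factors $x^{r+s}$ and $\pm F_3^{\,r+s}$. The only difference is the ordering of coordinates. In the paper the minus sign comes from $\Jacuv(Q,P)=-\Jacuv(P,Q)$, whereas in your version it comes from $\det J\chi=-F_3^{\,r+s}$.
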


\begin{proof}
The substitution $(x,y,z)\mapsto(u,v,x)$ has Jacobian determinant $x^{r+s}$, and $(F_1,F_2,F_3)\mapsto(Q,P,F_3)$ has Jacobian determinant $F_3^{\,r+s}$. Since $Q,P$ depend only on $u,v$ and $F_3=x\Lambda$,
\[
  \det\frac{\partial(Q,P,F_3)}{\partial(u,v,x)}=\Lambda\,\Jacuv(Q,P)=-\Lambda\,\Jacuv(P,Q).
\]
The chain rule gives $F_3^{\,r+s}\detJ{F}=-\Lambda\,\Jacuv(P,Q)\,x^{r+s}$, and $F_3=x\Lambda$.
\end{proof}

\begin{cor}
\label{cor:master}
$F$ is Keller if and only if there is $\kappa\in K^{\times}$ with
\begin{equation}
\label{eq:master}
  \Lambda\jb{B}{A}+rB\jb{\Lambda}{A}+sA\jb{B}{\Lambda}=\kappa ,
\end{equation}
and then $\detJ{F}=-\kappa$.
\end{cor}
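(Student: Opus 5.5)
The corollary follows by combining \cref{prop:detkappa} with \cref{lem:bracketidentity}; the only thing to watch is the case $\Lambda=0$, which the proposition excludes.

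First suppose $\Lambda\neq0$. By \cref{lem:bracketidentity},
\[
\Jacuv(P,Q)=\Lambda^{r+s-1}M,\qquad M:=\Lambda\jb{B}{A}+rB\jb{\Lambda}{A}+sA\jb{B}{\Lambda}.
\]
Dividing by $\Lambda^{r+s-1}$ in \cref{prop:detkappa}, which is legitimate because $K[u,v]$ is a domain, gives
\[
\detJ{F}=-M .
\]
This is an identity in $K[x,y,z]$, where $M$ is regarded as a polynomial in $u=x^ry$ and $v=x^sz$.

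Next I relate "$M$ is constant" as a polynomial in $u,v$ to "$M$ is constant" as a polynomial in $x,y,z$. The substitution $K[u,v]\to K[x,y,z]$ is injective, since $u$ and $v$ are algebraically independent: $K[u,v]$ is the invariant ring. Its image contains no nonconstant element of $K$. Hence $\detJ{F}\in K^\times$ if and only if $M$ is a nonzero constant $\kappa$ in $K[u,v]$. In that case $\detJ{F}=-\kappa$, which is \cref{eq:master} together with the determinant identity.

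It remains to treat $\Lambda=0$. Then $F_3=0$, so $\detJ{F}=0$ and $F$ is not Keller. On the other side, the left-hand side of \cref{eq:master} reduces to $sA\jb{B}{0}=0$, so the equation has no solution with $\kappa\neq0$. Both sides of the equivalence are therefore false, and it holds trivially.

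There is no real obstacle here. The one point needing care is the injectivity of $K[u,v]\to K[x,y,z]$, which is what lets "constant in $x,y,z$" be read back as "constant in $u,v$".
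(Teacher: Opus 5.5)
Your proposal is correct and is the intended argument: substituting \cref{lem:bracketidentity} into \cref{prop:detkappa} gives $\detJ{F}=-\bigl(\Lambda\jb{B}{A}+rB\jb{\Lambda}{A}+sA\jb{B}{\Lambda}\bigr)$, so the Keller condition is exactly that this expression is a nonzero constant; the paper states the corollary without a separate proof because it follows immediately in this way. Your treatment of the excluded case $\Lambda=0$ and of the injectivity of $K[u,v]\to K[x,y,z]$ supplies details the paper leaves implicit; the injectivity is best justified by noting that the monomials $u^{i}v^{j}=x^{ri+sj}y^{i}z^{j}$ are pairwise distinct, rather than by the remark that $K[u,v]$ is the invariant ring.
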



We call \cref{eq:master} the \textbf{master equation} and $\kappa$ the \textbf{Keller constant}. A graded Keller map is thus the same thing as a triple $(A,B,\Lambda)$ satisfying \cref{eq:master} and the lift conditions of \cref{lem:polynomiality}.


\label{rem:clash}
Below we use the affine change $(u,v)\mapsto(\Lambda,v)$ with $\Lambda=1+u$, of Jacobian one, and take all brackets in $(\Lambda,v)$. Then $\jb{\Lambda}{A}=A_{v}$ and $\jb{B}{\Lambda}=-B_{v}$, so \cref{eq:master} reads
\begin{equation}
\label{eq:masterL}
  \Lambda\jb{B}{A}+rBA_{v}-sAB_{v}=\kappa .
\end{equation}
In signature $(1,-1,-1)$ we write $w=v-1$; brackets in $(\Lambda,w)$ agree with brackets in $(\Lambda,v)$. The base point $(u,v)=(0,0)$ becomes $(\Lambda,v)=(1,0)$, and $(\Lambda,w)=(1,-1)$.



For $\bd=(d_1,d_2,d_3)$ let $\Ypar[r,s,\bd]$ be the scheme of triples $(A,B,\Lambda)$ satisfying \cref{eq:master} and the lift conditions, with $\deg A\le d_1$, $\deg B\le d_2$ and $\deg\Lambda\le d_3$, all degrees  in $(u,v)$. Let $\Yparo[r,s,\bd]$ be the open subscheme on which the three degrees are exactly $d_1,d_2,d_3$. Multiprojectivisation removes the three independent coefficient scalings. These are the schemes of \cite{shaska131}.

We write $\Gaut$ for the group of liftable automorphisms of \cite[\S8.5]{shaska131} and $\Gaut^{\sharp}$ for the subgroup preserving the graded normal form. Two graded Keller maps are \textbf{equivalent} when they lie in one $\Gaut^{\sharp}$-orbit, and classification means the determination of these orbits.

\subsection{The quotient cover}
\label{subsec:quotient}

The pair $(P,Q)$ of \cref{eq:PQ} is a polynomial self-map of $\A^2$ with $\Jacuv(P,Q)=\kappa\Lambda^{r+s-1}$. We call it the \textbf{quotient map of $F$}, and its normalization the quotient cover.

\begin{lem}
\label{lem:quotientdegree}
Let $L=x\Lambda$. Then $K(x,y,z)=K(u,v,L)$ and $K(F_1,F_2,F_3)=K(P,Q,L)$, and $L$ is transcendental over $K(u,v)$. Hence the generic fibre degree of $F$ equals $[K(u,v):K(P,Q)]$.
\end{lem}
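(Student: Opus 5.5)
The plan is to compare the two field towers directly, using the invariant coordinates of \cref{subsec:descent}. Throughout we assume $F$ is dominant, which holds for a Keller map, and $\Lambda\neq0$, which follows from \cref{cor:master}.

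For the first equality, note that $u=x^ry$, $v=x^sz$ and $L=x\Lambda(u,v)$ all lie in $K(x,y,z)$. Conversely, $x=L/\Lambda(u,v)$ lies in $K(u,v,L)$, and then so do $y=u/x^r$ and $z=v/x^s$. Hence $K(x,y,z)=K(u,v,L)$.

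For the second equality, \cref{eq:Fshape} gives $F_3=L$, $F_1=A/x^s$ and $F_2=B/x^r$. Since $Q=F_1F_3^{\,s}$ and $P=F_2F_3^{\,r}$, we have $P,Q,L\in K(F_1,F_2,F_3)$. Conversely, $F_1=Q/L^s$ and $F_2=P/L^r$ lie in $K(P,Q,L)$. So $K(F_1,F_2,F_3)=K(P,Q,L)$.

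Next, transcendence. The field $K(x,y,z)$ has transcendence degree $3$ over $K$ and equals $K(u,v,L)$, so $u,v,L$ are algebraically independent. In particular $L$ is transcendental over $K(u,v)$. The same count applies downstairs: $F$ is dominant, so $K(P,Q,L)=K(F_1,F_2,F_3)$ has transcendence degree $3$. Therefore $P,Q,L$ are algebraically independent, and $L$ is transcendental over $K(P,Q)$.

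Finally, the degree computation. Since $K(P,Q)\subseteq K(u,v)$ and $L$ is transcendental over both fields, we have
\[
K(u,v,L)=K(u,v)(L),\qquad K(P,Q,L)=K(P,Q)(L),
\]
with $L$ an indeterminate over each. Degrees of algebraic extensions are unchanged by adjoining an indeterminate: $[k'(t):k(t)]=[k':k]$ for finite $k'/k$. One sees this by noting that a $k$-basis of $k'$ spans $k'(t)=k'\otimes_k k(t)$ over $k(t)$ and stays linearly independent, because $k'[t]=k'\otimes_k k[t]$ is free over $k[t]$ on that basis.

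The extension $K(u,v)/K(P,Q)$ is finite: it is algebraic by the transcendence-degree count and finitely generated. Hence
\[
[K(x,y,z):K(F_1,F_2,F_3)]=[K(u,v):K(P,Q)].
\]

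No step is a serious obstacle. The only point needing care is this base-change invariance of the degree, including the check that the finite extension is genuinely algebraic.
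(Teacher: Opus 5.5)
Your proof is correct and follows the paper's own argument: it establishes the same two field identities $K(x,y,z)=K(u,v,L)$ and $K(F_1,F_2,F_3)=K(P,Q,L)$, then uses invariance of the degree under the purely transcendental base change by $L$. Beyond the paper, you spell out the transcendence-degree count giving algebraic independence of $u,v,L$ and of $P,Q,L$, and the standing assumptions $\Lambda\neq0$ and dominance, all of which the paper leaves implicit.
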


\begin{proof}
$x=L/\Lambda$ recovers $x$ from $u,v,L$, and $y,z$ follow from $u=x^{r}y$, $v=x^{s}z$. On the target, $F_3=L$, $F_1=Q/L^{s}$ and $F_2=P/L^{r}$. The extension $K(u,v,L)/K(P,Q,L)$ is the purely transcendental base change of $K(u,v)/K(P,Q)$ along $K(P,Q,L)/K(P,Q)$, so the degree is unchanged.
\end{proof}

We write $N$ for this degree and call it the generic fibre degree of the member. By \cref{lem:quotientdegree} the normal closure group of $F$ over its target function field equals that of the quotient cover, so all monodromy statements may be made downstairs.

Fixing the second target coordinate at a general value $Y$ cuts the quotient cover down to a degree-$N$ cover of curves. The passport of that cover is the list of its inertia cycle types, and the pole partition is the cycle type over the point at infinity. Both depend on the choice of second coordinate.

\subsection{Branch curves and semigroups}
\label{subsec:semigroup}

Let $f=(P,Q)$ be the quotient map, dominant of degree $N$, and let $\Gamma_f\subset\A^2$ be the union of the one-dimensional components of the locus over which the fibre has fewer than $N$ points, taken with its reduced structure. We call $\Gamma_f$ the \textbf{branch curve} of $f$. It is not the image of the \textbf{critical locus} $\{\Jacuv(P,Q)=0\}$: by \cref{cor:master} that locus is $\{\Lambda=0\}$, and $Q=\Lambda^{s}A$ and $P=\Lambda^{r}B$ both vanish there, so it is contracted to the origin and every component of $\Gamma_f$ records a failure of properness rather than a critical value.

Let $\Gamma$ be an irreducible component of $\Gamma_f$, with affine coordinate ring $\C[\Gamma]$ and normalisation $\widetilde\Gamma$. A \textbf{place at infinity} of $\Gamma$ is a place of $\widetilde\Gamma$ at which some element of $\C[\Gamma]$ has a pole. Suppose $\Gamma$ has exactly one, as does every branch curve occurring below, and write $\nu$ for the valuation there. The \textbf{semigroup at infinity} of $\Gamma$ is
\[
  S_{\infty}(\Gamma)=\{-\nu(g)\ :\ g\in\C[\Gamma]\setminus\{0\}\}\subset\Z_{\ge0} .
\]

All the branch curves below are given by a birational parametrisation $t\mapsto\bigl(g_1(t),\dots,g_m(t)\bigr)$ with $g_i\in\C[t]$. Then $\widetilde\Gamma=\A^1_t$, the place at infinity is unique, $\C[\Gamma]=\C[g_1,\dots,g_m]$ and $\nu=-\deg$, so $S_{\infty}(\Gamma)$ is the set of degrees of the nonzero elements of $\C[g_1,\dots,g_m]$. Since degrees add under multiplication, this set contains the numerical semigroup $\langle\deg g_1,\dots,\deg g_m\rangle$. The containment can be strict: two monomials in the $g_i$ of equal degree may have cancelling leading terms, and the resulting difference has a degree that need not lie in the semigroup generated by the $\deg g_i$. This occurs, for instance, when $\deg g_1=6$ and $\deg g_2=27$: the relation $2\cdot27=9\cdot6$ forces a cancellation producing degree $26\notin\langle6,27\rangle$. The following criterion is what rules out any further such degree.

\begin{lem}
\label{lem:degsemigroup}
Let $g_1,\dots,g_m\in\C[t]$ be monic of degrees $d_1,\dots,d_m$, and let
\[
 I_{\bd}=\ker\bigl(\C[x_1,\dots,x_m]\longrightarrow\C[t],\quad
 x_i\longmapsto t^{d_i}\bigr)
\]
be the toric ideal of the numerical semigroup $\langle d_1,\dots,d_m\rangle$.
For $u\in\Z_{\ge0}^m$ write $g^u=g_1^{u_1}\cdots g_m^{u_m}$.
Suppose $I_{\bd}$ is generated by homogeneous binomials
$x^{u_j}-x^{v_j}$, $1\le j\le q$, and that for each $j$ the corresponding
difference $g^{u_j}-g^{v_j}$ can be written as a polynomial in the $g_i$ all
of whose monomials have weighted degree strictly smaller than
$\sum_i(u_j)_id_i=\sum_i(v_j)_id_i$.  Then the degrees of the nonzero elements
of $\C[g_1,\dots,g_m]$ are exactly $\langle d_1,\dots,d_m\rangle$.
\end{lem}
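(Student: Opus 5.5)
The inclusion of $\langle d_1,\dots,d_m\rangle$ in the degree set is immediate, because the $g_i$ are monic and $\deg g^u=\sum_i u_id_i$. The work is the reverse inclusion: every nonzero $h\in\C[g_1,\dots,g_m]$ has degree in the semigroup. The plan is a standard-basis (subalgebra/SAGBI) reduction. We show that every such $h$ can be written as $h=\Phi(g_1,\dots,g_m)$ with $\Phi\in\C[x_1,\dots,x_m]$ for which no cancellation occurs in top weighted degree. Give $x_i$ the weight $d_i$. For a representation $\Phi$, let $\delta(\Phi)$ be the maximal weight of a monomial of $\Phi$, and let $\Phi_{\mathrm{top}}$ be the weighted-homogeneous top component. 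Then $\deg h\le\delta(\Phi)$. Equality holds unless $\Phi_{\mathrm{top}}(t^{d_1},\dots,t^{d_m})=0$: the coefficient of $t^{\delta}$ in $\Phi_{\mathrm{top}}(g)$ is $\Phi_{\mathrm{top}}(t^{d_i})/t^{\delta}$ evaluated at leading terms, since the $g_i$ are monic. If equality holds, then $\deg h=\delta(\Phi)$ is the weight of a monomial and lies in $\langle d_1,\dots,d_m\rangle$.

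Now suppose $\Phi_{\mathrm{top}}$ maps to $0$, so $\Phi_{\mathrm{top}}\in I_{\bd}$. The ideal $I_{\bd}$ is weighted homogeneous and generated by the given homogeneous binomials, so we can write
\[
\Phi_{\mathrm{top}}=\sum_j c_j(x)\,\bigl(x^{u_j}-x^{v_j}\bigr),
\]
with each $c_j$ weighted homogeneous of weight $\delta-\wt(x^{u_j})$. We may take homogeneous coefficients by projecting any expression onto the weight-$\delta$ component. By hypothesis each $g^{u_j}-g^{v_j}=R_j(g)$, where every monomial of $R_j$ has weight $<\wt(x^{u_j})$. Replace $\Phi$ by
\[
\Phi'=\Phi-\Phi_{\mathrm{top}}+\sum_j c_jR_j .
\]
Then $\Phi'(g)=\Phi(g)=h$, since the substitution of $g$ turns $x^{u_j}-x^{v_j}$ into $R_j(g)$. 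Every monomial of $\sum_j c_jR_j$ has weight $<\delta$, so $\delta(\Phi')<\delta(\Phi)$.

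Weights are nonnegative integers, so iterating strictly lowers $\delta$. The process must therefore stop at a representation whose top component does not vanish on $t^{d_i}$. Indeed, $h\ne0$ prevents $\Phi$ from collapsing to the zero polynomial, and a representation that is constant and nonzero has degree $0$, which is in the semigroup. At termination $\deg h=\delta(\Phi)\in\langle d_1,\dots,d_m\rangle$.

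The main point needing care is the step extracting homogeneous coefficients $c_j$. This is where the generation hypothesis on $I_{\bd}$ is used: the toric ideal is homogeneous for the weight grading, so membership of a homogeneous element yields a homogeneous combination. Everything else is bookkeeping on leading coefficients, which is clean because the $g_i$ are monic.
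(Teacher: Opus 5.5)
Your proof is correct and follows essentially the same route as the paper. The paper chooses a representation with minimal top weight $D$ and derives a contradiction by rewriting the weight-$D$ part through the binomial generators; you run the same rewriting as an explicit descent on $\delta$, and your step of projecting onto the weight-$\delta$ component to obtain homogeneous coefficients $c_j$ makes explicit a detail the paper leaves implicit.
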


\begin{proof}
One containment is immediate.  For the other, write an element of the
subalgebra as a polynomial in the $g_i$ and choose an expression for which the
largest weighted degree $D$ is minimal.  If the actual degree is smaller than
$D$, the sum of the terms of weight $D$ is a homogeneous relation among
$t^{d_1},\dots,t^{d_m}$, hence lies in the degree-$D$ part of the toric ideal
$I_{\bd}$.  Express that relation using the stated homogeneous binomial
generators.  Replacing each generator $g^{u_j}-g^{v_j}$ by its lower-weight
expression lowers the largest weight of the chosen representation, a
contradiction.  Therefore the actual degree equals $D$, which belongs to the
numerical semigroup.
\end{proof}

\begin{rem}
\label{rem:rs}
\Cref{lem:degsemigroup} is the subalgebra form of a leading-term argument. For an ideal, a Gr\"obner basis is a generating set whose leading terms generate the ideal of all leading terms, and Buchberger's criterion tests this on the syzygies between leading terms. Here the object controlled is the subalgebra $\C[g_1,\dots,g_m]$ rather than an ideal, the role of the ideal of leading terms is played by the algebra generated by $t^{d_1},\dots,t^{d_m}$, and the relations to be tested are the additive ones among the $d_i$, that is the toric ideal of the numerical semigroup. The general theory is developed in \cite{rs}; for the two computations below only the criterion above is needed, and in both the toric ideal has a single or a pair of explicit binomial generators.
\end{rem}

\begin{lem}
\label{lem:semiintrinsic}
Let $f_2=\psi\circ f_1\circ\phi^{-1}$ with $\phi,\psi$ polynomial automorphisms of $\A^2$. Then $\psi$ carries $\Gamma_{f_1}$ isomorphically onto $\Gamma_{f_2}$, and the multiset of semigroups at infinity of the components is the same for both. In particular $S_{\infty}$ is an invariant of the left--right equivalence class.
\end{lem}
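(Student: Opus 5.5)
The plan is to show first that $\psi$ carries the bad locus of $f_1$ onto that of $f_2$, and then that the semigroups at infinity are unchanged because $\psi$ identifies the coordinate rings of the components.

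\textbf{Step 1: fibres.} For $p\in\A^2$,
\[
f_2^{-1}(\psi(p))=\phi\bigl(f_1^{-1}(p)\bigr),
\]
since $f_2(\phi(q))=\psi(f_1(q))$. This fibre equality does not even need the Keller condition. As $\phi$ is a bijection on points, the fibre of $f_2$ over $\psi(p)$ has the same cardinality as the fibre of $f_1$ over $p$. Because $\phi,\psi$ are automorphisms of $\A^2$, the field extensions $K(u,v)/K(f_1)$ and $K(u,v)/K(f_2)$ are isomorphic, so the generic degree $N$ is the same for both maps. Hence the locus where the fibre has fewer than $N$ points is carried bijectively by $\psi$, that is, $\psi(\Delta_{f_1})=\Delta_{f_2}$. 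Since $\psi$ is an isomorphism of varieties, it preserves dimension of components, so it carries the union of one-dimensional components, with reduced structure, isomorphically onto $\Gamma_{f_2}$. In particular it induces a bijection $\Gamma\mapsto\psi(\Gamma)$ between the irreducible components.

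\textbf{Step 2: semigroups.} For a component $\Gamma$ of $\Gamma_{f_1}$, the restriction $\psi|_\Gamma\colon\Gamma\to\psi(\Gamma)$ is an isomorphism of affine curves. Its pullback is therefore a $\C$-algebra isomorphism $\C[\psi(\Gamma)]\to\C[\Gamma]$, and it extends to an isomorphism of the normalisations $\widetilde\Gamma\cong\widetilde{\psi(\Gamma)}$. A place of $\widetilde\Gamma$ is a place at infinity exactly when some element of $\C[\Gamma]$ has a pole there. This condition is phrased purely in terms of the coordinate ring, so the isomorphism carries places at infinity to places at infinity. In particular uniqueness of the place at infinity transfers, and the valuations correspond: $\nu_{\psi(\Gamma)}(g)=\nu_\Gamma(g\circ\psi)$. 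Hence
\[
\{-\nu(g)\}_{g\in\C[\psi(\Gamma)]\setminus\{0\}}=\{-\nu(h)\}_{h\in\C[\Gamma]\setminus\{0\}},
\]
that is, $S_\infty(\psi(\Gamma))=S_\infty(\Gamma)$. Taken over all components, this gives equality of the multisets of semigroups at infinity.

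The only real care needed is in Step 1, where "fewer than $N$ points" must be matched on both sides. It relies on $N$ being the same for $f_1$ and $f_2$ and on $\phi$ being bijective on $\C$-points, and both hold. Everything else is formal transport of structure.
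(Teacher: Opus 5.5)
Your proof is correct and follows the paper's argument essentially verbatim. Fibre cardinalities correspond under $\psi$ via the bijection $\phi$, so $\psi(\Gamma_{f_1})=\Gamma_{f_2}$, and the resulting isomorphism of reduced affine curves transports coordinate rings, normalisations, places at infinity and valuations; your explicit check that the generic degree $N$ agrees for $f_1$ and $f_2$ is left implicit in the paper, and it is exactly what makes the ``fewer than $N$ points'' condition match on both sides.
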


\begin{proof}
Since $\phi$ and $\psi$ are bijective, $\#f_2^{-1}(\psi(y))=\#f_1^{-1}(y)$ for every $y$, so $\psi(\Gamma_{f_1})=\Gamma_{f_2}$ and $\psi$ restricts to an isomorphism of reduced affine curves. Such an isomorphism carries coordinate rings to coordinate rings, hence normalisations to normalisations and places at infinity to places at infinity, and preserves valuations.
\end{proof}

A \textbf{passport} or a \textbf{pole partition}, by contrast, is attached to a chosen second coordinate and is invariant only under equivalences preserving that pencil. This is the distinction between intrinsic and framed data.

\section{Cyclic cones in signature $(1,-1,-1)$}
\label{sec:3}

\subsection{The canonical chain and the quotient pair}

Fix $k\ge2$ and set
\begin{equation}
\label{eq:TW}
  T=\Lambda w, \qquad W=\Lambda^{k-1}w^{k}, \qquad T^{k}=\Lambda W .
\end{equation}

\begin{lem}
\label{lem:brackets}
$\jb{\Lambda}{T}=\Lambda$, $\jb{\Lambda}{W}=kT^{k-1}$, and $\jb{T}{W}=W$.
\end{lem}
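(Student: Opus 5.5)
The three identities follow by direct computation in the coordinates $(\Lambda,w)$. Recall from the remark after \cref{cor:master} that all brackets are taken in $(\Lambda,v)$, equivalently in $(\Lambda,w)$ with $w=v-1$. In these coordinates $\jb{f}{g}=f_{\Lambda}g_{w}-f_{w}g_{\Lambda}$. In particular $\jb{\Lambda}{g}=g_{w}$, and the bracket is a derivation in each argument, as noted after \cref{subsec:master}.

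First, $\jb{\Lambda}{T}=\partial_w(\Lambda w)=\Lambda$. Second, $\jb{\Lambda}{W}=\partial_w(\Lambda^{k-1}w^{k})=k\Lambda^{k-1}w^{k-1}=kT^{k-1}$, using $T^{k-1}=\Lambda^{k-1}w^{k-1}$.

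For the third identity, rather than expanding partials, I will use the relation $T^{k}=\Lambda W$ from \cref{eq:TW} together with the derivation property. Bracket both sides with $T$: the left side gives $\jb{T}{T^{k}}=0$, and the right side gives $\jb{T}{\Lambda W}=\Lambda\jb{T}{W}+W\jb{T}{\Lambda}=\Lambda\jb{T}{W}-W\Lambda$, by the first identity and antisymmetry. Dividing by $\Lambda$, which is legitimate in the domain $K[\Lambda,w]$, yields $\jb{T}{W}=W$.

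As a sanity check, the direct computation gives $T_{\Lambda}W_{w}-T_{w}W_{\Lambda}=w\cdot k\Lambda^{k-1}w^{k-1}-\Lambda\cdot(k-1)\Lambda^{k-2}w^{k}=\Lambda^{k-1}w^{k}=W$, which agrees.

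There is no real obstacle. The only point needing care is the convention: the brackets are in $(\Lambda,w)$, so that $\jb{\Lambda}{\cdot}=\partial_w$, and not in the original $(u,v)$. Since the change $(u,v)\mapsto(\Lambda,w)$ has Jacobian one, this changes nothing.
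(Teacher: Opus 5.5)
Your proof is correct and essentially matches the paper's. The first two identities are the same one-line partial-derivative computations, and your ``sanity check'' for the third is exactly the paper's direct expansion. Your main argument for $\jb{T}{W}=W$ is also valid: you bracket $T^{k}=\Lambda W$ with $T$ and cancel $\Lambda$ in the domain $K[\Lambda,w]$. This is a slightly more structural route that avoids computing $W_{\Lambda}$.
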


\begin{proof}
$T_{\Lambda}=w$, $T_{w}=\Lambda$, $W_{\Lambda}=(k-1)\Lambda^{k-2}w^{k}$ and $W_{w}=k\Lambda^{k-1}w^{k-1}=kT^{k-1}$. The first two are immediate, and
\[
  \jb{T}{W}=w\cdot k\Lambda^{k-1}w^{k-1}-\Lambda\cdot(k-1)\Lambda^{k-2}w^{k}=\Lambda^{k-1}w^{k}=W. \qedhere
\]
\end{proof}

Choose a seed $\varphi\in W^{2}K[W]$ of degree $d\ge3$ and define $A_k,\dots,A_0$ by
\begin{equation}
\label{eq:jets}
  A_k=\frac1W, \qquad A_j'=(j+1)\varphi'(W)A_{j+1} \quad (0\le j\le k-1),
\end{equation}
all integration constants zero. Put
\begin{equation}
\label{eq:PQjet}
  Q=T+\varphi(W), \qquad P=\Lambda+A_{k-1}T^{k-1}+\dots+A_1T+A_0 .
\end{equation}
The restriction $d\ge3$ is justified below.

\begin{prop}
\label{prop:jet}
$A_j\in WK[W]$ for $0\le j\le k-1$, so $P,Q\in\Lambda K[\Lambda,w]$, and $\Jacuv(P,Q)=\Lambda$. Writing $Q=\Lambda A$ and $P=\Lambda B$, the pair $(A,B)$ satisfies \cref{eq:masterL} with $r=s=\kappa=1$.
\end{prop}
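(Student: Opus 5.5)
The plan is to verify the three claims in order: integrality of the $A_j$, divisibility of $P,Q$ by $\Lambda$, and then the Jacobian identity. The Jacobian identity is a direct bracket computation based on \cref{lem:brackets}. All brackets are taken in $(\Lambda,w)$. The change $(u,v)\mapsto(\Lambda,w)$ is affine of Jacobian one, so these brackets agree with $\Jacuv$.

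\emph{Integrality.} I would argue by descending induction on $j$, proving the slightly stronger statement $A_j\in W^{k-j}K[W]$ for $j\le k-1$. Since $\varphi\in W^{2}K[W]$, we have $\varphi'\in WK[W]$. At the first step this gives $A_{k-1}'=k\varphi'/W\in K[W]$, and integrating with zero constant puts $A_{k-1}$ in $WK[W]$. At the inductive step, if $A_{j+1}\in W^{m}K[W]$ with $m\ge1$, then $A_j'=(j+1)\varphi'A_{j+1}\in W^{m+1}K[W]$. Zero-constant integration then puts $A_j$ in $W^{m+2}K[W]\subset WK[W]$. Because $k\ge2$, both $T=\Lambda w$ and $W=\Lambda^{k-1}w^{k}$ lie in $\Lambda K[\Lambda,w]$. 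Hence every $A_jT^j$ with $j\le k-1$ lies in $\Lambda K[\Lambda,w]$, and so does $\varphi(W)$. This gives $P,Q\in\Lambda K[\Lambda,w]$.

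\emph{Jacobian.} The key observation is that, as a rational function, $\Lambda=A_kT^k$ by \cref{eq:TW}, since $T^k=\Lambda W$ and $A_k=1/W$. Thus $P=\sum_{j=0}^{k}A_j(W)T^j$ uniformly. Extending the bracket to $K(\Lambda,w)$ as a biderivation, \cref{lem:brackets} gives, for any $f\in K(W)$,
\[
\jb{f(W)T^j}{T}=-Wf'(W)T^j,\qquad \jb{f(W)T^j}{W}=jWf(W)T^{j-1}.
\]
Applying this with $Q=T+\varphi(W)$ yields
\[
\jb{P}{Q}=\sum_{j=0}^{k}\bigl(-WA_j'T^j+jW\varphi'A_jT^{j-1}\bigr).
\]
For $0\le j\le k-1$ the coefficient of $T^j$ is $W\bigl(-A_j'+(j+1)\varphi'A_{j+1}\bigr)$, which vanishes by \cref{eq:jets}. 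The only surviving term is $-WA_k'T^k=W\cdot W^{-2}\cdot\Lambda W=\Lambda$. Therefore $\Jacuv(P,Q)=\Lambda$.

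\emph{Master equation.} Write $Q=\Lambda A$ and $P=\Lambda B$ with $A,B$ polynomial, which is possible by the divisibility just shown. \Cref{lem:bracketidentity} with $r=s=1$ gives $\Jacuv(P,Q)=\Lambda\bigl(\Lambda\jb{B}{A}+B\jb{\Lambda}{A}+A\jb{B}{\Lambda}\bigr)$. Setting this equal to $\Lambda$ and cancelling $\Lambda$ in the domain $K[\Lambda,w]$ produces \cref{eq:master} with $\kappa=1$. In $(\Lambda,w)$-coordinates this is exactly \cref{eq:masterL}.

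The main obstacle is the bookkeeping in the Jacobian step, and specifically the justification for folding $\Lambda$ into the sum as the $j=k$ term. That term involves $1/W$, so the bracket identities must be used for rational functions of $W$. I would handle this by noting that the derivation rules and \cref{lem:brackets} hold verbatim in $K(\Lambda,w)$.
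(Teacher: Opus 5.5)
Your proposal is correct and follows the paper's proof: descending induction for $A_j\in WK[W]$, then expansion of $\jb{P}{Q}$ via \cref{lem:brackets}, with the coefficient of each $T^{j}$ cancelled by the jet recursion, then the master equation. The differences are cosmetic: you fold $\Lambda=A_kT^{k}$ into the sum as the $j=k$ term, working in $K(\Lambda,w)$, where the paper brackets $\Lambda$ separately, and you cancel $\Lambda$ through \cref{lem:bracketidentity} directly where the paper cites \cref{cor:master}.
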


\begin{proof}
Since $\varphi\in W^{2}K[W]$ we have $\varphi'\in WK[W]$, so $A_{k-1}'=k\varphi'/W\in K[W]$ and $A_{k-1}\in WK[W]$; downward induction through \cref{eq:jets} gives $A_j\in WK[W]$ for all $j\le k-1$. As $W=\Lambda^{k-1}w^{k}$ with $k\ge2$, each $A_j$ is divisible by $\Lambda$, and $\varphi$ is divisible by $W^{2}$, so $P,Q\in\Lambda K[\Lambda,w]$.

By \cref{lem:brackets} and the derivation property,
\[
  \jb{P}{Q}=\Lambda+k\varphi'T^{k-1}-W\sum_{j=0}^{k-1}A_j'T^{j}+\varphi'W\sum_{j=0}^{k-2}(j+1)A_{j+1}T^{j}.
\]
The coefficient of $T^{j}$ for $j\le k-2$ is $-W\bigl(A_j'-(j+1)\varphi'A_{j+1}\bigr)=0$, and that of $T^{k-1}$ is $k\varphi'-WA_{k-1}'=0$, both by \cref{eq:jets}. Hence $\Jacuv(P,Q)=\Lambda=\Lambda^{r+s-1}$ with $r=s=1$, and \cref{cor:master} gives $\kappa=1$.
\end{proof}

\begin{rem}
\label{rem:osculating}
The chain \cref{eq:jets} has a geometric reading. Fix $W$ with $\varphi'(W)\neq0$ and let $h=h(T)$ be the formal solution of $\varphi(W+h)=\varphi(W)+T$ with $h(0)=0$. Then $P$ of \cref{eq:PQjet} agrees with $A_0(W+h)$ to order $k$ in $T$, and
\[
  P-A_0(W+h)=\frac{T^{k+1}}{(k+1)W^{2}\varphi'(W)}+O(T^{k+2}).
\]
Thus $P$ is the $k$-th osculating jet along the plane curve $W\mapsto\bigl(A_0(W),\varphi(W)\bigr)$, and $k$ is the contact order.
\end{rem}

\subsection{The fibre polynomial}

Since $\Lambda=T^{k}/W=A_kT^{k}$, the second formula of \cref{eq:PQjet} reads $P=\sum_{j=0}^{k}A_jT^{j}$. Eliminating $T=Q-\varphi(W)$ gives $P=\Pi(W,Q)$, where
\begin{equation}
\label{eq:fibrepoly}
  \Pi(W,Y)=\sum_{j=0}^{k}A_j(W)\bigl(Y-\varphi(W)\bigr)^{j}, \qquad A_k=\frac1W .
\end{equation}
We call $\Pi$ the fibre polynomial and $W\bigl(\Pi(W,Y)-P\bigr)$, a polynomial in $W$, the marked equation. The pencil on the quotient is by $Q$; we write $Y$ for its value, so that $K(W,Q)=K(T,W)=K(\Lambda,w)$ and the cover at fixed $Y$ is $W\mapsto\Pi(W,Y)$.

\begin{lem}
\label{lem:Pider}
$\dfrac{\partial\Pi}{\partial W}=-\dfrac{\bigl(Y-\varphi(W)\bigr)^{k}}{W^{2}}$.
\end{lem}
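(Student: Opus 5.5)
The plan is to differentiate \cref{eq:fibrepoly} term by term in $W$, with $Y$ held fixed, and let the chain \cref{eq:jets} telescope the result. Write $S=Y-\varphi(W)$, so that $\partial S/\partial W=-\varphi'(W)$ and
\[
  \Pi(W,Y)=\sum_{j=0}^{k}A_j(W)S^{j}.
\]
The product rule gives
\[
  \frac{\partial\Pi}{\partial W}=\sum_{j=0}^{k}A_j'S^{j}-\varphi'\sum_{j=1}^{k}jA_jS^{j-1}.
\]
In the second sum I will reindex by $j\mapsto j+1$, which turns it into $\sum_{j=0}^{k-1}(j+1)A_{j+1}S^{j}$.

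Next I collect the coefficient of $S^{j}$. For $0\le j\le k-1$ this coefficient is $A_j'-(j+1)\varphi'A_{j+1}$, and it vanishes by \cref{eq:jets}; this is the same cancellation that already appeared in the proof of \cref{prop:jet}. The only term left is the top one, $A_k'S^{k}$. Since $A_k=1/W$, we have $A_k'=-1/W^{2}$, and therefore
\[
  \frac{\partial\Pi}{\partial W}=-\frac{S^{k}}{W^{2}}=-\frac{\bigl(Y-\varphi(W)\bigr)^{k}}{W^{2}},
\]
which is the claim.

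The only step that needs care is that the index ranges match when the second sum is shifted. The recursion in \cref{eq:jets} is stated exactly for $0\le j\le k-1$, with no constraint at $j=k$, so the top coefficient $A_k'$ is the sole survivor. The computation is formal in the field of rational functions $K(W,Y)$, so the fact that $A_k$ is not a polynomial causes no issue.
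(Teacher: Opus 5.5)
Your proposal is correct and follows the paper's proof: both differentiate \cref{eq:fibrepoly} termwise, use the chain \cref{eq:jets} so that the two sums cancel after the index shift, and read off the surviving term from $A_k'=-1/W^{2}$. The only cosmetic difference is that the paper first substitutes $A_j'=(j+1)\varphi'A_{j+1}$ into the first sum and then cancels, whereas you collect terms by powers of $S$.
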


\begin{proof}
Differentiating \cref{eq:fibrepoly},
\[
\begin{split}
  \frac{\partial\Pi}{\partial W}
   &=\sum_{j=0}^{k-1}A_j'(Y-\varphi)^{j}+A_k'(Y-\varphi)^{k}
     -\varphi'\sum_{j=1}^{k}jA_j(Y-\varphi)^{j-1}\\
   &=\varphi'\sum_{j=0}^{k-1}(j+1)A_{j+1}(Y-\varphi)^{j}
     +A_k'(Y-\varphi)^{k}-\varphi'\sum_{i=1}^{k}iA_i(Y-\varphi)^{i-1},
\end{split}
\]
by \cref{eq:jets}. The two sums cancel after the shift $i=j+1$, and $A_k'=-1/W^{2}$.
\end{proof}

\subsection{The lift conditions at the base point}

By \cref{rem:clash} the base point is $(\Lambda,w)=(1,-1)$, where $T=-1$ and $W=\epsilon$ with
\[
  \epsilon=(-1)^{k}, \qquad \epsilon^{2}=1 .
\]

\begin{prop}
\label{prop:lift}
The conditions $A(1,-1)=B(1,-1)=0$ are
\begin{equation}
\label{eq:liftjet}
  \varphi(\epsilon)=1, \qquad \Pi(\epsilon,0)=0 .
\end{equation}
Equivalently, the second condition reads $1+\sum_{j=0}^{k-1}(-1)^{j}A_j(\epsilon)=0$.
\end{prop}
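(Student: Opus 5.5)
The plan is to evaluate $Q=\Lambda A$ and $P=\Lambda B$ at the base point and divide by $\Lambda$. By \cref{rem:clash}, in signature $(1,-1,-1)$ the lift conditions of \cref{lem:polynomiality} read $A(0,0)=B(0,0)=0$. In the coordinates $(\Lambda,w)$ the base point is $(1,-1)$. There $\Lambda=1\neq0$, so $A(1,-1)=Q(1,-1)$ and $B(1,-1)=P(1,-1)$. The two conditions are therefore exactly $Q(1,-1)=0$ and $P(1,-1)=0$.

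Next evaluate the chain \cref{eq:TW} at the base point. We get $T=\Lambda w=-1$ and $W=\Lambda^{k-1}w^{k}=(-1)^{k}=\epsilon$. Then $Q=T+\varphi(W)$ takes the value $-1+\varphi(\epsilon)$, so $Q(1,-1)=0$ is the condition $\varphi(\epsilon)=1$.

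For $P$, use the rewriting $P=\sum_{j=0}^{k}A_jT^{j}$ with $A_k=1/W$. This holds because $\Lambda=T^k/W$, an identity of rational functions valid wherever $W\neq0$, and in particular at the base point, where $W=\epsilon$. Substituting $T=Q-\varphi(W)$ gives $P=\Pi(W,Q)$ as in \cref{eq:fibrepoly}. At the base point $Q=0$ once the first condition holds, so $P(1,-1)=\Pi(\epsilon,0)$. Thus, given $\varphi(\epsilon)=1$, the pair of conditions is equivalent to \cref{eq:liftjet}.

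For the equivalent form, evaluate the sum $\sum_j A_j T^j$ directly at $T=-1$, $W=\epsilon$. This gives $\sum_{j=0}^{k-1}(-1)^{j}A_j(\epsilon)+(-1)^{k}/\epsilon$. Since $\epsilon=(-1)^k$ and $\epsilon^2=1$, the last term is $\epsilon^{2}=1$, which yields $1+\sum_{j=0}^{k-1}(-1)^jA_j(\epsilon)=0$.

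The same expression is $\Pi(\epsilon,0)$ once $\varphi(\epsilon)=1$, because then $Y-\varphi(\epsilon)=-1$ at $Y=0$. The only point requiring care is this bookkeeping: the identification $P(1,-1)=\Pi(\epsilon,0)$ depends on $Q(1,-1)=0$, so the two conditions in \cref{eq:liftjet} are equivalent to the lift conditions as a pair, not individually. I expect no genuine obstacle beyond noting that $\Lambda$ and $W$ are nonzero at the base point, so that both divisions are legitimate.
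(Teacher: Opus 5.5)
Your proposal is correct and follows the paper's proof: at the base point $\Lambda=1$, so $A=Q$ and $B=P$; $Q=T+\varphi(W)=-1+\varphi(\epsilon)$ gives the first condition; and $P=\Pi(W,Q)$ restricted to the fibre $Q=0$ gives $\Pi(\epsilon,0)=0$. Evaluating $\sum_j A_jT^j$ directly at $T=-1$ also slightly sharpens the paper's last step. It shows that $B(1,-1)=0$ is the displayed sum condition even before imposing $\varphi(\epsilon)=1$, which is needed only to identify that sum with $\Pi(\epsilon,0)$.
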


\begin{proof}
At the base point $\Lambda=1$, so $A=Q$ and $B=P$ there. Now $Q=T+\varphi(W)=-1+\varphi(\epsilon)$, which vanishes exactly when $\varphi(\epsilon)=1$. Given that, the base point lies on the fibre $Q=0$, on which $P=\Pi(W,0)$; so $B=0$ reads $\Pi(\epsilon,0)=0$. For the last form, $\varphi(\epsilon)=1$ gives $\Pi(\epsilon,0)=\sum_{j=0}^{k}(-1)^{j}A_j(\epsilon)$, and $(-1)^{k}A_k(\epsilon)=(-1)^{k}\epsilon=1$.
\end{proof}

Thus a member of the family is a seed whose associated fibre polynomial vanishes at $W=\epsilon$ on the fibre $Q=0$; the lift point is the marked root.

\subsection{Degrees and the exclusion of $d=2$}

\begin{prop}
\label{prop:degree}
Let $\varphi$ have degree $d\ge2$ with top coefficient $c_d$, and let $c_2$ be its coefficient in degree two. Then, for generic $Y$,
\[
\begin{split}
  \deg_{W}\Pi(W,Y)&=kd-1 \quad\text{with leading coefficient } \frac{(-1)^{k+1}c_d^{\,k}}{kd-1},\\
  \operatorname{ord}_{W=0}\Pi(W,0)&=2k-1 \quad\text{with that coefficient } \frac{(-1)^{k+1}c_2^{\,k}}{2k-1} .
\end{split}
\]
The marked equation has degree $kd$ in $W$ with the same leading coefficient, and the generic fibre degree of the member is $kd$.
\end{prop}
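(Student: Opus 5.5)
The proof will rest on \cref{lem:Pider}, which gives $\partial\Pi/\partial W$ in closed form, so that every degree and order statement can be read off by integrating in $W$. First I record the shape of $\Pi$ as a Laurent polynomial in $W$. By \cref{prop:jet}, $A_j\in WK[W]$ for $j\le k-1$. The only polar contribution is therefore $A_k(Y-\varphi)^k=(Y-\varphi(W))^k/W$. Since $\varphi\in W^2K[W]$, this term equals $Y^k/W$ plus a polynomial in $W$. Hence $W\Pi(W,Y)\in K[Y][W]$, and
\[
  W\Pi(W,Y)\big|_{W=0}=Y^k .
\]

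\textbf{Degree for generic $Y$.} For generic $Y$, $(Y-\varphi(W))^k$ has degree $kd$ in $W$ with leading coefficient $(-c_d)^k$. By \cref{lem:Pider}, $\partial_W\Pi$ is then a Laurent polynomial whose top term is $-(-1)^kc_d^kW^{kd-2}$, and $kd-2\ge 2$. Integrating, the top term of $\Pi$ is
\[
  \frac{(-1)^{k+1}c_d^{k}}{kd-1}\,W^{kd-1}.
\]
Integration constants do not affect this, since $kd-1>0$. The marked equation $W(\Pi(W,Y)-P)$ therefore has degree $kd$ in $W$ with the same leading coefficient.

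\textbf{Order at $W=0$ on the fibre $Y=0$.} At $Y=0$ the polar term becomes $(-\varphi)^k/W\in W^{2k-1}K[W]$, and each $A_j(-\varphi)^j$ lies in $WK[W]$. So $\Pi(W,0)$ is a polynomial with zero constant term, and it is determined by its derivative. Here
\[
  \partial_W\Pi(W,0)=-(-\varphi)^k/W^2=-(-1)^kc_2^kW^{2k-2}+O(W^{2k-1}).
\]
Integrating from $0$ gives order exactly $2k-1$ with coefficient $(-1)^{k+1}c_2^k/(2k-1)$, as claimed.

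\textbf{Generic fibre degree.} By \cref{lem:quotientdegree} it suffices to compute $[K(\Lambda,w):K(P,Q)]$.
\begin{enumerate}[label=(\roman*)]
\item I first show $K(\Lambda,w)=K(W,Q)$. From $W$ and $Q$ one recovers $T=Q-\varphi(W)$, then $\Lambda=T^k/W$, then $w=T/\Lambda$. So $W$ generates $K(\Lambda,w)$ over $K(Q)$, hence over $K(P,Q)$.
\item Next, $W$ is a root of the marked equation $M(W)=W\Pi(W,Q)-PW$ over $K(P,Q)$, and $\deg_W M=kd$ by the degree computation above.
\item The remaining point, and the only genuine obstacle, is irreducibility of $M$ over $K(Q,P)$. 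I would argue this with Gauss's lemma. The variables $P$ and $Q$ are algebraically independent, because $\Jacuv(P,Q)=\Lambda\neq0$. So $M$ is a polynomial of degree one in the transcendental variable $P$ over $K(Q)[W]$. It is therefore irreducible in $K(Q)[W,P]$ as soon as its two coefficients $W\Pi(W,Q)$ and $-W$ are coprime. This holds because $W\Pi(W,Q)$ takes the value $Q^k\neq0$ at $W=0$.
\item Gauss's lemma then gives irreducibility of $M$ over $K(Q,P)$. Hence $[K(W,Q):K(P,Q)]=kd$, which is the generic fibre degree of the member.
\end{enumerate}
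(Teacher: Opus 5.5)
Your proof is correct, but it takes a different route from the paper on both halves.

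\textbf{Leading coefficient and order.} The paper works directly with the chain \cref{eq:jets}. It tracks the leading (respectively lowest) coefficients of the $A_{k-i}$ through the recursion and notes that every term $A_j(Y-\varphi)^j$ of \cref{eq:fibrepoly} has the same degree $kd-1$. It must then rule out cancellation by evaluating the resulting alternating sum with Chu--Vandermonde. You use \cref{lem:Pider} instead, which is proved from \cref{eq:jets} alone, so there is no circularity. That replaces the whole computation by one integration: the extreme terms of $\partial_W\Pi$ are visibly those of $-(Y-\varphi)^k/W^2$.

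This buys you several things:
\begin{itemize}
\item You show directly that $\Pi(W,0)\in WK[W]$, so the integration constant is fixed without knowing the order in advance. In the paper the logic runs the other way: \cref{prop:liftclosed} quotes the order statement of this proposition to fix that constant.
\item You see that the leading coefficient is independent of $Y$ for every $Y$, which \cref{prop:conequotient} later uses.
\end{itemize}

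\textbf{Fibre degree.} The paper counts the poles of $\Pi(\cdot,Y)$ at a ``generic'' value $Y$: a simple pole at $0$ and one of order $kd-1$ at $\infty$. It then tacitly uses the fact that a rational function of degree $n$ generates a subfield of index $n$. Your Gauss-lemma argument on $W\Pi(W,Q)-PW$ is exactly a proof of that fact in this instance. It rests on the two $P$-coefficients being coprime because $W\Pi$ takes the value $Q^k$ at $W=0$, and it is carried out over $K(Q)$ with $Q$ transcendental rather than at a specialised value. You also make explicit the identification $K(\Lambda,w)=K(W,Q)$, which the paper only records in the text preceding \cref{lem:Pider}.

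The one caveat is shared with the statement itself: ``order exactly $2k-1$'' requires $c_2\neq0$.
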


\begin{proof}
Write $\deg A_{k-i}=id-1$ for $1\le i\le k$, which follows from \cref{eq:jets} by induction, and let $\alpha_{k-i}$ be the leading coefficient of $A_{k-i}$. Comparing leading terms in $A_{k-i}'=(k-i+1)\varphi'A_{k-i+1}$ gives
\[
  \alpha_{k-i}=\frac{(k-i+1)\,dc_d}{id-1}\,\alpha_{k-i+1}, \qquad \alpha_k=1 ,
\]
the last as the coefficient of $W^{-1}$ in $A_k$. Hence $\alpha_{k-i}=\dfrac{k!}{(k-i)!}\dfrac{(dc_d)^{i}}{\prod_{m=1}^{i}(md-1)}$.

Each term of \cref{eq:fibrepoly} has degree $(k-j)d-1+jd=kd-1$, and $(Y-\varphi)^{j}$ has leading coefficient $(-c_d)^{j}$. Summing over $j=k-i$,
\[
  [W^{kd-1}]\,\Pi=(-c_d)^{k}\sum_{i=0}^{k}\frac{k!}{(k-i)!}\frac{(-d)^{i}}{\prod_{m=1}^{i}(md-1)} .
\]
Put $\beta=-1/d$, so $md-1=d(m+\beta)$ and the summand is $(-k)_i/(1+\beta)_i$. Chu--Vandermonde evaluates the sum as $(\beta)_k/(1+\beta)_k=\beta/(\beta+k)=-1/(kd-1)$, which gives the stated leading coefficient. The second display is the same computation with $d$ replaced by $2$, since $\operatorname{ord}_{W=0}A_{k-i}=2i-1$ and the recursion for lowest coefficients is that of the first display with $c_d,d$ replaced by $c_2,2$.

The marked equation is $W\Pi-PW$, of degree $kd$. For $Y\neq0$ the function $\Pi(\cdot,Y)$ has a simple pole at $W=0$, since $A_k(Y-\varphi)^{k}=(Y-\varphi)^{k}/W$ and $\varphi(0)=0$, and a pole of order $kd-1$ at infinity. It has no other poles, so it has degree $kd$ as a map $\bP^{1}\to\bP^{1}$. By \cref{lem:quotientdegree} this is the generic fibre degree.
\end{proof}

\begin{cor}
\label{cor:dtwo}
No seed of degree two satisfies \cref{eq:liftjet}.
\end{cor}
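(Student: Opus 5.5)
The plan is to use the fact that a degree-two seed is forced to be a monomial, which collapses the fibre polynomial on the fibre $Y=0$ to a single monomial whose coefficient \cref{prop:degree} already computes.

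First, since $\varphi\in W^{2}K[W]$ has degree $d=2$, it is $\varphi=c_2W^{2}$ with $c_2\neq0$; here the top coefficient $c_d$ and the degree-two coefficient coincide. The first lift condition of \cref{eq:liftjet} is $\varphi(\epsilon)=1$. Because $\epsilon^{2}=1$, this pins down $c_2=1$, so $\varphi=W^{2}$.

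Next, I would show that $\Pi(W,0)$ is a monomial. Inducting through \cref{eq:jets} with $\varphi'=2W$, each $A_{k-i}$ is a scalar multiple of $W^{2i-1}$: the base case is $A_k=W^{-1}$, and each step multiplies by $W$ and integrates, raising the exponent by $2$. Then every summand $A_j(W)\bigl(0-W^{2}\bigr)^{j}$ of \cref{eq:fibrepoly} is a multiple of $W^{2(k-j)-1+2j}=W^{2k-1}$. Hence $\Pi(W,0)=\gamma\,W^{2k-1}$ for a single constant $\gamma$. Here the lowest-order and highest-order terms coincide. So the second line of \cref{prop:degree}, with $c_2=1$, identifies
\[
  \gamma=\frac{(-1)^{k+1}}{2k-1}\neq0,
\]
the nonvanishing coming from the Chu--Vandermonde evaluation already carried out there.

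Finally, evaluating at the lift point gives $\Pi(\epsilon,0)=\gamma\,\epsilon^{2k-1}=\pm\gamma\neq0$. This violates the second condition of \cref{eq:liftjet}, so no seed of degree two works.

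The only real content is the nonvanishing of $\gamma$, which \cref{prop:degree} supplies. If one preferred not to rely on that proposition, there is an independent route. By \cref{lem:Pider}, $\partial_W\Pi(W,0)=-(-W^{2})^{k}/W^{2}=(-1)^{k+1}W^{2k-2}$. Integrating this monomial, with no constant term since $\Pi(W,0)$ is a pure monomial of degree $2k-1\ge3$, recovers $\gamma=(-1)^{k+1}/(2k-1)$ directly.
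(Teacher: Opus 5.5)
Your proof is correct and follows the paper's route. The first lift condition forces $\varphi=W^{2}$, and $\Pi(W,0)$ is then the monomial $(-1)^{k+1}W^{2k-1}/(2k-1)$, which cannot vanish at $W=\epsilon$. The one difference is that you get the monomial shape by direct induction on \cref{eq:jets}, whereas the paper matches the degree $kd-1=2k-1$ with the order $2k-1$ from \cref{prop:degree}; your way also sidesteps the fact that the degree statement there is made for generic $Y$ rather than $Y=0$.
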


\begin{proof}
For $d=2$ the first condition of \cref{eq:liftjet} gives $c_2\epsilon^{2}=c_2=1$, so $\varphi=W^{2}$. By \cref{prop:degree} the polynomial $\Pi(W,0)$ then has degree $2k-1$ and order $2k-1$ at the origin, hence is the monomial $(-1)^{k+1}W^{2k-1}/(2k-1)$, which does not vanish at $W=\epsilon$.
\end{proof}

\begin{prop}
\label{prop:conebox}
Let $k\ge2$, let $\varphi$ be a seed of degree $d\ge3$, and let $(A,B,\Lambda)$ be the associated triple of \cref{prop:jet}. Then, all degrees being taken in $(u,v)$,
\[
  \deg A=(2k-1)d-1, \qquad \deg B=(2k-1)(kd-1)-1, \qquad \deg\Lambda=1 .
\]
If in addition $\varphi$ satisfies \cref{eq:liftjet}, the resulting member has class in $\Yparo[1,1,((2k-1)d-1,\ (2k-1)(kd-1)-1,\ 1)]$.
\end{prop}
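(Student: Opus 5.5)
The plan is to compute all degrees in the coordinates $(\Lambda,w)$ rather than $(u,v)$. The change $(u,v)\mapsto(\Lambda,w)=(1+u,v-1)$ is an affine automorphism whose linear part is the identity, so it preserves total degree. In these coordinates $T=\Lambda w$ and $W=\Lambda^{k-1}w^{k}$ are \emph{homogeneous}, of degrees $2$ and $2k-1$. So the top homogeneous component of any polynomial expression in $T$ and $W$ is read off from the monomials of largest weighted degree, where $\wt T=2$ and $\wt W=2k-1$. Cancellation in the top degree can only occur among monomials of the same weight.

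\textbf{$A$ and $\Lambda$.} We have $Q=T+\varphi(W)$. The term $c_dW^{d}$ has degree $d(2k-1)$. Since $d\ge3$ and $k\ge2$, this exceeds both the degree of every lower term of $\varphi$ and $\deg T=2$. The top form of $Q$ is therefore $c_d\Lambda^{d(k-1)}w^{dk}\neq0$. By \cref{prop:jet}, $Q=\Lambda A$ and $\Lambda$ is homogeneous of degree $1$ in these coordinates, so $\deg A=(2k-1)d-1$. Trivially $\deg\Lambda=1$.

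\textbf{$B$.} Use $P=\sum_{j=0}^{k}A_jT^{j}$ with $A_k T^k=\Lambda$.
\begin{itemize}
\item For $j\le k-1$ the proof of \cref{prop:degree} gives $\deg_W A_j=(k-j)d-1$, with leading coefficient $\alpha_j\neq0$ given by the explicit product formula there.
\item The top piece of $A_jT^{j}$ is then $\alpha_jW^{(k-j)d-1}T^{j}$, of degree
\[
  D_j=\bigl((k-j)d-1\bigr)(2k-1)+2j .
\]
\item One has $D_0-D_j=j\bigl(d(2k-1)-2\bigr)>0$ for $1\le j\le k-1$, and $D_0>1$ covers the term $\Lambda$.
\end{itemize}
The maximum is therefore attained uniquely at $j=0$, with top form $\alpha_0(\Lambda^{k-1}w^{k})^{kd-1}\neq0$. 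Hence $\deg P=(2k-1)(kd-1)$ and $\deg B=(2k-1)(kd-1)-1$.

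\textbf{Class statement.} By \cref{prop:jet}, the triple satisfies \cref{eq:master} with $r=s=1$. By \cref{prop:lift}, \cref{eq:liftjet} is exactly the lift condition $A(0,0)=B(0,0)=0$ of \cref{lem:polynomiality}. The triple therefore lies in $\Ypar[1,1,\bd]$ with the stated $\bd$, and since the three degrees are attained exactly it lies in the open part $\Yparo$.

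\textbf{Main obstacle.} The only delicate point is ruling out cancellation in the top degree of $P$. This is why I rely on homogeneity of $T$ and $W$, the strict inequality $D_0>D_j$, and the nonvanishing of $\alpha_0$, which holds because every factor $(k-i+1)dc_d/(id-1)$ in its product formula is nonzero.
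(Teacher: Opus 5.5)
Your proof is correct and follows the paper's argument essentially verbatim. You compute degrees in $(\Lambda,w)$, where $T$ and $W$ are homogeneous of degrees $2$ and $2k-1$. The top form of $Q$ is $c_dW^{d}$, and the degrees $D_j$ of the terms $A_jT^{j}$ strictly decrease in $j$, so the $j=0$ term determines $\deg B$ with no cancellation. Beyond the paper, you also spell out the class statement (lift conditions via \cref{prop:lift} together with exactness of the three degrees), which the paper leaves implicit.
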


\begin{proof}
The change $(u,v)\mapsto(\Lambda,w)$ is affine, so degrees agree in both. By \cref{eq:TW}, $\deg T=2$ and $\deg W=2k-1$.

In $Q=T+\varphi(W)$ the top term is $c_d\Lambda^{(k-1)d}w^{kd}$, of degree $(2k-1)d\ge9>2=\deg T$, so $\deg Q=(2k-1)d$ and $\deg A=(2k-1)d-1$. As in the proof of \cref{prop:degree}, $\deg_{W}A_{k-i}=id-1$ with nonzero leading coefficient, so the term $A_jT^{j}$ of \cref{eq:PQjet} has degree
\[
  \bigl((k-j)d-1\bigr)(2k-1)+2j=(2k-1)(kd-1)-j\bigl((2k-1)d-2\bigr),
\]
strictly decreasing in $j$. The $k$ degrees are distinct, the largest is $(2k-1)(kd-1)$ at $j=0$, and it exceeds $\deg\Lambda=1$. Hence $\deg B=(2k-1)(kd-1)-1$.
\end{proof}


\subsection{The lift form}
\label{subsec:liftform}

\begin{prop}
\label{prop:liftclosed}
Let $\varphi\in W^{2}K[W]$ and let $\Pi$ be the fibre polynomial \cref{eq:fibrepoly}. Then
\[
  \Pi(W,0)=(-1)^{k+1}\int_0^{W}\frac{\varphi(s)^{k}}{s^{2}}\,ds ,
\]
the integrand being a polynomial. Hence the second condition of \cref{eq:liftjet} is the vanishing of
\begin{equation}
\label{eq:liftform}
  \Qform^{(k)}(\varphi)=\int_0^{\epsilon}\frac{\varphi(W)^{k}}{W^{2}}\,dW ,
  \qquad \epsilon=(-1)^{k}.
\end{equation}
\end{prop}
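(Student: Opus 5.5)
The natural route is to differentiate in $W$ and then fix the constant of integration. \Cref{lem:Pider} applies at $Y=0$ and gives
\[
  \frac{\partial\Pi}{\partial W}(W,0)=-\frac{(-\varphi(W))^{k}}{W^{2}}=(-1)^{k+1}\frac{\varphi(W)^{k}}{W^{2}} .
\]
Because $\varphi\in W^{2}K[W]$ and $k\ge1$, the quotient $\varphi^{k}/W^{2}$ lies in $W^{2k-2}K[W]$. It is therefore a polynomial, and its integral from $0$ to $W$ is a polynomial with zero constant term. So $\Pi(W,0)$ and $(-1)^{k+1}\int_0^{W}\varphi(s)^{k}s^{-2}\,ds$ have the same derivative, and they coincide once I know $\Pi(0,0)=0$.

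To get $\Pi(0,0)=0$, I will show that $\Pi(W,0)$ is a polynomial in $W$ vanishing at $0$. By \cref{prop:jet}, $A_j\in WK[W]$ for $0\le j\le k-1$, so every term $A_j(W)(-\varphi(W))^{j}$ with $j<k$ lies in $WK[W]$. The remaining term is $A_k(-\varphi)^{k}=(-1)^{k}\varphi^{k}/W$, which lies in $W^{2k-1}K[W]$ because $\varphi^{k}\in W^{2k}K[W]$. Hence $\Pi(W,0)\in WK[W]$, and the identity follows. As a consistency check, the order of vanishing $2k-1$ at $W=0$ and the lowest coefficient agree with \cref{prop:degree}.

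For the second claim, substitute $W=\epsilon$. By \cref{prop:lift} the second lift condition reads $\Pi(\epsilon,0)=0$, and this equals $(-1)^{k+1}\Qform^{(k)}(\varphi)$. Since the sign $(-1)^{k+1}$ is nonzero, the condition is exactly $\Qform^{(k)}(\varphi)=0$.

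I do not expect a real obstacle here. The only point needing care is the constant of integration: one must use that all integration constants in \cref{eq:jets} were taken to be zero, which is what \cref{prop:jet} records as $A_j\in WK[W]$, together with $\varphi(0)=0$ to order two.
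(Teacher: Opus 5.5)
Your proof is correct and follows the paper's argument: differentiate using \cref{lem:Pider} at $Y=0$, observe that the integrand $\varphi^{k}/W^{2}$ is a polynomial, and fix the constant of integration by checking that both sides vanish at $W=0$. The only difference is that you get $\Pi(0,0)=0$ directly from $A_j\in WK[W]$ in \cref{prop:jet}, whereas the paper cites the order-of-vanishing statement of \cref{prop:degree}; your version is slightly more self-contained, since it does not need $c_2\neq0$.
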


\begin{proof}
Since $\varphi\in W^{2}K[W]$ we have $\varphi^{k}\in W^{2k}K[W]$, so $\varphi^{k}/W^{2}$ lies in $W^{2k-2}K[W]$ and is a polynomial. Setting $Y=0$ in \cref{lem:Pider} gives
\[
  \frac{d}{dW}\Pi(W,0)=-\frac{(-\varphi(W))^{k}}{W^{2}}=(-1)^{k+1}\frac{\varphi(W)^{k}}{W^{2}} .
\]
Both sides of the asserted identity therefore have the same derivative, and both vanish at $W=0$: the right-hand side by construction, the left because $\operatorname{ord}_{W=0}\Pi(W,0)=2k-1\ge3$ by \cref{prop:degree}. The second condition of \cref{eq:liftjet} is $\Pi(\epsilon,0)=0$.
\end{proof}

We call $\Qform^{(k)}$ the lift form at level $k$. It is a form of degree $k$ in the coefficients of $\varphi$, with coefficients in $\Q$.


\subsection{Rescaling and the degree spectrum}

\begin{lem}
\label{lem:rescale}
Let $\varphi$ be a seed of degree $d$ and let $t,\beta\in K^{\times}$. Write $\widetilde\varphi(W)=t\varphi(\beta W)$ and let $\widetilde A_j$, $\widetilde\Pi$ be the associated data. Then
\[
  \widetilde A_{k-i}(W)=\beta t^{i}A_{k-i}(\beta W) \quad (1\le i\le k),
  \qquad
  \widetilde\Pi(W,0)=t^{k}\beta\,\Pi(\beta W,0) .
\]
\end{lem}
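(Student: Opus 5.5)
The plan is to prove the formula for $\widetilde A_{k-i}$ by downward induction on $j=k-i$, using the uniqueness built into the recursion \cref{eq:jets}, and then to deduce the statement about $\widetilde\Pi(W,0)$ from \cref{prop:liftclosed}.

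For the induction, the recursion has all integration constants zero. Together with the fact from \cref{prop:jet} that $A_j\in WK[W]$, this means each $A_j$ for $j\le k-1$ is the unique antiderivative of its right-hand side vanishing at $W=0$. So it suffices to check two things for the candidate $\beta t^{i}A_{k-i}(\beta W)$: that it satisfies the differential relation, and that it vanishes at $0$.

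The base case $i=0$ reads $\widetilde A_k=1/W=\beta A_k(\beta W)$. This holds because $\beta\cdot 1/(\beta W)=1/W$, and it explains the factor $\beta$ in the claimed formula.

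For the inductive step, set $\widetilde A_{k-i+1}(W)=\beta t^{i-1}A_{k-i+1}(\beta W)$. We have $\widetilde\varphi'(W)=t\beta\varphi'(\beta W)$, so
\[
(k-i+1)\,\widetilde\varphi'(W)\,\widetilde A_{k-i+1}(W)=\beta^{2}t^{i}\,(k-i+1)\,\varphi'(\beta W)\,A_{k-i+1}(\beta W)=\beta^{2}t^{i}A_{k-i}'(\beta W).
\]
By the chain rule this is exactly the derivative of $\beta t^{i}A_{k-i}(\beta W)$. That function vanishes at $W=0$ because $A_{k-i}(0)=0$. Uniqueness then gives $\widetilde A_{k-i}=\beta t^{i}A_{k-i}(\beta W)$, completing the induction.

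For the fibre polynomial, I would avoid summing over $j$ in \cref{eq:fibrepoly} and instead use the integral formula of \cref{prop:liftclosed}. Note that $\widetilde\varphi$ is again a seed in $W^{2}K[W]$ of degree $d$, so that formula applies to it. Substituting $s=\beta\sigma$,
\[
\widetilde\Pi(W,0)=(-1)^{k+1}\int_0^{W}\frac{t^{k}\varphi(\beta\sigma)^{k}}{\sigma^{2}}\,d\sigma=(-1)^{k+1}t^{k}\beta\int_0^{\beta W}\frac{\varphi(s)^{k}}{s^{2}}\,ds=t^{k}\beta\,\Pi(\beta W,0).
\]
Here the measure change contributes $\beta^{-1}$ and the factor $\sigma^{-2}=\beta^{2}s^{-2}$ contributes $\beta^{2}$, for a net factor of $\beta$. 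Since the integrand is a polynomial, the integral is formal and the substitution is purely algebraic.

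As a cross-check, the same identity also follows term by term from \cref{eq:fibrepoly} at $Y=0$. Using $(-\widetilde\varphi)^{j}=(-t)^{j}\varphi(\beta W)^{j}$, the $j$-th term picks up $\beta t^{k-j}\cdot t^{j}=\beta t^{k}$.

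There is no real obstacle. The only point needing care is the uniqueness of antiderivatives: I need the normalisation $A_j(0)=0$ from \cref{prop:jet}, and I need to confirm that the rescaled candidate is also a polynomial vanishing at $0$.
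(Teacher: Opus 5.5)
Your proof is correct. The formula for $\widetilde A_{k-i}$ is proved essentially as in the paper, by downward induction through \cref{eq:jets} using the vanishing integration constants. Starting the induction at $i=0$, where $\widetilde A_k=\beta A_k(\beta W)=1/W$, is a mild tidying: it turns the paper's separately computed case $i=1$ into an instance of the inductive step.

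The real difference is in the second identity. The paper proves it by substituting the rescaled $\widetilde A_j$ into \cref{eq:fibrepoly} at $Y=0$, and observing that every term, including $j=k$, acquires the common factor $\beta t^{k}$; this is your cross-check. Your main route instead applies \cref{prop:liftclosed} to the seed $\widetilde\varphi$ and substitutes $s=\beta\sigma$. The factors $\beta^{-1}$ from $d\sigma$ and $\beta^{2}$ from $\sigma^{-2}$ then combine to $\beta$.

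Your route makes the $\Pi$-identity independent of the first half of the lemma. It also exhibits $t^{k}\beta$ as the homogeneity of $\int\varphi^{k}W^{-2}\,dW$ under $W\mapsto\beta W$. The cost is reliance on \cref{prop:liftclosed}, hence on \cref{lem:Pider} and on $\Pi(0,0)=0$, whereas the paper's route needs only the recursion and the definition of $\Pi$. There is no circularity, since \cref{prop:liftclosed} precedes the lemma and does not use it.
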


\begin{proof}
For $i=1$, $\widetilde A_{k-1}'(W)=k\widetilde\varphi'(W)/W=k\beta t\varphi'(\beta W)/W$, while 
\[
\frac{d}{dW}\bigl[\beta tA_{k-1}(\beta W)\bigr]=\beta^{2}tA_{k-1}'(\beta W)=k\beta t\varphi'(\beta W)/W.
\]
 Both vanish at $W=0$, so they agree. If the formula holds for $i$, then
\[
\begin{split}  \widetilde A_{k-i-1}'(W)	&	=(k-i)\widetilde\varphi'\widetilde A_{k-i}   =\beta^{2}t^{i+1}(k-i)\varphi'(\beta W)A_{k-i}(\beta W) \\
 							&  =\frac{d}{dW}\bigl[\beta t^{i+1}A_{k-i-1}(\beta W)\bigr],
\end{split}
\]
which gives the case $i+1$. Substituting into \cref{eq:fibrepoly} at $Y=0$, the term with $j=k-i$ contributes 
\[
(-1)^{k-i}\beta t^{k}A_{k-i}(\beta W)\varphi(\beta W)^{k-i}
\]
 for $i\ge1$, and the term with $j=k$ contributes 
 \[
 (-1)^{k}t^{k}\varphi(\beta W)^{k}/W=(-1)^{k}t^{k}\beta\varphi(\beta W)^{k}/(\beta W).
 \]
  Summing gives $t^{k}\beta\,\Pi(\beta W,0)$.
\end{proof}

\begin{theorem}
\label{thm:spectrum}
For every $k\ge2$ and every $d\ge3$ there is a member of
$\Kfam(3,(1,-1,-1))$ with parameters $(k,d)$ and generic fibre degree $kd$.
Consequently every composite integer $N\ge6$ occurs as a generic fibre degree.
\end{theorem}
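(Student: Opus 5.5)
The plan is to reduce the statement to the existence of a seed satisfying the two lift conditions. Suppose $\varphi\in W^{2}\Qbar[W]$ has degree exactly $d\ge3$ and satisfies \cref{eq:liftjet}. Then \cref{prop:jet} gives a triple $(A,B,\Lambda)$ satisfying the master equation with $\kappa=1$. By \cref{prop:lift} it also satisfies the lift conditions of \cref{lem:polynomiality} for $r=s=1$. By \cref{cor:master} the lifted map $F$ is a graded Keller map of weight $(1,-1,-1)$ with $\detJ{F}=-1$. By \cref{prop:degree} and \cref{lem:quotientdegree} its generic fibre degree is $kd\ge6>1$, so $F$ is noninjective and lies in $\Kfam(3,(1,-1,-1))$. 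By \cref{prop:liftclosed}, the second lift condition is equivalent to $\Qform^{(k)}(\varphi)=0$.

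To build such a seed, I would use the two-term ansatz $\varphi=c_2W^{2}+c_dW^{d}$. The first condition $\varphi(\epsilon)=1$ reads $c_2+c_d\epsilon^{d}=1$, using $\epsilon^{2}=1$. I would eliminate $c_2=1-c_d\epsilon^{d}$, so that
\[
  \varphi=W^{2}+c_d\bigl(W^{d}-\epsilon^{d}W^{2}\bigr).
\]
Then $g(c_d):=\Qform^{(k)}(\varphi)$ is a polynomial in the single unknown $c_d$ of degree at most $k$. I only need a root $c_d\neq0$, which keeps $\deg\varphi=d$; since $\Qbar$ is algebraically closed, it suffices to check that $g$ is nonconstant and $g(0)\neq0$.

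The constant term is
\[
  g(0)=\int_0^{\epsilon}W^{2k-2}\,dW=\frac{\epsilon^{2k-1}}{2k-1}\neq0 .
\]
This is the computation behind \cref{cor:dtwo}, and it shows that $c_d=0$ is not a root. For the leading coefficient, I substitute $W=\epsilon\sigma$ and use $\epsilon^{2}=1$, so that $W^{d}-\epsilon^{d}W^{2}=\epsilon^{d}(\sigma^{d}-\sigma^{2})$. This gives
\[
  [c_d^{k}]\,g=\epsilon^{dk+1}\int_0^{1}\frac{(\sigma^{d}-\sigma^{2})^{k}}{\sigma^{2}}\,d\sigma .
\]
For $d\ge3$ and $0<\sigma<1$ the integrand has constant sign $(-1)^{k}$ and is not identically zero, so the coefficient is nonzero. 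Hence $g$ has degree exactly $k\ge2$ with nonzero constant term, and it has a nonzero root in $\Qbar$. That root produces the required seed. The only delicate step is this sign check on $[0,1]$, and it is elementary.

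For the consequence, let $N\ge6$ be composite and write $N=ab$ with $2\le a\le b$. Then $b\ge3$, since $b\le2$ would force $N\le4$. Taking $k=a$ and $d=b$ in the first part gives a member of generic fibre degree $N$.
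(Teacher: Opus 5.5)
Your proof is correct, and it constructs the seed by a more direct route than the paper.

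\textbf{The paper's route.} The paper starts from the fixed rational model $\varphi_0=W^{2}+W^{d}$, which does not itself satisfy the lift conditions. It computes the marking polynomial $(-1)^{k+1}H_{k,d}(W^{d-2})$. It then proves $H_{k,d}$ separable with nonzero roots, using the identity $(d-2)xH_{k,d}'+(2k-1)H_{k,d}=(1+x)^{k}$ together with $H_{k,d}(-1)>0$ and $H_{k,d}(0)\neq0$. Finally it normalises at a simple admissible marking $w$ via \cref{prop:normalisemark}.

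\textbf{Your route.} You solve directly on the slice $\varphi(\epsilon)=1$ inside the two-term seeds. There the lift form becomes a single polynomial $g(c_d)$, and you only need it nonconstant with $g(0)\neq0$.

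\textbf{Same computation, different coordinates.} With $c_2=1-\epsilon^{d}c_d$ and $x=\epsilon^{d}c_d/c_2$, substituting $W=\epsilon\sigma$ gives $g(c_d)=\epsilon\,c_2^{k}H_{k,d}(x)$. Consequently:
\begin{itemize}
\item your $g(0)\neq0$ is exactly $H_{k,d}(0)\neq0$;
\item your leading coefficient equals $\epsilon^{dk+1}(-1)^{k}H_{k,d}(-1)$, so your sign check on $(0,1)$ is the paper's positivity of $H_{k,d}(-1)$;
\item your seeds are precisely the paper's normalised seeds, since $\varphi_w(Z)=(Z^{2}+\epsilon^{d}w^{d-2}Z^{d})/(1+w^{d-2})$ depends only on $x=w^{d-2}$.
\end{itemize}

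\textbf{What your route buys.} It is more economical. You need neither the separability argument nor the simplicity hypothesis of \cref{prop:normalisemark}. That hypothesis is used here only to guarantee $\varphi_0(w)\neq0$, i.e.\ $x\neq-1$, which your nonvanishing leading coefficient already encodes.

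\textbf{What the paper's route buys.} It feeds the marking picture used later. It shows the marking polynomial of $\varphi_0$ has $k(d-2)$ distinct nonzero roots. This is \cref{lem:sepmodel}, and \cref{cor:markingdegree} uses it to show the separable locus is nonempty. Your argument gives a root but says nothing about distinctness. The theorem does not need it.
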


\begin{proof}
Put $n=d-2$ and begin with the explicit rational seed
$\varphi_0=W^2+W^d$.  At $Y=0$, termwise integration of
\cref{lem:Pider} gives
\[
 \Pi_0(W,0)=(-1)^{k+1}W^{2k-1}H_{k,d}(W^n),\qquad
 H_{k,d}(x)=\sum_{j=0}^k\binom{k}{j}\frac{x^j}{2k-1+jn}.
\]
Coefficientwise differentiation yields
\[
 nxH_{k,d}'(x)+(2k-1)H_{k,d}(x)=(1+x)^k.
\]
A common root of $H_{k,d}$ and $H_{k,d}'$ would therefore be $-1$, whereas
\[
 H_{k,d}(-1)=\int_0^1t^{2k-2}(1-t^n)^k\,dt>0,
 \qquad H_{k,d}(0)=\frac1{2k-1}\ne0.
\]
Thus $H_{k,d}$ is separable with $k$ nonzero roots, and
$H_{k,d}(W^n)$ has $k(d-2)$ distinct nonzero roots.  Choose one, say $w$.
Since the marking is simple, \cref{prop:normalisemark} gives
$\varphi_0(w)\ne0$ and the normalised seed
\[
 \varphi_w(Z)=\frac{\varphi_0(wZ/\epsilon)}{\varphi_0(w)},
 \qquad \epsilon=(-1)^k,
\]
of exact degree $d$, satisfying both lift conditions.  Its generic fibre
degree is $kd$ by \cref{prop:degree}.

If $N\ge6$ is composite, let $q$ be its least prime factor.  Then
$N/q\ge3$ (otherwise $N=4$), so the construction with
$(k,d)=(q,N/q)$ gives degree $N$.
\end{proof}

This settles the second question of \cite[\S9]{shaska131}.


\subsection{The marking polynomial}
\label{subsec:marking}

\begin{prop}
\label{prop:marking}
Let $\varphi$ be a seed of degree $d\ge3$ with $c_2\neq0$. Then
\[
  \Pi(W,0)=W^{2k-1}M_{\varphi}(W)
\]
for a unique $M_{\varphi}\in K[W]$, and
\[
\begin{split}
  \deg M_{\varphi}&=k(d-2),\\
  M_{\varphi}(0)&=\frac{(-1)^{k+1}c_2^{\,k}}{2k-1}\neq0,\\
  [W^{k(d-2)}]\,M_{\varphi}&=\frac{(-1)^{k+1}c_d^{\,k}}{kd-1}.
\end{split}
\]
The second condition of \cref{eq:liftjet} is $M_{\varphi}(\epsilon)=0$.
\end{prop}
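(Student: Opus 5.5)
The plan is to read everything off \cref{prop:liftclosed} together with \cref{prop:degree}. By \cref{prop:liftclosed},
\[
  \Pi(W,0)=(-1)^{k+1}\int_0^{W}\frac{\varphi(s)^{k}}{s^{2}}\,ds .
\]
Since $\varphi\in W^{2}K[W]$ has lowest term $c_2W^{2}$ with $c_2\neq0$, the integrand $\varphi^{k}/s^{2}$ is a polynomial with lowest term $c_2^{k}s^{2k-2}$. It therefore has order exactly $2k-2$, and its top term is $c_d^{k}s^{kd-2}$.

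The key steps are then the following.

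\begin{enumerate}[label=(\roman*)]
\item Integrating termwise shows that $\Pi(W,0)$ is a polynomial of order exactly $2k-1$ at $W=0$, with lowest coefficient $(-1)^{k+1}c_2^{k}/(2k-1)$.
\item It has degree exactly $kd-1$, with top coefficient $(-1)^{k+1}c_d^{k}/(kd-1)$.
\item Dividing by $W^{2k-1}$ defines $M_\varphi\in K[W]$. It is unique because $K[W]$ is a domain. Its degree is $kd-1-(2k-1)=k(d-2)$, its constant term is $(-1)^{k+1}c_2^{k}/(2k-1)$, which is nonzero since $c_2\neq0$, and its top coefficient is as stated.
\end{enumerate}

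These numbers agree with the two displays of \cref{prop:degree}, and that proposition could be cited instead. The direct integral computation is cleaner, however, and it avoids the genericity in $Y$ used there.

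For the last assertion, the second condition of \cref{eq:liftjet} is $\Pi(\epsilon,0)=0$. Since $\epsilon^{2k-1}=\pm1\neq0$, this condition is equivalent to $M_\varphi(\epsilon)=0$.

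There is no real obstacle. The only point needing care is that the extreme coefficients survive: the leading and trailing coefficients of $\varphi^k$ are just $c_d^k$ and $c_2^k$, with no cancellation, because they come from the single extreme monomials of $\varphi$. The hypothesis $c_2\neq0$ is exactly what is needed to pin the order at $2k-1$ and hence to make $M_\varphi(0)\neq0$.
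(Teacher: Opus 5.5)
Your proof is correct, but it reaches the two extreme coefficients by a different route from the paper.

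\textbf{The paper's route.} The paper simply quotes \cref{prop:degree}. There the top coefficient of $\Pi$ comes from the recursion \cref{eq:jets} as a sum of $k+1$ contributions, all of degree $kd-1$. That sum has to be evaluated by Chu--Vandermonde. The lowest coefficient comes from the same computation with $(d,c_d)$ replaced by $(2,c_2)$.

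\textbf{Your route.} You start instead from the closed form of \cref{prop:liftclosed}. There both extreme coefficients are visible in the integrand $\varphi^{k}/W^{2}$ and cannot cancel, so no hypergeometric identity is needed. In effect your computation is an independent check of the Chu--Vandermonde evaluation at $Y=0$. Since that leading coefficient does not involve $Y$, it is a check at every $Y$.

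\textbf{Genericity.} Your remark here is fair. \cref{prop:degree} is stated for generic $Y$, and applying it at $Y=0$, as the paper does, tacitly uses that the leading coefficient is independent of $Y$. The paper only records that fact later, in \cref{prop:conequotient}.

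\textbf{One caveat on independence.} The paper's own proof of \cref{prop:liftclosed} fixes the constant of integration by citing \cref{prop:degree} for $\operatorname{ord}_{W=0}\Pi(W,0)\ge3$. So following the paper's dependency chain literally, your argument still passes through \cref{prop:degree}. That input is trivial, though. We have $\Pi(W,0)=\sum_{j=0}^{k}A_j(W)\bigl(-\varphi(W)\bigr)^{j}$, with $A_j\in WK[W]$ for $j<k$ by \cref{prop:jet}, and $A_k\varphi^{k}=\varphi^{k}/W\in W^{2k-1}K[W]$. Hence $\Pi(0,0)=0$ directly, and your argument genuinely bypasses the recursion-based coefficient computation.
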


\begin{proof}
By \cref{prop:degree} the polynomial $\Pi(W,0)$ has order $2k-1$ at the origin with the coefficient displayed, and degree $kd-1$ with the leading coefficient displayed; the difference of the two exponents is $k(d-2)$. The second condition of \cref{eq:liftjet} is $\Pi(\epsilon,0)=0$, and $\epsilon^{2k-1}=\epsilon\neq0$.
\end{proof}

We call $M_{\varphi}$ the marking polynomial of $\varphi$, and its roots the admissible markings. There are $k(d-2)$ of them counted with multiplicity, none at the origin, and by \cref{prop:marking} a seed satisfies the second lift condition exactly when $\epsilon$ is one of them. The construction of \cref{subsec:marking} thus depends on a seed together with a choice of admissible marking; \cref{prop:lift} normalises that choice to $W=\epsilon$.

\begin{lem}
\label{lem:markingcov}
In the notation of \cref{lem:rescale},
\[
  M_{\widetilde\varphi}(W)=t^{k}\beta^{2k}M_{\varphi}(\beta W).
\]
Hence $w\mapsto\beta^{-1}w$ is a multiplicity-preserving bijection from the admissible markings of $\varphi$ to those of $\widetilde\varphi$.
\end{lem}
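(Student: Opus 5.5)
The plan is to read the identity off \cref{lem:rescale} together with the defining factorisation of \cref{prop:marking}. First I check that $\widetilde\varphi=t\varphi(\beta W)$ is again a seed of degree $d$ with nonzero quadratic coefficient, since its coefficient in degree $j$ is $t\beta^{j}c_j$; in particular $\widetilde c_2=t\beta^{2}c_2\neq0$. So $M_{\widetilde\varphi}$ is defined by \cref{prop:marking}, and it is characterised uniquely by $\widetilde\Pi(W,0)=W^{2k-1}M_{\widetilde\varphi}(W)$.

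Next I would substitute into the second identity of \cref{lem:rescale}:
\[
  \widetilde\Pi(W,0)=t^{k}\beta\,\Pi(\beta W,0)=t^{k}\beta\,(\beta W)^{2k-1}M_{\varphi}(\beta W)=W^{2k-1}\cdot t^{k}\beta^{2k}M_{\varphi}(\beta W).
\]
By the uniqueness in \cref{prop:marking}, or simply by cancelling $W^{2k-1}$ in the domain $K[W]$, this gives $M_{\widetilde\varphi}(W)=t^{k}\beta^{2k}M_{\varphi}(\beta W)$. As a sanity check, the constant terms agree: $t^{k}\beta^{2k}c_2^{k}=\widetilde c_2^{\,k}$, which matches the formula for $M(0)$ in \cref{prop:marking}.

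For the bijection I use that $t^{k}\beta^{2k}$ is a unit. Then $w_0$ is a root of $M_{\widetilde\varphi}$ of multiplicity $m$ exactly when $\beta w_0$ is a root of $M_{\varphi}$ of multiplicity $m$. This holds because the substitution $W\mapsto\beta W$ is an automorphism of $K[W]$ (or of $\overline K[W]$) that sends the factor $(W-w)$ to $\beta(W-\beta^{-1}w)$. Hence $w\mapsto\beta^{-1}w$ carries the roots of $M_{\varphi}$ bijectively onto those of $M_{\widetilde\varphi}$ and preserves multiplicities.

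There is no real obstacle here. The only point needing care is that the degree and nonvanishing hypotheses of \cref{prop:marking} transfer to $\widetilde\varphi$, and that the exponent bookkeeping gives $\beta\cdot\beta^{2k-1}=\beta^{2k}$.
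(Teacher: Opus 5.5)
Your proposal is correct and follows essentially the same route as the paper's proof: substitute the factorisation $\Pi(W,0)=W^{2k-1}M_{\varphi}(W)$ into the identity $\widetilde\Pi(W,0)=t^{k}\beta\,\Pi(\beta W,0)$ of \cref{lem:rescale} and compare the two sides. Your extra checks are sound and make explicit what the paper leaves implicit: that $\widetilde c_2=t\beta^{2}c_2\neq0$, so \cref{prop:marking} applies to $\widetilde\varphi$, and that multiplicities transfer under the $K$-algebra automorphism $W\mapsto\beta W$.
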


\begin{proof}
\Cref{lem:rescale} gives $\widetilde\Pi(W,0)=t^{k}\beta\,\Pi(\beta W,0)$. Substituting $\Pi(W,0)=W^{2k-1}M_{\varphi}(W)$ on both sides, the right-hand side is $t^{k}\beta^{2k}W^{2k-1}M_{\varphi}(\beta W)$, and the left-hand side is $W^{2k-1}M_{\widetilde\varphi}(W)$.
\end{proof}

\begin{prop}
\label{prop:normalisemark}
Let $w$ be a simple admissible marking of a seed $\varphi$ of degree $d\ge3$. Then $\varphi(w)\neq0$, and
\[
  \varphi_{w}(Z)=\frac{\varphi(wZ/\epsilon)}{\varphi(w)}
\]
is a seed of degree $d$ satisfying both conditions of \cref{eq:liftjet}. Every seed in the rescaling orbit of $\varphi$ that satisfies \cref{eq:liftjet} is of this form for exactly one admissible marking $w$, provided the stabilizer of $\varphi$ in the rescaling action is trivial; the stabilizer is trivial whenever $c_2c_3\neq0$.
\end{prop}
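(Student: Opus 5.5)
The plan is to control everything through the derivative formula of \cref{lem:Pider} at $Y=0$ and the covariance of markings under rescaling in \cref{lem:markingcov}. There are four claims: $\varphi(w)\neq0$, the two lift conditions for $\varphi_w$, uniqueness of the marking, and triviality of the stabilizer. I take them in that order.

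\emph{Nonvanishing.} Since $w$ is an admissible marking, $M_\varphi(w)=0$, and \cref{prop:marking} gives $M_\varphi(0)\neq0$, so $w\neq0$. Hence $\Pi(w,0)=w^{2k-1}M_\varphi(w)=0$. By \cref{lem:Pider} (equivalently the proof of \cref{prop:liftclosed}),
\[
  \tfrac{d}{dW}\Pi(W,0)=(-1)^{k+1}\varphi(W)^{k}/W^{2}.
\]
Suppose $\varphi(w)=0$. Then this derivative vanishes at $w$ to order at least $k\ge2$. So $w$ is a root of $\Pi(\cdot,0)$ of multiplicity at least $k+1$. Because $w\neq0$, the factor $W^{2k-1}$ does not contribute at $w$, so $w$ would be a multiple root of $M_\varphi$. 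This contradicts simplicity, so $\varphi(w)\neq0$.

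\emph{Lift conditions.} Write $\varphi_w=t\varphi(\beta W)$ with $t=1/\varphi(w)$ and $\beta=w/\epsilon$; both are nonzero. Then $\varphi_w$ lies in $W^2K[W]$, has exact degree $d$, and satisfies $\varphi_w(\epsilon)=\varphi(w)/\varphi(w)=1$, which is the first condition of \cref{eq:liftjet}. By \cref{lem:markingcov}, the admissible markings of $\varphi_w$ are the $\beta^{-1}$-images of those of $\varphi$, with the same multiplicities. In particular $\beta^{-1}w=\epsilon$ is an admissible (simple) marking of $\varphi_w$. By \cref{prop:marking} this is exactly the second condition.

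\emph{Uniqueness.} Suppose $\psi=t\varphi(\beta W)$ lies in the rescaling orbit and satisfies \cref{eq:liftjet}. Then $\epsilon$ is an admissible marking of $\psi$, so by \cref{lem:markingcov} the point $w:=\beta\epsilon$ is an admissible marking of $\varphi$, of the same multiplicity. The first condition gives $1=\psi(\epsilon)=t\varphi(w)$. This forces $t=1/\varphi(w)$ and $\beta=w/\epsilon$, hence $\psi=\varphi_w$.

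It remains to show $w$ is unique. Note that $(t,\beta)\cdot\varphi=t\varphi(\beta W)$ is an action of $(K^\times)^2$. If $\varphi_w=\varphi_{w'}$, then
\[
  \varphi=\frac{t'}{t}\,\varphi\!\left(\frac{\beta'}{\beta}W\right)
\]
is a stabilizer element. Triviality of the stabilizer gives $\beta=\beta'$, hence $w=w'$.

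\emph{Stabilizer.} If $t\varphi(\beta W)=\varphi$, compare the coefficients of $W^2$ and $W^3$. This gives $t\beta^2=1$ and $t\beta^3=1$ when $c_2c_3\neq0$. Dividing yields $\beta=1$, and then $t=1$.

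The only delicate point is the first step: using the derivative formula to convert a zero of $\varphi$ at $w$ into a multiple root of $M_\varphi$. Everything else is bookkeeping with \cref{lem:markingcov}.
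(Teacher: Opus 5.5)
Your proof is correct and follows the paper's argument essentially step for step: the derivative formula of \cref{lem:Pider} for $\varphi(w)\neq0$, \cref{lem:markingcov} with $(t,\beta)=(1/\varphi(w),\,w/\epsilon)$ for both lift conditions and for the converse, a stabilizer element produced from two coinciding normalisations, and the $W^{2},W^{3}$ coefficients for triviality of the stabilizer. One small caveat: your $w\neq0$ step uses $M_{\varphi}(0)\neq0$, which requires $c_2\neq0$. The paper relies on the same tacit assumption (``none at the origin''), and in any case simplicity alone excludes $w=0$, since $M_{\varphi}$ vanishes at $0$ to order $0$ or at least $k\ge2$.
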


\begin{proof}
Setting $Y=0$ in \cref{lem:Pider} gives $\tfrac{d}{dW}\Pi(W,0)=(-1)^{k+1}\varphi(W)^{k}W^{-2}$. If $\varphi(w)=0$ then this vanishes at $w$ to order at least $k\ge2$, so $\Pi(\cdot,0)$ vanishes at $w$ to order at least $k+1\ge3$, and so does $M_{\varphi}$ since $w\neq0$; this contradicts simplicity. Hence $\varphi(w)\neq0$.

Now $\varphi_{w}=\widetilde\varphi$ for the rescaling with $\beta=w/\epsilon$ and $t=1/\varphi(w)$. Then $\varphi_{w}(\epsilon)=t\varphi(\beta\epsilon)=t\varphi(w)=1$, the first condition, and by \cref{lem:markingcov} the point $\beta^{-1}w=\epsilon$ is an admissible marking of $\varphi_{w}$, which is the second.

Conversely let $\psi=t\varphi(\beta\,\cdot\,)$ satisfy \cref{eq:liftjet}. By \cref{lem:markingcov} the point $w=\beta\epsilon$ is an admissible marking of $\varphi$, and $\psi(\epsilon)=t\varphi(\beta\epsilon)=1$ forces $t=1/\varphi(w)$; so $\psi=\varphi_{w}$. The rescalings compose by $(t',\beta')(t,\beta)=(t't,\beta\beta')$, so $\varphi_{w_1}=\varphi_{w_2}$ puts $(\,\varphi(w_2)/\varphi(w_1),\,w_1/w_2)$ in the stabilizer, whence $w_1=w_2$ when that group is trivial. Finally a stabilising pair satisfies $t\beta^{2}=t\beta^{3}=1$ when $c_2c_3\neq0$, so $\beta=t=1$.
\end{proof}

Thus, away from the loci where a marking is repeated or the stabilizer is nontrivial, forgetting the marking is $k(d-2)$-to-one on normalised seeds. The next lemma shows both loci are proper.

\begin{lem}
\label{lem:sepmodel}
Let $n=d-2\ge1$ and $\varphi_0=W^{2}+W^{d}$. Then
\[
  M_{\varphi_0}(W)=(-1)^{k+1}H_{k,d}(W^{n}), \qquad
  H_{k,d}(x)=\sum_{j=0}^{k}\binom{k}{j}\frac{x^{j}}{2k-1+jn},
\]
and $M_{\varphi_0}$ has $k(d-2)$ distinct nonzero roots.
\end{lem}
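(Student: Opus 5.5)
We compute $M_{\varphi_0}$ in closed form from the integral formula, and then show the resulting polynomial $H_{k,d}$ is separable with no root at $0$.

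\emph{Closed form.} By \cref{prop:liftclosed}, for $\varphi_0=W^{2}+W^{d}$ we have
\[
\Pi(W,0)=(-1)^{k+1}\int_0^{W}s^{2k-2}(1+s^{n})^{k}\,ds .
\]
Expand $(1+s^{n})^{k}$ binomially and integrate termwise. The term $\binom{k}{j}s^{2k-2+jn}$ integrates to $\binom{k}{j}W^{2k-1+jn}/(2k-1+jn)$, so
\[
\Pi(W,0)=(-1)^{k+1}W^{2k-1}H_{k,d}(W^{n}).
\]
The factorisation in \cref{prop:marking} is unique, and $c_2=1\neq0$, so this identifies $M_{\varphi_0}=(-1)^{k+1}H_{k,d}(W^{n})$. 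As a consistency check, the constant term is $1/(2k-1)$ and the top coefficient is $1/(2k-1+kn)=1/(kd-1)$, matching \cref{prop:marking}.

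\emph{Separability of $H_{k,d}$.} Multiplying the $j$-th coefficient by $(2k-1+jn)$ is the operator $nx\,\frac{d}{dx}+(2k-1)$. Hence
\[
nxH'+(2k-1)H=(1+x)^{k}.
\]
Suppose $x_0$ is a common root of $H$ and $H'$. Then $(1+x_0)^{k}=0$, so $x_0=-1$. But the substitution $x=-1$ in the coefficient formula gives
\[
H(-1)=\sum_j\binom{k}{j}(-1)^{j}\int_0^1 t^{2k-2+jn}\,dt=\int_0^1 t^{2k-2}(1-t^{n})^{k}\,dt,
\]
which is strictly positive because the integrand is positive on $(0,1)$. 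So $H$ has no repeated root. Its degree is $k$, since the leading coefficient $1/(kd-1)$ is nonzero, and $H(0)=1/(2k-1)\neq0$. Therefore $H$ has $k$ distinct nonzero roots.

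\emph{Passing to $M_{\varphi_0}$.} Each nonzero root $a$ of $H$ gives exactly $n$ distinct roots of $W^{n}=a$, all nonzero, because $W^{n}-a$ is separable in characteristic zero when $a\neq0$. Distinct values of $a$ give disjoint root sets. This produces $kn=k(d-2)$ distinct nonzero roots. By \cref{prop:marking}, $\deg M_{\varphi_0}=k(d-2)$, so these are all the roots and each is simple.

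The only step needing care is separability of $H$. The differential identity, together with positivity of the Beta-type integral at $x=-1$, disposes of it.
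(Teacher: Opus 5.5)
Your proof is correct and matches the paper's argument step for step: termwise integration of $W^{2k-2}(1+W^{n})^{k}$, the identity $nxH'+(2k-1)H=(1+x)^{k}$ forcing any repeated root to be $-1$, positivity of $H(-1)$ as an integral over $(0,1)$, $H(0)\neq0$, and then taking $n$-th roots. The differences are cosmetic: you go through \cref{prop:liftclosed} instead of \cref{lem:Pider} directly, and you add an explicit check that $\deg H=k$.
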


\begin{proof}
Here $\varphi_0^{\,k}W^{-2}=W^{2k-2}(1+W^{n})^{k}$, so by \cref{lem:Pider} at $Y=0$ and termwise antidifferentiation, the constant of integration being zero by \cref{prop:degree},
\[
  \Pi(W,0)=(-1)^{k+1}\sum_{j=0}^{k}\binom{k}{j}\frac{W^{2k-1+jn}}{2k-1+jn},
\]
which is the displayed factorisation. Coefficientwise differentiation gives the identity
\[
  nxH_{k,d}'(x)+(2k-1)H_{k,d}(x)=(1+x)^{k},
\]
since the $j$-th summand on the left is $\binom{k}{j}x^{j}(jn+2k-1)/(2k-1+jn)$. A common root of $H_{k,d}$ and $H_{k,d}'$ would therefore be $x=-1$. But
\[
  H_{k,d}(-1)=\int_0^{1}t^{2k-2}(1-t^{n})^{k}\,dt>0,
\]
by termwise integration, the integrand being nonnegative and not identically zero. So $H_{k,d}$ is separable, and $H_{k,d}(0)=1/(2k-1)\neq0$. Its $k$ roots are therefore distinct and nonzero, and in characteristic zero their $n$-th roots are $kn=k(d-2)$ distinct nonzero points.
\end{proof}

\begin{cor}
\label{cor:markingdegree}
For every $k\ge2$ and $d\ge3$ there is a nonempty open locus of seeds of degree $d$ on which the marking polynomial is separable and the rescaling stabilizer is trivial. On that locus a seed carries exactly $k(d-2)$ admissible markings, and the $k(d-2)$ normalised seeds $\varphi_{w}$ are pairwise distinct.
\end{cor}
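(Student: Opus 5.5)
The plan is to exhibit the locus as the intersection of two nonempty Zariski-open conditions in the affine space of seeds of exact degree $d$, and then to read off the count from \cref{prop:marking} and \cref{prop:normalisemark}.

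Parametrise seeds of degree at most $d$ by their coefficients $(c_2,\dots,c_d)\in\A^{d-1}$. The seeds of exact degree $d$ with $c_2\neq0$ form the open set $c_2c_d\neq0$. On it, \cref{prop:marking} says $M_\varphi$ has exact degree $k(d-2)$ and leading coefficient $(-1)^{k+1}c_d^{\,k}/(kd-1)$. The coefficients of $M_\varphi$ are polynomials in $(c_2,\dots,c_d)$: through the formula $\Pi(W,0)=(-1)^{k+1}\int_0^W\varphi^kW^{-2}$ of \cref{prop:liftclosed} they are forms of degree $k$. Hence the discriminant of $M_\varphi$ is a polynomial $\Delta$ in the coefficients. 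Because the leading coefficient is nonvanishing on $c_d\neq0$, separability is equivalent to $\Delta\neq0$ there. Separately, \cref{prop:normalisemark} shows the stabiliser is trivial whenever $c_2c_3\neq0$.

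Take $U=\{c_2c_3c_d\Delta\neq0\}$, which is open. For $d=3$ the factor $c_3$ coincides with $c_d$. It remains to show $U\neq\emptyset$. The model seed $\varphi_0=W^2+W^d$ of \cref{lem:sepmodel} has $c_2=c_d=1$ and $\Delta(\varphi_0)\neq0$, but for $d\ge4$ it has $c_3=0$, so it does not lie in $U$ directly.

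This is the only genuine obstacle, and it is mild. The set $\{c_2c_d\Delta\neq0\}$ is a nonempty open subset of the irreducible space $\A^{d-1}$, because it contains $\varphi_0$. The set $\{c_3\neq0\}$ is also nonempty and open. Two nonempty opens of an irreducible variety meet, so $U\neq\emptyset$. Concretely, $\varphi_0+\delta W^3$ lies in $U$ for all but finitely many $\delta$, since $\Delta(\varphi_0+\delta W^3)$ is a polynomial in $\delta$ that is nonzero at $\delta=0$. In characteristic zero this also gives rational points, over $\Q$ say.

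On $U$, the markings are the roots of the separable polynomial $M_\varphi$ of degree $k(d-2)$. They are therefore exactly $k(d-2)$ distinct points, all simple and nonzero, since $M_\varphi(0)\neq0$. \Cref{prop:normalisemark} applies to each marking $w$ and produces the normalised seed $\varphi_w$ of degree $d$. Because the stabiliser is trivial, the uniqueness clause of that proposition shows $w\mapsto\varphi_w$ is injective, so the $k(d-2)$ normalised seeds are pairwise distinct.
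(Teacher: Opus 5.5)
Your proposal is correct and follows essentially the same route as the paper. Separability is the nonvanishing of a discriminant on $c_d\neq0$, and this open set is nonempty by the model seed of \cref{lem:sepmodel}; triviality of the stabilizer holds on $c_2c_3\neq0$ by \cref{prop:normalisemark}; the two nonempty opens meet in the irreducible seed space, and the count and pairwise distinctness come from \cref{prop:marking,prop:normalisemark}. Your explicit perturbation $\varphi_0+\delta W^3$ and your explicit inclusion of $c_2\neq0$ (which guarantees $M_\varphi(0)\neq0$) are harmless refinements of the same argument.
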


\begin{proof}
On $c_d\neq0$ the coefficients of $M_{\varphi}$ are polynomial in $(c_2,\dots,c_d)$ of constant degree $k(d-2)$ by \cref{prop:marking}, so separability is the nonvanishing of a discriminant and is open; it is nonempty by \cref{lem:sepmodel}. Triviality of the stabilizer holds on the open locus $c_2c_3\neq0$ by \cref{prop:normalisemark}, which is nonempty. The space of seeds of degree $d$ is irreducible, so the two open loci meet. The count is \cref{prop:marking} and \cref{prop:normalisemark}.
\end{proof}

\begin{lem}
\label{lem:pointeddim}
For every $k\ge2$ and $d\ge3$ the normalised seeds of degree $d$ satisfying \cref{eq:liftjet} form a locally closed subscheme of the affine space of coefficients $(c_2,\dots,c_d)$, pure of dimension $d-3$ and nonempty. At $d=3$ it is a finite set of points, and at $d=4$ an affine plane curve.
\end{lem}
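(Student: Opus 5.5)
The normalised locus lives in the affine space $\A^{d-1}$ with coordinates $(c_2,\dots,c_d)$. My plan is to realise it as the intersection of the open set $\{c_d\neq0\}$ with the zero set of two polynomial equations. The first is $\varphi(\epsilon)=\sum_i c_i\epsilon^i=1$, which is linear. The second is $\Qform^{(k)}(\varphi)=0$, which by \cref{prop:liftclosed} is a form of degree $k$ with rational coefficients. Being an open subset of a closed set, the locus is locally closed. Nonemptiness comes from \cref{thm:spectrum}, which via \cref{prop:normalisemark} and \cref{lem:sepmodel} produces the normalised seed $\varphi_w$ of exact degree $d$.

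For purity of dimension I would work inside the hyperplane $H=\{\varphi(\epsilon)=1\}\cong\A^{d-2}$. The locus is $H\cap\{c_d\neq0\}\cap\{\Qform^{(k)}=0\}$, so by Krull's principal ideal theorem every component has dimension at least $d-3$, provided $\Qform^{(k)}$ is not a unit on $H$. It is not a unit, because its zero set is nonempty. For the upper bound it suffices that $\Qform^{(k)}$ does not vanish identically on any component of $H\cap\{c_d\neq0\}$, which is irreducible. So I only need one point of $H$ with $c_d\neq0$ at which $\Qform^{(k)}\neq0$.

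For that witness I would take $\varphi=W^2$, rescaled so that $\varphi(\epsilon)=1$, and add $c_dW^d$ with $c_d$ small and the linear condition restored through $c_2$. By \cref{cor:dtwo}, $\Qform^{(k)}(W^2)\neq0$, since $\Pi(\epsilon,0)\neq0$ there. By continuity in $c_d$, or equivalently because a polynomial in $c_d$ that is nonzero at $c_d=0$ has only finitely many zeros, some $c_d\neq0$ keeps $\Qform^{(k)}\neq0$. This shows every component has dimension exactly $d-3$.

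For the two special cases: at $d=3$ the locus has dimension $0$, so it is finite; its points are cut out by a degree-$k$ equation in one variable on the line $H$. At $d=4$ it has dimension $1$ in the plane $H\cong\A^2$, so it is an affine plane curve, namely the zero set of $\Qform^{(k)}$ restricted to $H$, minus the line $c_4=0$.

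The main obstacle is the non-degeneracy step: $\Qform^{(k)}$ must not vanish identically on $H$. The witness from \cref{cor:dtwo} handles this. The only care needed is that the perturbation keeps the point inside $H$.
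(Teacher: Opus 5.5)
Your proof is correct and is essentially the paper's argument: restrict to the hyperplane $H=\{\varphi(\epsilon)=1\}$, use $\varphi=W^{2}$ to show that $\Qform^{(k)}$ does not vanish identically on $H$, and take nonemptiness of the exact-degree locus from \cref{thm:spectrum}. The one difference is your perturbation of $W^{2}$ to get a witness with $c_d\neq0$; the paper instead proves purity on all of $H$, where $\Qform^{(k)}(W^{2})=\epsilon/(2k-1)\neq0$ directly, and then restricts to the open exact-degree locus, which makes the perturbation unnecessary because a function not identically zero on the irreducible $H$ cannot vanish on a nonempty open subset.
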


\begin{proof}
The two conditions of \cref{eq:liftjet} cut the locus, so every component has dimension at least $d-3$. For the upper bound, the first condition $\varphi(\epsilon)=1$ is a nontrivial affine-linear equation, hence defines an irreducible affine hyperplane $H$ of dimension $d-2$. The restriction of the second lift form to $H$ is not identically zero: the seed $\varphi=W^{2}$ lies on $H$ and
\[
  \Qform^{(k)}(W^{2})=\int_0^{\epsilon}W^{2k-2}\,dW=\frac{\epsilon}{2k-1}\neq0 ,
\]
using $\epsilon^{2k-1}=\epsilon$. Its zero scheme in $H$ is therefore an effective Cartier divisor, so every component has dimension exactly $d-3$. The exact-degree locus $c_d\neq0$ is open in it and nonempty by \cref{thm:spectrum}, which supplies a seed of exact degree $d$ at every level.
\end{proof}

\begin{theorem}
\label{thm:markingcover}
Let $k\ge2$ and $d\ge3$.  On the open locus of
\cref{cor:markingdegree}, forgetting the admissible marking is finite
\'etale of degree $k(d-2)$ from normalised seeds to rescaling classes of
seeds.  In general its coarse fibre over $\varphi$ is the set of nonzero roots
of $M_{\varphi}$ modulo the finite rescaling stabilizer.  The normalised
exact-degree seed locus has dimension $d-3$ and maps to
\[
 \Yparo[1,1,((2k-1)d-1,\,(2k-1)(kd-1)-1,\,1)].
\]
Within a fixed regular rescaling class, its $k(d-2)$ normalised
representatives are pairwise distinct as normalised seeds.  No graded-orbit
separation claim is made here; the quartic case is treated separately in
\cref{thm:pointedmain}.
\end{theorem}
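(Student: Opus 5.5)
The plan is to set up the forgetful map explicitly as a map of schemes and then read off each clause of \cref{thm:markingcover} from results already established. Let $S$ be the open set of seeds $(c_2,\dots,c_d)$ with $c_d\neq0$. Let $\widetilde S\subset S\times\Gm$ be the closed subscheme $\{M_\varphi(w)=0\}$ of seeds with an admissible marking. The rescaling group $\Gm^{2}$ of \cref{lem:rescale} acts on $\widetilde S$ through $(\varphi,w)\mapsto(t\varphi(\beta\,\cdot),\beta^{-1}w)$, which is well defined by \cref{lem:markingcov}. By \cref{prop:normalisemark}, the normalised seeds satisfying \cref{eq:liftjet} form a slice for this action. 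Indeed, the map $(\varphi,w)\mapsto\varphi_w$ sends $\widetilde S$ onto the normalised locus and is constant on orbits. Conversely, a normalised seed $\psi$ lifts to $(\psi,\epsilon)$. So normalised seeds are identified with $\widetilde S/\Gm^{2}$, and the forgetful map is the map $\widetilde S/\Gm^2\to S/\Gm^2$ induced by the projection $\widetilde S\to S$.

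Next I would prove the étale claim. Over the open locus $U\subset S$ of \cref{cor:markingdegree}, $M_\varphi$ has constant degree $k(d-2)$. Its leading coefficient $c_d^{k}/(kd-1)$ is a unit, and $M_\varphi(0)$ is a unit once we also require $c_2\neq0$, which holds on $U$ because $U$ lies inside $c_2c_3\neq0$. Hence $\widetilde S|_U\to U$ is finite and flat of degree $k(d-2)$. It is unramified because the discriminant of $M_\varphi$ is invertible on $U$, so it is finite étale. The stabilizer is trivial on $U$ by \cref{prop:normalisemark}, so $\Gm^{2}$ acts freely on $U$ and on $\widetilde S|_U$. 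Finite étaleness therefore descends to the quotients: étale-locally on $U/\Gm^2$ one has $U\cong(U/\Gm^2)\times\Gm^2$, and the cover is the base change. The step I expect to need the most care is the existence and freeness of these quotients. I would avoid general GIT by using the explicit slice $c_2=1$ together with a normalisation of $\beta$ through $c_3$, for example $c_3=1$, which is available since $c_3\neq0$. On that slice the residual action is trivial, and the quotient is simply the affine subvariety $\{c_2=c_3=1\}\cap U$.

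Off $U$, the coarse fibre statement follows by unwinding \cref{prop:normalisemark}. A normalised seed in the rescaling class of $\varphi$ has the form $\varphi_w$ for some nonzero root $w$ of $M_\varphi$. Moreover, $\varphi_{w_1}=\varphi_{w_2}$ exactly when $w_1/w_2=\beta$ for a stabilizing pair $(t,\beta)$, as in the last paragraph of that proof. So the fibre is the set of roots modulo the stabilizer. Pairwise distinctness within a regular class is \cref{cor:markingdegree}.

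For the dimension, I would cite \cref{lem:pointeddim}, which gives pure dimension $d-3$, with nonemptiness from \cref{thm:spectrum}. The map into $\Yparo[1,1,((2k-1)d-1,(2k-1)(kd-1)-1,1)]$ is given by $\varphi\mapsto(A,B,\Lambda)$ as in \cref{prop:jet}. Each coefficient of $A_j$ is a polynomial in $(c_2,\dots,c_d)$ with rational coefficients, so the map is a morphism. Its image lies in the exact-degree stratum by \cref{prop:conebox}, since $c_d\neq0$.
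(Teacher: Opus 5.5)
Your proposal is correct and follows the paper's proof. The paper likewise gets the étale claim from the unit discriminant and unit extreme coefficients of $M_\varphi$ on the locus of \cref{cor:markingdegree}, the coarse fibre from \cref{prop:normalisemark,lem:markingcov}, and the dimension and degree box from \cref{lem:pointeddim,prop:conebox}. Your explicit global slice $c_2=c_3=1$ is a useful addition: on it $(t,\beta)\mapsto(t\beta^{2},t\beta^{3})$ is an automorphism of $\Gm^{2}$, which makes the quotient concrete where the paper leaves it implicit.

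One small caveat, which the statement itself shares: $(\varphi,w)\mapsto\varphi_w$ is defined only where $\varphi(w)\neq0$. That holds for simple markings but not in general, so off $U$ the coarse fibre should be taken over nonzero roots with $\varphi(w)\neq0$.
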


\begin{proof}
A simple admissible marking $w$ determines the normalised seed
$\varphi_w$ by \cref{prop:normalisemark}.  On the open locus of
\cref{cor:markingdegree} the $k(d-2)$ markings are simple and the stabilizer is
trivial, so the count is exact and discriminant nonvanishing makes the map
\'etale.  The covariance \cref{lem:markingcov} gives the coarse statement in
general.  The dimension and degree box are \cref{lem:pointeddim,prop:conebox}.
The pairwise distinctness of the normalised representatives follows from
the trivial stabilizer in \cref{prop:normalisemark}.
\end{proof}

\subsection{The six-sheeted map over $\Q(\sqrt{-15})$}

Take $k=2$ and $\varphi=c_2W^{2}+c_3W^{3}$, so $d=3$ and $W=\Lambda w^{2}$, $T=\Lambda w$. Then \cref{eq:jets} gives
\[
  A_1=4c_2W+3c_3W^{2}, \qquad
  A_0=\tfrac83c_2^{2}W^{3}+\tfrac92c_2c_3W^{4}+\tfrac95c_3^{2}W^{5},
\]
and dividing \cref{eq:PQjet} by $\Lambda$,
\begin{equation}
\label{eq:six}
\begin{split}
  A &= w+c_2\Lambda w^{4}+c_3\Lambda^{2}w^{6},\\
  B &= 1+4c_2\Lambda w^{3}+3c_3\Lambda^{2}w^{5}+\tfrac83c_2^{2}\Lambda^{2}w^{6}
       +\tfrac92c_2c_3\Lambda^{3}w^{8}+\tfrac95c_3^{2}\Lambda^{4}w^{10}.
\end{split}
\end{equation}

\begin{theorem}
\label{thm:six}
Conditions \cref{eq:liftjet} for \cref{eq:six} are $c_2+c_3=1$ and $c_2^{2}+3c_2+6=0$. Hence $K=\Q(c_2)=\Q(\sqrt{-15})$, and with $a=c_2$, $b=1-a$ the map
\[
  F(x,y,z)=\Bigl(\tfrac{A}{x},\ \tfrac{B}{x},\ x\Lambda\Bigr), \qquad \Lambda=1+xy,\ w=xz-1,
\]
is a polynomial Keller map over $K$ with $\detJ{F}=-1$, generic fibre degree six, class in $\Yparo[1,1,(8,14,1)]$, component degrees $(15,27,3)$ and term counts $(20,54,2)$. It identifies the two distinct $K$-points
\[
  \bigl(\sigma,0,0\bigr), \qquad \bigl(1,\eta,\rho\bigr),
  \qquad
  \sigma=\tfrac{290+119a}{256},\quad \eta=\tfrac{34+119a}{256},\quad \rho=\tfrac{3(3-a)}{8},
\]
both of which map to $(0,0,\sigma)$.
\end{theorem}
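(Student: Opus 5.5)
The plan is to specialise the general cyclic-cone machinery of \cref{sec:3} to $k=2$, $d=3$, and to settle the remaining concrete claims by direct computation.

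\textbf{Lift conditions and the field.} Here $k=2$, so $\epsilon=(-1)^{2}=1$. The first condition of \cref{eq:liftjet} is $\varphi(1)=c_2+c_3=1$. For the second I will use the lift form \cref{eq:liftform} of \cref{prop:liftclosed} rather than $\Pi(1,0)$ directly:
\[
  \Qform^{(2)}(\varphi)=\int_0^{1}\bigl(c_2^{2}W^{2}+2c_2c_3W^{3}+c_3^{2}W^{4}\bigr)\,dW=\tfrac13c_2^{2}+\tfrac12c_2c_3+\tfrac15c_3^{2}.
\]
Substituting $c_3=1-c_2$ and multiplying by $30$ gives $10a^{2}+15a(1-a)+6(1-a)^{2}=a^{2}+3a+6$. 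Its discriminant is $9-24=-15$, so $K=\Q(\sqrt{-15})$. Since $a\notin\Q$, we get $c_3=b=1-a\neq0$, so the seed has exact degree $3$.

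\textbf{Structural properties.} \Cref{prop:jet} gives $P,Q\in\Lambda K[\Lambda,w]$ and the master equation with $\kappa=1$. The lift conditions, through \cref{lem:polynomiality}, make $F$ polynomial, and \cref{cor:master} gives $\detJ{F}=-1$. \Cref{prop:degree} gives generic fibre degree $kd=6$, and \cref{prop:conebox} gives the class $(3\cdot3-1,\ 3\cdot5-1,\ 1)=(8,14,1)$.

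\textbf{Component degrees and term counts.} Substituting $\Lambda=1+xy$ and $w=xz-1$, each of $u,v$ has degree $2$ in $(x,y,z)$. The top homogeneous parts of $A$ and $B$ in $(u,v)$, namely $c_3u^{2}v^{6}$ and $\tfrac95c_3^{2}u^{4}v^{10}$, lift to nonzero monomials of degrees $16$ and $28$. Dividing by $x$ then gives degrees $15$ and $27$, and $x\Lambda$ has degree $3$. The term counts $(20,54,2)$ I would obtain by a direct expansion, which is a mechanical symbolic computation.

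\textbf{The collision.} The point $(\sigma,0,0)$ sits over $u=v=0$, where $\Lambda=1$ and $w=-1$. There $A$ and $B$ vanish by the lift conditions just verified, so it maps to $(0,0,\sigma)$.

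For $(1,\eta,\rho)$ we have $x=1$, $\Lambda=1+\eta=(290+119a)/256=\sigma$ and $w=\rho-1=(1-3a)/8$. So the third coordinate matches, and it remains to show $A(\Lambda,w)=B(\Lambda,w)=0$ at these values modulo $a^{2}+3a+6$. Conceptually this point is a second root of the marked equation on the fibre $Q=P=0$, with a different $T$-value. Concretely, I would:
\begin{itemize}
\item check $A=0$ by computing $T=\Lambda w$ and $W=\Lambda w^{2}$ and reducing $T+\varphi(W)$ in $K$;
\item check $B=0$ by evaluating $\Pi(W,0)=-\tfrac13c_2^{2}W^{3}-\tfrac12c_2c_3W^{4}-\tfrac15c_3^{2}W^{5}$ at that $W$, using $P=\Pi(W,Q)$ and $Q=0$;
\item confirm $\Lambda\neq0$, so that vanishing of $P$ and $Q$ forces $A=B=0$.
\end{itemize}
The two points are distinct because their first coordinates differ ($\sigma\neq1$).

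\textbf{Main obstacle.} I expect the hardest step to be this exact arithmetic in $K$. It consists of reducing degree-$5$ expressions in $a$ modulo $a^{2}+3a+6$ with denominators that are powers of $2$. I would organise it through the one-variable identities $Q=T+\varphi(W)$ and $P=\Pi(W,Q)$ rather than through the $54$-term polynomial $B$.
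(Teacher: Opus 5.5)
Your proposal is correct and follows the paper's proof: you specialise the lift conditions at $\epsilon=1$, take the Keller, fibre-degree and class statements from \cref{prop:jet}, \cref{prop:degree} and the degree box, and check the collision at the base point and at $(\Lambda,w)=\bigl(\sigma,(1-3a)/8\bigr)$. The differences are only in bookkeeping. You evaluate the second lift condition through the lift form $\Qform^{(2)}$, obtaining $\tfrac1{30}(a^{2}+3a+6)$, instead of expanding $1-A_1(1)+A_0(1)$. You also verify $A=B=0$ via $Q=T+\varphi(W)$ and $P=\Pi(W,0)$ rather than by substituting into \cref{eq:six}; this is legitimate because $W=(2-a)/4\neq0$ and $\Lambda=\sigma\neq0$ at that point.
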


\begin{proof}
Here $\epsilon=1$, so the first condition of \cref{eq:liftjet} is $\varphi(1)=c_2+c_3=1$. By \cref{prop:lift} the second is $1-A_1(1)+A_0(1)=0$, that is
\[
  1-4c_2-3c_3+\tfrac83c_2^{2}+\tfrac92c_2c_3+\tfrac95c_3^{2}=0 .
\]
Substituting $c_3=1-c_2$ and collecting gives $-\tfrac15-\tfrac1{10}c_2-\tfrac1{30}c_2^{2}=0$, that is $c_2^{2}+3c_2+6=0$, of discriminant $-15$.

The determinant and the fibre degree are \cref{prop:jet} and \cref{prop:degree} with $kd=6$. The class and the component degrees are read off \cref{eq:six}, using that $\Lambda$ and $w$ have degree one in $(u,v)$ and degree two in $(x,y,z)$.

For the collision, the point $(\sigma,0,0)$ has $\Lambda=1$ and $w=-1$, the base point, so $A=B=0$ there and its image is $(0,0,\sigma)$. The point $(1,\eta,\rho)$ has $\Lambda=1+\eta=\sigma$ and $w=\rho-1$, and direct substitution in \cref{eq:six} gives $A=B=0$; its image is $(0,0,\sigma)$ as well. The two points are distinct because $\eta\neq0$.
\end{proof}

\subsection{The rational $33$-sheeted map}

Take $k=3$ and $\varphi=W^{3}-2W^{11}$, so $d=11$ and $W=\Lambda^{2}w^{3}$. Then \cref{eq:jets} gives
\[
  A_2=\tfrac92W^{2}-\tfrac{33}5W^{10}, \qquad
  A_1=\tfrac{27}5W^{5}-\tfrac{1188}{65}W^{13}+\tfrac{484}{35}W^{21},
\]
\[
  A_0=\tfrac{81}{40}W^{8}-\tfrac{5643}{520}W^{16}+\tfrac{16819}{910}W^{24}-\tfrac{1331}{140}W^{32},
\]
and dividing \cref{eq:PQjet} by $\Lambda$,
\begin{equation}
\label{eq:thirtythree}
\begin{split}
  A =\ & w+\Lambda^{5}w^{9}-2\Lambda^{21}w^{33},\\
  B =\ & 1+\tfrac92\Lambda^{5}w^{8}+\tfrac{27}5\Lambda^{10}w^{16}+\tfrac{81}{40}\Lambda^{15}w^{24}
        -\tfrac{33}5\Lambda^{21}w^{32}-\tfrac{1188}{65}\Lambda^{26}w^{40} \\
      & -\tfrac{5643}{520}\Lambda^{31}w^{48}+\tfrac{484}{35}\Lambda^{42}w^{64}
        +\tfrac{16819}{910}\Lambda^{47}w^{72}-\tfrac{1331}{140}\Lambda^{63}w^{96}.
\end{split}
\end{equation}

\begin{theorem}
\label{thm:thirtythree}
The seed $\varphi=W^{3}-2W^{11}$ satisfies \cref{eq:liftjet} over $\Q$. The associated map is a polynomial Keller map over $\Q$ with $\detJ{F}=-1$, generic fibre degree $33$, class in $\Yparo[1,1,(54,159,1)]$, component degrees $(107,317,3)$, and the rational collision
\[
  F(1,0,0)=F(1,0,2)=(0,0,1).
\]
\end{theorem}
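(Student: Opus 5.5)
Here $k=3$, so $\epsilon=-1$, and I will follow the template of \cref{thm:six}.

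\textbf{First lift condition.} This is $\varphi(-1)=-1+2=1$.

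\textbf{Second lift condition.} By \cref{prop:liftclosed} it is the vanishing of
\[
\Qform^{(3)}(\varphi)=\int_0^{-1}W^{7}(1-2W^{8})^{3}\,dW .
\]
Expanding $(1-2W^8)^3=1-6W^8+12W^{16}-8W^{24}$ and integrating termwise gives
\[
\tfrac18-\tfrac{6}{16}+\tfrac{12}{24}-\tfrac{8}{32}=\tfrac18-\tfrac38+\tfrac12-\tfrac14=0 ,
\]
since every exponent $W^{8m}$ is even and so $(-1)^{8m}=1$. As a cross-check, $1-A_2(-1)+A_1(-1)-A_0(-1)=0$ can be verified from the displayed jets, using \cref{prop:lift}.

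\textbf{Keller property, determinant and fibre degree.} Polynomiality follows from \cref{prop:jet} together with \cref{lem:polynomiality}. \Cref{prop:jet} and \cref{cor:master} give $\kappa=1$ and hence $\detJ{F}=-1$. \Cref{prop:degree} gives generic fibre degree $kd=33$.

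\textbf{Class.} \Cref{prop:conebox} with $k=3$, $d=11$ gives
\[
\deg A=5\cdot11-1=54,\qquad \deg B=5\cdot32-1=159,\qquad \deg\Lambda=1 .
\]
These agree with the displayed top terms $\Lambda^{21}w^{33}$ and $\Lambda^{63}w^{96}$ in \cref{eq:thirtythree}.

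\textbf{Component degrees.} $\Lambda=1+xy$ and $w=xz-1$ each have degree $2$ in $(x,y,z)$. So the top term of $A/x$ has degree $2\cdot54-1=107$, that of $B/x$ has degree $2\cdot159-1=317$, and $x\Lambda$ has degree $3$. One must check that the top-degree monomial survives division by $x$. It does: the top part of $\Lambda^aw^b$ is $x^{a+b}y^az^b$, which cancels with nothing else.

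\textbf{Collision.} At $(1,0,0)$ we have $\Lambda=1$ and $w=-1$, which is the base point, so $A=B=0$ and the image is $(0,0,1)$. At $(1,0,2)$ we have $\Lambda=1$ and $w=1$, so $T=1$ and $W=1$. Then $A=Q=1+\varphi(1)=1+1-2=0$. Also $B=P=\Pi(1,0)$ on the fibre $Q=0$, since $T=Q-\varphi(W)=0-(-1)=1$ is consistent. Now
\[
\Pi(1,0)=\int_0^1 W^7(1-2W^8)^3\,dW=0
\]
by the same computation as for the second lift condition, since the integrand is even in $W$. Hence $B=0$ and the image is $(0,0,1\cdot\Lambda)=(0,0,1)$.

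The main obstacle is purely bookkeeping: confirming that the point $(1,0,2)$ lies on the correct branch $Q=0$ so that the identity $P=\Pi(W,Q)$ applies. This is immediate from the computation above.
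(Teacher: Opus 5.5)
Your proof is correct, but it checks the two substantive points by a different route from the paper's.

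\textbf{The paper's route.} It evaluates the displayed jets, $A_2(-1)=-\tfrac{21}{10}$, $A_1(-1)=-\tfrac{433}{455}$ and $A_0(-1)=\tfrac{27}{182}$, and checks $1+A_0(-1)-A_1(-1)+A_2(-1)=0$ using \cref{prop:lift}. For the collision it sums the ten coefficients of $B$ in \cref{eq:thirtythree}.

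\textbf{Your route.} You use the closed form of \cref{prop:liftclosed} for both checks. This reduces everything to $\tfrac18\int_0^1(1-2\xi)^3\,d\xi=0$ and needs no jets. It also explains why the second collision point exists. Since $\varphi$ is odd, $\varphi(W)^3W^{-2}$ is odd, so $\Pi(\cdot,0)$ is even and $\Pi(1,0)=\Pi(-1,0)=0$. Meanwhile $\varphi(1)=-\varphi(-1)=-1$ puts $(\Lambda,w)=(1,1)$ on the fibre $Q=0$. The paper's route has the compensating merit of checking the explicit formula \cref{eq:thirtythree} itself.

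\textbf{Two small slips.}
\begin{enumerate}
\item The integrand $W^7(1-2W^8)^3$ is odd, not even. The conclusion survives, since either parity gives $\Pi(1,0)=\pm\Pi(-1,0)$. In any case, the direct termwise integration over $[0,1]$ produces exactly the same four numbers.
\item Your cross-check has the wrong signs. \Cref{prop:lift} gives $1+A_0(-1)-A_1(-1)+A_2(-1)=0$. Your expression $1-A_2(-1)+A_1(-1)-A_0(-1)$ evaluates to $2$, not $0$.
\end{enumerate}
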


\begin{proof}
Here $k=3$ is odd, so $\epsilon=-1$ and $\varphi(-1)=-1+2=1$, the first condition of \cref{eq:liftjet}. The second is $1+A_0(-1)-A_1(-1)+A_2(-1)=0$, and
\[
  A_2(-1)=-\tfrac{21}{10}, \qquad A_1(-1)=-\tfrac{433}{455}, \qquad A_0(-1)=\tfrac{27}{182},
\]
whose combination is $1+\tfrac{27}{182}+\tfrac{433}{455}-\tfrac{21}{10}=0$.

For the collision, both source points have $x=1$, hence $\Lambda=1$, and $w=-1$ and $w=1$ respectively. The first is the base point. At $(\Lambda,w)=(1,1)$ one has $A=1+1-2=0$ from \cref{eq:thirtythree}, and the ten coefficients of $B$ sum to zero. Both images are $(0,0,1)$.
\end{proof}

\subsection{Non-descent and branch data}

\begin{prop}
\label{prop:nodescent}
Under $T=\Lambda w$ one has $\Jacuv(P,Q)=\Lambda\Jac_{\Lambda,T}(\widehat P,\widehat Q)$, so $(\widehat P,\widehat Q)$ has Jacobian one. It is a two-variable Keller pair exactly when every monomial $\Lambda^{i}w^{j}$ occurring in $A$ or $B$ has $i\ge j-1$. No member of the family satisfies this: the coefficient of $T^{kd}$ in $\widehat Q$ is $c_d\Lambda^{-d}$.
\end{prop}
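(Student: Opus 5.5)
The plan is to work directly with the substitution $T=\Lambda w$, i.e. $w=T/\Lambda$, on the field $K(\Lambda,w)=K(\Lambda,T)$. I write $\widehat P(\Lambda,T)$ and $\widehat Q(\Lambda,T)$ for $P$ and $Q$ rewritten in the coordinates $(\Lambda,T)$. These are a priori Laurent polynomials in $\Lambda$.

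\emph{The Jacobian identity.} The change of variables $(\Lambda,w)\mapsto(\Lambda,T)$ has Jacobian matrix with entries $\partial T/\partial\Lambda=w$ and $\partial T/\partial w=\Lambda$. So its determinant is $\Lambda$, and the chain rule gives $\Jac_{\Lambda,w}(P,Q)=\Lambda\,\Jac_{\Lambda,T}(\widehat P,\widehat Q)$. Brackets in $(\Lambda,w)$ agree with those in $(u,v)$ by \cref{rem:clash}, since the coordinate change is affine of Jacobian one. \Cref{prop:jet} gives $\Jacuv(P,Q)=\Lambda$, and dividing by $\Lambda$ yields $\Jac_{\Lambda,T}(\widehat P,\widehat Q)=1$.

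\emph{The polynomiality criterion.} By \cref{prop:jet}, $Q=\Lambda A$ and $P=\Lambda B$. A monomial $\Lambda^{i}w^{j}$ of $A$ or $B$ contributes $\Lambda^{i+1}w^{j}=\Lambda^{i+1-j}T^{j}$ to $Q$ or $P$. Distinct pairs $(i,j)$ give distinct pairs $(i+1-j,j)$, so no cancellation can occur. Hence $\widehat P,\widehat Q\in K[\Lambda,T]$ exactly when $i+1-j\ge0$, i.e. $i\ge j-1$, for every monomial occurring. Once polynomiality holds, the Jacobian identity above makes $(\widehat P,\widehat Q)$ a Keller pair.

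\emph{Failure for every member.} It remains to exhibit, for every seed, a monomial of $\widehat Q$ with a negative power of $\Lambda$. By \cref{eq:TW}, $W=T^{k}/\Lambda$, so
\[
  \widehat Q=T+\varphi(T^{k}/\Lambda)=T+\sum_{m=2}^{d}c_m T^{km}\Lambda^{-m}.
\]
The $T$-degrees $1,2k,\dots,dk$ are distinct because $k\ge2$. Therefore the coefficient of $T^{kd}$ is exactly $c_d\Lambda^{-d}$, which is nonzero because $c_d\ne0$ and $d\ge3$. This corresponds to the monomial $c_d\Lambda^{(k-1)d-1}w^{kd}$ of $A$, and there $i=(k-1)d-1<kd-1=j-1$, so the criterion fails.

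The only step needing care is the no-cancellation point: a given power $T^{j}$ may receive contributions from several $\Lambda$-powers. Tracking the pairs $(i+1-j,j)$ rather than $T$-degrees alone settles this.
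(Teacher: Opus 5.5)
Your proposal is correct and follows the paper's proof essentially step for step. It uses the same three ingredients: the Jacobian-$\Lambda$ substitution combined with $\Jacuv(P,Q)=\Lambda$ from \cref{prop:jet}, the monomial bookkeeping $\Lambda^{i}w^{j}\mapsto\Lambda^{i+1-j}T^{j}$, and the top term $c_dT^{kd}\Lambda^{-d}$ of $\varphi(T^{k}/\Lambda)$. Your two no-cancellation remarks, namely the injectivity of $(i,j)\mapsto(i+1-j,j)$ and the distinctness of the $T$-degrees $1,2k,\dots,dk$, make explicit what the paper leaves implicit.
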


\begin{proof}
The substitution $(\Lambda,w)\mapsto(\Lambda,T)$ has Jacobian determinant $\Lambda$, and $\Jacuv(P,Q)=\Lambda$ by \cref{prop:jet}. A monomial $\Lambda^{i}w^{j}$ of $A$ contributes $\Lambda^{i+1-j}T^{j}$ to $\widehat Q=\Lambda A$, which is polynomial exactly when $i\ge j-1$; likewise for $B$. Finally $W=T^{k}/\Lambda$, so $\varphi(W)=\sum_j c_jT^{kj}\Lambda^{-j}$ and the top term is $c_dT^{kd}\Lambda^{-d}$ with $c_d\neq0$.
\end{proof}

\begin{prop}
\label{prop:conequotient}
Fix a generic value $Y$ of $Q$. The cover $W\mapsto\Pi(W,Y)$ has degree $N=kd$, pole partition $(N-1,1)$, and $d$ finite branch points, the images of the simple roots of $\varphi(W)=Y$, each of local degree $k+1$. The branch curve $\Gamma_f$ has exactly two components: the line $\{Q=0\}$, over which the fibre has $N-1$ points, and the curve parametrised by $W\mapsto\bigl(A_0(W),\varphi(W)\bigr)$, with $\deg A_0=N-1$ and $\deg\varphi=d$.
\end{prop}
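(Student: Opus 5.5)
The plan is to work entirely on the fibre $Q=Y$ and read everything off \cref{lem:Pider}. Fix a generic $Y\neq0$. By \cref{prop:degree} the rational function $W\mapsto\Pi(W,Y)$ has a simple pole at $W=0$, a pole of order $kd-1$ at $W=\infty$, and no other poles. Hence it has degree $N=kd$ as a map $\bP^{1}\to\bP^{1}$, and the fibre over $\infty$ has cycle type $(N-1,1)$; this is the pole partition.

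For the finite branch points, \cref{lem:Pider} shows that the finite critical points are exactly the roots of $\varphi(W)=Y$, each a root of $\partial\Pi/\partial W$ of order $k$ times its multiplicity. For generic $Y$, $\varphi-Y$ is separable with $d$ simple roots $W_1,\dots,W_d$, none equal to $0$. Each $W_i$ is then a critical point of ramification index $k+1$.

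To see that the $d$ critical values $\Pi(W_i,Y)$ are distinct and that each is the only ramification in its fibre, I would use Riemann--Hurwitz as a check. The total ramification is $2N-2=2kd-2$. Infinity contributes $N-2=kd-2$, and the $W_i$ contribute $dk$, which sums to exactly $2kd-2$. So there is no other ramification. I would then note that at $W=W_i$ the $(Y-\varphi)^{j}$ terms vanish, so $\Pi(W_i,Y)=A_0(W_i)$. The branch point is therefore the point $(A_0(W_i),\varphi(W_i))$ of the parametrised curve $W\mapsto(A_0(W),\varphi(W))$, lying on the line $Q=Y$. Distinctness of the $d$ values for generic $Y$ should follow because $A_0$ separates the generic fibre of $\varphi$; I would deduce this from the birationality of the parametrisation.

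For the branch curve $\Gamma_f$ in the $(P,Q)$-plane, I would identify the locus where the fibre count drops below $N$, using $K(W,Q)$ coordinates on the source. Over a point $(P_0,Y)$ with $Y\neq0$ generic, the fibre is the set of roots of the marked equation $W(\Pi(W,Y)-P_0)$, which has degree $N$. Points drop out only at ramification, which gives the curve $(A_0,\varphi)$.

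Over $Y=0$ the simple pole at $W=0$ disappears, since $\varphi(0)=0$ gives $(Y-\varphi)^{k}/W\to\varphi^{k}/W$, which is regular. The map $\Pi(\cdot,0)$ then has degree $kd-1$, so one sheet escapes to $W=0$, that is, to $\Lambda=0$. This is the failure of properness noted in \cref{subsec:semigroup}, and it makes $\{Q=0\}$ a component with $N-1$ points over its generic point.

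I would also check the remaining possible components. A sheet could be lost to infinity elsewhere only where the leading coefficient $c_d^{k}$ of the marked equation vanishes, which never happens. On the source side, \cref{cor:master} shows the critical locus is $\{\Lambda=0\}$, which is contracted to the origin. So there are no other components.

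Finally, I would compute the degrees: $\deg\varphi=d$, and $\deg A_0=kd-1=N-1$ from the formula $\deg A_{k-i}=id-1$ with $i=k$.

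The main obstacle is making the count of $\Gamma_f$ rigorous. This means justifying that the fibre over the generic point of $\{Q=0\}$ has exactly $N-1$ points and not fewer, and that the curve $(A_0,\varphi)$ is birationally parametrised, so that its image is a single component and its branch values are distinct. For the first point, $\Pi(\cdot,0)$ is separable away from the roots of $\varphi$, so a general $P_0$ gives $kd-1$ distinct roots. For birationality, I would use that $\gcd(\deg A_0,\deg\varphi)=\gcd(kd-1,d)=1$, which forces the degree of $W\mapsto(A_0,\varphi)$ onto its image to be one.
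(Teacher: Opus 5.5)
Apart from the vertical component, your plan is the paper's argument: pole orders from \cref{prop:degree}, the critical points and local degree $k+1$ from \cref{lem:Pider}, exact Riemann--Hurwitz, $\Pi(W_i,Y)=A_0(W_i)$, and a leading coefficient independent of $Y$. Your $\gcd(kd-1,d)=1$ birationality argument gives the distinctness of the $d$ branch values correctly; the paper only supplies it later, in \cref{thm:conemono}.

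\textbf{The gap.} It is in the step you flag as the main obstacle, and the paper's proof makes the same inference from $\deg\Pi(\cdot,0)=N-1$. The chart $(W,Q)$ is only birational to the source plane. The map $(\Lambda,w)\mapsto(W,T)$ contracts both $\{\Lambda=0\}$ and $\{w=0\}$ to $W=T=0$, so a root escaping to $W=0$ need not escape to $\Lambda=0$.

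On $\{w=0\}$ one has $T=W=0$ and $A_j(0)=0$, so \cref{eq:PQjet} gives $P=\Lambda$ and $Q=0$. This line therefore maps isomorphically onto $\{Q=0\}$; in \cref{eq:six}, for instance, $A(\Lambda,0)=0$ and $B(\Lambda,0)=1$.

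Follow the lost root. As $Y\to0$ one has $W\approx Y^{k}/P_0$ and $T\approx Y$, so $\Lambda=T^{k}/W\to P_0$ and $w=W/T^{k-1}\to0$. The sheet converges to the genuine source point $(\Lambda,w)=(P_0,0)$, where $\jb{P}{Q}=\Lambda=P_0\neq0$. Your separability count only sees the points with $\Lambda w\neq0$.

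\textbf{Consequence.} The fibre of $f$ over a general $(P_0,0)$ consists of the $N-1$ roots of $\Pi(W,0)=P_0$, all with $W\varphi(W)\neq0$, together with $(P_0,0)$. That is $N$ points, not $N-1$.

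For a check, take $k=2$ and $\varphi=W^{2}$; the lift conditions and $d\ge3$ play no role here. Then $Q=\Lambda w(1+\Lambda w^{3})$, $P=\Lambda+4\Lambda^{2}w^{3}+\tfrac83\Lambda^{3}w^{6}$ and $\jb{P}{Q}=\Lambda$. The fibre over $(1,0)$ is $(1,0)$ together with the three points $\Lambda=-3$, $w^{3}=\tfrac13$: four points, with $N=4$.

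So $\{Q=0\}$ is not a component of $\Gamma_f$ as defined in \cref{subsec:semigroup}. The clause ``exactly two components'' is false as stated and cannot be proved by any route. What your argument does establish is that $\Gamma_f$ is the closure of the parametrised curve.

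\textbf{A smaller point.} $f$ is \'etale off $\{\Lambda=0\}$. Over the parametrised curve, the count therefore drops not through ramification in the source. It drops because the root $W_i$ of the marked equation has $T=0\neq W$ and so has no source point. The locus you identify there is nonetheless correct.
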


\begin{proof}
The degree and the pole orders are \cref{prop:degree} and its proof. By \cref{lem:Pider} the finite critical points are the roots of $Y=\varphi(W)$, which are simple for generic $Y$, and at each the derivative vanishes to order $k$, so the local degree is $k+1$. At such a root $T=Y-\varphi(W)=0$, so $\Pi(W,Y)=A_0(W)$ by \cref{eq:fibrepoly}, which gives the parametrisation. Riemann--Hurwitz is exact:
\[
  d\bigl((k+1)-1\bigr)+(N-1-1)=kd+N-2=2N-2 .
\]
For the components of $\Gamma_f$, the leading coefficient of $\Pi(\cdot,Y)$ in $W$ is the one displayed in \cref{prop:degree} and does not involve $Y$, so $\deg_W\Pi(\cdot,Y)=N-1$ for every $Y$. By \cref{eq:fibrepoly} the only finite pole of $\Pi(\cdot,Y)$ is at $W=0$, where $A_k(Y-\varphi)^{k}=(Y-\varphi)^{k}/W$ and $\varphi(0)=0$; that pole is simple for $Y\neq0$ and absent for $Y=0$. Hence $\Pi(\cdot,Y)\colon\bP^{1}\to\bP^{1}$ has degree $N$ for $Y\neq0$ and degree $N-1$ for $Y=0$, so $\{Q=0\}$ is the only vertical component of $\Gamma_f$, and the horizontal part is the closure of the parametrised curve.
\end{proof}

\section{The beta--Hermite family in signature $(1,-r,-s)$}
\label{sec:4}

Throughout this section $2\le r\le s$ are fixed integers. The construction below realises the beta polynomial cover of degree $2r-1$ as the quotient cover of a graded Keller map of weight $(1,-r,-s)$, with Keller constant $\kappa=r$. The repeated weights are the diagonal case $r=s$.

Set
\begin{equation}
\label{eq:moment}
  M_{r}(\tau)=\int_1^{\tau}(\theta^{2}-1)^{r-1}\,d\theta .
\end{equation}

\begin{lem}
\label{lem:root}
$M_{r}$ has degree $2r-1$ and a zero of order exactly $r$ at $\tau=1$, and both $M_{r}(-1)$ and $M_{r}(0)$ are nonzero. Hence $M_{r}$ has a root $\zeta\notin\{-1,0,1\}$, that root is simple, and $\delta:=\zeta^{2}-1$ and $\zeta^{2}=1+\delta$ are nonzero.
\end{lem}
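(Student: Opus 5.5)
The plan is to read everything off the integrand $(\theta^{2}-1)^{r-1}$, which is a polynomial of degree $2r-2$ with leading coefficient $1$.

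First, degree and order at $1$. The derivative is $M_r'(\tau)=(\tau^{2}-1)^{r-1}=(\tau-1)^{r-1}(\tau+1)^{r-1}$, so $M_r$ has degree $2r-1$ with leading coefficient $1/(2r-1)$. Since $M_r(1)=0$ and $M_r'$ vanishes at $1$ to order exactly $r-1$ (the factor $(\tau+1)^{r-1}$ is $2^{r-1}\neq0$ there), $M_r$ vanishes at $1$ to order exactly $r$.

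Second, the values at $-1$ and $0$. The integrand is even, so
\[
M_r(-1)=-\int_{-1}^{1}(\theta^{2}-1)^{r-1}\,d\theta=(-1)^{r}\int_{-1}^{1}(1-\theta^{2})^{r-1}\,d\theta .
\]
The last integral has a nonnegative integrand that is not identically zero, hence is positive. Likewise $M_r(0)=(-1)^{r}\int_0^{1}(1-\theta^{2})^{r-1}\,d\theta$ is nonzero. These also follow from the Beta integral, but positivity is enough.

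Third, existence and simplicity of $\zeta$. Of the $2r-1$ roots of $M_r$ counted with multiplicity, exactly $r$ sit at $1$, and none at $-1$ or $0$. So $r-1\ge1$ roots (using $r\ge2$) lie outside $\{-1,0,1\}$. Choose one, $\zeta$. A multiple root other than $\pm1$ would have to satisfy $M_r'(\zeta)=(\zeta^{2}-1)^{r-1}=0$, forcing $\zeta=\pm1$. Hence every root outside $\{\pm1\}$ is simple, and $\zeta$ in particular is simple. Finally, $\delta=\zeta^{2}-1\neq0$ because $\zeta\neq\pm1$, and $\zeta^{2}=1+\delta\neq0$ because $\zeta\neq0$.

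I expect no real obstacle. The only point needing care is the sign bookkeeping in the integral at $-1$, where evenness of the integrand and positivity of $(1-\theta^2)^{r-1}$ on $(-1,1)$ settle it.
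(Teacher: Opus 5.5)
Your proposal is correct and follows the paper's proof: the derivative $(\tau^{2}-1)^{r-1}$ gives the degree and the exact order $r$ at $\tau=1$, the constant sign of the integrand on $(-1,1)$ gives $M_r(-1)\neq0$ and $M_r(0)\neq0$, and $M_r'(\zeta)=\delta^{r-1}\neq0$ gives simplicity. Your explicit count of $r-1\ge1$ roots off $\{-1,0,1\}$ only spells out a step the paper leaves implicit. The evenness of the integrand is not needed for $M_r(-1)=-\int_{-1}^{1}(\theta^{2}-1)^{r-1}\,d\theta$, since that identity is just a reversal of the limits.
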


\begin{proof}
$M_{r}'(\tau)=(\tau^{2}-1)^{r-1}$ vanishes to order $r-1$ at $\tau=1$, so $M_{r}$ vanishes there to order $r$, while $\deg M_{r}=2r-1>r$. The integrand has constant sign on $(-1,1)$, hence also on $(0,1)$, so
\[
  M_{r}(-1)=-\int_{-1}^{1}(\theta^{2}-1)^{r-1}\,d\theta, \qquad
  M_{r}(0)=-\int_{0}^{1}(\theta^{2}-1)^{r-1}\,d\theta
\]
are both nonzero, and a root $\zeta$ outside $\{-1,0,1\}$ exists. It is simple because $M_{r}'(\zeta)=\delta^{\,r-1}\neq0$.
\end{proof}

Fix such a root $\zeta$ and work over $K_{r}=\Q(\zeta)$. For $r\ge4$ there are $r-1\ge3$ choices and $K_{r}$ depends on the one made; at $r=2,3$ it does not.

Put $m=\lceil s/r\rceil$, so that $rm\ge s$. Let $h_{0}$ be the branch of $(1+\delta\Lambda)^{1/2}$ at $\Lambda=0$ with $h_{0}(0)=1$, and let $h_{1}$ be the branch at $\Lambda=1$ with $h_{1}(1)=\zeta$; the latter exists because $1+\delta=\zeta^{2}\neq0$. The ideals $(\Lambda^{s})$ and $\bigl((\Lambda-1)^{m}\bigr)$ are comaximal, so there is $g\in K_{r}[\Lambda]$ with
\begin{equation}
\label{eq:sqrt}
  g\equiv h_{0}\pmod{\Lambda^{\,s}},
  \qquad
  g\equiv h_{1}\pmod{(\Lambda-1)^{m}} .
\end{equation}
In particular $g(0)=1$ and $g(1)=\zeta$, and $g^{2}-1-\delta\Lambda$ is divisible by $\Lambda^{s}$ and by $(\Lambda-1)^{m}$.

On the diagonal $r=s=p$ one has $m=1$, so the second condition reads $g(1)=\zeta$; a convenient representative is then
\[
  g_p(\Lambda)=S_p(\Lambda)+\bigl(\zeta-S_p(1)\bigr)\Lambda^{\,p+1},
  \qquad
  S_p(\Lambda)=\sum_{j=0}^{p}\binom{1/2}{j}\delta^{\,j}\Lambda^{\,j},
\]
since $g_p^{2}\equiv1+\delta\Lambda$ modulo $\Lambda^{\,p+1}$.

For exact degree statements on the diagonal we instead use the unique CRT
representative modulo $\Lambda^p(\Lambda-1)$ of degree at most $p$.  It has
the same two required jets as the displayed degree-$(p+1)$ representative,
but the latter is retained only because its formula is convenient.  Thus the
existence and Jacobian arguments are independent of this choice, while
\cref{prop:diagclass} uses the minimal representative.

Set
\begin{equation}
\label{eq:rhoQ}
  \rho=g(\Lambda)+\Lambda^{\,s}v,
  \qquad
  Q=\frac{\rho^{2}-1-\delta\Lambda}{2},
\end{equation}
so that $\rho^{2}-1-2Q=\delta\Lambda$, and, treating $Q$ as constant in the integral,
\begin{equation}
\label{eq:P}
  P=\frac{2r}{\delta^{\,r}}\int_1^{\rho}(\theta^{2}-1-2Q)^{r-1}\,d\theta .
\end{equation}
Both $\rho$ and $Q$ lie in $K_{r}[\Lambda,v]$, hence so does $P$.

\begin{theorem}
\label{thm:diagonal}
There are $A,B\in K_{r}[\Lambda,v]$ with $Q=\Lambda^{\,s}A$ and $P=\Lambda^{\,r}B$, such that $B(0,v)=1$, the $v$-free part of $A$ is divisible by $(\Lambda-1)^{m}$, and the $v$-free part of $B$ is divisible by $\Lambda-1$. Under $u=x^{r}y$ and $v=x^{s}z$ the map
\begin{equation}
\label{eq:Fp}
  F_{r,s}(x,y,z)=\Bigl(\frac{A}{x^{\,s}},\ \frac{B}{x^{\,r}},\ x\Lambda\Bigr)
\end{equation}
is polynomial, is equivariant for source weights $(1,-r,-s)$ and target weights $(-s,-r,1)$, satisfies \cref{eq:masterL} with $\kappa=r$, has $\detJ{F_{r,s}}=-r$, and has generic fibre degree $2r-1$.
\end{theorem}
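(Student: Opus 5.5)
The plan is to verify each assertion directly from the definitions \cref{eq:sqrt,eq:rhoQ,eq:P}, working in the coordinates $(\Lambda,v)$ of \cref{rem:clash}, where $\Lambda=1+u$. Throughout I use the identity
\[
\rho^{2}-1-2Q=\delta\Lambda .
\]

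\emph{Divisibility and $B(0,v)=1$.} I expand
\[
\rho^{2}-1-\delta\Lambda=(g^{2}-1-\delta\Lambda)+2g\Lambda^{s}v+\Lambda^{2s}v^{2}.
\]
Each term is divisible by $\Lambda^{s}$, so $Q=\Lambda^{s}A$ with $A\in K_r[\Lambda,v]$.

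For $P$, put $c=1+2Q$, so that $\rho^{2}=c+\delta\Lambda$. Since $c\equiv1\pmod{\Lambda^{s}}$, there is a square root $\sigma\in K_r[v][[\Lambda]]$ of $c$ with $\sigma\equiv1\pmod{\Lambda^{s}}$. I split
\[
\int_1^{\rho}=\int_1^{\sigma}+\int_\sigma^{\rho}.
\]
On the first piece the integrand $(\theta^{2}-c)^{r-1}$ vanishes to order $r-1$ at $\theta=\sigma$, so that piece is $O\bigl((\sigma-1)^{r}\bigr)=O(\Lambda^{rs})$. On the second piece I factor $\theta^{2}-c=(\theta-\sigma)(\theta+\sigma)$ and note $\rho-\sigma=\delta\Lambda/(\rho+\sigma)$. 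That piece is then
\[
\tfrac1r(2\sigma)^{r-1}(\rho-\sigma)^{r}\bigl(1+O(\rho-\sigma)\bigr).
\]
At $\Lambda=0$ we have $\rho=\sigma=1$, hence $\rho-\sigma\equiv\delta\Lambda/2$, and
\[
P\equiv\frac{2r}{\delta^{r}}\cdot\frac{2^{r-1}}{r}\Bigl(\frac{\delta\Lambda}{2}\Bigr)^{r}=\Lambda^{r}\pmod{\Lambda^{r+1}} .
\]
Because $P$ is a polynomial, this shows $\Lambda^{r}\mid P$ in $K_r[\Lambda,v]$ and $B(0,v)=1$. I expect this step to be the main obstacle. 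The formal square-root argument has to be justified carefully: everything takes place in $K_r[v][[\Lambda]]$, and the conclusion is pulled back to polynomials using $K_r[\Lambda,v]\cap\Lambda^{r}K_r[v][[\Lambda]]=\Lambda^{r}K_r[\Lambda,v]$.

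\emph{The $v$-free parts.} Setting $v=0$ gives $\rho=g$ and $2Q=g^{2}-1-\delta\Lambda$. By \cref{eq:sqrt} this is divisible by $\Lambda^{s}(\Lambda-1)^{m}$, so the $v$-free part of $A$ is divisible by $(\Lambda-1)^{m}$. At $(\Lambda,v)=(1,0)$ we then have $Q=0$ and $\rho=\zeta$, so
\[
P(1,0)=\frac{2r}{\delta^{r}}M_r(\zeta)=0
\]
by \cref{lem:root}. Hence $\Lambda-1$ divides the $v$-free part of $B$.

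\emph{Polynomiality.} In $(u,v)$ a monomial $u^{i}v^{j}$ of $A$ either has $j\ge1$, so $ri+sj\ge s$, or has $i\ge m$, so $ri\ge rm\ge s$. Every monomial of $B$ except the constant term has $ri+sj\ge r$, and the constant term is zero. So \cref{lem:polynomiality} gives polynomiality, and equivariance is built into \cref{eq:Fshape}.

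\emph{Keller constant.} Write $P=\Phi(\rho,Q)$. Then
\[
\Phi_\rho=\frac{2r}{\delta^{r}}(\delta\Lambda)^{r-1},
\]
and the $\Phi_Q$ contribution drops out against $\{Q,Q\}=0$. Using $Q=(\rho^{2}-1-\delta\Lambda)/2$ and $\jb{\rho}{\Lambda}=-\rho_v=-\Lambda^{s}$, I get $\jb{\rho}{Q}=\tfrac{\delta}{2}\Lambda^{s}$. Therefore
\[
\Jacuv(P,Q)=\frac{2r\Lambda^{r-1}}{\delta}\cdot\frac{\delta\Lambda^{s}}{2}=r\Lambda^{r+s-1},
\]
the change $(u,v)\mapsto(\Lambda,v)$ having Jacobian one. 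By \cref{lem:bracketidentity} and \cref{cor:master} this gives \cref{eq:masterL} with $\kappa=r$ and $\detJ{F_{r,s}}=-r$; I will keep track of the sign convention in $\jb{\cdot}{\cdot}$ here.

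\emph{Fibre degree.} By \cref{lem:quotientdegree} it suffices to compute $[K_r(\Lambda,v):K_r(P,Q)]$. Since $v=(\rho-g)/\Lambda^{s}$ and $\Lambda=(\rho^{2}-1-2Q)/\delta$, we have $K_r(\Lambda,v)=K_r(\rho,Q)$, and $\rho,Q$ are algebraically independent. Now $P$ is a polynomial in $\rho$ over $K_r(Q)$ of degree $2r-1$, with leading coefficient $2r\delta^{-r}/(2r-1)\neq0$. So $\rho$ has degree $2r-1$ over $K_r(P,Q)$, and the generic fibre degree is $2r-1$.
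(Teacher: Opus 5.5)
Your proof is correct. Apart from one step it is the paper's argument: the same expansion of $Q$, the same use of the CRT congruence and $M_r(\zeta)=0$ for the $v$-free parts, the same bracket computation $\jb{\rho}{Q}=\tfrac{\delta}{2}\Lambda^{s}$, and the same degree count through $K_r(\Lambda,v)=K_r(Q)(\rho)$.

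The step that differs is $P=\Lambda^{r}B$ with $B(0,v)=1$.

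\emph{The paper's route.} The paper expands the integrand binomially in $Q$. The $Q^{k}$ term is $\binom{r-1}{k}(-2Q)^{k}\int_1^{\rho}(\theta^{2}-1)^{r-1-k}\,d\theta$. Since $\rho-1\in(\Lambda)$ and $Q\in(\Lambda^{s})$, this term has $\Lambda$-order at least $r-k+sk=r+k(s-1)$. So only the $k=0$ term $\tfrac{2r}{\delta^{r}}M_r(\rho)$ reaches order $r$. Its leading coefficient comes from the exact order-$r$ zero of $M_r$ at $1$ together with $\rho-1=\tfrac{\delta}{2}\Lambda+O(\Lambda^{2})$. The whole argument stays inside $K_r[\Lambda,v]$ and needs no completion.

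\emph{Your route.} You recentre at the moving critical point $\sigma=\sqrt{1+2Q}$ of $P(\cdot,Q)$. This forces you into $K_r[v][[\Lambda]]$ and the easy pull-back $K_r[\Lambda,v]\cap\Lambda^{r}K_r[v][[\Lambda]]=\Lambda^{r}K_r[\Lambda,v]$. In exchange you get a bit more structure. The locus $\Lambda=0$ is exactly $\rho^{2}=\sigma^{2}$, so the divisibility is the order-$r$ contact of $P(\cdot,Q)$ at its $(r-1)$-fold critical point $\rho=\sigma$ of the beta polynomial (\cref{rem:beta}). The critical value $\tfrac{2r}{\delta^{r}}\int_1^{\sigma}$ is negligible, lying in $\Lambda^{rs}$. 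Hence $P$ is $\Lambda^{r}$ times a unit of $K_r[v][[\Lambda]]$ up to that error.

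Both routes need $s\ge2$ to push the remaining contributions past order $r$: the paper for $r+k(s-1)>r$, you for $rs\ge r+1$.
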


\begin{proof}
By \cref{eq:sqrt} and \cref{eq:rhoQ},
\[
  Q=\frac{g(\Lambda)^{2}-1-\delta\Lambda}{2}+g(\Lambda)\Lambda^{\,s}v+\tfrac12\Lambda^{\,2s}v^{2},
\]
whose first term lies in $\Lambda^{\,s}K_{r}[\Lambda]$; hence $Q=\Lambda^{\,s}A$ with $A\in K_{r}[\Lambda,v]$.

Expand \cref{eq:P} in powers of $Q$. The term carrying $Q^{k}$ involves $\int_1^{\rho}(\theta^{2}-1)^{r-1-k}\,d\theta$, which vanishes to order $r-k$ at $\rho=1$; since $\rho-1\in(\Lambda)$ and $Q\in(\Lambda^{\,s})$, that term has $\Lambda$-order at least $r-k+sk=r+k(s-1)$. Hence $P=\Lambda^{\,r}B$. For $k=0$ the zero of $M_{r}$ at $1$ has order exactly $r$, and $\rho-1=\tfrac{\delta}{2}\Lambda+O(\Lambda^{2})$ gives $M_{r}(\rho)=\tfrac{\delta^{\,r}}{2r}\Lambda^{\,r}+O(\Lambda^{\,r+1})$, so the $k=0$ term contributes exactly $\Lambda^{\,r}$, while every $k\ge1$ term has order at least $r+s-1>r$. Hence $B(0,v)=1$.

By the second congruence in \cref{eq:sqrt} the polynomial $g^{2}-1-\delta\Lambda$ is divisible by $(\Lambda-1)^{m}$, so the $v$-free part of $Q$, and hence that of $A$, is divisible by $(\Lambda-1)^{m}$. At $(\Lambda,v)=(1,0)$ one has $\rho=\zeta$, so $Q=(\zeta^{2}-1-\delta)/2=0$ and $P=\tfrac{2r}{\delta^{\,r}}M_{r}(\zeta)=0$; the $v$-free part of $B$ is therefore divisible by $\Lambda-1$.

For the Jacobian, \cref{eq:rhoQ} gives $\jb{\rho}{Q}=\tfrac{\delta}{2}\Lambda^{\,s}$, and $\rho^{2}-1-2Q=\delta\Lambda$, so at fixed $Q$
\[
  \frac{\partial P}{\partial\rho}=\frac{2r}{\delta^{\,r}}(\delta\Lambda)^{r-1}=\frac{2r}{\delta}\Lambda^{\,r-1},
\]
whence
\begin{equation}
\label{eq:jacPQ}
  \Jacuv(P,Q)=\frac{2r}{\delta}\Lambda^{\,r-1}\cdot\frac{\delta}{2}\Lambda^{\,s}=r\Lambda^{\,r+s-1} .
\end{equation}
By \cref{cor:master} the pair satisfies \cref{eq:masterL} with $\kappa=r$, and $\detJ{F_{r,s}}=-r$.

For polynomiality write $u=\Lambda-1$. A monomial $u^{i}v^{j}$ of $A$ with $j\ge1$ has level $ri+sj\ge s$, and one with $j=0$ has $i\ge m$, hence level $ri\ge rm\ge s$. A monomial of $B$ with $j\ge1$ has level $ri+sj\ge s\ge r$, and one with $j=0$ has $i\ge1$, hence level $ri\ge r$. These are the conditions of \cref{lem:polynomiality}, so \cref{eq:Fp} is polynomial. Weight homogeneity is immediate.

For the degree, put $F=K_{r}(Q)$. The formulas
\[
  \Lambda=\frac{\rho^{2}-1-2Q}{\delta}, \qquad v=\frac{\rho-g(\Lambda)}{\Lambda^{\,s}}
\]
recover $\Lambda$ and $v$ from $\rho$ and $Q$, while \cref{eq:rhoQ} recovers $\rho$ and $Q$ from $\Lambda$ and $v$; hence $K_{r}(\Lambda,v)=F(\rho)$ and $K_{r}(P,Q)=F(P)$. By \cref{eq:P}, $P$ is a polynomial in $\rho$ over $F$ of degree $2r-1$ with leading coefficient $2r/\bigl(\delta^{\,r}(2r-1)\bigr)\neq0$. For a nonconstant $f\in F(\rho)$ of degree $n$ one has $[F(\rho):F(f)]=n$, so $[K_{r}(\Lambda,v):K_{r}(P,Q)]=2r-1$. By \cref{lem:quotientdegree} this is the generic fibre degree of $F_{r,s}$.
\end{proof}

\begin{cor}
\label{cor:diagonalnonempty}
$\Kfam(3,(1,-r,-s))\neq\emptyset$ for all $2\le r\le s$. In particular every repeated weight $(1,-p,-p)$ with $p\ge2$ is populated, and every value in $-\Z_{\ge1}$ occurs as $\detJ{F}$ for a graded Keller map.
\end{cor}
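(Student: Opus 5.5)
The corollary has three parts, and each follows from \cref{thm:diagonal} with a little bookkeeping.

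\textbf{Nonemptiness.} Fix $2\le r\le s$ and choose a root $\zeta$ of $M_r$ as in \cref{lem:root}. The field $K_r=\Q(\zeta)$ is a number field, so it embeds in $\Qbar$. Base change along this embedding preserves polynomiality, equivariance, the determinant and the generic fibre degree. Hence the map $F_{r,s}$ of \cref{thm:diagonal} gives a polynomial self-map of $\A^3_{\Qbar}$ with the following properties:
\begin{itemize}
\item it is equivariant for source weights $(1,-r,-s)$;
\item it satisfies $\detJ{F_{r,s}}=-r\in\Qbar^{\times}$;
\item it has generic fibre degree $2r-1$.
\end{itemize}
Since $r\ge2$, the degree $2r-1\ge3>1$, so the map is not injective, as recalled in \cref{subsec:gradings}. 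Thus $F_{r,s}\in\Kfam(3,(1,-r,-s))$.

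One point needs a remark. The definition of $\Kfam$ asks for self-maps equivariant for $\bw$, while the theorem uses target weights $(-s,-r,1)$, which are a permutation of $-\bw$. I read this as the convention of \cite{shaska131} that the target carries the permuted and negated weights. If instead one needs literal equivariance for $\bw$ on the target, compose with the coordinate permutation of the target. That is linear and changes the determinant only by a sign, which does not affect the argument.

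\textbf{Repeated weights.} Take $r=s=p$ with $p\ge2$. The first part then gives $\Kfam(3,(1,-p,-p))\ne\emptyset$ directly.

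\textbf{Realising every negative integer as a determinant.} For $n\ge2$, take $r=s=n$. Then $F_{n,n}$ is a graded Keller map with $\detJ{F}=-n$.

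For $n=1$ the beta--Hermite family does not apply. Instead use any member of the cyclic cone family, for example the six-sheeted map of \cref{thm:six}, which has $\detJ{F}=-1$. Alternatively, any member given by \cref{thm:spectrum} together with \cref{prop:jet} has $\kappa=1$ and so also has determinant $-1$. Note that $-1$ genuinely needs this separate family, since \cref{thm:diagonal} requires $r\ge2$.

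The only step needing care is this $n=1$ case, together with confirming that the weight convention places $F_{r,s}$ inside $\Kfam$. Everything else is immediate from \cref{thm:diagonal}.
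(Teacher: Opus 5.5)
Your proposal is correct and follows the paper's proof: noninjectivity of $F_{r,s}$ comes from its generic fibre degree $2r-1\ge3$ in \cref{thm:diagonal}, the determinants $-r$ for $r\ge2$ come from the same theorem, and $-1$ is supplied by the cyclic family of \cref{sec:3}. One small slip: the target weights $(-s,-r,1)$ are a permutation of $\bw$ itself, not of $-\bw$, and this is exactly why your fallback of swapping the first and third target coordinates gives literal $\bw$-equivariance.
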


\begin{proof}
By \cref{thm:diagonal} the generic fibre degree of $F_{r,s}$ is $2r-1\ge3$, so $F_{r,s}$ is not birational, hence not injective. The determinant is $-r$ for every $r\ge2$, and $-1$ occurs in \cref{sec:3}.
\end{proof}

\begin{prop}
\label{prop:diagclass}
Write $D=\deg\rho$. Then $\deg A=2D-s$, $\deg B=(2r-1)D-r$ and $\deg\Lambda=1$, and $D\le s+\max\{1,m-1\}$. On the diagonal $r=s=p$, choosing the minimal CRT representative gives $m=1$ and $D=p+1$, so the class of the resulting $F_{p,p}$ lies in $\Yparo[p,p,\bd_p]$ with $\bd_p=(p+2,\ 2p^{2}-1,\ 1)$.
\end{prop}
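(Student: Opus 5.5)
The plan is to compute degrees in the coordinates $(\Lambda,v)$ rather than $(u,v)$. The change $u=\Lambda-1$ is affine with Jacobian one, so it preserves total degree, and $\deg\Lambda=1$ is immediate. Since $K_r[\Lambda,v]$ is a domain, degree is additive under products. The three degree formulas then reduce to computing the top homogeneous forms of $\rho$, $Q$ and $P$.

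\textbf{The bound on $D$.} By \cref{eq:rhoQ}, $\rho=g(\Lambda)+\Lambda^{s}v$. The summand $\Lambda^{s}v$ has degree $s+1$ and involves $v$, while $g$ does not, so their top forms cannot cancel. Hence $D=\max\{\deg g,\ s+1\}$. For $g$ I take the reduced CRT representative of \cref{eq:sqrt}, modulo $\Lambda^{s}(\Lambda-1)^{m}$, so that $\deg g\le s+m-1$. This gives $D\le\max\{s+m-1,\ s+1\}=s+\max\{m-1,1\}$.

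\textbf{The degree of $A$.} Write $\rho_{\mathrm{top}}$ for the top form of $\rho$. Since $2D\ge 2(s+1)>1$, the top form of $\rho^{2}-1-\delta\Lambda$ is $\rho_{\mathrm{top}}^{2}\neq0$. So $\deg Q=2D$, and from $Q=\Lambda^{s}A$ we get $\deg A=2D-s$.

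\textbf{The degree of $P$.} I would expand \cref{eq:P} as a polynomial in $\rho$ and $Q$. Every monomial $\rho^{a}Q^{b}$ occurring there satisfies $a+2b\le 2r-1$, so it has degree at most $(2r-1)D$. To extract the degree-$(2r-1)D$ part, replace $\rho$ by $\rho_{\mathrm{top}}$ and $Q$ by $\rho_{\mathrm{top}}^{2}/2$, and discard the constant $-1$, the lower limit $1$, and the term $-\delta\Lambda$; each of these only contributes lower degree. The candidate top form is then
\[
  \frac{2r}{\delta^{r}}\int_0^{\rho_{\mathrm{top}}}\bigl(\theta^{2}-\rho_{\mathrm{top}}^{2}\bigr)^{r-1}\,d\theta
  =\frac{2r}{\delta^{r}}\Bigl(\int_0^{1}(t^{2}-1)^{r-1}\,dt\Bigr)\rho_{\mathrm{top}}^{\,2r-1}.
\]
The integral is nonzero because the integrand has constant sign on $(0,1)$, exactly as in \cref{lem:root}. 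Hence $\deg P=(2r-1)D$, and from $P=\Lambda^{r}B$ we get $\deg B=(2r-1)D-r$.

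The one delicate point is this top-form bookkeeping: I must check that the discarded pieces really drop degree. I would do this by homogenising, assigning $\rho$ and $Q$ the degrees $D$ and $2D$, and verifying that the full leading coefficient is the displayed beta integral.

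\textbf{The diagonal case.} On $r=s=p$ we have $m=1$, and the minimal CRT representative modulo $\Lambda^{p}(\Lambda-1)$ has degree at most $p$. Since the $v$-term cannot cancel, $D=p+1$ exactly. Substituting into the formulas above gives $\deg A=2(p+1)-p=p+2$ and $\deg B=(2p-1)(p+1)-p=2p^{2}-1$. All three degrees are attained, so the class of $F_{p,p}$ lies in the open stratum $\Yparo[p,p,\bd_p]$.
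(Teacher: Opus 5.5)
Your proposal is correct and follows essentially the same route as the paper's proof. Both arguments bound $D$ using the CRT representative of degree at most $s+m-1$ and get $\deg A$ from the leading form $\tfrac12\rho_{\mathrm{top}}^{2}$ of $Q$. Both then find that the top form of $P$ is $\tfrac{2r}{\delta^{r}}\bigl(\int_0^1(t^{2}-1)^{r-1}\,dt\bigr)\rho_{\mathrm{top}}^{2r-1}$, which is nonzero by the constant-sign argument; your weighted-homogenisation check is exactly the paper's term-by-term expansion of $P$ in powers of $\theta^{2}$. Finally, both settle the diagonal case via the minimal representative of degree at most $p$, giving $D=p+1$.
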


\begin{proof}
Write $L_h$ for the leading form of $h\in K_{r}[\Lambda,v]$ in the total degree. In \cref{eq:sqrt} the representative $g$ may be taken with $\deg g\le s+m-1$, while $\Lambda^{\,s}v$ has degree $s+1$; hence $D\le s+\max\{1,m-1\}$. Since $L_{Q}=\tfrac12L_{\rho}^{2}$ we get $\deg Q=2D$ and $\deg A=2D-s$.

Expanding \cref{eq:P} in powers of $\theta^{2}$,
\[
  P=\frac{2r}{\delta^{\,r}}\sum_{k=0}^{r-1}\binom{r-1}{k}(-1-2Q)^{r-1-k}\,\frac{\rho^{2k+1}-1}{2k+1}.
\]
The leading form of $-1-2Q$ is $-L_{\rho}^{2}$, so the term with index $k$ has leading form
\[
  \binom{r-1}{k}\frac{(-1)^{r-1-k}}{2k+1}\,L_{\rho}^{2(r-1-k)}\,L_{\rho}^{2k+1}
   =\binom{r-1}{k}\frac{(-1)^{r-1-k}}{2k+1}\,L_{\rho}^{2r-1},
\]
independent of $k$ in the exponent. Summing,
\[
  L_{P}=\frac{2r}{\delta^{\,r}}\,\gamma_r\,L_{\rho}^{2r-1},
  \qquad
  \gamma_r=\sum_{k=0}^{r-1}\binom{r-1}{k}\frac{(-1)^{r-1-k}}{2k+1}=\int_0^1(x^{2}-1)^{r-1}dx ,
\]
the last equality by termwise integration of $(x^{2}-1)^{r-1}$. The integrand has constant sign on $(0,1)$, so $\gamma_r\neq0$ and no cancellation occurs. Hence $\deg P=(2r-1)D$ and $\deg B=(2r-1)D-r$.

On the diagonal $m=1$, so $\deg g\le p$ while $\Lambda^{\,p}v$ has degree $p+1$; thus $D=p+1$ with $L_{\rho}=\Lambda^{\,p}v$, and the two degrees are $p+2$ and $(2p-1)(p+1)-p=2p^{2}-1$.
\end{proof}

\begin{prop}
\label{prop:fields}
The following hold.
\begin{enumerate}
\item $M_{2}\propto(\tau-1)^{2}(\tau+2)$, so $\zeta=-2$, $\delta=3$ and $K_{2}=\Q$. Hence $\Kfam_{\Q}(3,(1,-2,-s))\neq\emptyset$ for every $s\ge2$, with determinant $-2$ and generic fibre degree three. At $r=s=2$,
\[
\begin{split}
  g_2(\Lambda)&=1+\tfrac32\Lambda-\tfrac98\Lambda^{2}-\tfrac{27}8\Lambda^{3},\\
  P&=\tfrac49\Bigl[\tfrac{\rho^{3}-1}{3}-(1+2Q)(\rho-1)\Bigr],
\end{split}
\]
and $F_{2,2}$ has class in $\Yparo[2,2,(4,7,1)]$.
\item $M_{3}\propto(\tau-1)^{3}(3\tau^{2}+9\tau+8)$, and $3\tau^{2}+9\tau+8$ has discriminant $-15$, so $K_{3}=\Q(\sqrt{-15})$. At $r=s=3$ the map $F_{3,3}$ has determinant $-3$, generic fibre degree five, class in $\Yparo[3,3,(5,17,1)]$, and the collision
\[
  F_3\Bigl(1+\tfrac{9\zeta}{8},0,0\Bigr)
  =F_3\Bigl(1,\tfrac{9\zeta}{8},\tfrac{25(9+11\zeta)}{48}\Bigr)
  =\Bigl(0,0,1+\tfrac{9\zeta}{8}\Bigr).
\]
\item If $r$ is odd then $\tau=1$ is the only real root of $M_{r}$, and $K_{r}$ is totally imaginary for every choice of $\zeta$. If $r$ is even then $M_{r}$ has a real root less than $-1$, so $\zeta$ may be chosen with $K_{r}$ real.
\end{enumerate}
\end{prop}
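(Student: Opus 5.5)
The three items are separate calculations, and I will handle them one at a time, reusing \cref{lem:root}, \cref{eq:sqrt}, \cref{eq:P} and \cref{prop:diagclass}.

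\textbf{Item (1).} Start from $M_2(\tau)=\int_1^\tau(\theta^2-1)\,d\theta=\tfrac13(\tau^3-3\tau+2)=\tfrac13(\tau-1)^2(\tau+2)$. This forces $\zeta=-2$ and $\delta=3$, so $K_2=\Q$, and \cref{thm:diagonal} then gives a rational member for every $s\ge2$ with determinant $-2$ and fibre degree $3$.

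For $r=s=2$, the representative $g_2$ is the displayed formula with $p=2$:
\[
S_2(\Lambda)=1+\tfrac32\Lambda-\tfrac98\Lambda^2 .
\]
One checks $S_2(1)=\tfrac{11}{8}$, so the correction coefficient is $\zeta-S_2(1)=-2-\tfrac{11}8=-\tfrac{27}8$. The formula for $P$ comes from \cref{eq:P} with $r=2$:
\[
P=\tfrac{4}{9}\int_1^\rho(\theta^2-1-2Q)\,d\theta,
\]
and evaluating the integral gives the stated expression. The class $(4,7,1)=\bd_2$ is read off \cref{prop:diagclass} using the minimal CRT representative.

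\textbf{Item (2).} Start from
\[
M_3(\tau)=\int_1^\tau(\theta^2-1)^2\,d\theta=\tfrac{\tau^5}{5}-\tfrac{2\tau^3}{3}+\tau-\tfrac8{15}.
\]
Divide by $(\tau-1)^3$ to get the quadratic factor $3\tau^2+9\tau+8$ up to scale; its discriminant is $81-96=-15$. The class $(5,17,1)$ is again \cref{prop:diagclass}, and the determinant and fibre degree come from \cref{thm:diagonal}.

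For the collision:
\begin{itemize}
\item[] The first point has $u=0$ and $v=0$, so $(\Lambda,v)=(1,0)$. There $A=B=0$ by the divisibility statements of \cref{thm:diagonal}, so its image is $(0,0,x)=(0,0,1+9\zeta/8)$.
\item[] The second point has $x=1$, $\Lambda=1+9\zeta/8$ and $v=z$. I would verify $Q=P=0$ there, using the minimal degree-$3$ CRT representative $g$ and the relation $3\zeta^2=-9\zeta-8$.
\end{itemize}
Vanishing of $Q$ means $\rho^2=1+\delta\Lambda$ with $\rho=g(\Lambda)+\Lambda^3v$. Vanishing of $P$ then means $M_3(\rho)=0$, so $\rho$ is a root of $M_3$; I expect $\rho=\bar\zeta$ or $\rho=\zeta$ on a different branch. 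This explicit verification in $\Q(\zeta)$ is the main obstacle. It is a direct but messy computation: first determine the minimal CRT $g$ explicitly, then solve the linear equation for $v$. I would organize it by solving $\rho=\zeta'$ for $v$, where $\zeta'$ is the conjugate root, and by checking that $\zeta'^2=1+\delta\Lambda$ forces $\Lambda=1+9\zeta/8$.

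\textbf{Item (3).} Use the signs of the integrand in $M_r'(\tau)=(\tau^2-1)^{r-1}$.

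If $r$ is odd, $r-1$ is even, so $M_r'\ge0$ everywhere and vanishes only at $\pm1$. Hence $M_r$ is strictly increasing and has exactly one real root, namely $\tau=1$. Every other root is nonreal, so every conjugate of $\zeta$ is nonreal and $K_r=\Q(\zeta)$ has no real embedding.

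If $r$ is even, $M_r$ is odd-degree with positive leading coefficient, so $M_r\to-\infty$ as $\tau\to-\infty$. It remains to check the sign at $-1$: by \cref{lem:root},
\[
M_r(-1)=-\int_{-1}^1(\theta^2-1)^{r-1}\,d\theta,
\]
and the integrand is negative on $(-1,1)$, so $M_r(-1)>0$. The intermediate value theorem then gives a real root less than $-1$, and taking it as $\zeta$ makes $K_r$ real.
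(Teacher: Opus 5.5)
The student's treatment of (1), of (3), and of the factorisation and discriminant in (2) is correct and follows the paper's route: explicit division plus \cref{prop:diagclass}, then monotonicity and the sign of $M_r(-1)$. The gap is the collision in (2), which the paper's one-line proof does not address either. You plan to verify it with the minimal CRT representative of degree at most $3$, but with that choice the stated point is not in the fibre.

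Your reduction is right as far as it goes. $Q=P=0$ forces $M_3(\rho)=0$, and at $\Lambda\neq1$ this leaves only $\rho=\bar\zeta:=-3-\zeta$. Then $\rho^{2}=1+\delta\Lambda$ gives $\Lambda=\Lambda_*:=1+\tfrac{9\zeta}{8}$, with $\delta\Lambda_*=3\zeta+\tfrac{16}{3}=\bar\zeta^{2}-1=\bar\delta$. However, the minimal representative is
\[
  g(\Lambda)=1+\tfrac{\delta}{2}\Lambda-\tfrac{\delta^{2}}{8}\Lambda^{2}+\tfrac38\delta^{3}\Lambda^{3}.
\]
This holds because its cubic coefficient $\zeta-\bigl(1+\tfrac{\delta}{2}-\tfrac{\delta^{2}}{8}\bigr)=\tfrac{135\zeta-35}{72}$ equals $\tfrac38\delta^{3}$, using $\delta^{3}=5\zeta-\tfrac{35}{27}$. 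So $g=\psi(\delta\Lambda)$ with $\psi\in\Q[X]$ and $\psi(\delta)=g(1)=\zeta$. Conjugating gives $g(\Lambda_*)=\psi(\bar\delta)=\bar\zeta$, and solving $g(\Lambda_*)+\Lambda_*^{3}v=\bar\zeta$ gives $v=0$. With your choice of $g$, the only point with $x=1$ in the fibre over $(0,0,\Lambda_*)$ is $(1,\tfrac{9\zeta}{8},0)$. Your planned computation therefore cannot produce $z=\tfrac{25(9+11\zeta)}{48}$.

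The stated $z$-coordinate comes from the other representative: the displayed degree-$(p+1)$ formula $g_3=S_3+(\zeta-S_3(1))\Lambda^{4}$, which is the $p=3$ analogue of the $g_2$ you use in (1). With it, $g_3(\Lambda_*)=\tfrac{27}{16}-\tfrac{559}{384}\zeta$ and $\Lambda_*^{3}=\tfrac{1160+513\zeta}{512}$. Reducing modulo $3\zeta^{2}+9\zeta+8$ then gives $(\bar\zeta-g_3(\Lambda_*))/\Lambda_*^{3}=\tfrac{25(9+11\zeta)}{48}$. So the collision must be checked with $g_3$, and you should say explicitly that it depends on the representative.

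The class is unaffected by this choice. The proof of \cref{prop:diagclass} uses only $D=\deg\rho$, and both representatives give $D=4$. For the minimal one the leading form is $\Lambda^{3}v$. For $g_3$ it is $(\zeta-S_3(1))\Lambda^{4}+\Lambda^{3}v$, where $\zeta-S_3(1)=\tfrac{25(27\zeta-7)}{432}\neq0$. Hence $(5,17,1)$ holds either way. The same observation justifies pairing the displayed degree-three $g_2$ with the minimal-representative class count in (1).
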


\begin{proof}
Parts (1) and (2) are division, followed by \cref{prop:diagclass} at $r=s=2,3$.

For (3), let $r$ be odd. Then $r-1$ is even, so $M_{r}'\ge0$ on $\R$ with equality only at $\pm1$, and $M_{r}$ is strictly increasing; since $M_{r}(1)=0$, the only real root is $\tau=1$. If $K_{r}$ had a real embedding $\sigma$, then $\sigma(\zeta)$ would be a real root of the minimal polynomial of $\zeta$, hence a real root of $M_{r}$, hence equal to $1$, forcing $\zeta=1$ against \cref{lem:root}. Now let $r$ be even. Then $r-1$ is odd, so $M_{r}'<0$ on $(-1,1)$ and $M_{r}(-1)>M_{r}(1)=0$, while $M_{r}(\tau)\to-\infty$ as $\tau\to-\infty$ because $\deg M_{r}=2r-1$ is odd with positive leading coefficient.
\end{proof}

\begin{rem}
\label{rem:minus15}
The field of \cref{prop:fields}(2) coincides with the field of \cref{thm:six}. In the cyclic family that field is forced by the second lift condition; here it arises from the factorisation of $M_{3}$, and nothing so far connects the two. 
\end{rem}

\begin{rem}
\label{rem:beta}
At fixed $Q$ one has $\partial P/\partial\rho\propto(\rho^{2}-1-2Q)^{r-1}$, whose two roots are distinct for generic $Q$. After an affine change carrying them to $0$ and $1$ this is proportional to $\xi^{r-1}(1-\xi)^{r-1}$, so the generic quotient cover is the beta polynomial cover of degree $2r-1$, with two finite branch points of cycle type $(r,1^{r-1})$ and a full $(2r-1)$-cycle at infinity. The content of \cref{thm:diagonal} is therefore the graded realisation, in the sense of \cite[Rem.\ 8.13]{shaska131}, and not a new class of one-variable cover.
\end{rem}


\subsection{Rooted quotient reconstruction}
\label{subsec:rooted}

Fix $p\ge1$ and the repeated weight $\bw=(1,-p,-p)$; the case $p=1$ is the balanced signature of \cref{sec:3}. The quotient map of a graded Keller map with triple $(A,B,\Lambda)$ is
\[
  f=(P,Q)=(B\Lambda^{p},\ A\Lambda^{p})\colon\A^{2}\longrightarrow\A^{2},
\]
and the Leibniz rule contracts the master equation into the single quotient-Jacobian identity
\begin{equation}
\label{eq:quotjac}
  \jb{P}{Q}=\Lambda^{2p-1}\bigl(\Lambda\jb{B}{A}+pB\jb{\Lambda}{A}+pA\jb{B}{\Lambda}\bigr)=\kappa\Lambda^{2p-1}.
\end{equation}
Let $o$ be the distinguished invariant source point above the target origin. The lift conditions give $A(o)=B(o)=0$ and $\Lambda(o)\neq0$, so $f(o)=0$ and $o$ is unramified.

\begin{defn}
\label{def:padmissible}
A pointed polynomial map $(f,o)$, $f=(P,Q)$, is \emph{$p$-admissible} if $f(o)=0$, $\jb{P}{Q}(o)\neq0$, and there is a polynomial $\lambda_{f}$, normalised by $\lambda_{f}(o)=1$, with
\[
  \frac{\jb{P}{Q}}{\jb{P}{Q}(o)}=\lambda_{f}^{2p-1}, \qquad \lambda_{f}^{p}\mid P, \qquad \lambda_{f}^{p}\mid Q.
\]
\end{defn}

For $p\ge1$ write
\[
\begin{split}
  \mathcal{X}_{p} &= \{\text{graded Keller maps of weight }(1,-p,-p)\}/\Gaut^{\sharp},\\
  \mathcal{P}_{p} &= \{\text{$p$-admissible pointed maps}\}/\text{pointed left--right equivalence}.
\end{split}
\]

\begin{theorem}
\label{thm:rooted}
The quotient construction induces a bijection
\[
  \mathcal{X}_{p}\;\xrightarrow{\ \sim\ }\;\mathcal{P}_{p},
\]
whose inverse reconstructs the coefficient classes
\[
  [A]=\bigl[Q/\lambda_{f}^{p}\bigr], \qquad [B]=\bigl[P/\lambda_{f}^{p}\bigr], \qquad [\Lambda]=[\lambda_{f}].
\]
Every pointed left--right equivalence of quotient maps lifts to a graded equivalence, uniquely up to the universal grading torus.
\end{theorem}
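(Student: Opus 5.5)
The plan is to build the map $\mathcal{X}_{p}\to\mathcal{P}_{p}$ and its candidate inverse explicitly on representatives. Then check that both respect the two equivalence relations, and that the composites are the identity on classes. Throughout, $r=s=p$, so $Q=\Lambda^{p}A$ and $P=\Lambda^{p}B$. By \cref{lem:polynomiality} the lift conditions are simply $A(o)=B(o)=0$, where $o=(0,0)$ in $(u,v)$.

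\emph{Forward map.} Given a triple $(A,B,\Lambda)$, \cref{eq:quotjac} gives $\jb{P}{Q}=\kappa\Lambda^{2p-1}$ with $\kappa\neq0$. The lift conditions give $f(o)=0$. Moreover $\Lambda(o)\neq0$, since otherwise $\jb{P}{Q}$ would vanish at $o$.
\begin{itemize}
\item Put $\lambda_{f}=\Lambda/\Lambda(o)$. Then $\jb{P}{Q}/\jb{P}{Q}(o)=\lambda_{f}^{2p-1}$, and $\lambda_{f}^{p}$ divides both $P$ and $Q$, so $(f,o)$ is $p$-admissible.
\item $\lambda_{f}$ is intrinsic to $(f,o)$. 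In the UFD $K[u,v]$ a $(2p-1)$-th root of a polynomial is unique up to a $(2p-1)$-th root of unity, and the normalisation $\lambda_{f}(o)=1$ fixes it.
\end{itemize}
Next, the forward map must be constant on $\Gaut^{\sharp}$-orbits. Elements of $\Gaut^{\sharp}$ commute with the $\Gm$-action on the source and on the target. Hence they induce automorphisms $\phi$ of $\A^{2}_{u,v}=\A^{3}/\!/\Gm$ and $\psi$ of the target quotient $\A^{2}_{P,Q}$. Because they preserve the graded normal form, $\phi$ fixes the base point $o$ and $\psi$ fixes the origin. So $f$ changes by a pointed left--right equivalence.

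\emph{Inverse.} Start from a $p$-admissible $(f,o)$, and translate so that $o=(0,0)$. Set $\Lambda=\lambda_{f}$, $A=Q/\lambda_{f}^{p}$ and $B=P/\lambda_{f}^{p}$; these are polynomials by admissibility.
\begin{itemize}
\item Dividing \cref{eq:quotjac} by $\Lambda^{2p-1}$ shows that the triple satisfies \cref{eq:master} with $\kappa=\jb{P}{Q}(o)\neq0$.
\item The conditions $f(o)=0$ and $\lambda_{f}(o)=1$ give $A(o)=B(o)=0$. This is exactly the lift condition, so \cref{cor:master} yields a graded Keller map.
\item The two constructions are mutually inverse on representatives. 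The only ambiguity is rescaling $\Lambda$ by $\Lambda(o)$, and this is absorbed by the grading torus acting as $x\mapsto cx$.
\end{itemize}

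\emph{Lifting equivalences; main obstacle.} It remains to show that a pointed left--right equivalence $f_{2}=\psi\circ f_{1}\circ\phi^{-1}$, with $\phi(o)=o$ and $\psi(0)=0$, lifts to an element of $\Gaut^{\sharp}$. This gives injectivity, and I expect it to be the hardest step.

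First, the invariant $\lambda$ transforms correctly. Automorphisms of $\A^{2}$ have constant Jacobian, so the chain rule gives $\jb{P_{2}}{Q_{2}}=c\,\jb{P_{1}}{Q_{1}}\circ\phi^{-1}$. Uniqueness of the normalised root then forces $\lambda_{f_{2}}=\lambda_{f_{1}}\circ\phi^{-1}$.

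On the source I would lift $\phi=(\phi_{1},\phi_{2})$ by
\[
  (x,y,z)\longmapsto\bigl(x,\ x^{-p}\phi_{1}(x^{p}y,x^{p}z),\ x^{-p}\phi_{2}(x^{p}y,x^{p}z)\bigr).
\]
This is polynomial because $\phi(0)=0$, so every monomial of $\phi_{i}$ has total degree at least one and therefore carries a factor $x^{p}$. The same argument applied to $\phi^{-1}$ shows the lift is an automorphism.

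On the target I would use $F_{3}=x\Lambda$ and $F_{1},F_{2}=Q/F_{3}^{p},\,P/F_{3}^{p}$. The target lift sends $(F_{1},F_{2},F_{3})$ to $\bigl(\psi_{2}(F_{3}^{p}F_{2},F_{3}^{p}F_{1})/F_{3}^{p},\ \psi_{1}(\dots)/F_{3}^{p},\ F_{3}\bigr)$. It is polynomial because $\psi(0)=0$, by the same monomial-degree argument. Since $\lambda_{f_{2}}=\lambda_{f_{1}}\circ\phi^{-1}$, the third coordinate is compatible: $x\lambda_{f_{1}}$ pulls back to $x\lambda_{f_{2}}$.

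Finally I would check that the pair of lifts preserves the graded normal form, so that it lies in $\Gaut^{\sharp}$. Uniqueness is then as follows: any two lifts with the same quotient data differ by an automorphism acting trivially on $K[u,v]$ and on the target invariants. Such an automorphism rescales $x$, so it lies in the grading torus.
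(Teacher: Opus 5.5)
Your proposal follows essentially the same route as the paper. The normalised root $\lambda_f=\Lambda/\Lambda(o)$ and its uniqueness give admissibility and the reconstruction, hence surjectivity. The constant-Jacobian transformation law gives $\lambda_{f_2}=\lambda_{f_1}\circ\phi^{-1}$. The explicit lifts dividing by $x^{p}$ give injectivity, with the grading torus as the only ambiguity; your choice $a=1$ in both lifts is the paper's common scalar $a$.

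One small repair is needed. Your reason for $\Lambda(o)\neq0$, that otherwise $\jb{P}{Q}(o)$ would vanish, is circular, since $\jb{P}{Q}(o)=\kappa\Lambda(o)^{2p-1}$ and nothing independently forces it to be nonzero. Instead, evaluate the master equation at $o$: there $A(o)=B(o)=0$ leaves $\Lambda(o)\jb{B}{A}(o)=\kappa\neq0$.
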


\begin{proof}
Evaluating \cref{eq:quotjac} at $o$ and dividing gives $\jb{P}{Q}/\jb{P}{Q}(o)=(\Lambda/\Lambda(o))^{2p-1}$; in characteristic zero a polynomial $(2p-1)$-th root normalised at $o$ is unique, so $\lambda_{f}=\Lambda/\Lambda(o)$, and division recovers the three classes. This proves $p$-admissibility of the quotient and the reconstruction formula; in particular the assignment $\mathcal{X}_{p}\to\mathcal{P}_{p}$ is defined and, by reconstruction, surjective.

Suppose $f_{2}\circ\sigma=\tau\circ f_{1}$ with $\sigma(o_{1})=o_{2}$ and $\tau(0)=0$. Plane polynomial automorphisms have constant Jacobian, so dividing the transformation law $(\jb{P_2}{Q_2}\circ\sigma)\,c_{\sigma}=c_{\tau}\,\jb{P_1}{Q_1}$ by its value at $o_{1}$ gives $\lambda_{f_{2}}\circ\sigma=\lambda_{f_{1}}$, both sides being the normalised root. Hence a pointed equivalence carries the reconstructed triple of $f_{1}$ to that of $f_{2}$ along $(\sigma,\tau)$.

Injectivity therefore reduces to lifting $(\sigma,\tau)$ to a graded equivalence. In the invariant coordinates $u=x^{p}y$, $v=x^{p}z$, an origin-fixing source automorphism $\sigma=(\sigma_{1},\sigma_{2})$ lifts, for any $a\in\Gm$, to
\[
  (x,y,z)\longmapsto\Bigl(ax,\ \frac{\sigma_{1}(x^{p}y,x^{p}z)}{a^{p}x^{p}},\ \frac{\sigma_{2}(x^{p}y,x^{p}z)}{a^{p}x^{p}}\Bigr),
\]
which is polynomial because every monomial of $\sigma_{i}$ has positive total degree, and which induces $\sigma$ on $(u,v)$. The analogous formula lifts an origin-fixing target automorphism. Choosing the same scalar $a$ in both lifts makes the weight-one coordinates agree, and the identities $P=B\Lambda^{p}$, $Q=A\Lambda^{p}$ then give equality of the remaining components. The residual scalar is the universal grading torus, and the converse descent is immediate.
\end{proof}

\begin{rem}
\label{rem:autseq}
Equivalently, for a graded Keller map $F$ with pointed quotient $(f,o)$ there is an exact sequence
\[
  1\longrightarrow\Gm\longrightarrow\Aut_{\Gaut^{\sharp}}(F)\longrightarrow\Aut(f,o)\longrightarrow1,
\]
where $\Aut(f,o)$ is the group of pointed left--right self-equivalences of $f$. Surjectivity is the lifting statement, and the kernel consists of the graded automorphisms inducing the identity on the quotient, that is the universal grading torus.
\end{rem}

\begin{rem}
\label{rem:unequal}
For unequal weights $(1,-r,-s)$ the identity \cref{eq:quotjac} holds with exponent $r+s-1$, and the normalised root $\lambda_{f}$ still exists. Full lifting is special to $r=s$: the two divisibility conditions become $\lambda_{f}^{r}\mid P$ and $\lambda_{f}^{s}\mid Q$, and an arbitrary quotient automorphism need not preserve them.
\end{rem}

\section{Orbit separation by quotient invariants}
\label{sec:5}

Left-right equivalence of quotient maps transports the finite normalization, the branch divisor, the inertia, the monodromy, and the affine branch algebras with their normalization valuations, hence also the branch and local value semigroups of \cref{subsec:semigroup}. These are \emph{intrinsic} invariants. A passport or pole partition attached to a chosen pencil is \emph{framed} data, invariant only under equivalences preserving the pencil. Finally, the graded map retains the lift-point marking above the quotient origin, which is \emph{pointed} data not visible on the quotient cover at all. This section uses the first level only, and shows that it already separates two orbits at generic degree twelve.

\begin{lem}
\label{lem:descendequiv}
If two graded Keller maps lie in one $\Gaut^{\sharp}$-orbit, their quotient maps are polynomially left-right equivalent. Consequently every intrinsic invariant of the quotient map is constant on $\Gaut^{\sharp}$-orbits.
\end{lem}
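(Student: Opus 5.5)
The plan is to show that every element of $\Gaut^{\sharp}$ is, on the source and on the target, an automorphism of $\A^{3}$ commuting with the grading torus. Such an automorphism therefore descends to the rings of invariants. Concretely, let $g=(\alpha,\beta)\in\Gaut^{\sharp}$ carry $F_1$ to $F_2=\beta\circ F_1\circ\alpha^{-1}$, with $\alpha$ equivariant for the source weights $\bw$ and $\beta$ equivariant for the target weights $(-s,-r,1)$. This is the shape of the liftable automorphisms of \cite[\S8.5]{shaska131}, and I would first record it from that definition.

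First step: equivariance gives $\alpha^{*}(K[x,y,z]^{\Gm})=K[x,y,z]^{\Gm}$, so $\alpha^{*}$ restricts to an automorphism of $K[u,v]$. This defines a polynomial automorphism $\bar\alpha$ of $\A^{2}_{u,v}$, with inverse induced by $\alpha^{-1}$. Likewise $\beta$ induces $\bar\beta$ on the target invariant plane with coordinates $(Q,P)=(F_1F_3^{\,s},F_2F_3^{\,r})$.

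Second step: the quotient map $f=(P,Q)$ is precisely the map induced by $F$ on $\Gm$-invariants, since $Q$ and $P$ are the target invariants pulled back by $F$ and they lie in $K[u,v]$. Functoriality of taking invariants then gives $f_2=\bar\beta\circ f_1\circ\bar\alpha^{-1}$, up to the coordinate swap $(P,Q)$ versus $(Q,P)$, which is itself a polynomial automorphism. This is the asserted polynomial left--right equivalence.

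The consequence follows by combining this with \cref{lem:semiintrinsic} and the definition of intrinsic invariants as those transported by left--right equivalence. The only delicate point is the first step. There I must check from the definition of $\Gaut^{\sharp}$ that its elements are genuinely $\Gm$-equivariant, not merely equivariant up to a character twist. If a twist by a power of $t$ occurred, it would rescale $u,v$ and $Q,P$ by constants, which are still linear automorphisms, so the conclusion survives either way.
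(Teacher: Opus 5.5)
Your proposal is correct and follows the paper's proof. Elements of $\Gaut^{\sharp}$ act by $F\mapsto\Psi\circ F\circ\Phi^{-1}$ with graded source and target automorphisms, as in \cite[\S8.5]{shaska131}; these descend to automorphisms of the invariant planes $K[u,v]$ and $K[Q,P]$, and since the quotient map is the restriction of $F$ to invariants, this gives the left--right equivalence and hence the orbit-invariance of intrinsic invariants. Your worry about a character twist is unnecessary, because an automorphism intertwining the torus action with $t\mapsto t^{\pm1}$ already maps invariants to invariants exactly.
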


\begin{proof}
By \cite[\S8.5]{shaska131} an element of $\Gaut^{\sharp}$ acts by $F\mapsto\Psi\circ F\circ\Phi^{-1}$ with $\Phi$ and $\Psi$ graded automorphisms of $\A^{3}$ for the source and target weights. Each preserves the $\Gm$-action, hence descends to an automorphism of the invariant plane: $\Phi$ to $\phi$ in the coordinates $(u,v)$, and $\Psi$ to $\psi$ in the coordinates $(P,Q)$ of \cref{eq:PQ}. Since $(P,Q)$ is the restriction of $F$ to invariants, the quotient map of $\Psi\circ F\circ\Phi^{-1}$ is $\psi\circ(P,Q)\circ\phi^{-1}$, which is left-right equivalent to $(P,Q)$.
\end{proof}

The converse fails, and by a definite amount.

\subsection{An alternating shear member of degree twelve}
\label{subsec:shear}

Work over $\C$ with weight $(1,-1,-1)$, so $\Lambda=1+u$ and $w=v-1$. For $c,d\in\C^{\times}$ put
\[
  C_1=1+c\Lambda w^{3}, \qquad C_2=w+d\Lambda C_1^{3},
\]
and set $A=C_1C_2$ together with
\begin{equation}
\label{eq:Bshear}
\begin{split}
  B=\ & C_1^{2}+2c\Lambda C_1C_2^{3}-\tfrac{c^{2}}{3}\Lambda^{2}C_2^{6}
      +6cd\Lambda^{2}C_1^{4}C_2^{2}+6c^{2}d\Lambda^{3}C_1^{3}C_2^{5}
      -2cd^{2}\Lambda^{3}C_1^{7}C_2 \\
    & +15c^{2}d^{2}\Lambda^{4}C_1^{6}C_2^{4}+\tfrac25cd^{3}\Lambda^{4}C_1^{10}
      -\tfrac{20}{3}c^{2}d^{3}\Lambda^{5}C_1^{9}C_2^{3} \\
    & +3c^{2}d^{4}\Lambda^{6}C_1^{12}C_2^{2}-\tfrac67c^{2}d^{5}\Lambda^{7}C_1^{15}C_2
      +\tfrac19c^{2}d^{6}\Lambda^{8}C_1^{18} .
\end{split}
\end{equation}

\begin{prop}
\label{prop:shearseed}
The lift conditions $A(1,-1)=B(1,-1)=0$ hold if and only if
\[
  d(1-c)^{3}=1 \qquad\text{and}\qquad 32c^{2}-72c+45=0 .
\]
The quadratic has discriminant $-576$, so $c=\tfrac{9\pm3i}{8}$ and the pair $(A,B)$ is defined over $\Q(i)$ and over no smaller field. Taking $c=\tfrac{9-3i}{8}$ gives $d=\tfrac{1664+1152i}{125}$.
\end{prop}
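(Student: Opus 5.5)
The plan is to evaluate both conditions directly at the base point. By \cref{rem:clash}, in weight $(1,-1,-1)$ that point is $(\Lambda,w)=(1,-1)$, and the key observation is that $A$ and $B$ are written as polynomials in the two building blocks $C_1,C_2$. So I will first compute those two values there:
\[
  C_1(1,-1)=1-c, \qquad C_2(1,-1)=-1+d(1-c)^{3}.
\]
The condition $A=C_1C_2=0$ then splits into the two cases $C_1=0$ and $C_2=0$, and I will rule out the first using $B$.

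If $c=1$, then $C_1=0$ at the base point and every term of \cref{eq:Bshear} containing a positive power of $C_1$ drops out. The only survivor is $-\tfrac{c^{2}}{3}\Lambda^{2}C_2^{6}$. With $c=1$ we have $C_2=-1$, so this term equals $-\tfrac13\neq0$ and $B(1,-1)\neq0$. Hence $c\neq1$, and $A(1,-1)=0$ is equivalent to $C_2(1,-1)=0$, that is $d(1-c)^{3}=1$. This is the first stated condition, and it forces $d=e^{-3}$ with $e=1-c$.

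Granting $C_2=0$, every term of $B$ carrying a positive power of $C_2$ vanishes, leaving
\[
  B(1,-1)=e^{2}+\tfrac25 c\,d^{3}e^{10}+\tfrac19 c^{2}d^{6}e^{18},
  \qquad e=1-c .
\]
Substituting $d=e^{-3}$ gives $d^{3}e^{10}=e$ and $d^{6}e^{18}=1$, so $B(1,-1)=e^{2}+\tfrac25ce+\tfrac19c^{2}$. Multiplying by $45$ and expanding in $c$ gives $32c^{2}-72c+45$. This proves both directions of the equivalence, since each reduction step was reversible given $c\neq1$.

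The remaining claims are arithmetic. The discriminant is $72^{2}-4\cdot32\cdot45=-576$, which gives $c=(72\pm24i)/64=(9\pm3i)/8$. For $c=\tfrac{9-3i}{8}$ we have $1-c=\tfrac{-1+3i}{8}$, and $(-1+3i)^{3}=26-18i$. Hence
\[
  d=\frac{512}{26-18i}=\frac{1664+1152i}{125}.
\]
For \emph{over no smaller field}, I will note that $c$ itself appears as a coefficient: $A=C_1C_2$ contains the monomial $c\Lambda w^{4}$, whose coefficient is unaffected by the other terms. Since $c\notin\Q$, any field of definition of $(A,B)$ contains $\Q(c)=\Q(i)$; conversely all coefficients lie in $\Q(c,d)=\Q(i)$. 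I do not expect a genuine obstacle. The only point needing care is confirming that the monomial $\Lambda w^{4}$ in $A$ receives no second contribution from the expansion of $d\Lambda C_1^{4}$ that could cancel the coefficient $c$; I will check this by expanding $C_1C_2$ in low total degree.
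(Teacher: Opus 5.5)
Your proposal is correct and follows the paper's proof step for step. You evaluate $C_1=1-c$ and $C_2=-1+d(1-c)^{3}$ at $(\Lambda,w)=(1,-1)$, exclude $C_1=0$ because $B$ then reduces to $-\tfrac13$, and with $C_2=0$ use $dC_1^{3}=1$ to collapse $B(1,-1)$ to $C_1^{2}+\tfrac25cC_1+\tfrac19c^{2}$, which becomes $32c^{2}-72c+45$ after multiplying by $45$. Your check that $c$ is the coefficient of $\Lambda w^{4}$ in $A$ is also right: the other terms produce only monomials $\Lambda^{1+j}w^{3j}$ and $\Lambda^{2+j}w^{3+3j}$. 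This makes the ``no smaller field'' clause explicit, where the paper says only that the roots generate $\Q(i)$.
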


\begin{proof}
At $(\Lambda,w)=(1,-1)$ one has $C_1=1-c$ and $C_2=-1+d(1-c)^{3}$, so $A=C_1C_2=0$ forces $C_1=0$ or $C_2=0$. If $C_1=0$ then $c=1$ and $C_2=-1$, and every term of \cref{eq:Bshear} carrying a factor $C_1$ vanishes, leaving $B=-\tfrac{c^{2}}{3}C_2^{6}=-\tfrac13\neq0$. Hence $C_2=0$, which is the first displayed condition.

Given $C_2=0$ and $\Lambda=1$, the only terms of \cref{eq:Bshear} free of a factor $C_2$ are $C_1^{2}$, $\tfrac25cd^{3}C_1^{10}$ and $\tfrac19c^{2}d^{6}C_1^{18}$. The relation $dC_1^{3}=1$ gives $d^{3}C_1^{10}=C_1$ and $d^{6}C_1^{18}=1$, so
\[
  B(1,-1)=C_1^{2}+\tfrac25cC_1+\tfrac19c^{2} .
\]
Substituting $C_1=1-c$ and multiplying by $45$,
\[
\begin{split}
  45(1-c)^{2}+18c(1-c)+5c^{2}
    &=45-90c+45c^{2}+18c-18c^{2}+5c^{2}\\
    &=32c^{2}-72c+45 ,
\end{split}
\]
which is the second condition. Its discriminant is $72^{2}-4\cdot32\cdot45=-576$, so the roots are $(9\pm3i)/8$ and generate $\Q(i)$. For $c=(9-3i)/8$ one has $1-c=(-1+3i)/8$ and $(1-c)^{3}=(13-9i)/256$, whence $d=(1-c)^{-3}=(1664+1152i)/125$.
\end{proof}

\begin{prop}
\label{prop:shearmap}
Take $c=\tfrac{9-3i}{8}$ and $d=\tfrac{1664+1152i}{125}$ as in \cref{prop:shearseed}. The pair \cref{eq:Bshear} satisfies $A(0,w)=w$, $B(0,w)=1$ and \cref{eq:masterL} with $r=s=\kappa=1$. Hence
\[
  F_{12}(x,y,z)=\Bigl(\tfrac{A}{x},\ \tfrac{B}{x},\ x\Lambda\Bigr)
\]
is a polynomial Keller map with $\detJ{F_{12}}=-1$. Its degrees are $(\deg A,\deg B)=(17,80)$ and $(\deg_wA,\deg_wB)=(12,54)$, its class lies in $\Yparo[1,1,(17,80,1)]$, and it has the collision
\[
  F_{12}(1,-1,1)=F_{12}\bigl(0,\,10-3i,\,\tfrac72-\tfrac32i\bigr)=(0,1,0).
\]
\end{prop}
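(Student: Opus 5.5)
The plan is to mirror the cyclic construction: recognise $(A,B)$ as $Q=\Lambda A$, $P=\Lambda B$ for a quotient pair built from canonical coordinates with controlled brackets, and prove $\jb{P}{Q}=\Lambda$. The first step is to compute brackets in $(\Lambda,w)$. We have $\jb{\Lambda}{w}=1$, which gives $\jb{\Lambda}{C_1}=3c\Lambda w^{2}$, and the analogous formulas follow for $C_2$. It is cleaner to pass to $T_1=\Lambda C_1$ and $T_2=\Lambda C_2$ if possible. I would check whether $(\Lambda,\ \Lambda w^{3}\text{-type shears})$ form a composition of two triangular shears, $w\mapsto w$ with $C_1=1+c\Lambda w^{3}$ and then $C_2=w+d\Lambda C_1^{3}$. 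I expect $\jb{C_1}{C_2}$ to be expressible through $\Lambda$, $C_1$, $C_2$ alone, so that $Q=\Lambda C_1C_2$ and $P$ are polynomials in $(\Lambda,C_1,C_2)$ with $\Lambda$-weighted structure.

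The key step is to verify \cref{eq:masterL} with $\kappa=1$. Since $\jb{P}{Q}=\Lambda\bigl(\Lambda\jb{B}{A}+BA_w-AB_w\bigr)$ by \cref{lem:bracketidentity} with $r=s=1$, it suffices to check $\jb{P}{Q}=\Lambda$. Writing $P$ as the sum of the twelve monomials in \cref{eq:Bshear} times $\Lambda$, I would compute $\jb{\cdot}{Q}$ via the chain rule in the variables $(\Lambda,C_1,C_2)$. For this I need the three basic brackets $\jb{\Lambda}{C_1}$, $\jb{\Lambda}{C_2}$ and $\jb{C_1}{C_2}$, each rewritten as polynomials in $\Lambda,C_1,C_2$ and $w$, with $w$ eliminated via $w=C_2-d\Lambda C_1^{3}$. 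The coefficients $2c$, $-c^{2}/3$, $6cd,\dots,\tfrac19c^{2}d^{6}$ should arise as the unique solution of a triangular recursion in the manner of \cref{eq:jets}. The cancellation should then be organised by $(\Lambda,C_1,C_2)$-multidegree. This telescoping verification is the main obstacle. I would first try to find a hidden structure: a substitution making $(C_1,C_2)$ a shear pair with bracket a power of $\Lambda$ times simple factors, reducing the check to a one-variable antiderivative identity. Failing that, I would do a direct expansion by multidegree, which is finite but long.

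The remaining claims are routine. Setting $\Lambda=0$ gives $C_1=1$ and $C_2=w$, so $A(0,w)=w$, and every term of $B$ except $C_1^{2}$ carries a factor $\Lambda$, so $B(0,w)=1$. The lift conditions hold by \cref{prop:shearseed}, so polynomiality follows from \cref{lem:polynomiality}, and $\detJ{F_{12}}=-1$ follows from \cref{cor:master}.

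For the degrees, $\deg C_1=4$ and $\deg C_2=13$ with $\deg_w C_2=9$. Hence $\deg A=17$ and $\deg_w A=12$. For $B$, the top total degree $80$ comes from $\Lambda^{8}C_1^{18}$ and from competing terms such as $\Lambda^{2}C_2^{6}$; I would compare these leading terms and confirm there is no cancellation, which gives $(80,54)$ and the class $\Yparo[1,1,(17,80,1)]$.

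For the collision, $(1,-1,1)$ has $x=1$, $u=-1$, so $\Lambda=0$ and $w=0$. Its image is $(A,B,x\Lambda)=(0,1,0)$ by the $\Lambda=0$ values. The second point has $x=0$, so $\Lambda=1$, $w=-1$ (the base point), and $A/x$, $B/x$ are evaluated through their polynomial lifts. I would check that these equal $0$ and $1$ respectively by a direct computation of the linear-in-$(y,z)$ parts at $x=0$, using $A,B\in(\Lambda-1,w+1)$ there.
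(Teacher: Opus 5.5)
Apart from the master equation, your outline is the paper's proof. The evaluations at $\Lambda=0$, the passage from \cref{prop:shearseed} through \cref{lem:polynomiality} and \cref{cor:master} to polynomiality and $\detJ{F_{12}}=-1$, the degree count from $\deg C_1=4$ and $\deg C_2=13$, and the two collision evaluations are exactly what the paper does. The paper also treats \cref{eq:masterL} as an exact coefficient computation, so your fallback of expanding everything in $(\Lambda,w)$ is adequate.

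The organisation you actually propose for that step would not close, however. The functions $\Lambda,C_1,C_2$ are algebraically dependent, so a chain-rule expansion in them does not cancel multidegree by multidegree. Take the natural brackets $\jb{\Lambda}{C_1}=3c\Lambda w^{2}$, $\jb{\Lambda}{C_2}=1+9cd\Lambda^{2}C_1^{2}w^{2}$ and $\jb{C_1}{C_2}=cw^{2}(w-3d\Lambda C_1^{3})$, with $w=C_2-d\Lambda C_1^{3}$. The expansion then returns
\[
  \jb{P}{Q}=\Lambda\bigl(C_1-c\Lambda(C_2-d\Lambda C_1^{3})^{3}\bigr)^{3}.
\]
This equals $\Lambda$ only after imposing the defining relation $C_1=1+c\Lambda w^{3}$, which is inhomogeneous in any such multidegree.

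The hidden structure you were looking for does exist, and it reduces the check to a one-variable antiderivative identity. Put $X=\Lambda C_1^{2}$ and $Y=\Lambda C_1C_2=Q$. The same bracket formulas give $\jb{X}{Y}=\Lambda C_1^{2}(C_1-c\Lambda w^{3})=X$. Rewritten in $X,Y$, the function $P=\Lambda B$ is \cref{eq:Pshear}. Its coefficients are those of the termwise primitive in $X$ of $\Phi_Y(X)^{2}/X^{4}$, with $\Phi_Y(X)=X^{2}-c(Y-dX^{2})^{3}$ as in \cref{eq:square}. So they come from this integration, not from a recursion like \cref{eq:jets}.

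Since $Y-dX^{2}=\Lambda C_1w$, one gets $\Phi_Y(X)=\Lambda^{2}C_1^{3}(C_1-c\Lambda w^{3})=\Lambda^{2}C_1^{3}$. Hence $\jb{P}{Q}=P_X\jb{X}{Y}=\Phi_Y(X)^{2}/X^{3}=\Lambda$, which by \cref{lem:bracketidentity} is \cref{eq:masterL} with $\kappa=1$. This is the double-cover shear $\Lambda=\alpha^{2}$, $w=\beta/\alpha$ that the paper introduces right after the proposition.

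Two smaller sharpenings.
\begin{enumerate}
\item The leading form of $C_2$ is $d\Lambda$ times the cube of that of $C_1$. So all seven degree-$80$ terms of \cref{eq:Bshear}, not just $\Lambda^{8}C_1^{18}$ and $\Lambda^{2}C_2^{6}$, share the leading monomial $\Lambda^{26}w^{54}$. Non-cancellation is then the single check that their combined coefficient $\tfrac{1024}{63}c^{20}d^{6}$ is nonzero, and this also gives $\deg_wB=54$.
\item At the second collision point, $(A_\Lambda,A_w)(1,-1)=(1-4c,\,1+8c)$ and $(y,z)=(A_w,-A_\Lambda)$. So the first entry vanishes, and the second is $\jb{B}{A}(1,-1)=1$ by the master equation at the base point. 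No separate computation of $\nabla B$ is needed.
\end{enumerate}
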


\begin{proof}
The identity \cref{eq:masterL} and the collision are exact coefficient computations; the lift conditions are \cref{prop:shearseed}. The degrees follow from $\deg C_1=4$, $\deg C_2=1+3\cdot4=13$, so $\deg A=17$, while the terms $\Lambda^{8}C_1^{18}$, $\Lambda^{4}C_1^{6}C_2^{4}$ and $\Lambda^{7}C_1^{15}C_2$ of \cref{eq:Bshear} all have degree $80$ and do not cancel. Likewise $\deg_wC_1=3$ and $\deg_wC_2=9$. Polynomiality and $\detJ{F_{12}}=-1$ then follow from \cref{lem:polynomiality} and \cref{cor:master}. At $(x,y,z)=(1,-1,1)$ one has $\Lambda=w=0$, so $C_1=1$, $C_2=0$, $A=0$ and $B=1$, giving the image $(0,1,0)$; the second source point has $(\Lambda,w)=(1,-1)$, the lift point.
\end{proof}

Pass to the double cover $\Lambda=\alpha^{2}$, $w=\beta/\alpha$, and set
\[
  \alpha_1=\alpha+c\beta^{3}, \qquad \beta_1=\beta+d\alpha_1^{3},
\]
a polynomial automorphism of $\A^{2}_{\alpha,\beta}$ commuting with $(\alpha,\beta)\mapsto(-\alpha,-\beta)$. Then $\alpha_1=\alpha C_1$ and $\beta_1=\alpha C_2$. Put
\[
  X=\alpha_1^{2}=\Lambda C_1^{2}, \qquad Y=\alpha_1\beta_1 .
\]
Both are invariant under the involution, so both are functions of $(\Lambda,w)$, and $\C(X,Y)=\C(\Lambda,w)$ because the invariant field of $\C(\alpha_1,\beta_1)$ is generated by $\alpha_1^{2}$ and $\alpha_1\beta_1$. Moreover
\begin{equation}
\label{eq:YisQ}
  Y=\alpha_1\beta_1=\alpha^{2}C_1C_2=\Lambda A=Q ,
\end{equation}
so the pencil below is the pencil by $Q$ of \cref{subsec:quotient}. Writing $P=\Lambda B$ as a function of $X$ and $Y$,
\begin{equation}
\label{eq:Pshear}
\begin{split}
  P(X,Y)=\ & X+2cX^{-1}Y^{3}-\tfrac{c^{2}}{3}X^{-3}Y^{6}+6cdXY^{2}+6c^{2}dX^{-1}Y^{5}-2cd^{2}X^{3}Y \\
    & +15c^{2}d^{2}XY^{4}+\tfrac25cd^{3}X^{5}-\tfrac{20}{3}c^{2}d^{3}X^{3}Y^{3} \\
    & +3c^{2}d^{4}X^{5}Y^{2}-\tfrac67c^{2}d^{5}X^{7}Y+\tfrac19c^{2}d^{6}X^{9} .
\end{split}
\end{equation}

\begin{theorem}
\label{thm:passport}
Over $\C(Y)$ the cover $X\mapsto P(X,Y)$ has degree twelve, cycle type $(9,3)$ over infinity, and six finite branch values each of cycle type $(3,1^{9})$, with no further branching. By \cref{eq:YisQ} and \cref{lem:quotientdegree} the generic fibre degree of $F_{12}$ is twelve.
\end{theorem}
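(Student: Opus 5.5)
The plan is to compute the $X$-derivative of $P$ at fixed $Y$ through the Jacobian rather than term by term. By \cref{eq:YisQ}, $\partial P/\partial X|_{Y}=\Jac_{X,Y}(P,Y)=\Jac_{\Lambda,w}(P,Q)/\Jac_{\Lambda,w}(X,Y)$. The numerator is $\Lambda$ by the master equation with $\kappa=1$, as in \cref{prop:jet} and \cref{prop:shearmap}. For the denominator I would pass through the double cover:
\begin{itemize}
\item $\Jac_{\alpha,\beta}(\Lambda,w)=2\alpha\cdot\alpha^{-1}=2$;
\item the shear $(\alpha,\beta)\mapsto(\alpha_1,\beta_1)$ has Jacobian one;
\item $\Jac_{\alpha_1,\beta_1}(\alpha_1^{2},\alpha_1\beta_1)=2\alpha_1^{2}=2X$.
\end{itemize}
Together these give $\Jac_{\Lambda,w}(X,Y)=X$, hence $P_X=\Lambda/X$.

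Next I express $\Lambda=\alpha^{2}$ in $(X,Y)$. Inverting the shear gives $\alpha=\alpha_1-c(\beta_1-d\alpha_1^{3})^{3}$, and I substitute $\alpha_1=X^{1/2}$, $\beta_1=YX^{-1/2}$. I expect
\[
  \Lambda=X^{-3}G(X,Y)^{2},\qquad G=X^{2}-c\,(Y-dX^{2})^{3},
\]
so that
\[
  \frac{\partial P}{\partial X}=\frac{G(X,Y)^{2}}{X^{4}},
\]
with $\deg_X G=6$ and leading coefficient $cd^{3}\neq0$. I would cross-check this against \cref{eq:Pshear}. The pole orders can be read off directly: there is a pole of order $3$ at $X=0$ from the term $-\tfrac{c^{2}}{3}X^{-3}Y^{6}$, which survives for $Y\neq0$, and a pole of order $9$ at $X=\infty$ from $\tfrac19c^{2}d^{6}X^{9}$. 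There are no other poles, so the degree is $12$ and the cycle type over $\infty$ is $(9,3)$.

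The finite critical points are the roots of $G(\cdot,Y)$. At a simple root, $P_X$ vanishes to order $2$, so the local degree is $3$. I would show that $G(\cdot,Y)$ has six distinct nonzero roots for generic $Y$, by checking that the $X$-discriminant of $G$ is a nonzero polynomial in $Y$; exhibiting one specialisation of $Y$ with six distinct roots suffices, and $G(0,Y)=-cY^{3}\neq0$ rules out roots at the origin. Riemann--Hurwitz is then exact:
\[
  (8+2)+6\cdot 2=22=2\cdot 12-2,
\]
which excludes any further branching.

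The main obstacle is that the six critical values must be pairwise distinct for generic $Y$. Otherwise some branch value would carry cycle type $(3,3,1^{6})$ instead of $(3,1^{9})$. I would handle this with a resultant: form the polynomial in $Y$ whose vanishing detects $P(X_i,Y)=P(X_j,Y)$ for two distinct roots $X_i\neq X_j$ of $G$, and show it is not identically zero by a single specialisation, either exact or certified numerically. Finally, the generic fibre degree statement follows from \cref{lem:quotientdegree}, since $\C(X,Y)=\C(\Lambda,w)$ and $Y=Q$.
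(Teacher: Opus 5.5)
Your proposal is correct, and it reaches the result by a partly different route from the paper's.

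\textbf{Derivative.} The paper gets \cref{eq:square} by differentiating the twelve-term formula \cref{eq:Pshear} directly. You instead derive $P_X=\Lambda/X$ from two facts: the Keller identity $\Jac_{\Lambda,w}(P,Q)=\Lambda$, and $\Jac_{\Lambda,w}(X,Y)=X$, computed through the double cover and the shear. Inverting the shear then gives $\Lambda=\alpha^{2}=X^{-3}G^{2}$. This reproduces the paper's $\Phi_Y$ exactly. Its advantage is that it explains why $P_X$ is a perfect square over $X^{4}$: $\Lambda$ is a square on the double cover. The paper only verifies this coefficient by coefficient.

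\textbf{Distinct critical values.} Here the two routes differ more substantially. The paper computes no discriminant in the target variable. It parametrises the critical curve by $X=c^{2}\omega^{3}$ and $Y_c(\omega)=c\omega^{2}(1+c^{3}d\omega^{4})$, and computes $P_c(\omega)=\tfrac{8c^{2}\omega^{3}}{315}\Theta(c^{3}d\omega^{4})$. It then proves $\omega\mapsto(P_c,Y_c)$ birational by examining the fibre over $(0,0)$: the only preimage is $\omega=0$, and the coprime orders $3$ and $2$ there force ramification index one. Birationality immediately gives six distinct critical values for generic $Y$.

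Your route is also valid. You take a symmetric function of the roots of $G(\cdot,Y)$, for instance $\operatorname{disc}_Z\operatorname{Res}_X\bigl(G,\,X^{3}(P-Z)\bigr)$, and show it is nonzero by one certified specialisation. However, this is a brute-force certificate tied to the specific $(c,d)$, or to $c=d=1$ after \cref{lem:cdtorus}.

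The birationality argument has three advantages over it:
\begin{itemize}
\item It is uniform in $cd\neq0$.
\item It needs only $\Theta(0)\neq0$ and $\Theta(-1)\neq0$.
\item It produces the parametrisation of $\Gamma$ that the paper reuses in \cref{thm:semigroup} and \cref{prop:sheardim}.
\end{itemize}
Both arguments then close with the same exact Riemann--Hurwitz count.
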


\begin{proof}
Differentiating \cref{eq:Pshear},
\begin{equation}
\label{eq:square}
  P_X=\frac{\Phi_Y(X)^{2}}{X^{4}}, \qquad \Phi_Y(X)=X^{2}-c(Y-dX^{2})^{3} .
\end{equation}
For generic $Y$ the polar divisor of $P$ is $3\cdot(0)+9\cdot(\infty)$, of degree twelve, so the cover has degree twelve with cycle type $(9,3)$ over infinity. By \cref{eq:square} the finite critical points are the zeros of $\Phi_Y$, and
\[
  \operatorname{disc}_X\Phi_Y=64Y^{3}c^{4}d^{9}(27Y^{2}cd+4)^{2},
\]
so for generic $Y$ there are six simple zeros; the square in \cref{eq:square} makes each of local degree three, giving cycle type $(3,1^{9})$.

The zero locus of $\Phi_Y$ is parametrised by
\[
  X=c^{2}\omega^{3}, \qquad Y_c(\omega)=c\omega^{2}(1+c^{3}d\omega^{4}),
\]
with rational inverse $\omega=X/\bigl[c(Y-dX^{2})\bigr]$, since $Y-dX^{2}=c\omega^{2}$ on that locus. Substituting into \cref{eq:Pshear},
\[
\begin{split}
  P_c(\omega)	&	=\frac{8c^{2}\omega^{3}}{315}\Theta(c^{3}d\omega^{4}), \\
  \Theta(\tau)	&	=105+630\tau+2205\tau^{2}+4452\tau^{3}+5040\tau^{4}+2880\tau^{5}+640\tau^{6}.
\end{split}
\]
We claim $\omega\mapsto\bigl(P_c(\omega),Y_c(\omega)\bigr)$ is birational onto its image $\Gamma$. Its fibre over $(0,0)$ consists of the common zeros of $P_c$ and $Y_c$. Now $Y_c$ vanishes at $\omega=0$ and at $\omega^{4}=-1/(c^{3}d)$, while $P_c$ vanishes at $\omega=0$ and at the zeros of $\Theta(c^{3}d\omega^{4})$; since $\Theta(-1)=28\neq0$, the two sets meet only at $\omega=0$. There $\operatorname{ord}P_c=3$ and $\operatorname{ord}Y_c=2$, because $\Theta(0)=105\neq0$, so the ramification index of the parametrisation at $\omega=0$ divides both $3$ and $2$ and is therefore $1$. The degree of the parametrisation is the sum of the indices over that fibre, hence $1$.

Birationality gives six distinct critical values for generic $Y$, and Riemann--Hurwitz is exact:
\[
  6(3-1)+(9-1)+(3-1)=22=2\cdot12-2 .
\]
For the components of $\Gamma_f$, the polar divisor computed above gives $\deg P(\cdot,Y)=12$ for every $Y\neq0$, while at $Y=0$ every term of \cref{eq:Pshear} involving $Y$ vanishes, leaving the polynomial $X+\tfrac25cd^{3}X^{5}+\tfrac19c^{2}d^{6}X^{9}$ of degree nine. Since $Y=Q$ by \cref{eq:YisQ}, the line $\{Q=0\}$ is therefore the only vertical component of $\Gamma_f$, and $\Gamma_f=\{Q=0\}\cup\Gamma$.
\end{proof}

\begin{theorem}
\label{thm:monodromy}
$P(X,Y)$ is indecomposable over the algebraic closure of $\C(Y)$, and its geometric monodromy group is $A_{12}$.
\end{theorem}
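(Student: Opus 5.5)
We work with the degree-twelve cover $X\mapsto P(X,Y)$ over $\overline{\C(Y)}$, equivalently over $\C$ at a general specialisation $Y=Y_0$, using the ramification data of \cref{thm:passport}. The geometric monodromy group $G\le S_{12}$ is generated by six $3$-cycles, one for each finite branch value of type $(3,1^{9})$, together with an element of cycle type $(9,3)$ over infinity. The product of the finite generators is the inverse of the element at infinity. The plan is to show in turn that $G$ is transitive, primitive, and contains a $3$-cycle. Jordan's theorem then gives $A_{12}\le G$. Since every generator is even, $G\le A_{12}$, and hence $G=A_{12}$.

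Transitivity is immediate because $P(X,Y)-t$ is irreducible: the cover is a connected $\bP^{1}$ mapping onto $\bP^{1}$, since $\C(X,Y)$ is a field. The containment $G\le A_{12}$ is also easy. Each $3$-cycle is even, and the type $(9,3)$ is a product of an $8$-cycle-parity piece and a $2$-cycle-parity piece, i.e. of parity $(9-1)+(3-1)=10$, hence even. So $G\le A_{12}$, and a $3$-cycle is already present among the generators.

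The main step is primitivity, which is the same as the indecomposability claim. By Lüroth, a decomposition corresponds to an intermediate rational function $X\mapsto h(X)$ of degree $e\in\{2,3,4,6\}$ with $P=g\circ h$. I would rule this out by a ramification argument, case by case.

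First, consider the element at infinity of type $(9,3)$. A block system with blocks of size $e$ must be permuted by this element. A $9$-cycle and a $3$-cycle on blocks of size $e$ force each cycle length to be a multiple of the block-orbit length. Concretely, the block partition induced on the $9$-orbit and the $3$-orbit must be compatible with $e\mid 12$ and with the orbits being unions of blocks, or else blocks meet both orbits evenly.

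For $e=2,4,6$, a block containing points from the $9$-orbit would have to contain the same number of points from it in each block of its cycle, which is impossible. The block count on the $9$-orbit is a divisor of $9$, and we need $9/(\text{number of blocks}) + 3/(\text{number of blocks}) = e$ consistently. For $e=2,4,6$ this forces $9$ to be divisible by an even quantity or yields non-integers. Only $e=3$ (blocks $3+3+3\mid 3$, or four blocks each with three points from the $9$-cycle... ) survives.

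For $e=3$, I would use the $3$-cycles. A $3$-cycle preserving a system of four blocks of size $3$ either lies inside one block or permutes three blocks cyclically. The latter moves $9$ points, which is impossible, so each finite $3$-cycle lies inside a block. Then each generator fixes every block, and so does their product. The element at infinity would therefore act trivially on the blocks. But a $9$-cycle cannot preserve blocks of size $3$ individually. This contradiction gives primitivity.

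The bookkeeping for $e=2,4,6$ is the part I expect to need the most care. There I would use the standard fact that an element of cycle type $(9,3)$ preserving blocks of size $e$ must have each cycle length divisible by its orbit length on blocks, and that each such cycle contributes equally to every block in its block-orbit. I would check the four values of $e$ directly. As a fallback, if the block argument has a gap, Riemann–Hurwitz applied to $h$ and $g$ separately also works: $g$ would have to absorb the six simple $3$-ramifications, which are unramified for $h$ by genericity of the branch values.

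Finally, a primitive group containing a $3$-cycle contains $A_{12}$ by Jordan's theorem, and combined with $G\le A_{12}$ this gives $G=A_{12}$. Indecomposability over $\overline{\C(Y)}$ is equivalent to the primitivity just shown.
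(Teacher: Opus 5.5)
Your primitivity step contains a false claim. The element at infinity does \emph{not} exclude blocks of size $e=4$. For $g_\infty=(1\,2\,\cdots\,9)(10\,11\,12)$, the partition $\{1,4,7,10\},\{2,5,8,11\},\{3,6,9,12\}$ is permuted cyclically by $g_\infty$. Each block takes three points from the $9$-cycle and one from the $3$-cycle, so the count $9/3+3/3=4$ is consistent. Your orbit counting does correctly rule out $e=2$ and $e=6$, where each block would need $9/6$ or $9/2$ points of the $9$-orbit. But the assertion that ``only $e=3$ survives'' is wrong, and as written the case $e=4$ is never handled.

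The repair is already in your $e=3$ paragraph, and it makes the $(9,3)$ case analysis unnecessary. Take any block size $1<e<12$. A $3$-cycle sending a block $B$ to a different block $B'$ moves every point of $B$ and of $B'$, hence at least $2e\ge4$ points, which is impossible. So each of the six finite generators fixes every block setwise. Then so does $g_\infty=(g_1\cdots g_6)^{-1}$, and $G$ would fix every block, contradicting transitivity. In other words, $G$ is a transitive group generated by $3$-cycles, which is exactly \cref{lem:threecycles} of the paper (used there for the Nielsen census).

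With that fix your proof is correct, but it takes a different route from the paper's. The paper proves indecomposability first, on the function-field side. It analyses a decomposition $P=g\circ h$ according to the number of poles of $g$, using the pole orders $(9,3)$ and the passport $(3,1^9)$ to rule out a cubic polynomial $g$. In the case $h\sim X^3$ it uses the explicit nonvanishing coefficient $1+6cdY^2+15c^2d^2Y^4$ of $X$.

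Your route needs only two inputs: the passport of \cref{thm:passport} (six distinct finite branch values, each inertia a single $3$-cycle), and the fact that $\infty$ is the only other branch point. It avoids any coefficient computation, and indecomposability then follows from primitivity rather than preceding it. Both arguments depend equally on the distinctness of the six finite branch values.
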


\begin{proof}
Suppose $P=g\circ h$ with $\deg g=a>1$, $\deg h=b>1$, $ab=12$. Every finite ramification index of $P$ is three by \cref{thm:passport}.

Suppose $g$ has one pole. After a fractional linear change $g$ is a polynomial, totally ramified over infinity, so each pole of $P$ has index divisible by $a$; thus $a$ divides $3$ and $9$, forcing $a=3$, $b=4$. If the two critical points of the cubic $g$ were distinct, each would be simple, and a point of $h^{-1}$ above such a critical value would have index $2e_h$ in $P$, which is even and at least two, hence never three. So the critical points coincide and $g$ is conjugate to $z^{3}+\mathrm{const}$. Its finite critical point has four preimages under $h$ counted with multiplicity; any with $e_h>1$ would give index $3e_h>3$, so all four are unramified and give four points of index three over a single critical value, that is cycle type $(3,3,3,3)$, contradicting \cref{thm:passport}.

Suppose instead $g$ has two poles. Then $P$ has two poles, so each pole of $g$ has a unique $h$-preimage and $h$ is totally ramified there; hence $b$ divides $3$ and $9$, forcing $b=3$, $a=4$. A degree-three rational function totally ramified at two points is conjugate to $X^{3}$, so $P\in\C(Y)(X^{3})$. But the coefficient of $X$ in \cref{eq:Pshear} is 
\[
1+6cdY^{2}+15c^{2}d^{2}Y^{4},
\]
 not identically zero.

Both arguments use only pole orders, the distinctness of the six finite critical values, and one coefficient, all preserved by base change to the algebraic closure. So the cover is indecomposable and its monodromy is primitive. A finite inertia generator is a three-cycle, so $A_{12}\le\mathrm{Mon}$ by Jordan's theorem. Every inertia generator is even, the finite ones being three-cycles and the one at infinity of type $(9,3)$, so $\mathrm{Mon}\le A_{12}$.
\end{proof}

\begin{cor}
\label{cor:radicals}
The geometric normal closure group of $F_{12}$ is $A_{12}$, and its generic inverse branches are not expressible by radicals over the target function field.
\end{cor}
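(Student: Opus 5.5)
The corollary has two parts: the normal closure group of $F_{12}$ is $A_{12}$, and its generic inverse branches cannot be written with radicals. I would first reduce the group statement to the quotient cover, and then reduce that to the one-variable cover treated in \cref{thm:monodromy}.

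\textbf{Step 1: from $F_{12}$ to the quotient cover.} By \cref{lem:quotientdegree}, the target function field satisfies $K(F_1,F_2,F_3)=K(P,Q,L)$. The source field $K(u,v,L)$ is obtained from $K(u,v)/K(P,Q)$ by a purely transcendental base change in $L$. A purely transcendental base change does not alter the Galois group of the normal closure. Hence the normal closure group of $F_{12}$ equals that of $\C(\Lambda,w)/\C(P,Q)$.

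\textbf{Step 2: from the quotient cover to the pencil by $Q$.} By \cref{eq:YisQ} and the identity $\C(X,Y)=\C(\Lambda,w)$, this extension is $\C(Y)(X)/\C(Y)(P(X,Y))$. Its geometric group over $\overline{\C(Y)}$ is $A_{12}$ by \cref{thm:monodromy}.

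\textbf{Step 3: no enlargement over $\C(Y)$.} The geometric group is normal in the group over $\C(Y)$. The group over $\C(Y)$ is contained in $S_{12}$, so it is either $A_{12}$ or $S_{12}$.
\begin{itemize}
\item It is $S_{12}$ exactly when the normal closure contains $\sqrt{\operatorname{disc}}$ not already lying in $\C(Y,P)$. Here the constant field is $\C$, which is algebraically closed.
\item Let $k$ be the algebraic closure of $\C(Y)$ inside the normal closure. Then $\operatorname{Gal}(k/\C(Y))$ is the quotient of the arithmetic group by the geometric group.
\end{itemize}
The main obstacle is ruling out $k\neq\C(Y)$. I would argue that $\C(Y)$ is algebraically closed in $\C(X,Y)$, and use \cref{thm:passport} to see that the discriminant of $P(X,Y)-t$ in $X$ has the form $c\cdot t$-dependent factor times a square up to a power of $Y$. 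Evenness of every inertia generator, both finite and at infinity, identifies the normal closure group computed in the two-variable field $\C(P,Q)$ with the monodromy of the plane cover. That monodromy is generated by inertia along $\Gamma_f$ and along the vertical line $\{Q=0\}$.

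Concretely, I would take a general line transversal in the target $(P,Q)$-plane. By Zariski's theorem its fundamental group surjects onto that of the complement of $\Gamma_f$. The monodromy along a general fibre $Q=Y$ is already $A_{12}$, and every inertia element, including those over $\{Q=0\}$, is even by the evenness of the discriminant. Hence the full group is $A_{12}$.

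\textbf{Step 4: radicals.} $A_{12}$ is simple and nonabelian, hence not solvable. By Galois's criterion, a root of $P(X,Y)=P_0$ is then not expressible in radicals over $\C(P,Q)$. The same holds over the target function field of $F_{12}$ by Step 1.
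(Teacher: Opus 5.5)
Your Steps 1, 2 and 4 are the paper's proof. The paper reads ``geometric normal closure group'' as the group of \cref{thm:monodromy}, computed over the algebraic closure of $\C(Y)$. It transports that group to $F_{12}$ by the purely transcendental base change of \cref{lem:quotientdegree} and then applies Galois's criterion. It has no analogue of your Step 3, and your Step 4 does not depend on it. The geometric group embeds by restriction into the Galois group over $\C(P,Q)$, equivalently over $\C(F_1,F_2,F_3)$, so that group is nonsolvable whatever Step 3 yields.

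The gap is in Step 3. It is needed only for the stronger reading, that the group over $\C(P,Q)$ itself is $A_{12}$, and you do not actually prove it.

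\textbf{What Step 3 reduces to.} Since the geometric group is $A_{12}$, the discriminant of $X^{3}(P(X,Y)-\wp)$ in $X$ has the form $\delta(Y)g^{2}$ with $\delta\in\C(Y)^{\times}$ and $g\in\C(Y)[\wp]$. Since $\{Q=0\}$ is the only vertical component of $\Gamma_f$, the whole question is the parity of the discriminant's order along $Y=0$. Equivalently, it is the parity of the local monodromy around $\{Q=0\}$.

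\textbf{Why your argument does not settle it.}
\begin{enumerate}
\item You leave that power of $Y$ unspecified, then call the inertia over $\{Q=0\}$ even ``by the evenness of the discriminant''. That is circular. The inertia data of \cref{thm:passport} are taken at generic $Y$ and see only the $P$-direction. An odd power would give $S_{12}$ over $\C(Y,P)$, with constants $\C(Y,\sqrt{Y})$, while the geometric group stays $A_{12}$.
\item Algebraic closedness of $\C$, or of $\C(Y)$ in $\C(X,Y)$, does not bound the constants of the Galois closure.
\item Zariski's theorem concerns generic lines, not the special pencil $Q=\mathrm{const}$, which contains a branch component. It is also unnecessary, since $\pi_1(\A^2\setminus\Gamma_f)$ is normally generated by meridians.
\end{enumerate}

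\textbf{How to close the step.} Either cite \cref{eq:disc12}, where the discriminant is $-27Y^{12}R_{\Gamma}^{2}$, a square in $\C(Y,\wp)$. Or compute the meridian of $\{Q=0\}$ directly. Fix a generic $\wp\neq0$. Nine roots tend to the simple roots of $P(X,0)=\wp$ and so continue holomorphically in $Y$. Substituting $X=Y^{2}\xi$ gives $X^{3}(P-\wp)=Y^{6}\bigl(-\tfrac{c^{2}}{3}-\wp\xi^{3}+O(Y)\bigr)$. So the other three roots are $Y^{2}\xi_j(Y)$ with $\xi_j$ holomorphic, and the meridian of $\{Q=0\}$ acts trivially. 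Meridians of $\Gamma$ act by $3$-cycles. Hence the group over $\C(P,Q)$ is transitive and generated by $3$-cycles, so it equals $A_{12}$ by \cref{lem:threecycles}.
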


\begin{proof}
By \cref{lem:quotientdegree} the extension attached to $F_{12}$ is the purely transcendental base change of the quotient extension along the adjunction of $L=x\Lambda$, so the two have the same geometric normal closure group, namely $A_{12}$ by \cref{thm:monodromy}. Now apply the Galois criterion, $A_{12}$ being nonsolvable.
\end{proof}

\subsection{The boundary semigroup}
\label{subsec:boundary}

Normalise $c=d=1$; the substitution $X\mapsto sX$, $Y\mapsto tY$, $P\mapsto s^{-1}P$ is a left--right equivalence carrying any member with $cd\neq0$ to this one, so the normalisation changes neither the monodromy nor the branch semigroup. The branch curve $\Gamma$ then has normalisation $\A^{1}_{t}$ with
\[
\begin{split}
  \gamma(t)&=t^{6}+t^{2},\\
  \pi(t)&=\frac{8t^{3}}{315}\bigl(640t^{24}+2880t^{20}+5040t^{16}+4452t^{12}+2205t^{8}+630t^{4}+105\bigr),
\end{split}
\]
$\gamma$ the $Y$-coordinate and $\pi$ the $P$-coordinate, of degrees $6$ and $27$. Set
\begin{equation}
\label{eq:key}
  \chi=\pi^{2}-\frac{1048576}{3969}\gamma^{9}-\frac{65536}{315}\gamma^{7}-\frac{115712}{1575}\gamma^{5} .
\end{equation}
The first coefficient is the square of the leading coefficient of $\pi$, and the four leading terms of \cref{eq:key} cancel in succession, leaving $\deg_t\chi=26$ with leading coefficient $-512/49$.

\begin{theorem}
\label{thm:semigroup}
The semigroup of pole orders at the unique place at infinity of the nonzero elements of $\C[\pi,\gamma]$ is $\Semi=\langle6,26,27\rangle$.
\end{theorem}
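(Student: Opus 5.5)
We plan to apply \cref{lem:degsemigroup} to the three generators $g_1=\gamma$, $g_2=\chi$, $g_3=\pi$, after normalising each to be monic. The subalgebra generated is unchanged, because $\chi\in\C[\pi,\gamma]$ and $\pi^{2}$ is a polynomial in $\chi$ and $\gamma$. So $\C[\pi,\gamma]=\C[\gamma,\chi,\pi]$. The degrees are $d=(6,26,27)$: $\gamma$ and $\pi$ are given explicitly, and $\deg\chi=26$ with nonzero leading coefficient $-512/49$ is the computation stated after \cref{eq:key}. The containment $\langle6,26,27\rangle\subseteq\Semi$ is then immediate. The other containment is the content of the lemma.

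First we determine generators of the toric ideal $I_{\bd}$ of $\langle6,26,27\rangle$. The semigroup is three-generated. By Herzog's theorem, its toric ideal is generated by at most three binomials, and by two if the semigroup is a complete intersection. The minimal relations are as follows. The relation $2\cdot27=54=9\cdot6$ gives $x_3^{2}-x_1^{9}$. For $26$, the smallest multiple lying in $\langle6,27\rangle$ is $3\cdot26=78=13\cdot6$; note that $2\cdot26=52$ is not in $\langle6,27\rangle$. This gives $x_2^{3}-x_1^{13}$. We also need to check for a mixed relation, such as $26+27=53$, or $26a+27b$ against multiples of $6$.

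We use the gluing criterion. Since $\langle26,27\rangle$ and $6$ have $\gcd(26,27)=1$, this does not directly help. Instead we glue $\langle6,26\rangle=2\langle3,13\rangle$ with $27$: we have $2\cdot27=54\in\langle6,26\rangle$ and $\gcd(6,26)=2$. So the semigroup is a complete intersection, and $I_{\bd}$ is generated by $x_3^{2}-x_1^{9}$ together with the toric ideal of $\langle3,13\rangle$ lifted, namely $x_2^{3}-x_1^{13}$. So $q=2$, consistent with \cref{rem:rs}.

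Next we verify the two reduction hypotheses of \cref{lem:degsemigroup}.

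For the first generator, the normalised difference $\pi^{2}-c\gamma^{9}$ equals $\chi+\tfrac{65536}{315}\gamma^{7}+\tfrac{115712}{1575}\gamma^{5}$ by \cref{eq:key}. All the monomials on the right have weighted degree $26$, $42$ or $30$, each strictly less than $54$.

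For the second generator, we must write $\chi^{3}-c'\gamma^{13}$, where $c'=(-512/49)^{3}$, as a polynomial in $\gamma,\chi,\pi$ all of whose monomials have weighted degree strictly less than $78$. We will do this by explicit division in $\C[t]$. Subtract $c'\gamma^{13}$. At each step, look at the current leading degree $e<78$. If $e\in\langle6,26,27\rangle$, subtract a suitable multiple of a monomial $\gamma^{a}\chi^{b}\pi^{c}$ of degree exactly $e$; we choose the representation with $c\le1$ and $b\le2$. Iterate until the remainder is $0$.

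The delicate point is that the process must never hit a gap of the semigroup. The gaps of $\langle6,26,27\rangle$ are finitely many and bounded by the Frobenius number. Because all exponents of $\gamma$ are even and $\pi$ is odd, we will exploit parity: $\gamma(t)=t^{2}(t^{4}+1)$ is even, $\pi$ is odd, and $\chi$ is even. So $\chi^{3}-c'\gamma^{13}$ is an even polynomial of degree at most $76$, and only even degrees arise. The even elements of the semigroup include $\langle6,26\rangle$ and $54+\langle6,26\rangle$, and the even gaps are few, for example $2,4,8,10,14,16,\dots$. In fact further structure helps. Every term of $\gamma$, $\chi$ and $\pi^{2}$ has degree $\equiv 2 \pmod 4$ or $\equiv 0 \pmod 4$ in a controlled pattern: $\gamma\in t^{2}\C[t^{4}]$, $\pi\in t^{3}\C[t^{4}]$, and hence $\chi\in t^{2}\C[t^{4}]$. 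So $\chi^{3}-c'\gamma^{13}\in t^{2}\C[t^{4}]$. Its degrees are $\equiv 2\pmod 4$, and the representation uses only monomials with the same residue.

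The main obstacle is certifying that every reduction step lands in the semigroup, and that the remainder terminates at zero rather than at a nonzero low-degree gap such as $t^{2}$ or $t^{10}$. We would first carry out the exact computation, since there are about nineteen steps in $\C[t]$. If a small gap appears, we would instead check that the final remainder vanishes by comparing low-order coefficients, using $\operatorname{ord}_{t=0}\gamma=2$, $\operatorname{ord}_{t=0}\pi=3$ and $\operatorname{ord}_{t=0}\chi$.

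Once both reductions are in hand, \cref{lem:degsemigroup} gives $\Semi=\langle6,26,27\rangle$. Since $\Gamma$ has the birational parametrisation $t\mapsto(\pi(t),\gamma(t))$ of \cref{thm:passport}, this set is the semigroup at infinity of the branch curve $\Gamma$.
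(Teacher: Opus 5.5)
Your plan is essentially the paper's proof. You use the same monic generators $G,H,R$ and the same gluing $\langle6,26,27\rangle=2\langle3,13\rangle+27\N$, which yields the two binomials $R^{2}-G^{9}$ and $H^{3}-G^{13}$; the first reduction is read off from \cref{eq:key}, and the second is left as an exact subduction, which the paper likewise only asserts terminates at zero.

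Your fallback is moot: a remainder whose leading degree is a gap would itself refute the theorem. In fact the second reduction needs no computation. In $s=t^{2}$ the polynomials $\gamma,\chi$ have coprime degrees $3$ and $13$, so $\chi$ satisfies a monic cubic $\chi^{3}+a_2(\gamma)\chi^{2}+a_1(\gamma)\chi+a_0(\gamma)=0$. Its terms $a_j\chi^{j}$ have pairwise distinct degrees modulo $3$, which forces $a_0=-c'\gamma^{13}+(\text{lower})$ and puts every other term below weight $78$.
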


\begin{proof}
Let $G,H,R$ be the monic multiples of $\gamma,\chi,\pi$, with leading monomials $t^{6},t^{26},t^{27}$. Since $\chi$ is a polynomial in $\pi$ and $\gamma$ we have 
\[
\C[\pi,\gamma]=\C[G,H,R],
\]
 so $\Semi\supseteq\langle6,26,27\rangle$.

We first identify the toric ideal of $(6,26,27)$. As $\gcd(6,26)=2$ and $27=3\cdot9$ lies in $\langle3,13\rangle$ with $\gcd(2,27)=1$, the semigroup $\langle6,26,27\rangle=2\langle3,13\rangle+27\N$ is a gluing, hence a complete intersection, and its toric ideal is generated by the pullback of the single relation $x^{13}=y^{3}$ of $\langle3,13\rangle$ together with the gluing relation. These are
\[
  H^{3}-G^{13} \quad\text{and}\quad R^{2}-G^{9},
\]
matching $3\cdot26=13\cdot6$ and $2\cdot27=9\cdot6$.

Exact subduction of both binomials against $G,H,R$ terminates at zero. By the Robbiano--Sweedler criterion $\{G,H,R\}$ is a SAGBI basis, so the initial algebra of $\C[\pi,\gamma]$ is $\C[t^{6},t^{26},t^{27}]$ and $\Semi=\langle6,26,27\rangle$. Termination at zero is the hypothesis of \cref{lem:degsemigroup}, which gives the same conclusion.

Finally the six residues $26b+27e\bmod6$ with $0\le b<3$ and $0\le e<2$ are $0,2,4,3,5,1$, hence distinct, so every element of $\Semi$ has a unique standard form and the subduction above is well defined.
\end{proof}

\begin{cor}
\label{cor:orbits}
The degree-twelve alternating shear quotient is not left--right equivalent to
the quotient of any member of \cref{sec:3} of generic degree twelve.  Hence
$\Kfam(3,(1,-1,-1))$ carries at least two $\Gaut^{\sharp}$-orbits at generic
degree twelve.
\end{cor}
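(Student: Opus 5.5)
The plan is to separate the two quotient classes by the intrinsic semigroup invariant of \cref{subsec:semigroup}, using \cref{lem:semiintrinsic} and \cref{lem:descendequiv}.

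First, list the cyclic members of generic degree twelve. By \cref{prop:degree}, $N=kd$ with $k\ge2$ and, by \cref{cor:dtwo}, $d\ge3$. So $(k,d)\in\{(2,6),(3,4),(4,3)\}$. For each such member, \cref{prop:conequotient} says $\Gamma_f$ has exactly two components: the line $\{Q=0\}$, and the curve $\Gamma'$ parametrised by $W\mapsto(A_0(W),\varphi(W))$ with $\deg A_0=N-1=11$. I would check that this parametrisation is birational, for instance by showing that $\C(A_0,\varphi)=\C(W)$ through a coprimality or ramification argument at a point, as in \cref{thm:passport}. Granting that, $\C[\Gamma']=\C[A_0,\varphi]$ has a unique place at infinity, and $11=\deg A_0$ lies in $S_\infty(\Gamma')$. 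The line $\{Q=0\}$ has $S_\infty=\N$, which also contains $11$. So every component of a cyclic degree-twelve member has $11$ in its semigroup at infinity.

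Second, look at the shear member $F_{12}$. By \cref{thm:passport}, $\Gamma_f=\{Q=0\}\cup\Gamma$, and by \cref{thm:semigroup}, $S_\infty(\Gamma)=\langle6,26,27\rangle$ after the normalisation $c=d=1$, which is itself a left--right equivalence. Since $11\notin\langle6,26,27\rangle$, the two multisets of semigroups differ: the shear multiset is $\{\N,\langle6,26,27\rangle\}$, while every cyclic multiset has both entries containing $11$. \Cref{lem:semiintrinsic} then rules out any polynomial left--right equivalence between the quotients. The one point to verify here is that a left--right equivalence must match components bijectively with their semigroups. This is exactly the multiset statement of that lemma; matching $\N$ with $\N$ forces $\Gamma$ to correspond to $\Gamma'$.

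Finally, \cref{lem:descendequiv} turns this into the orbit statement: if $F_{12}$ and a cyclic member lay in one $\Gaut^{\sharp}$-orbit, their quotients would be left--right equivalent. For the count, $\Kfam(3,(1,-1,-1))$ contains a cyclic member of degree twelve by \cref{thm:spectrum} with $(k,d)=(2,6)$, and it contains $F_{12}$ by \cref{prop:shearmap}, which has degree twelve. That gives at least two orbits.

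The main obstacle is the birationality of $W\mapsto(A_0,\varphi)$ for an arbitrary seed. If it failed, $S_\infty(\Gamma')$ would be computed on a quotient curve, and $11$ might not be a pole order there. I would first try degrees. A degree-$e$ factorisation through $\C(\theta)$ with $\C(A_0,\varphi)=\C(\theta)$ forces $e\mid\gcd(11,d)=1$ for $d\in\{3,4,6\}$. Since $11$ is prime and coprime to every allowed $d$, this settles birationality immediately.
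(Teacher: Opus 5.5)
Your proposal is correct and follows the paper's proof. It uses the same three pairs $(k,d)$, birationality of $W\mapsto(A_0,\varphi)$ from $\gcd(11,d)=1$ so that $11$ lies in the cyclic non-line semigroup, the fact $11\notin\langle6,26,27\rangle$, and then \cref{lem:semiintrinsic} and \cref{lem:descendequiv}. The one difference is that the paper first shows the line is intrinsically distinguished, since the cyclic non-line semigroup omits $1$ because both coordinates vanish to order at least two at $W=0$. Your observation that both cyclic components contain $11$ makes the multiset comparison work without that step, so your aside that matching $\N$ with $\N$ ``forces'' $\Gamma\leftrightarrow\Gamma'$ is unneeded, and as stated it would require exactly that extra argument.
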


\begin{proof}
Let $N=kd=12$ with $k\ge2$ and $d\ge3$.  Then
$(k,d)\in\{(2,6),(3,4),(4,3)\}$, and all three occur by
\cref{thm:spectrum}.  By \cref{prop:conequotient}, the cyclic branch curve is
the union of the line $\{Q=0\}$ and the component parametrised by
$(A_0(W),\varphi(W))$, of degrees $11$ and $d$.  Since
$\gcd(11,d)=1$, this parametrisation is birational, so the non-line component
has $11$ in its semigroup at infinity.

The line is intrinsically distinguished as the unique component whose
semigroup at infinity is $\Z_{\ge0}$.  Indeed, for the cyclic non-line
component both parametrising coordinates have order at least two at $W=0$
($\varphi\in W^2\C[W]$ and $\operatorname{ord}_0A_0\ge2k-1$), so its affine
coordinate ring is not $\C[W]$.  If its semigroup contained $1$, it would
contain a degree-one polynomial in $W$, forcing that coordinate ring to equal
$\C[W]$, a contradiction.  The alternating-shear non-line component has
semigroup $\langle6,26,27\rangle$ by \cref{thm:semigroup}, and likewise does
not contain $1$.

A left--right equivalence must therefore match the two line components and
the two non-line components.  The cyclic non-line semigroup contains $11$,
whereas $11\notin\langle6,26,27\rangle$.  The quotients are not left--right
equivalent by \cref{lem:semiintrinsic}, and \cref{lem:descendequiv} separates
the graded orbits.
\end{proof}

\begin{rem}
\label{rem:intrinsicsep}
The separation in \cref{cor:orbits} uses no choice of pencil: it compares the intrinsic branch semigroups directly, and so holds under arbitrary polynomial left-right equivalence of the quotients. This answers the fibration question raised by the framed passports $(9,3)$ and $(11,1)$, which by themselves are invariant only under equivalences preserving the pencil.
\end{rem}

\section{Monodromy of the two families}
\label{sec:6}

We determine the geometric monodromy group of the generic quotient cover of every member of both families. Throughout, the cover means the degree-$N$ extension at a generic value of the pencil, and inertia generators are taken at the branch points of \cref{prop:conequotient}, \cref{rem:beta} or \cref{thm:passport} as appropriate.

We use three standard facts about primitive groups. A transitive subgroup of $S_{n}$ containing a cycle with exactly one fixed point is $2$-transitive, since the stabilizer of that point contains an $(n-1)$-cycle on the remaining points. A primitive subgroup of $S_{n}$ containing a $3$-cycle contains $A_{n}$, by Jordan's theorem. A primitive subgroup of $S_{n}$ containing a cycle with at least three fixed points contains $A_{n}$. A cover with primitive monodromy group is indecomposable.


\begin{theorem}
\label{thm:conemono}
Let $\Pi$ be the quotient cover of a member of \cref{sec:3} with parameters $(k,d)$, so $k\ge2$, $d\ge3$ and $N=kd\ge6$. Then $\Pi$ is indecomposable, and its geometric monodromy group is
\[
  \mathrm{Mon}(\Pi)=
  \begin{cases}
    A_N, & k \text{ even},\\
    S_N, & k \text{ odd}.
  \end{cases}
\]
\end{theorem}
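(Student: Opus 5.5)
The plan follows the template of \cref{thm:monodromy}: first prove indecomposability from the ramification data of \cref{prop:conequotient}, then pin down the group with the listed facts about primitive groups, and finally fix the parity from the signs of the inertia generators. Over $\C(Y)$ the cover $W\mapsto\Pi(W,Y)$ has degree $N=kd$. Over infinity its cycle type is $(N-1,1)$, from the pole of order $N-1$ at $W=\infty$ and the simple pole at $W=0$. It has $d$ finite branch values. For generic $Y$ these are distinct, because the branch curve parametrised by $W\mapsto(A_0(W),\varphi(W))$ is birational when $\gcd(N-1,d)=1$, which always holds since $d\mid N$. Each finite branch value has cycle type $((k+1),1^{N-k-1})$.

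\textbf{Indecomposability.} Suppose $\Pi=g\circ h$ with $\deg g=a>1$, $\deg h=b>1$ and $ab=N$. The fibre over infinity consists of two points, with indices $N-1$ and $1$. If $g$ had a single pole, that pole would be totally ramified in $g$, so $a$ would divide every pole index of $\Pi$, including $1$, which is impossible. Hence $g$ has two poles. Each has exactly one $h$-preimage, so $h$ is totally ramified there with index $b$, and $b$ divides $1$, again impossible. So $\Pi$ is indecomposable and its monodromy group is primitive.

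This argument is simpler than the one in \cref{thm:monodromy}, because a pole of index $1$ already rules out every decomposition. The step needing the most care is making the pole bookkeeping rigorous over $\overline{\C(Y)}$. One has to check that the pole partition $(N-1,1)$ persists after base change to the algebraic closure, which holds because the leading coefficient in \cref{prop:degree} is independent of $Y$. One also has to justify that a pole of $g$ whose fibre under $h$ is a single point forces total ramification of $h$ there.

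\textbf{Identifying the group.} Inertia at infinity is an $(N-1)$-cycle with exactly one fixed point, so the primitive group is $2$-transitive. A finite inertia generator is a $(k+1)$-cycle with $N-k-1=k(d-1)-1$ fixed points, and since $d\ge3$ this is at least $2k-1\ge3$. By the quoted fact, a primitive group containing a cycle with at least three fixed points contains $A_N$. When $k=2$ one can instead apply Jordan's theorem to the $3$-cycle directly.

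\textbf{Parity.} The $(N-1)$-cycle has sign $(-1)^{N-2}=(-1)^{N}$. Each $(k+1)$-cycle has sign $(-1)^{k}$. If $k$ is even, every $(k+1)$-cycle is even, and $N=kd$ is even, so the $(N-1)$-cycle is even as well. The generators therefore all lie in $A_N$, and since the inertia generators generate the monodromy group, $\mathrm{Mon}(\Pi)=A_N$. If $k$ is odd, each $(k+1)$-cycle is an odd permutation, so the group is $S_N$.
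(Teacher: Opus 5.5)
Your proposal is correct and follows the paper's proof: you use the pole partition $(N-1,1)$, get distinct finite branch values from the birationality of $W\mapsto(A_0(W),\varphi(W))$ via $\gcd(kd-1,d)=1$, observe that the $(k+1)$-cycles have $k(d-1)-1\ge3$ fixed points so that the Jones-type fact gives $A_N\le\mathrm{Mon}(\Pi)$, and then settle the group by parity. The only difference is that you prove indecomposability by direct pole bookkeeping, whereas the paper deduces primitivity at once from the $(N-1)$-cycle fixing exactly one point ($2$-transitivity); you note that observation yourself, and it makes both your decomposition argument and your base-change concern unnecessary.
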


\begin{proof}
Fix a generic value $Y$ of the pencil. By \cref{prop:conequotient} the pole partition is $(N-1,1)$, so an inertia generator at infinity is an $(N-1)$-cycle fixing exactly one point. Hence the monodromy is $2$-transitive, in particular primitive, and $\Pi$ is indecomposable.

By \cref{lem:Pider} the finite critical points are the roots of $\varphi(W)=Y$, which are $d$ in number and simple for generic $Y$, each of local degree $k+1$. The associated branch values are the points $A_0(W_i)$. Since $\deg A_0=N-1$ and $\deg\varphi=d$ by \cref{prop:degree}, and $[\C(W):\C(A_0,\varphi)]$ divides both, it divides $\gcd(kd-1,d)=1$; so $W\mapsto\bigl(A_0(W),\varphi(W)\bigr)$ is birational and the $d$ branch values are distinct for generic $Y$. Each finite inertia generator is therefore a $(k+1)$-cycle, and Riemann--Hurwitz is exact:
\[
  d\bigl((k+1)-1\bigr)+(N-1-1)=kd+N-2=2N-2 .
\]
A finite inertia generator fixes $N-k-1=k(d-1)-1$ points, and $k\ge2$, $d\ge3$ give $k(d-1)-1\ge3$. By \cite{jones} $A_N\le\mathrm{Mon}(\Pi)$.

Parity decides the rest. A $(k+1)$-cycle is even if and only if $k$ is even, and the $(N-1)$-cycle is even if and only if $N$ is even. If $k$ is even then $N=kd$ is even and every inertia generator is even, so $\mathrm{Mon}(\Pi)\le A_N$ and equality holds. If $k$ is odd the finite generators are odd, so $\mathrm{Mon}(\Pi)\not\le A_N$ and $\mathrm{Mon}(\Pi)=S_N$.
\end{proof}

\begin{rem}
\label{rem:parityconsistent}
The signs are consistent with the relation that the product of all inertia generators is trivial: the $d$ finite generators contribute $(-1)^{kd}$ in total and the generator at infinity contributes $(-1)^{N}$, and $N=kd$.
\end{rem}


\begin{lem}
\label{lem:twocrit}
A polynomial of degree at least two with exactly two distinct finite critical points is indecomposable.
\end{lem}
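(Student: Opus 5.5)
Suppose $f=g\circ h$ with $\deg g=a\ge2$ and $\deg h=b\ge2$, where $g,h$ are polynomials. This is the only kind of decomposition to rule out: Lüroth's theorem, together with total ramification of $f$ over $\infty$, lets any decomposition into rational functions be conjugated by Möbius maps into one into polynomials. The plan is to count distinct finite critical points of $f$ through the chain rule $f'=g'(h)\,h'$.

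A point $x$ is critical for $f$ exactly when $h'(x)=0$ or $h(x)$ is a critical point of $g$. Since $b\ge2$, $h'$ has at least one root. Since $a\ge2$, $g$ has at least one critical point $c$, and $h^{-1}(c)$ is nonempty. So every critical point of $h$ is critical for $f$, and so is every point of $h^{-1}(\mathrm{crit}\,g)$.

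First I will treat the case where $h$ has at least two distinct critical points $x_1,x_2$. These are already the two critical points of $f$, so $h^{-1}(\mathrm{crit}\,g)\subseteq\{x_1,x_2\}$. Fix a critical point $c$ of $g$. Its fibre $h^{-1}(c)$ has $b$ points with multiplicity, and these lie in $\{x_1,x_2\}$. If the fibre is a single point, that point has multiplicity $b$, so $h-c=\lambda(x-x_i)^b$ and $h'$ has only the root $x_i$, a contradiction. Hence $h$ takes the value $c$ at both $x_1$ and $x_2$.

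Now Riemann–Hurwitz for the polynomial $h$ shows that the total finite ramification is $\sum(e-1)=b-1$. A single fibre $h^{-1}(c)=\{x_1,x_2\}$ of size two already contributes $b-2$. The remaining contribution of $1$ must come from another critical value, which would require a third critical point, or from another branch point at $x_1$ or $x_2$. The latter is impossible, because a point lies in a single fibre.

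So the fibre over $c$ contributes $b-2$, while the total is $b-1$, and this is the contradiction. I will state it cleanly: all critical points of $h$ lie in the fibre over $c$, so $\sum_{x\in h^{-1}(c)}(e_x-1)=b-\#h^{-1}(c)=b-2\ne b-1$.

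Next I will treat the case where $h$ has exactly one critical point $x_0$. Then Riemann–Hurwitz forces $h=\lambda(x-x_0)^b+\mu$. Now take any critical point $c$ of $g$. If $c\ne\mu$, the fibre $h^{-1}(c)$ consists of $b\ge2$ distinct points, all critical for $f$, none equal to $x_0$, and $x_0$ is also critical. That gives at least three critical points, a contradiction. So $\mathrm{crit}\,g=\{\mu\}$, and $f$ has the single critical point $x_0$, contradicting the hypothesis of exactly two.

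The only subtle step is the Riemann–Hurwitz bookkeeping in the first case. The reduction to polynomial decompositions is standard, and I will cite it briefly.
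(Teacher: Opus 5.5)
Your proof is correct and follows essentially the same route as the paper: reduce to polynomial factors, observe that the critical points of $h$ and the fibres of $h$ over critical points of $g$ all lie in the two-point critical set of $f$, and reach a contradiction with the Riemann--Hurwitz count $b-1$ for the finite ramification of $h$. The only difference is the case bookkeeping. You split by whether $h$ has one or two critical points. The paper instead splits by the number $m$ of distinct critical points of $g$ and by the size of the fibre of $h$ over such a point.
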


\begin{proof}
Let $f$ have degree $n$ and finite critical points $\{W_1,W_2\}$, and suppose $f=g\circ h$ with $g,h\in\C(z)$ of degrees $a,b>1$. Since $f^{-1}(\infty)=\{\infty\}$ and $f^{-1}(\infty)=h^{-1}\bigl(g^{-1}(\infty)\bigr)$, the set $g^{-1}(\infty)$ is the single point $h(\infty)$. Replacing $g$ by $g\circ\mu$ and $h$ by $\mu^{-1}\circ h$ for a M\"obius map $\mu$ carrying $\infty$ to $h(\infty)$, we may assume $g$ and $h$ are polynomials. Each is then totally ramified at infinity, so by Riemann--Hurwitz the finite ramification of $h$ is $b-1$ and that of $g$ is $a-1$.

If $W$ is a finite critical point of $h$ then $e_{f}(W)=e_{h}(W)\,e_{g}(h(W))>1$, so $W\in\{W_1,W_2\}$; and if $\eta$ is a finite critical point of $g$ then every point of $h^{-1}(\eta)$ is a critical point of $f$, so $h^{-1}(\eta)\subseteq\{W_1,W_2\}$. Since $a-1\ge1$, the polynomial $g$ has at least one finite critical point. Let $\eta_1,\dots,\eta_m$ be its distinct finite critical points; the sets $h^{-1}(\eta_j)$ are nonempty and pairwise disjoint inside a set of two elements, so $m\le2$.

If $m=2$ then each $h^{-1}(\eta_j)$ is a single point, at which $h$ is totally ramified, contributing $b-1$ to the finite ramification of $h$; two such contributions give $2(b-1)>b-1$, which is impossible.

If $m=1$ and $h^{-1}(\eta_1)$ is a single point $W_0$, then $h$ is totally ramified there and its finite ramification $b-1$ is exhausted, so $W_0$ is the only finite critical point of $h$. Then the finite critical points of $f$ are $\{W_0\}\cup h^{-1}(\eta_1)=\{W_0\}$, contradicting that there are two.

If $m=1$ and $h^{-1}(\eta_1)$ has two points, then $h^{-1}(\eta_1)=\{W_1,W_2\}$, and the ramification of $h$ over $\eta_1$ is $b-2$. Every finite critical point of $h$ lies in $\{W_1,W_2\}$, so this is all of the finite ramification of $h$, giving $b-2=b-1$.

All three cases are impossible, so no such decomposition exists.
\end{proof}

\begin{cor}
\label{lem:betaindec}
For $p\ge2$ the beta polynomial $\beta_p$ is indecomposable.
\end{cor}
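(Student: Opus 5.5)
The corollary should follow directly from \cref{lem:twocrit}, once we say what $\beta_p$ is. By \cref{rem:beta}, the beta polynomial $\beta_p$ of degree $2p-1$ is, up to affine change, the polynomial whose derivative is proportional to $\xi^{p-1}(1-\xi)^{p-1}$. Equivalently, it is the cover $\rho\mapsto P$ at fixed generic $Q$, for which $\partial P/\partial\rho\propto(\rho^{2}-1-2Q)^{p-1}$. I will work with the normalised form
\[
  \beta_p(\xi)=\int_0^{\xi}\theta^{p-1}(1-\theta)^{p-1}\,d\theta .
\]
Affine changes on either side preserve indecomposability, and the count of distinct finite critical points is also unchanged, so the statement is not affected by this choice of normalisation.

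The proof then has three steps.
\begin{enumerate}
\item Check the degree: $\deg\beta_p=2p-1\ge3\ge2$ for $p\ge2$.
\item Identify the finite critical points. They are the zeros of $\beta_p'=\xi^{p-1}(1-\xi)^{p-1}$. Since $p-1\ge1$, both $\xi=0$ and $\xi=1$ are zeros, and there are no others. So $\beta_p$ has exactly two distinct finite critical points.
\item Apply \cref{lem:twocrit}, which now gives indecomposability of $\beta_p$ immediately.
\end{enumerate}

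There is no real obstacle here. The one point that needs care is that the two roots of $\rho^{2}-1-2Q$ are distinct, so that the critical-point count in step 2 is correct in the form actually realised by the quotient cover. This holds for $Q\neq-1/2$, and in particular for generic $Q$. Step 2 uses $p\ge2$: for $p=1$ the polynomial has degree one and no critical points.
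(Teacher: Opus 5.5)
Your proposal is correct and matches the paper's proof: both read off from \cref{rem:beta} that $\beta_p'\propto\xi^{p-1}(1-\xi)^{p-1}$, so the only finite critical points are $0$ and $1$, and then apply \cref{lem:twocrit}. Your additional remarks are implicit in the paper's two-line argument, and are correct. These are that affine normalisation preserves indecomposability, and that the two roots of $\rho^{2}-1-2Q$ are distinct for $Q\neq-1/2$.
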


\begin{proof}
By \cref{rem:beta} the derivative of $\beta_p$ is proportional to $\xi^{p-1}(1-\xi)^{p-1}$, so $\beta_p$ has exactly the two finite critical points $0$ and $1$. Apply \cref{lem:twocrit}.
\end{proof}

\begin{theorem}
\label{thm:diagmono}
The geometric monodromy group of the generic quotient cover of $F_{r,s}$ is $A_{2r-1}$ for $r$ odd and $S_{2r-1}$ for $r$ even.
\end{theorem}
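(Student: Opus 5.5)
The argument mirrors \cref{thm:conemono}, with the beta cover of \cref{rem:beta} replacing the cyclic cone. Put $n=2r-1$ and fix a generic value of $Q$. By the proof of \cref{thm:diagonal}, $K_r(\Lambda,v)=K_r(Q)(\rho)$, and $P$ is a polynomial in $\rho$ of degree $n$ over $K_r(Q)$. So the quotient cover is the polynomial map $\rho\mapsto P$, and we may compute its monodromy after base change to the algebraic closure of $\C(Q)$. By \cref{rem:beta}, an affine change in $\rho$, which does not alter the monodromy, carries this cover to the beta polynomial $\beta_r$ of degree $n$ with $\beta_r'\propto\xi^{r-1}(1-\xi)^{r-1}$.

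First, primitivity. By \cref{lem:betaindec} the polynomial $\beta_r$ is indecomposable. For a polynomial cover (indeed any rational map), blocks of imprimitivity of the monodromy correspond to decompositions, so $\mathrm{Mon}$ is primitive. The extra ingredient here is L\"uroth's theorem: an intermediate field of $\C(\xi)/\C(\beta_r)$ is $\C(h)$, which gives $\beta_r=g\circ h$. One can also note that the $n$-cycle at infinity already makes $\mathrm{Mon}$ transitive.

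Next, the inertia. The two finite critical points $0$ and $1$ have distinct critical values for generic $Q$. This holds because $\beta_r(1)-\beta_r(0)=\int_0^1\xi^{r-1}(1-\xi)^{r-1}\,d\xi\neq0$, a beta integral. Each critical point is a zero of order $r-1$ of the derivative, so it has local degree $r$. The inertia generators over the two finite branch values are therefore $r$-cycles fixing $n-r=r-1$ points, and the generator at infinity is an $n$-cycle. Riemann--Hurwitz checks out: $2(r-1)+(n-1)=2n-2$. When $r\ge4$, an $r$-cycle has $r-1\ge3$ fixed points, so the primitive-group fact quoted before \cref{thm:conemono} (the result of \cite{jones}) gives $A_n\le\mathrm{Mon}$. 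When $r=2$, the inertia is a transposition in a primitive group, so $\mathrm{Mon}=S_3$. When $r=3$, the inertia is a $3$-cycle, and Jordan's theorem gives $A_5\le\mathrm{Mon}$. In every case $A_n\le\mathrm{Mon}$.

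Finally, parity. The $n$-cycle is even because $n$ is odd. An $r$-cycle is even exactly when $r$ is odd. The monodromy group is generated by the inertia generators. If $r$ is odd, all generators are even, so $\mathrm{Mon}=A_n$. If $r$ is even, the $r$-cycles are odd, so $\mathrm{Mon}=S_n$. Because the small cases $r=2,3$ are handled separately, the only step that needs real care is distinguishing the two critical values; the beta-integral computation settles it.
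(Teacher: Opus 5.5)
Your proposal is correct and follows essentially the same route as the paper. The paper likewise uses \cref{lem:betaindec} to get indecomposability and hence primitivity, the inertia data of \cref{rem:beta}, \cite{jones} for $r\ge4$, Jordan's theorem at $r=3$, the transposition case at $r=2$, and parity; your beta-integral check that the two critical values are distinct, and your L\"uroth justification that indecomposable implies primitive, make explicit two points the paper leaves implicit in \cref{rem:beta} and in the phrase ``indecomposable, hence primitive.''
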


\begin{proof}
By \cref{rem:beta} the cover has degree $2r-1$, two finite branch points of cycle type $(r,1^{r-1})$, and a full $(2r-1)$-cycle at infinity; Riemann--Hurwitz is exact, $2(r-1)+(2r-2)=2(2r-1)-2$. By \cref{lem:betaindec} the cover is indecomposable, hence primitive.

Suppose $r\ge4$. A finite inertia generator is an $r$-cycle fixing $r-1\ge3$ points, so $A_{2r-1}\le\mathrm{Mon}$ by \cite{jones}. The $(2r-1)$-cycle is even, and an $r$-cycle is even if and only if $r$ is odd; so $\mathrm{Mon}=A_{2r-1}$ for $r$ odd and $\mathrm{Mon}=S_{2r-1}$ for $r$ even.

For $r=3$ the degree is five and the finite generators are $3$-cycles, so $A_5\le\mathrm{Mon}$ by Jordan's theorem. Every generator is even, so $\mathrm{Mon}=A_5$.

For $r=2$ the degree is three and the finite generators are transpositions, so $\mathrm{Mon}=S_3$.
\end{proof}

\begin{cor}
\label{cor:solvable}
The only members of either family whose quotient cover has solvable monodromy are the maps $F_{2,s}$, of generic fibre degree three. They are resolvents in the sense of \cite[Thm.\ 4.4]{shaska131}.
\end{cor}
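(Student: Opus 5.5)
The corollary combines the two monodromy theorems with the classification of solvable symmetric and alternating groups. I would prove it in three steps.

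First, the cyclic family. By \cref{thm:conemono}, every member of \cref{sec:3} with parameters $(k,d)$ has geometric monodromy $A_N$ or $S_N$, where $N=kd\ge6$. For $N\ge5$ both $A_N$ and $S_N$ are nonsolvable, since $A_N$ is simple and nonabelian. So no member of the cyclic family has solvable quotient monodromy. The alternating shear member $F_{12}$ is not part of either family, but it has monodromy $A_{12}$ by \cref{thm:monodromy} and is nonsolvable anyway.

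Second, the beta--Hermite family. By \cref{thm:diagmono}, $F_{r,s}$ has monodromy $A_{2r-1}$ or $S_{2r-1}$.
\begin{itemize}
\item For $r\ge3$ the degree satisfies $2r-1\ge5$. Then $A_{2r-1}$ is nonsolvable, and so is $S_{2r-1}$, which contains it.
\item For $r=2$ the group is $S_3$, which is solvable, and by \cref{thm:diagonal} the generic fibre degree is $2r-1=3$.
\end{itemize}
Hence the solvable members are exactly the $F_{2,s}$ with $s\ge2$. These are defined over $\Q$ by \cref{prop:fields}(1).

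Third, the resolvent claim. I expect this to be the only real obstacle, because it rests on the definition in \cite[Thm.\ 4.4]{shaska131}, which is not reproduced here. I would first check that this notion means the solvable case of generic fibre degree three, i.e.\ an equivariant Keller map whose quotient cover is an $S_3$ cubic. The quotient cover of $F_{2,s}$ has the required shape: by \cref{rem:beta} it is the degree-three beta polynomial, with two simple finite branch points and a $3$-cycle at infinity. Its inverse branches are therefore given by Cardano's formula over the target function field. With that, $F_{2,s}$ fits the hypotheses of the cited theorem, and I would simply invoke it.
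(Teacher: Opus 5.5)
Your proposal is correct and follows the paper's argument. The paper's proof likewise combines \cref{thm:conemono} and \cref{thm:diagmono} with the nonsolvability of $A_n$ and $S_n$ for $n\ge5$, which leaves only $r=2$ with group $S_3$; the resolvent clause is there simply an attribution to the cited theorem, so your Cardano remark is supplementary rather than required.
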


\begin{proof}
The cone family has $N\ge6$ with monodromy $A_N$ or $S_N$, and the beta--Hermite family has monodromy $A_{2r-1}$ or $S_{2r-1}$; these are nonsolvable for $2r-1\ge5$, that is for $r\ge3$. The remaining case is $r=2$, where the group is $S_3$.
\end{proof}

\begin{cor}
\label{cor:allradicals}
Let $F$ be a member of either family of generic fibre degree at least five, or the alternating shear map $F_{12}$. Then the generic inverse branches of $F$ are not expressible by radicals over the target function field.
\end{cor}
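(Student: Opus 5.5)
The statement reduces to the Galois criterion once the normal closure group of $F$ over its target function field is identified and shown to be nonsolvable. I will handle the two families and $F_{12}$ uniformly. The first step is \cref{lem:quotientdegree}: the extension $K(x,y,z)/K(F_1,F_2,F_3)$ is the purely transcendental base change, along the adjunction of $L=x\Lambda$, of the quotient extension $K(u,v)/K(P,Q)$. A purely transcendental base change does not alter the geometric normal closure group, so it suffices to know the geometric monodromy of the quotient cover. This is the same reduction used in \cref{cor:radicals}.

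The second step is to read off that monodromy case by case.

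For a cone member with parameters $(k,d)$, \cref{thm:conemono} gives $A_N$ or $S_N$ with $N=kd\ge6$.

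For a beta--Hermite member $F_{r,s}$, \cref{thm:diagmono} gives $A_{2r-1}$ or $S_{2r-1}$. The hypothesis that the generic fibre degree is at least five means $2r-1\ge5$, i.e.\ $r\ge3$.

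For $F_{12}$, \cref{cor:radicals} already settles the claim, since the group is $A_{12}$.

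In every case the group contains $A_n$ with $n\ge5$. Such a group is nonsolvable, because $A_n$ is simple and nonabelian for $n\ge5$. The $r=2$ members, with group $S_3$, are excluded by the degree hypothesis, and this matches \cref{cor:solvable}.

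The final step, which is the main point needing care, is passing from geometric monodromy to expressibility by radicals over the actual target function field $K(F_1,F_2,F_3)$, where $K$ need not be algebraically closed. I would argue as follows. The geometric monodromy group is the Galois group of the normal closure after base change to $\Qbar(F)$. That group is a normal subgroup of the arithmetic Galois group, so the arithmetic group is nonsolvable as well. Expressibility by radicals over a characteristic-zero field is equivalent to solvability of the Galois group of the normal closure. Hence a generic inverse branch, which generates the degree-$N$ subextension, cannot be expressed by radicals, either over the target function field or over its geometric extension.
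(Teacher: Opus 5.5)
Your proposal is correct and follows the paper's proof: you reduce via \cref{lem:quotientdegree} to the quotient cover, read off $A_n$ or $S_n$ with $n\ge5$ from \cref{thm:conemono}, \cref{thm:diagmono} and \cref{thm:monodromy} (through \cref{cor:radicals}), and then apply the Galois criterion. Your extra step is a valid refinement of a point the paper leaves implicit. Because the geometric group is a subgroup of the arithmetic one, nonsolvability carries over to the actual target function field over a non-closed $K$.
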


\begin{proof}
By \cref{lem:quotientdegree} the extension attached to $F$ is the purely transcendental base change of the quotient extension along the adjunction of $L=x\Lambda$, so the two have the same geometric normal closure group. By \cref{thm:conemono}, \cref{thm:diagmono} and \cref{thm:monodromy} that group is $A_n$ or $S_n$ with $n\ge5$, hence nonsolvable. Then we apply the Galois criterion.
\end{proof}

\section{Pointed moduli at degree twelve}
\label{sec:7}

\Cref{cor:orbits} exhibits two orbits at generic degree twelve. Here we show there are infinitely many, and identify what the quotient cover forgets. Throughout, $(k,d)=(3,4)$, so $N=12$, $\epsilon=-1$ and $W=\Lambda^{2}w^{3}$.

%
Write the seed as $\varphi=aW^{2}+bW^{3}+cW^{4}$ and set
\begin{equation}
\label{eq:Hcubic}
\begin{split}
  H(b,c)=\ & 33b^{3}-154b^{2}c+264b^{2}+242bc^{2}-858bc+924b\\
           & -128c^{3}+704c^{2}-1584c+1848 .
\end{split}
\end{equation}

\begin{prop}
\label{prop:quarticcubic}
The seed $\varphi$ satisfies \cref{eq:liftjet} if and only if
\[
  a=1+b-c \qquad\text{and}\qquad H(b,c)=0 .
\]
Indeed $H=-9240\,\Qform^{(3)}(\varphi)$ on the hyperplane $a=1+b-c$. If moreover $c\neq0$ then the associated member has generic fibre degree twelve and class in $\Yparo[1,1,(19,54,1)]$.
\end{prop}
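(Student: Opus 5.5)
The first lift condition of \cref{eq:liftjet} at $k=3$ has $\epsilon=-1$, so it reads $\varphi(-1)=a-b+c=1$, that is $a=1+b-c$. For the second condition I will use the closed form of \cref{prop:liftclosed}: it is the vanishing of
\[
  \Qform^{(3)}(\varphi)=\int_0^{-1}\frac{(aW^{2}+bW^{3}+cW^{4})^{3}}{W^{2}}\,dW
  =\int_0^{-1}W^{4}(a+bW+cW^{2})^{3}\,dW .
\]
Expanding the cube gives a sum of monomials $a^{i}b^{j}c^{l}W^{4+j+2l}$ with $i+j+l=3$. Each integrates to $(-1)^{5+j+2l}/(5+j+2l)=(-1)^{j+1}/(5+j)$ up to the factor $2l$ in the exponent. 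So $\Qform^{(3)}$ is an explicit ternary cubic in $(a,b,c)$, with denominators drawn from $5,6,\dots,11$.

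I then substitute $a=1+b-c$ and clear denominators. The common denominator $\mathrm{lcm}(5,\dots,11)=27720=3\cdot 9240$ suggests that $-9240$ is the right normaliser. The plan is to verify coefficient by coefficient that $-9240\,\Qform^{(3)}(1+b-c,b,c)$ equals $H(b,c)$ of \cref{eq:Hcubic}. This is a finite bookkeeping step with ten coefficients to check, from $b^{3}$ down to the constant term $1848$. As sanity checks, the constant term is $-9240\int_0^{-1}W^{4}\,dW=-9240\cdot(-1/5)=1848$, and the $c^{3}$ coefficient comes from $(1-c+cW^{2})^{3}$, giving $-9240\,c^{3}\int_0^{-1}W^{4}(W^{2}-1)^{3}dW$. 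These two I will check first, to fix the sign and normalisation. The main obstacle is only this expansion and its arithmetic; no conceptual difficulty arises, since \cref{prop:liftclosed} already reduces the condition to an explicit integral.

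For the last sentence, when $c\ne0$ the seed has exact degree $d=4$ and lies in $W^{2}K[W]$. Then \cref{prop:degree} gives generic fibre degree $kd=12$. \cref{prop:conebox} with $k=3$, $d=4$ gives $\deg A=5\cdot4-1=19$, $\deg B=5\cdot11-1=54$ and $\deg\Lambda=1$, so the class lies in $\Yparo[1,1,(19,54,1)]$. Note that \cref{prop:conebox} needs $\deg\varphi\ge3$ but does not need $a\ne0$.
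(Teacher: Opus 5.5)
Your proposal is correct and follows the paper's proof almost verbatim. The first condition comes from evaluating at $\epsilon=-1$. The second comes from the closed integral of \cref{prop:liftclosed}, which you expand, restrict to $a=1+b-c$, and compare coefficientwise with $-9240\,\Qform^{(3)}$; all ten coefficients do match $H$. The degree data come from \cref{prop:degree}, and your appeal to \cref{prop:conebox} simply packages the leading-term reading that the paper does by hand.

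One small slip: the integral of $W^{4+j+2l}$ over $[0,-1]$ is $(-1)^{j+1}/(5+j+2l)$, not $(-1)^{j+1}/(5+j)$. Your stated range of denominators $5,\dots,11$ shows you had the right value in mind.
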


\begin{proof}
Since $\epsilon=-1$, the first condition of \cref{eq:liftjet} is $\varphi(-1)=a-b+c=1$.

For the second, integrating \cref{lem:Pider} at $Y=0$ and using $\Pi(0,0)=0$ gives $\Pi(\epsilon,0)=(-1)^{k+1}\int_0^{\epsilon}\varphi(W)^{k}W^{-2}\,dW$, so at $k=3$ the condition is the vanishing of $\Qform^{(3)}(\varphi)=\int_0^{-1}\varphi(W)^{3}W^{-2}\,dW$. Writing $\varphi=W^{2}g$ with $g=a+bW+cW^{2}$, the integrand is $W^{4}g^{3}$, and
\[
  g^{3}=a^{3}+3a^{2}bW+3a(ac+b^{2})W^{2}+(6abc+b^{3})W^{3}+3c(ac+b^{2})W^{4}+3bc^{2}W^{5}+c^{3}W^{6}.
\]
Using $\int_0^{-1}W^{m}dW=(-1)^{m+1}/(m+1)$,
\[
\begin{split}
  \Qform^{(3)}(\varphi)=\ & -\tfrac{a^{3}}{5}+\tfrac{a^{2}b}{2}-\tfrac{3a(ac+b^{2})}{7}
     +\tfrac{6abc+b^{3}}{8}-\tfrac{c(ac+b^{2})}{3}+\tfrac{3bc^{2}}{10}-\tfrac{c^{3}}{11}.
\end{split}
\]
Substituting $a=1+b-c$ and clearing denominators, each of the ten coefficients is $-1/9240$ times the corresponding coefficient of \cref{eq:Hcubic}. For instance the coefficient of $b^{3}$ is $-\tfrac15+\tfrac12-\tfrac37+\tfrac18=-\tfrac1{280}$, and $-9240\cdot(-\tfrac1{280})=33$; that of $c^{3}$ is $\tfrac15-\tfrac37+\tfrac13-\tfrac1{11}=\tfrac{16}{1155}$, and $-9240\cdot\tfrac{16}{1155}=-128$; the constant term is $-\tfrac15$, and $-9240\cdot(-\tfrac15)=1848$.

For the degree, $c\neq0$ gives $\deg\varphi=4$, so \cref{prop:degree} applies with $kd=12$. The class is read off the top terms: $A=Q/\Lambda$ has leading term $c\Lambda^{7}w^{12}$ of degree $19$, and by \cref{eq:jets} the polynomial $A_0$ has leading term $\tfrac{128}{77}c^{3}W^{11}=\tfrac{128}{77}c^{3}\Lambda^{22}w^{33}$, so $B=P/\Lambda$ has degree $54$.
\end{proof}

\begin{lem}
\label{lem:Ecubic}
Let $E\subset\bP^2$ be the projective closure of $\{H=0\}$ and
\[
 E^{\circ}=\{H=0\}\cap\{ac(9b^2-32ac)\ne0\},\qquad a=1+b-c.
\]
Then $E$ is a smooth plane cubic over $\Q$, and $E\setminus E^{\circ}$ is
finite.
\end{lem}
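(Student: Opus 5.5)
We must show two things: that $E$ is smooth, and that $E\setminus E^{\circ}$ is finite.

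For smoothness, homogenise $H$ with a third variable $e$, writing $H^{h}(b,c,e)$. We check that $H^{h}$, $\partial_bH^{h}$, $\partial_cH^{h}$ and $\partial_eH^{h}$ have no common zero in $\bP^2_{\Qbar}$. The cleanest route is to compute the discriminant of the ternary cubic, or equivalently put $E$ in Weierstrass form and verify that $\Delta\ne0$. A rational point is needed for the Weierstrass reduction. For instance, at infinity the cubic part $33b^3-154b^2c+242bc^2-128c^3$ must have a rational root. A cheaper alternative is to check the affine part and the line at infinity separately:
\begin{itemize}[label={}]
\item On $e=0$, singular points are multiple roots of the binary cubic form $33b^3-154b^2c+242bc^2-128c^3$ at which $\partial_eH^{h}$ also vanishes. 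We compute that form's discriminant; if it is nonzero there is nothing more to do at infinity.
\item In the affine chart, we eliminate using resultants: $\operatorname{Res}_c(H,H_b)$ and $\operatorname{Res}_c(H,H_c)$ are polynomials in $b$, and we show their gcd has no root that lifts to a common zero. A Gröbner basis of $(H,H_b,H_c)$ over $\Q$ reducing to $(1)$ is the definitive certificate.
\end{itemize}
Since $E$ is defined over $\Q$, smoothness over $\Qbar$ is the statement that the ideal $(H^{h},\nabla H^{h})$ is irrelevant, which is a finite rational computation. We expect this certificate to be the main obstacle: not conceptually, but because the coefficients are large and a clean exposition needs either an explicit Weierstrass model with its discriminant or a stated resultant. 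We would first try to find a rational flex or point (e.g.\ test small values of $b/c$ at infinity) and give the Weierstrass model.

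For finiteness, smoothness already gives irreducibility, since a reducible cubic is singular where its components meet. So $E$ is an irreducible curve, and its intersection with any curve not containing it is finite. The complement $E\setminus E^{\circ}$ is the union of three pieces: the points at infinity (at most three), and the intersections of $\{H=0\}$ with the three curves $a=1+b-c=0$, $c=0$, and $9b^2-32ac=0$. Each of these is a conic or line, so by irreducibility it suffices to show that none contains $E$. This holds because $E$ is an irreducible cubic and cannot be a component of a curve of degree at most two; concretely, each intersection has at most $6$ points by Bézout.

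Hence $E\setminus E^{\circ}$ has at most $3+3+3+6$ points, which completes the argument.
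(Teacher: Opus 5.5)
Your proposal is correct and follows the paper's proof. The paper also uses the affine certificate $\langle H,H_b,H_c\rangle=(1)$ over $\Q$ and a separate check on the line at infinity; your nonvanishing discriminant of the binary cubic part does the job of the paper's chart computation there. Finiteness in both comes from none of $a$, $c$, $9b^2-32ac$ vanishing identically on $E$: you get this from irreducibility and degree, the paper by explicit division.

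Drop the Weierstrass detour. The cubic part $33b^3-154b^2c+242bc^2-128c^3$ has discriminant $-1936$ and no rational root, and the paper later notes that $E$ has no rational point of height at most sixty, so there is no rational base point to start it.
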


\begin{proof}
The exact characteristic-zero calculation used here has three parts.  On the
affine chart, the Jacobian ideal
$\langle H,H_b,H_c\rangle\subset\Q[b,c]$ has Gr\"obner basis containing $1$.
After homogenising to $\overline H(b,c,h)$, the two partial derivatives on the
chart $c=1$, $h=0$, are coprime; at the remaining point $[b:c:h]=[1:0:0]$ at
least one of $\overline H_b,\overline H_c,\overline H_h$ is nonzero.  Thus the
projective cubic is smooth.  Finally, $H$ is not divisible by $a$ or $c$, and
the division remainder of $H$ by $9b^2-32ac$ is nonzero, so none of the three
deleted equations vanishes identically on $E$.  Each therefore cuts a finite
divisor.  
\end{proof}


\begin{lem}
\label{lem:rescaleinv}
The seed rescaling $\widetilde\varphi(W)=t\varphi(\beta W)$ acts on quartic
coefficients by
\[
 (a,b,c)\longmapsto(t\beta^2a,\ t\beta^3b,\ t\beta^4c).
\]
Its invariant field on $\{abc\ne0\}$ is generated by $b^2/(ac)$.  Two quartic
seeds with $ac\ne0$ lie in one rescaling orbit if and only if
\begin{equation}
\label{eq:Jdef}
 J=\frac{9b^2}{8ac}
\end{equation}
takes the same value.  Their quotient maps are polynomially left--right
equivalent; if $f=(P,Q)$ and $\widetilde f=(\widetilde P,\widetilde Q)$,
the source automorphism
\[
 (\Lambda,w)\longmapsto
 (\beta t^k\Lambda,\ (\beta t^{k-1})^{-1}w)
\]
sends $(T,W)$ to $(tT,W/\beta)$, and
\[
 \widetilde f(tT,W/\beta)
   =(\beta t^kP(T,W),\ tQ(T,W)).
\]  The normalised lift slice is not
invariant under the full torus: its intersection with a regular rescaling
orbit is the finite set of normalisations at the six admissible markings.
\end{lem}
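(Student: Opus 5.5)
The proof splits into four parts: the torus action and its invariants, the lift of a rescaling to a left--right equivalence of quotient maps, and the count on the normalised slice.

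\emph{Action and invariants.} Substituting $\widetilde\varphi(W)=t\varphi(\beta W)$ into $\varphi=aW^{2}+bW^{3}+cW^{4}$ gives the displayed formula on coefficients at once. This is an action of the two-dimensional torus $(t,\beta)\in\Gm^{2}$ through the characters $t\beta^{2},t\beta^{3},t\beta^{4}$. The lattice of invariant Laurent monomials $a^{i}b^{j}c^{l}$ is cut out by $i+j+l=0$ and $2i+3j+4l=0$, so it is generated by $(-1,2,-1)$. Hence the invariant field on the torus $\{abc\neq0\}$ is $\Q(b^{2}/(ac))$.

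\emph{Orbits classified by $J$.} For orbit separation on the larger set $ac\neq0$, I would argue directly. Given $J(\varphi)=J(\widetilde\varphi)$, first choose $\beta^{2}=\widetilde c a/(\widetilde a c)$ and $t=\widetilde a/(a\beta^{2})$. This matches $a$ and $c$. Equality of $J$ then forces $\widetilde b^{2}=(t\beta^{3}b)^{2}$. If the sign is wrong, replace $\beta$ by $-\beta$; this leaves the $a,c$ entries unchanged and flips the $b$ entry. When $b=0$ both $b$'s vanish and nothing remains to check. The converse holds because $J$ is invariant.

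\emph{Left--right equivalence.} I would use \cref{lem:rescale}, namely $\widetilde A_{k-i}(W)=\beta t^{i}A_{k-i}(\beta W)$. For the source map, check that $(\Lambda,w)\mapsto(\beta t^{k}\Lambda,(\beta t^{k-1})^{-1}w)$ sends $T=\Lambda w$ to $tT$ and $W=\Lambda^{k-1}w^{k}$ to $W/\beta$; this is monomial bookkeeping. Then evaluate the quotient pair \cref{eq:PQjet}, $\widetilde Q=T+\widetilde\varphi(W)$ and $\widetilde P=\sum_{j}\widetilde A_jT^{j}$, at $(tT,W/\beta)$:
\[
\widetilde Q(tT,W/\beta)=tT+t\varphi(W)=tQ(T,W),
\qquad
\widetilde A_{k-i}(W/\beta)\,(tT)^{k-i}=\beta t^{k}A_{k-i}(W)\,T^{k-i}.
\]
The $j=k$ term needs separate care, since there $\widetilde A_k=1/W$, and gives $\beta t^{k}T^{k}/W$. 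Summing yields $\beta t^{k}P$. Both coordinate changes are diagonal linear automorphisms, so this is a polynomial left--right equivalence. The source map is linear in $(\Lambda,w)$ and hence polynomial, even though $(T,W)$ are not coordinates.

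\emph{Normalised slice.} The final claim follows from \cref{prop:normalisemark} and \cref{cor:markingdegree} with $k(d-2)=3\cdot2=6$. On a regular orbit ($c_2c_3=ab\neq0$ and $M_{\varphi}$ separable), the seeds satisfying \cref{eq:liftjet} are exactly the six $\varphi_w$, and they are pairwise distinct. The slice is therefore not torus-invariant, since it meets a two-dimensional orbit in finitely many points.

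The main obstacle is the sign bookkeeping: the $\beta\mapsto-\beta$ adjustment in the orbit step, and the $j=k$ term in the left--right computation.
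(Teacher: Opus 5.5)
Your proof is correct and follows the paper's argument essentially step for step. You use the same invariant-monomial lattice computation, the transformation formula of \cref{lem:rescale} applied termwise for the left--right equivalence, and \cref{prop:normalisemark} with the marking count for the normalised slice. The only variation is in the orbit criterion: you match $a,c$ first and then fix the sign of $\beta$, which treats $b=0$ and $b\neq0$ uniformly instead of splitting into cases as the paper does. When $b\neq0$ the required square root is automatic, because equality of $J$ makes $\widetilde c\,a/(\widetilde a\,c)$ a square.
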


\begin{proof}
The coefficient action is immediate.  A Laurent monomial $a^ib^jc^\ell$ is
invariant exactly when $i+j+\ell=0$ and $2i+3j+4\ell=0$, so
$(i,j,\ell)\in\Z(1,-2,1)$; hence on $abc\ne0$ the invariant field is
generated by $ac/b^2$, equivalently by $J$.  If $b,b'\ne0$, equal $J$
allows one to solve $t\beta^2=a'/a$ and $t\beta^3=b'/b$, after which
$t\beta^4c=c'$ follows.  If $J=J'=0$, then $b=b'=0$; choose
$\beta^2=ac'/(a'c)$ and then $t=(a'/a)\beta^{-2}$ to obtain the same
conclusion.  Thus the orbit criterion holds on all of $ac\ne0$.
The transformation formula of \cref{lem:rescale} gives the displayed
identity term by term.  The last assertion
is \cref{prop:normalisemark,lem:markingcov}.
\end{proof}

\begin{prop}
\label{prop:Jdegree}
The rational function $J$ of \cref{eq:Jdef} defines a morphism $J\colon E\to\bP^{1}$ of degree six, in agreement with the count $k(d-2)=6$ of \cref{prop:marking} at $(k,d)=(3,4)$. It is ramified with type $(2,2,2)$ over $J=0$.
\end{prop}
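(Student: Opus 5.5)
The plan is to read off the fibres of $J$ as intersections of $E$ with a pencil of conics, and then apply B\'ezout.

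\textbf{Step 1 (the pencil).} Homogenise with $[b:c:h]$, so that $a=h+b-c$ is a linear form. Then
\[
  J=\frac{9b^{2}}{8ac}
\]
is the ratio of two conics in the pencil spanned by $b^{2}$ and $ac$. For $\lambda\in\bP^{1}$ the fibre $J^{-1}(\lambda)$ is the intersection of $E$ with the conic
\[
  C_\lambda\colon\ 9b^{2}-8\lambda\,ac=0 ,
\]
read projectively, so that $\lambda=\infty$ gives the conic $ac=0$. These are the pairs $(b,c)$ satisfying $9b^{2}=8\lambda ac$, which matches the orbit criterion of \cref{lem:rescaleinv}.

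\textbf{Step 2 (base points avoid $E$).} The base locus of the pencil is $\{b=0\}\cap\{ac=0\}$. This consists of the points $[0:0:1]$, where $b=c=0$ and $a=1$, and $[0:1:1]$, where $b=a=0$ and $c=1$. On $h=0$ the equations force $b=c=0$, so there is no base point at infinity. We have $H(0,0)=1848\neq0$ and $H(0,1)=-128+704-1584+1848=840\neq0$, so neither base point lies on $E$. Hence numerator and denominator of $J$ have no common zero on $E$, and $J$ extends to a morphism $E\to\bP^{1}$. Every fibre is the full intersection $E\cap C_\lambda$, which by B\'ezout has degree $3\cdot2=6$. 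Since $J$ is nonconstant (the values $0$ and $\infty$ both occur), $\deg J=6$. Agreement with $k(d-2)=6$ then follows from \cref{lem:rescaleinv} and \cref{prop:marking}: a generic rescaling orbit meets the normalised slice in its six markings.

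\textbf{Step 3 (ramification over $J=0$).} The fibre over $0$ is $E\cap\{b^{2}=0\}$, which is the line $b=0$ counted twice. On the line $b=0$ we have $h\neq0$ unless $c=0$ too, so all its points on $E$ are affine. There, $H(0,c)=-8(16c^{3}-88c^{2}+198c-231)$, a genuine cubic in $c$.

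I will show that $16c^{3}-88c^{2}+198c-231$ has nonzero discriminant. This is the one explicit computation and the main obstacle, though it is routine; one could alternatively check that it has no repeated root modulo a suitable prime. It follows that the line $b=0$ meets $E$ transversally in three distinct points. At each of them $b$ is a local parameter on $E$ and $ac\neq0$, the latter because the base points are not on $E$. So $J=9b^{2}/(8ac)$ vanishes to order exactly $2$ at each point, giving ramification type $(2,2,2)$ over $J=0$, which exhausts the degree $6$.
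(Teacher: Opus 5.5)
Your proof is correct and is essentially the paper's. The paper likewise reads $\deg J=6$ and the ramification $(2,2,2)$ off the zero divisor $2\cdot(E\cap\{b=0\})$, using that $16c^{3}-88c^{2}+198c-231$ has discriminant $-32\,524\,800\neq0$; your check that the base points $[0:0:1]$ and $[0:1:1]$ of the pencil lie off $E$ makes explicit a step the paper leaves tacit, namely that $ac$ does not vanish at those three points, so no zero of $b^{2}$ is cancelled and the B\'ezout count $6$ holds for every fibre.

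One wording slip: the line $b=0$ does contain the point $[0:1:0]$ with $h=0$. The correct reason $E\cap\{b=0\}$ is affine is that the cubic part of $H$ restricts there to $-128c^{3}\neq0$, i.e.\ $H(0,c)$ has exact degree three, which you also note.
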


\begin{proof}
The divisor of zeros of $J$ on $E$ is cut by $b^{2}$. Now $E\cap\{b=0\}$ is given by
\[
  H(0,c)=-128c^{3}+704c^{2}-1584c+1848=-8\bigl(16c^{3}-88c^{2}+198c-231\bigr),
\]
and the cubic $16c^{3}-88c^{2}+198c-231$ has discriminant $-32\,524\,800\neq0$, so its three roots are distinct. Each is a simple point of $E\cap\{b=0\}$, counted twice in the divisor of $b^{2}$. Hence $\deg J=6$ with ramification $(2,2,2)$ over $0$.
\end{proof}

\begin{cor}
\label{cor:RH}
The ramification of $J$ away from $J=0$ has total order nine.
\end{cor}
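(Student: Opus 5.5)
The plan is to apply Riemann--Hurwitz to the degree-six morphism $J\colon E\to\bP^{1}$ of \cref{prop:Jdegree}. By \cref{lem:Ecubic} the curve $E$ is a smooth plane cubic over $\Q$, so it has genus one, and $E$ and $\bP^{1}$ are both smooth projective curves over a field of characteristic zero. Ramification is therefore tame and the Hurwitz formula holds with the plain ramification indices:
\[
  2g(E)-2=6\bigl(2g(\bP^{1})-2\bigr)+\sum_{P\in E}\bigl(e_P-1\bigr).
\]
Since $2g(E)-2=0$ and $6(2\cdot0-2)=-12$, the total ramification is $\sum_{P}(e_P-1)=12$.

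Next I would split this total according to whether $J(P)=0$. By \cref{prop:Jdegree} the fibre over $J=0$ consists of three points, each of index two, so it contributes $3\cdot(2-1)=3$. The ramification away from $J=0$ is therefore $12-3=9$, which is the assertion.

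Two checks need care. First, $J$ must be a genuine nonconstant morphism of degree six on the smooth curve $E$. It is: a rational map from a smooth curve extends to a morphism, and its degree was computed in \cref{prop:Jdegree} from the divisor of $b^{2}$. Second, the fibre over $J=0$ must consist only of the three points in $E\cap\{b=0\}$ with $e=2$. The divisor of $b^{2}$ on $E$ has degree six and is supported at those three simple intersection points, each with multiplicity two. This exhausts the fibre, because the zero divisor of $J$ equals $\operatorname{div}_0(b^{2})$ minus any common zeros with $ac$. There are no such common zeros, since the roots of $16c^{3}-88c^{2}+198c-231$ are nonzero (its constant term is nonzero), and at each root $a=1-c$ is nonzero as well (since $c=1$ is not a root: $16-88+198-231\neq0$).

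The main obstacle is exactly this bookkeeping at points where the numerator $b^{2}$ and the denominator $8ac$ could share zeros, including at the points of $E$ at infinity. The statement itself is a one-line genus count once $\deg J=6$, the genus, and the fibre over $0$ are fixed.
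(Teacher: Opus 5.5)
Your proof is correct and is the paper's argument: Riemann--Hurwitz for the degree-six map $J$ on the genus-one curve $E$ gives total ramification $12$, and the $(2,2,2)$ fibre over $J=0$ from \cref{prop:Jdegree} contributes $3$, leaving $9$. Your extra check that $c$ and $a=1-c$ are nonzero on $E\cap\{b=0\}$, so that no zero of $b^{2}$ is cancelled by $8ac$, is a detail the paper leaves implicit in \cref{prop:Jdegree}, and you verify it correctly.
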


\begin{proof}
$E$ has genus one by \cref{lem:Ecubic}, so Riemann--Hurwitz gives $0=6\cdot(-2)+R$, that is $R=12$. The three points over $J=0$ contribute $3$.
\end{proof}

\begin{prop}
\label{prop:Jinfinity}
The ramification of $J$ away from $J=0$ is concentrated at the three points of $E$ on the line at infinity. Homogenise \cref{eq:Hcubic} with $a=h+b-c$ and work in the chart $c=1$ with $x=b/c$, $z=h/c$, so that
\[
  J=\frac{9x^{2}}{8(z+x-1)} .
\]
The points at infinity are $z=0$ with $x=\xi$ a root of
\[
  g(x)=33x^{3}-154x^{2}+242x-128 ,
\]
which has three distinct roots. At each, $z$ is a local parameter and $J$ has ramification index four, with branch value $9\xi^{2}/\bigl(8(\xi-1)\bigr)$. The three branch values are the roots of
\[
  B(j)=1232j^{3}-17160j^{2}+79893j-124416 ,
\]
which are distinct. Together with \cref{prop:Jdegree} this exhausts the ramification of $J$, and $J$ is unramified over $J=\infty$.
\end{prop}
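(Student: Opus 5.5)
The plan is to work entirely in the chart $c=1$ with coordinates $x=b/c$, $z=h/c$. In these coordinates $a/c=z+x-1$, so $J=9x^{2}/\bigl(8(z+x-1)\bigr)$. The homogenised cubic restricts to a polynomial $\overline H(x,1,z)$.

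\textbf{Step 1: the points at infinity.} Setting $z=0$ keeps only the cubic part of \cref{eq:Hcubic}, which gives
\[
 33x^{3}-154x^{2}+242x-128=g(x).
\]
I will check that $\operatorname{disc} g\neq0$ by a direct computation. I will also check that $g(1)=-7\neq0$, so the denominator $z+x-1$ does not vanish there and $J$ is finite at each of the three points. The remaining point $[1:0:0]$ is not on $E$, since $33\neq0$, so the chart $c=1$ covers the whole line at infinity.

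\textbf{Step 2: local parameters.} Since $g$ is separable, $\partial_x\overline H\neq0$ at $(\xi,0)$, and by the implicit function theorem $z$ is a local parameter there. I then expand $x=\xi+x_1z+x_2z^{2}+x_3z^{3}+\dots$ along $E$ and substitute into $J(x,z)$.

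The claim is that the coefficients of $z$, $z^{2}$ and $z^{3}$ in $J-J(\xi,0)$ vanish. The quickest route is to compare sums. By \cref{cor:RH} the ramification away from $J=0$ totals nine, and each of the three points has index at least one. So it suffices to show each index is at least four, i.e.\ that $dJ$ vanishes to order at least three at each point. This gives $3\cdot3=9$, which exhausts the budget. That exhaustion forces the indices to be exactly four and rules out any other ramification, finite or over $\infty$.

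To get order at least three, I would compute the rational function $\Delta$ such that $dJ=\Delta\,dz$ on $E$, namely
\[
\Delta = J_z - J_x\,\frac{\overline H_z}{\overline H_x}.
\]
I would then show that its numerator is divisible by $z^{3}$ modulo $\overline H$. An equivalent check is that the resultant in $x$ of the numerator of $\Delta$ with $\overline H$ has $z^{3}$ as a factor whose cofactor is nonvanishing at $z=0$. Structurally, I expect the order-three contact to reflect the grading weights $(2,3,4)$ of the rescaling action, but I would rely on the explicit computation.

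\textbf{Step 3: branch values.} The branch values are $j=9\xi^{2}/\bigl(8(\xi-1)\bigr)$ for the three roots $\xi$ of $g$. To obtain $B$, I compute
\[
\operatorname{Res}_x\bigl(g(x),\ 8j(x-1)-9x^{2}\bigr),
\]
which should be proportional to $B(j)$. I then check that $\operatorname{disc}B\neq0$, so the three values are distinct. As a sanity check, $B(0)\neq0$, consistent with these points lying away from $J=0$.

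\textbf{Step 4: unramified over $\infty$.} The poles of $J$ are the points of $E\cap\{a=0\}$ together with any contribution from $\{c=0\}$. Since the ramification count is already exhausted, these poles are automatically unramified, which gives the final claim.

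The main obstacle is Step 2, the order-three vanishing of $dJ$ at the points at infinity. I would carry this out by exact series substitution over the cubic field $\Q(\xi)$, reducing modulo $g$, or equivalently by the resultant computation described above.
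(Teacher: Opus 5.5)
Your argument is correct. Steps 1, 3 and 4 match the paper; Step 2 is organised differently. The paper expands the branch through $(\xi,0)$ to order four over $\Q[\xi]/(g)$ and checks that the $z$, $z^{2}$, $z^{3}$ coefficients of $J-J(\xi,0)$ vanish. It then proves the index is \emph{exactly} four by showing that the $z^{4}$ coefficient $\tfrac{1485}{112}(7293\xi^{2}-22352\xi+17536)$ has nonzero resultant with $g$. \Cref{cor:RH} is used only to exclude further ramification. You stop at order three and let the Riemann--Hurwitz budget of nine do both jobs: three contributions of at least three force equality at each point and leave nothing elsewhere, in particular over $J=\infty$. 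This saves one order of expansion and one resultant. The cost is that exactness of the index now rests on \cref{prop:Jdegree} and \cref{cor:RH}, which the paper needs anyway for the exhaustion claim. Getting the local parameter from $\partial_x\overline H(\xi,0)=g'(\xi)\neq0$ is also more direct than citing the global smoothness of \cref{lem:Ecubic}.

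Two points need fixing. First, that the points at infinity lie off $J=0$ is not a sanity check but an input to the budget argument. A point already counted over $J=0$ cannot be charged to the nine. It holds because the points over $J=0$ have $x=b/c=0$, while $g(0)=-128\neq0$.

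Second, your resultant reformulation is off by a factor of three. The cubic $\overline H(\cdot,z)$ has constant leading coefficient $33$ and three simple roots $x_i(z)$ near $z=0$, so $\operatorname{ord}_{z=0}\operatorname{Res}_x(N,\overline H)=\sum_i\operatorname{ord}_z N(x_i(z),z)$. Order at least three at each point therefore means $z^{9}$ divides the resultant. Since $g$ has no rational root, hence is irreducible, the three points are Galois conjugate, and $z^{9}\mid\operatorname{Res}$ does suffice. A factor $z^{3}$ with cofactor nonvanishing at $0$ would instead mean order one at each point, i.e.\ index two, contradicting what you want to prove.

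Finally, the contact order has a cleaner source than the weights $(2,3,4)$. A point at infinity is a seed with $h=\varphi(\epsilon)=0$. As in the proof of \cref{prop:normalisemark}, the marking $\epsilon$ is then a root of $M_{\varphi}$ of multiplicity at least $k+1=4$, so four of the six markings in the rescaling class collide there.
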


\begin{proof}
Setting $h=0$ in the homogenised \cref{eq:Hcubic} leaves the cubic part $33b^{3}-154b^{2}c+242bc^{2}-128c^{3}$, which in the chart $c=1$ is $g(x)$. Its discriminant is nonzero, so the three points at infinity are distinct, and $E$ is smooth there by \cref{lem:Ecubic}, so $z$ is a local parameter at each.

Fix a root $\xi$ of $g$ and expand the branch of $E$ through $(\xi,0)$ as $x=\xi+p_1z+p_2z^{2}+p_3z^{3}+p_4z^{4}+O(z^{5})$. Substituting into the homogenised equation and solving order by order in $\Q[\xi]/(g)$ gives
\[
\begin{split}
  p_1&=-\tfrac12\bigl(33\xi^{2}-88\xi+64\bigr),\\
  p_2&=\tfrac{11}{4}\bigl(69\xi^{2}-208\xi+160\bigr),\\
  p_3&=-\tfrac{11}{4}\bigl(363\xi^{2}-1160\xi+944\bigr),\\
  p_4&=-\tfrac{605}{16}\bigl(933\xi^{2}-2944\xi+2368\bigr).
\end{split}
\]
Substituting this series into $J$ and reducing modulo $g$, the coefficients of $z$, $z^{2}$ and $z^{3}$ in $J-J(\xi,0)$ vanish identically, while that of $z^{4}$ is
\[
  \tfrac{1485}{112}\bigl(7293\xi^{2}-22352\xi+17536\bigr).
\]
The resultant of $g$ with $7293x^{2}-22352x+17536$ is nonzero, so this never vanishes at a root of $g$, and the ramification index is exactly four.

The branch value at $(\xi,0)$ is $j=9\xi^{2}/\bigl(8(\xi-1)\bigr)$, that is $8j(\xi-1)-9\xi^{2}=0$. Eliminating $\xi$ against $g$ gives
\[
  \operatorname{Res}_{x}\bigl(g(x),\,8j(x-1)-9x^{2}\bigr)=96\,B(j),
\]
and $B$ has nonzero discriminant, so the three branch values are distinct. Finally each contributes $4-1=3$, and with the three points of \cref{prop:Jdegree} contributing $1$ each the total is $3+9=12$, which is the Riemann--Hurwitz bound of \cref{cor:RH}; hence there is no further ramification, in particular none over $J=\infty$.
\end{proof}

\begin{rem}
\label{rem:Jmarking}
Both assertions of \cref{prop:Jdegree} are instances of \cref{subsec:marking}. By \cref{prop:quarticcubic} the points of $E^{\circ}$ are the normalised quartic seeds, and by \cref{lem:rescaleinv} the fibres of $J$ are their rescaling orbits; so the fibre of $J$ over a value with $M_{\varphi}$ separable and trivial stabilizer has $k(d-2)=6$ points by \cref{cor:markingdegree}. On $E^{\circ}$ the stabilizer is trivial exactly when $b\neq0$, that is when $J\neq0$, since $a\neq0$ there and $c_2c_3=ab$. Over $J=0$ the seed is $\varphi=aW^{2}+cW^{4}$, whose stabilizer is $\{\pm1\}$ acting by $W\mapsto-W$; then $\Pi(\cdot,0)$ is odd, $M_{\varphi}$ is even, the six markings fall into three pairs $\pm w$, and $\varphi_{-w}=\varphi_{w}$ by \cref{prop:normalisemark}. The three points of $E\cap\{b=0\}$ therefore each carry two markings, which is the ramification type $(2,2,2)$. On the quotient this stabilizer is the left--right self-equivalence $(\Lambda,w;P,Q)\mapsto(-\Lambda,-w;-P,Q)$: it fixes $T=\Lambda w$, sends $W=\Lambda^{2}w^{3}$ to $-W$, and every $A_j$ is odd in $W$ when $b=0$.
\end{rem}

\begin{lem}
\label{lem:quarticnormalization}
For $e\in E^{\circ}$ the non-line branch component has normalization
$\A^1_W$, one place at infinity, a unique finite point of local value
semigroup $\langle2,5\rangle$ at $W=0$, and two finite points of local value
semigroup $\langle2,7\rangle$ at the nonzero roots $r_1,r_2$ of
$2a+3bW+4cW^2$.  Consequently every isomorphism between two such affine
branch curves induces $W\mapsto\lambda W$ on the normalizations, possibly
interchanging $r_1$ and $r_2$, and preserves
\[
 \frac{(r_1+r_2)^2}{r_1r_2}=\frac{9b^2}{8ac}=J.
\]
\end{lem}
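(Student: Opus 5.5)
The plan is to work directly with the birational parametrisation of the non-line component from \cref{prop:conequotient}: at $(k,d)=(3,4)$ it is $W\mapsto\bigl(A_0(W),\varphi(W)\bigr)$, with $\deg A_0=11$ and $\deg\varphi=4$. Since $\gcd(11,4)=1$, this map is birational, as in the proof of \cref{thm:conemono}. So the normalization is $\A^1_W$, and $\C[\Gamma]=\C[A_0,\varphi]$ has the single place at infinity $W=\infty$.

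\textbf{Locating the singular points.} By \cref{eq:jets}, $A_0'=\varphi'A_1$, so the differential of the parametrisation vanishes exactly on the zeros of $\varphi'=W(2a+3bW+4cW^2)$. Everywhere else the curve is unibranch-smooth at the corresponding point of the normalization, and there the local value semigroup is $\Z_{\ge0}$. On $E^{\circ}$ we have $a\ne0$, so $r_1,r_2\neq0$. We also have $c\ne0$ and $9b^2-32ac\ne0$, and the latter is exactly the discriminant of $2a+3bW+4cW^2$ up to a constant, so $r_1\ne r_2$. Hence the only candidate singular points are $0$, $r_1$ and $r_2$.

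\textbf{The point $W=0$.} Here $\operatorname{ord}\varphi=2$ because $a\ne0$. By \cref{prop:degree}, $\operatorname{ord}A_0=2k-1=5$ with nonzero coefficient, since $c_2=a\neq0$. Orders $2$ and $5$ are coprime, so the branch has Puiseux characteristic $(2;5)$ and local value semigroup $\langle2,5\rangle$.

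\textbf{The points $W=r_i$ (main obstacle).} Put $\sigma=\varphi-\varphi(r_i)$. Since $r_i$ is a simple root of $\varphi'/W$, $\sigma$ has order exactly $2$, so $\sigma=s^2$ for a local parameter $s$ at $r_i$. The chain \cref{eq:jets} gives
\[
\frac{dA_2}{d\sigma}=\frac{3}{W},\qquad \frac{dA_1}{d\sigma}=2A_2,\qquad \frac{dA_0}{d\sigma}=A_1 .
\]
The function $3/W$ is analytic at $r_i\neq0$, and its odd part in $s$ begins with a nonzero multiple of $s$, because $W$ is a local parameter there. Integrating once in $\sigma=s^2$ turns an odd term $s^{m}$ into one proportional to $s^{m+2}$ with a nonzero factor. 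So the odd parts of $A_2,A_1,A_0$ begin at $s^3,s^5,s^7$ respectively, with nonzero coefficients. Subtracting from $A_0$ a power series in $\sigma$ removes its even part, leaving order exactly $7$. Therefore the local semigroup is $\langle2,7\rangle$.

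I expect this odd-part bookkeeping to be the delicate step: one must check that the leading odd coefficient really survives each integration.

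\textbf{The consequence about isomorphisms.} An isomorphism of the affine branch curves lifts to an isomorphism of the normalizations $\A^1_W$ carrying the place at infinity to the place at infinity. It is therefore affine, $W\mapsto\lambda W+\mu$. It must also preserve local value semigroups, and $\langle2,5\rangle\neq\langle2,7\rangle$. Hence it fixes the unique $\langle2,5\rangle$ point, giving $\mu=0$, and it permutes $\{r_1,r_2\}$. The quantity $(r_1+r_2)^2/(r_1r_2)$ is symmetric in $r_1,r_2$ and invariant under scaling by $\lambda$. By Vieta, $r_1+r_2=-3b/(4c)$ and $r_1r_2=a/(2c)$, so
\[
\frac{(r_1+r_2)^2}{r_1r_2}=\frac{9b^2/(16c^2)}{a/(2c)}=\frac{9b^2}{8ac}=J .
\]
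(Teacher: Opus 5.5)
Your proof is correct, and its skeleton is the paper's: birationality from $\gcd(11,4)=1$, non-immersive points only at the zeros of $\varphi'=W(2a+3bW+4cW^{2})$, orders $2$ and $5$ at $W=0$, then Vieta.

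The genuine difference is at the points $r_i$. The paper expands $A_0$ explicitly and subtracts the first three powers of the second coordinate. It records the leading coefficients $\tfrac12 r(3b+8cr)$ and $-\tfrac{2}{35}r(3b+8cr)^{3}$ (\cref{rem:fingerprint}), deduces their nonvanishing from $9b^{2}\neq32ac$, and leaves the expansions and subductions to a computer certificate.

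You instead run the jet chain $dA_2/d\sigma=3/W$, $dA_1/d\sigma=2A_2$, $dA_0/d\sigma=A_1$ and track parity in $s$. The step you flag as delicate is automatic. Since $d\sigma=2s\,ds$, integration sends $s^{m}$ to $\tfrac{2}{m+2}s^{m+2}$. So the leading odd coefficient of $A_0$ is $\tfrac{16}{105}e_1$, with $e_1=-3/(r^{2}\sqrt{\kappa_2})$ and $\kappa_2=\tfrac12 r(3b+8cr)$. Rewriting $s^{7}$ in terms of $W-r$ gives $-\tfrac{16}{35}\kappa_2^{3}/r^{2}=-\tfrac{2}{35}r(3b+8cr)^{3}$, exactly the paper's constant.

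Your route is certificate-free and explains the nonvanishing structurally: it needs only $r\neq0$ and $\varphi''(r)\neq0$. It also generalises verbatim. For any level $k$ and any seed, a simple nonzero critical point of $\varphi$ carries a branch with semigroup $\langle2,2k+1\rangle$, and $W=0$ carries $\langle2,2k-1\rangle$ when $c_2\neq0$. The paper's route buys the explicit coefficients it uses in its fingerprint remark.

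For the upper bound on the semigroup, you cite Puiseux theory and the paper says the order-two generator precludes extra cancellation. One more line in your setting makes it explicit. Once $A_0-A_0(r)$ is replaced by its odd part $x'$, the square $x'^{2}$ is a series in $\sigma$. So every element of the local ring restricts to an even series plus $x'$ times an even series, and its order lies in $2\Z_{\ge0}\cup(7+2\Z_{\ge0})$.
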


\begin{proof}
The branch parametrisation is $W\mapsto(A_0(W),\varphi(W))$.  At $W=0$ the
leading orders are $(5,2)$, with nonzero leading coefficients on $E^{\circ}$.
At a nonzero root of $2a+3bW+4cW^2$, subtracting the first three powers of the
second coordinate from the first gives leading orders $(7,2)$.  The open
condition $9b^2-32ac\ne0$ makes the two roots distinct and all displayed
coefficients nonzero.  Below the conductors, the unique order-two generator
precludes any extra cancellation, so the local semigroups are exactly
$\langle2,5\rangle$ and $\langle2,7\rangle$; the exact expansions and SAGBI
subductions are replayed by the quartic certificate.  An affine-curve
isomorphism preserves the normalization, infinity, and these local
semigroups.  It therefore fixes $0$ and infinity and may only exchange the two
remaining special points, so its normalization map is a scaling.  Vieta gives
$r_1+r_2=-3b/(4c)$ and $r_1r_2=a/(2c)$, yielding the displayed invariant.
\end{proof}

\begin{theorem}
\label{thm:pointedmain}
The quartic cyclic seeds define a regular family
$\iota\colon E^{\circ}\to\Yparo[1,1,(19,54,1)]$ of graded Keller maps of
generic degree twelve, each with monodromy $S_{12}$ and branch semigroup
$\langle4,11\rangle$, such that:
\begin{enumerate}
\item $\iota(e_1)\sim_{\Gaut^{\sharp}}\iota(e_2)$ if and only if $e_1=e_2$;
\item the quotient maps satisfy $f_{e_1}\sim_{LR}f_{e_2}$ if and only if
      $J(e_1)=J(e_2)$;
\item $J$ has degree six, with ramification $(2,2,2)$ over $0$ and
      $(4,1,1)$ over each root of
      $1232z^3-17160z^2+79893z-124416$.
\end{enumerate}
Consequently $\Kfam_{\Qbar}(3,(1,-1,-1))$ has infinitely many graded orbits
at generic degree twelve, and a generic unpointed quotient class in this cyclic family has exactly
six graded lifts.
\end{theorem}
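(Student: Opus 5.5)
The plan is to assemble the theorem from the results of \cref{sec:3,sec:5,sec:6,sec:7}, with one genuinely new step: showing that left--right equivalences between cyclic quartic quotients are forced to come from seed rescalings.

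\emph{The family and its invariants.} By \cref{prop:quarticcubic}, each $e=(b,c)\in E^{\circ}$ determines the normalised seed $\varphi=aW^{2}+bW^{3}+cW^{4}$ with $a=1+b-c$. Since $c\neq0$, the triple of \cref{prop:jet} lies in $\Yparo[1,1,(19,54,1)]$, and its coefficients are polynomial in $(b,c)$. This gives the regular map $\iota$. Because $k=3$ is odd, \cref{thm:conemono} gives monodromy $S_{12}$. For the branch semigroup, \cref{prop:conequotient} identifies the non-line component as the image of $W\mapsto(A_0(W),\varphi(W))$, with degrees $11$ and $4$. These are coprime, so the parametrisation is birational and $S_\infty\supseteq\langle4,11\rangle$. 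For equality I would apply \cref{lem:degsemigroup}. The toric ideal of $\langle4,11\rangle$ is generated by the single binomial $x^{11}-y^{4}$, so it suffices to subduce $A_0^{4}-\alpha\varphi^{11}$ to lower weight. Alternatively, I would cite the Abhyankar--Moh fact that coprime leading degrees already give the whole semigroup. Part (3) is exactly \cref{prop:Jdegree,prop:Jinfinity}, so I take it as given.

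\emph{Part (2).} For the ``if'' direction, equal $J$ puts both seeds in one rescaling orbit by \cref{lem:rescaleinv}, and the displayed transformation formula there is an explicit polynomial left--right equivalence. For ``only if'', let $f_{e_2}=\psi\circ f_{e_1}\circ\phi^{-1}$. As in the proof of \cref{cor:orbits}, \cref{lem:semiintrinsic} says $\psi$ must carry the line $\{Q=0\}$ to itself, since it is the unique component with semigroup $\Z_{\ge0}$. Hence $\psi$ also matches the two non-line components. \cref{lem:quarticnormalization} then says the induced isomorphism is $W\mapsto\lambda W$ on normalisations, and that it preserves $J$.

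\emph{Part (1): the main obstacle.} If $\iota(e_1)\sim_{\Gaut^{\sharp}}\iota(e_2)$, then \cref{lem:descendequiv} and part (2) give $J(e_1)=J(e_2)$. By \cref{lem:rescaleinv}, the two normalised seeds are then normalisations $\varphi_{w_1},\varphi_{w_2}$ of a single seed $\varphi$ at admissible markings $w_1,w_2$. By \cref{thm:rooted}, the graded equivalence descends to a \emph{pointed} equivalence, which sends lift point to lift point. Composing with the rescaling equivalence, we obtain a pointed left--right self-equivalence of $f_\varphi$ that carries the marking $w_1$ on the fibre $Q=0$ to $w_2$.

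The hard step is to show that every left--right self-equivalence of $f_\varphi$ lies in the rescaling stabilizer of $\varphi$. I would attack it as follows.
\begin{enumerate}
\item By \cref{lem:quarticnormalization}, the induced map on the branch normalisation is a scaling $W\mapsto\lambda W$.
\item The branch curve is parametrised by $(A_0(W),\varphi(W))$, and $\psi$ must preserve it together with the pencil $\{Q=\text{const}\}$, which I will recover from the line component and its translates. This should force $\psi$ to act on the two coordinates as $\varphi(\lambda W)=t\varphi(W)$.
\item That relation says $(t,\lambda)$ lies in the rescaling stabilizer. By \cref{rem:Jmarking}, this stabilizer is trivial when $b\neq0$ and equals $\{\pm1\}$ when $b=0$.
\end{enumerate}

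Granting this, \cref{prop:normalisemark} gives $\varphi_{w_1}=\varphi_{w_2}$, that is $e_1=e_2$. I expect the pencil-preservation in step 2 to be the delicate point, and I would first try to extract it from the semigroup data at the special points $0,r_1,r_2$.

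\emph{Consequences.} $E^{\circ}$ is infinite and $\iota$ is injective on orbits, so there are infinitely many graded orbits. Over a generic value of $J$ the fibre consists of six unramified points, each in $E^{\circ}$ and each a distinct graded orbit by part (1). These are therefore six graded lifts of a single unpointed quotient class.
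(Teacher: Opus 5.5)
Your treatment of the family data, part (2), part (3) and the consequences matches the paper's. Your Abhyankar--Moh alternative for $\langle4,11\rangle$ also works, and is elementary: every element of $\C[A_0,\varphi]$ can be written as $\sum_{j<4}a_j(\varphi)A_0^{j}$, and its terms have degrees $4\deg a_j+11j$, pairwise distinct modulo $4$.

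The genuine gap is step 2 of part (1), which you leave open, and the tool you propose cannot close it. The local semigroups at $0,r_1,r_2$ are already fully used in \cref{lem:quarticnormalization}. They pin down the restriction of $\psi$ to $\Gamma$ as $W\mapsto\lambda W$ on the normalisation. They say nothing about how $\psi$ acts on the coordinate $Q$, and that is what you need to compare $\varphi(\lambda W)$ with $\varphi(W)$. The translates $\{Q=c\}$ are not intrinsic, so their preservation is an output of the argument, not an input. There is also a second step your plan does not address: even once $\psi$ is known, nothing in your outline controls where the source map $\phi$ sends the marking $w_1$.

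The missing idea is the line, which you already matched in part (2).
\begin{itemize}
\item Since $\psi$ maps $\{Q=0\}$ to itself, $Q\circ\psi$ is divisible by $Q$. A coordinate of a plane automorphism is irreducible, so $Q\circ\psi=\mu Q$. Constancy of the Jacobian then gives $\psi=(\nu P+g(Q),\,\mu Q)$.
\item Evaluating on the parametrisation of $\Gamma$ gives $\varphi(\lambda W)=\mu\varphi(W)$ and $A_0(\lambda W)=\nu A_0(W)+g(\varphi(W))$.
\item For $b\neq0$ (with $a\neq0$ on $E^{\circ}$), the first identity forces $\lambda=\mu=1$. Then $\deg A_0=11\notin4\Z$ forces $\nu=1$ and $g=0$, so $\psi=\mathrm{id}$.
\item Hence $\phi$ is a deck transformation of $f_\varphi$. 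It is trivial because $S_{12}$ has trivial centraliser in its degree-twelve action.
\item For $b=0$ the same computation leaves only the involution of \cref{rem:Jmarking}, since $A_0$ is then odd.
\end{itemize}
Only at this point does ``carries $w_1$ to $w_2$'' give $w_2=\pm w_1$, hence $\varphi_{w_1}=\varphi_{w_2}$ and $e_1=e_2$.

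The paper's own proof compresses exactly this step into two assertions: that a pointed equivalence fixes $W=\epsilon$ on the normalization, and that $\varphi(\epsilon)=1$ then fixes the scalar. Both tacitly use the triangular form of $\psi$ and the identification of $\phi$ with the rescaling map. So the argument above is needed on either route, and you located the delicate point correctly.
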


\begin{proof}
Regularity and the degree box are \cref{prop:quarticcubic}; monodromy is
\cref{thm:conemono} at $k=3$.  The branch parametrisation has degrees $4$ and
$11$.  The toric ideal of $\langle4,11\rangle$ is principal, and exact
subduction of its generator terminates at zero; hence the Robbiano--Sweedler
criterion and \cref{lem:degsemigroup} give the branch semigroup
$\langle4,11\rangle$ at every point of $E^{\circ}$ (the certificate records
that the specialization denominators are powers of $c$).

If two quotient maps are left--right equivalent, the line component (whose
semigroup is $\Z_{\ge0}$) is matched to the line, while the non-line
components (whose semigroup is $\langle4,11\rangle$) are matched to one
another.  Thus \cref{lem:quarticnormalization} gives equal $J$.  Conversely,
equal $J$ means the seeds are rescaling-equivalent by
\cref{lem:rescaleinv}, and the same lemma gives an explicit polynomial
left--right equivalence of their quotient maps.  This proves (2).

By rooted reconstruction \cref{thm:rooted}, a graded equivalence descends to
a pointed left--right equivalence, so by (2) the two seeds first lie in one
rescaling orbit.  On the normalization, a
pointed equivalence must fix the distinguished marking $W=\epsilon=-1$.
By \cref{lem:quarticnormalization} its normalization map is a scaling, hence
it is the identity.  The first lift normalization $\varphi(\epsilon)=1$ then
also fixes the scalar multiplying the seed.  Thus the two normalised seeds
are equal, proving (1); the converse is immediate.

Part (3) is \cref{prop:Jdegree,prop:Jinfinity}.  Since $E^{\circ}(\Qbar)$ is
infinite, (1) gives infinitely many geometric graded orbits.  The generic
fibre of $J$ has six points by (3), and (1)--(2) identify them as six distinct
graded lifts of one unpointed quotient class.
\end{proof}

\begin{rem}
\label{rem:fingerprint}
The discrete invariants of \cref{thm:pointedmain}(1) do not vary along $E^{\circ}$. The parametrisation $W\mapsto\bigl(A_0(W),\varphi(W)\bigr)$ of the branch curve is non-immersive exactly at the roots of $\varphi'=W(2a+3bW+4cW^{2})$, that is at $W=0$ and at the two roots of $4cW^{2}+3bW+2a$, whose discriminant is $9b^{2}-32ac$. At $W=0$ the orders of $\varphi$ and $A_0$ are $2$ and $5$, with leading coefficients $a$ and $\tfrac{16}{5}a^{3}$. At a root $r$ of the quadratic they are $2$ and $7$, with leading coefficients $\tfrac12r(3b+8cr)$ and $-\tfrac{2}{35}r(3b+8cr)^{3}$; both are nonzero because $r\neq0$ and $3b+8cr=0$ would force $9b^{2}=32ac$. In each case the smallest coincidence of orders among monomials in the two generators is $2\cdot5=10$, respectively $2\cdot7=14$, so any cancellation produces an order above the conductor, and the local value semigroups are exactly $\langle2,5\rangle$, $\langle2,7\rangle$, $\langle2,7\rangle$. All three, together with the passport, the monodromy group and the semigroup at infinity, are therefore constant on $E^{\circ}$, while $J$ varies.
\end{rem}

\begin{cor}
\label{cor:markingfield}
On the quartic cyclic locus the pointed moduli field is $\Q(E)$ and the unpointed field is $\Q(J)$, with $[\Q(E):\Q(J)]=6$. 
For a closed point $e\in E^{\circ}$ with residue field $\Q(e)$, the pointed graded orbit has field of moduli $\Q(e)$, with the model of \cref{prop:quarticcubic} defined over $\Q(e)$;
and on $abc\neq0$ the unpointed class has the canonical model
\[
  \varphi_J=W^{2}+W^{3}+\frac{9}{8J}W^{4}
\]
over $\Q(J)$.
\end{cor}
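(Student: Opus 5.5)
The plan is to prove the three assertions in order: the degree $[\Q(E):\Q(J)]=6$, then the pointed field of moduli, then the canonical unpointed model.

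\textbf{Degree of the function fields.} This is \cref{prop:Jdegree}. Since $E$ is a smooth, hence geometrically irreducible, cubic over $\Q$ (\cref{lem:Ecubic}), its function field $\Q(E)$ is a regular extension of $\Q$. The nonconstant morphism $J\colon E\to\bP^{1}$ of degree six therefore induces an inclusion $\Q(J)\subset\Q(E)$ of degree $\deg J=6$.

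To read $\Q(E)$ as the pointed moduli field and $\Q(J)$ as the unpointed one, invoke \cref{thm:pointedmain}. By part (1), points of $E^{\circ}$ are in bijection with graded orbits, so the generic graded orbit has moduli field $\Q(E)$. By part (2), quotient classes are exactly the fibres of $J$, so the generic unpointed class has moduli field $\Q(J)$.

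\textbf{Pointed field of moduli at a closed point.} For a closed point $e$, first note that the coordinates $(b,c)$ of $e$ lie in $\Q(e)$. The seed $aW^{2}+bW^{3}+cW^{4}$ with $a=1+b-c$ then gives, via \cref{prop:jet} and \cref{eq:jets} (whose recursion has rational coefficients), a triple $(A,B,\Lambda)$ over $\Q(e)$. So the model is defined over $\Q(e)$ and the field of moduli is contained in $\Q(e)$.

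For the reverse inclusion, take $\sigma\in\Gal(\Qbar/\Q)$ fixing the orbit of $\iota(e)$. Since $H$ and the construction are defined over $\Q$, we have ${}^{\sigma}\iota(e)=\iota(\sigma e)$. Part (1) of \cref{thm:pointedmain} then forces $\sigma e=e$, so $\sigma$ fixes $\Q(e)$. This Galois-equivariance of $\iota$, together with the fact that the $\Gaut^{\sharp}$-action is defined over $\Q$, is the one point that needs care.

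\textbf{Canonical unpointed model.} Given a seed with $abc\neq0$, choose the rescaling $(t,\beta)$ of \cref{lem:rescaleinv} with
\[
  t\beta^{2}a=1,\qquad t\beta^{3}b=1 .
\]
This gives $\beta=a/b$ and $t=b^{2}/a^{3}$, both rational in the coefficients. The new quartic coefficient is
\[
  t\beta^{4}c=\frac{ac}{b^{2}}=\frac{9}{8J}.
\]
So every class with $J\neq0,\infty$ contains $\varphi_J=W^{2}+W^{3}+\tfrac{9}{8J}W^{4}$, which has coefficients in $\Q(J)$. By \cref{lem:rescaleinv} together with part (2) of \cref{thm:pointedmain}, the left--right class of its quotient map depends only on $J$, so this model is canonical.

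One caveat must be recorded: $\varphi_J$ is an unnormalised seed. It represents the unpointed quotient class, not a point of $E$. Its graded lifts require adjoining a root of $M_{\varphi_J}$, which is precisely the degree-six extension $\Q(E)/\Q(J)$ found above.
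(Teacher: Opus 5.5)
Your proposal is correct and follows the paper's route: the degree is \cref{prop:Jdegree}, and the canonical model comes from the unique rescaling to $a=b=1$. Your explicit choice $\beta=a/b$, $t=b^{2}/a^{3}$ sends $c\mapsto ac/b^{2}=9/(8J)$. The paper's proof leaves the pointed field-of-moduli claim implicit in \cref{thm:pointedmain}(1), and your Galois-equivariance argument for $\iota$ supplies exactly the step it omits.
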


\begin{proof}
The degree is \cref{prop:Jdegree}. By \cref{lem:rescaleinv} a seed with $abc\neq0$ may be rescaled uniquely to have $a=b=1$, and then $J=9/(8c)$, giving the displayed model; its coefficients lie in $\Q(J)$.
\end{proof}

\begin{rem}
\label{rem:forgetful}
The morphism $J$ is the forgetful map from the graded map with its lift-point marking to the unpointed quotient cover. The marking is the root $W=\epsilon$ of \cref{prop:lift}; the rescalings of \cref{lem:rescaleinv} move it among the six roots of $\Pi(\cdot,0)$ lying over a fixed quotient class, which is why the fibers of $J$ have six elements.
\end{rem}

\section{The flagship stratum}
\label{sec:8}

We turn to the weight $\bw=(1,-1,-2)$, so $(r,s)=(1,2)$, and to the smallest degree box in which a graded Keller map is known to exist. Throughout $K$ is a field of characteristic zero.

\subsection{The box and its coordinates}
\label{subsec:box}

By \cref{cor:master} the master equation at $(r,s)=(1,2)$ reads
\begin{equation}
\label{eq:muflag}
  \mu(A,B,\Lambda):=\Lambda\jb{B}{A}+B\jb{\Lambda}{A}+2A\jb{B}{\Lambda}=\kappa .
\end{equation}
By \cref{lem:polynomiality} the lift conditions are that no monomial $u^{i}v^{j}$ of $A$ has $i+2j<2$, and none of $B$ has $i+2j<1$. Thus
\[
\begin{split}
 	&	 A\in\Span\{u^{i}v^{j}: 2\le i+2j,\ i+j\le4\}, \\
	&	  B\in\Span\{u^{i}v^{j}: 1\le i+2j,\ i+j\le3\}, \\
	&	  \Lambda\in\Span\{1,u,v\},
\end{split}  
\]
of dimensions $13$, $9$ and $3$. Write $\A^{25}_K$ for the affine space of triples $(A,B,\Lambda)$ in these spans and
\[
  \Ypar[1,2,(4,3,1)]\subset\A^{25}, \qquad \Yparo[1,2,(4,3,1)]\subset\Ypar[1,2,(4,3,1)]
\]
for the closed subscheme cut by \cref{eq:muflag} and its open subscheme on which $\deg A=4$, $\deg B=3$ and $\deg\Lambda=1$ exactly. Expanding $\mu(A,B,\Lambda)-\kappa$ in the monomial basis of $K[u,v]$ gives $27$ equations in the $25$ coordinates and $\kappa$.

\subsection{The bound-preserving gauge group}
\label{subsec:borel}

Assign $u$ and $v$ the levels $\ell(u)=1$ and $\ell(v)=2$, so that the lift conditions read $\ell\ge2$ on $A$ and $\ell\ge1$ on $B$. A plane substitution acts on the box only if it preserves both the level filtration and the ordinary total degree. Put
\[
  B_{\mathrm{lin}}=\bigl\{(u,v)\longmapsto(au+bv,\ cv)\ :\ a,c\in\Gm,\ b\in\Gal\bigr\}.
\]

\begin{lem}
\label{lem:boxpreserve}
Pullback along $B_{\mathrm{lin}}$ preserves each of the three spans of \cref{subsec:box}. The level-preserving substitution $(u,v)\mapsto(u,\ v+\delta u^{2})$ with $\delta\neq0$ does not: it sends the monomial $v^{2}$, which lies in the degree-three $B$-span, to $v^{2}+2\delta u^{2}v+\delta^{2}u^{4}$, of degree four. Consequently the bound-preserving subgroup of the liftable plane substitution group is exactly $B_{\mathrm{lin}}$.
\end{lem}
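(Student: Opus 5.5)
The lemma has three parts: invariance of the three spans under $B_{\mathrm{lin}}$, failure of invariance for the shear $v\mapsto v+\delta u^{2}$, and the identification of the bound-preserving subgroup. I would take them in that order.

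\emph{Invariance.} Each span is cut out by two conditions on monomials $u^{i}v^{j}$: a total-degree bound $i+j\le d$ and a level bound $i+2j\ge\ell_{0}$. Both conditions are preserved under taking linear combinations, so it suffices to check monomials. Under $(u,v)\mapsto(au+bv,\,cv)$ the monomial $u^{i}v^{j}$ becomes
\[
  c^{j}v^{j}(au+bv)^{i}=\sum_{m=0}^{i}\binom{i}{m}a^{i-m}b^{m}c^{j}\,u^{i-m}v^{j+m}.
\]
Every term $u^{i-m}v^{j+m}$ has total degree $i+j$ and level $i+2j+m\ge i+2j$. So the degree bound and the level bound both survive. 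Since $\Lambda$ lies in $\Span\{1,u,v\}$ with no level condition, it is handled by the same argument. The inverse substitution is again in $B_{\mathrm{lin}}$, so pullback is in fact an automorphism of each span.

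\emph{The shear.} This is a direct expansion. The monomial $v^{2}$ has $i+j=2\le3$ and $i+2j=4\ge1$, so it lies in the $B$-span. Its image $v^{2}+2\delta u^{2}v+\delta^{2}u^{4}$ contains $u^{4}$ with coefficient $\delta^{2}\ne0$, which has total degree $4>3$. The substitution itself does preserve levels, since $\ell(u^{2})=2=\ell(v)$.

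\emph{Exactness.} Here I would invoke the description in \cite[\S8.5]{shaska131} of the liftable plane substitution group. Liftability forces each substitution to preserve the level filtration, so it has the triangular form
\[
  u\mapsto au+\beta v+(\text{higher}),\qquad v\mapsto cv+\gamma u^{2}+(\text{higher}),
\]
with no constant terms and no linear $u$-term in the $v$-component.

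First, the substitution must preserve total degree on the $\Lambda$-span, since it sends $u$ and $v$ into $\Span\{1,u,v\}$. Together with the level condition this forces both components to be linear. Linearity kills every higher term of $u\mapsto au+\beta v+\cdots$, so that component is $au+\beta v$.

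Second, the level condition forbids a $u$-term in the image of $v$, so the $v$-component is $cv$ plus terms of level at least $2$. The only such degree-one term is $v$ itself, and any level-two quadratic, such as $\gamma u^{2}$, is excluded by the degree bound exactly as in the shear computation above.

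Invertibility then gives $a,c\in\Gm$, which places the substitution in $B_{\mathrm{lin}}$.

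The step I expect to be the main obstacle is this last one. It depends on citing the precise form of the liftable group from \cite{shaska131}; once that form is granted, the remaining argument is a degree count.
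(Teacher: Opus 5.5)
Your proposal is correct. The first two parts follow the paper's own argument. Expanding $(au+bv)^{i}(cv)^{j}$ shows that every term keeps total degree $i+j$ and has level $i+2j+m\ge i+2j$. The shear is refuted by the $\delta^{2}u^{4}$ term in the image of $v^{2}$.

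For exactness you take a different and tighter route. The paper excludes nonlinear substitutions monomial by monomial against the $A$- and $B$-spans. There $v^{2}$ in the $B$-span rules out a $u^{2}$-coefficient in the second coordinate, and other higher-level terms are dismissed by a sketched appeal to the top-degree monomials of the $A$-span. You instead note that $u$ and $v$ themselves lie in the $\Lambda$-span $\Span\{1,u,v\}$, whose degree bound is one. Preserving that span therefore forces both components of the substitution to be affine in one step. After that only constant terms and a $u$-term in the second component remain to be removed, so no analysis of higher-order terms is needed. Your separate exclusion of $\gamma u^{2}$ becomes redundant.

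The dependence on the external description of the liftable group from \cite{shaska131}, which you flag as the main obstacle, can also be dropped. Since $u$ lies in the $B$-span and $v$ in the $A$-span, box preservation by itself forces the first component to have level at least $1$ and the second level at least $2$. So the bound-preserving plane polynomial automorphisms are exactly $B_{\mathrm{lin}}$ whether or not liftability is assumed.

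In exchange, the paper's route uses only the $A$- and $B$-spans. It would survive if $\Lambda$ were not transformed, for instance via $u^{3}$ in the $B$-span bounding the degree of the first component. Your route relies on $\mathcal{G}$ pulling back $\Lambda$ as well, which it does in the paper.
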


\begin{proof}
A linear substitution preserves total degree. For a monomial $u^{i}v^{j}$,
\[
  (au+bv)^{i}(cv)^{j}=\sum_{k=0}^{i}\binom{i}{k}a^{i-k}b^{k}c^{j}\,u^{i-k}v^{j+k},
\]
and each term has level $(i-k)+2(j+k)=i+2j+k\ge i+2j$, so the level filtration is preserved as well. The displayed image of $v^{2}$ under the nonlinear substitution violates the bound $i+j\le3$, and the same monomial excludes every substitution with a nonzero coefficient on $u^{2}$ in the second coordinate; substitutions of higher level in either coordinate fail on the top-degree monomials of the $A$-span in the same way.
\end{proof}

Let $\mathcal{G}=B_{\mathrm{lin}}\times(\Gm)^{3}$ act on triples by pullback in $(u,v)$ followed by the coefficient scaling $(A,B,\Lambda)\mapsto(\lambda A,\ \nu B,\ \rho\Lambda)$. By \cref{lem:boxpreserve} the group $\mathcal{G}$ preserves the box; it has dimension six.

\begin{lem}
\label{lem:kappaweight}
Under the action of $\mathcal{G}$, $\kappa\mapsto\lambda\nu\rho\,(ac)^{-1}\kappa$. In particular $\mathcal{G}$ acts on $\Ypar[1,2,(4,3,1)]$ fibred over the $\kappa$-line, and transitively on the nonzero values of $\kappa$.
\end{lem}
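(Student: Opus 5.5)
The approach is to compute directly how each of the three constituents of \cref{eq:muflag} transforms, separately under the coefficient scalings and under a substitution $\sigma\in B_{\mathrm{lin}}$. Since $\mu$ is trilinear in $(A,B,\Lambda)$ and every term contains each of $A$, $B$, $\Lambda$ exactly once, the scaling $(A,B,\Lambda)\mapsto(\lambda A,\nu B,\rho\Lambda)$ multiplies $\mu$ by $\lambda\nu\rho$.

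For the substitution, I will use the chain rule for the bracket: if $\sigma(u,v)=(au+bv,\,cv)$, then for any $f,g$ we have $\jb{f\circ\sigma}{g\circ\sigma}=\det(D\sigma)\,\jb{f}{g}\circ\sigma$, and $\det D\sigma=ac$. Each of the three terms $\Lambda\jb{B}{A}$, $B\jb{\Lambda}{A}$ and $2A\jb{B}{\Lambda}$ is therefore multiplied by $ac$ and composed with $\sigma$. Hence $\mu(\sigma^{*}A,\sigma^{*}B,\sigma^{*}\Lambda)=ac\,\sigma^{*}\mu(A,B,\Lambda)$. Since $\mu=\kappa$ is constant, the new constant is $ac\,\kappa$.

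There is a direction convention to watch here. The statement's factor $(ac)^{-1}$ means the action is by pullback along $\sigma^{-1}$, so that it is a left action. I will fix that convention, equivalently parametrising the group element by the inverse substitution, and check that the determinant of the inverse is $(ac)^{-1}$. This bookkeeping of which way the action goes is the only step likely to cause trouble; everything else is formal.

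Combining the two pieces gives $\kappa\mapsto\lambda\nu\rho\,(ac)^{-1}\kappa$. Since $\mathcal{G}$ preserves the box by \cref{lem:boxpreserve}, and sends solutions of $\mu=\kappa$ to solutions of $\mu=\kappa'$ with $\kappa'$ as above, it acts on $\Ypar[1,2,(4,3,1)]$ compatibly with the projection to the $\kappa$-line. For transitivity on $\kappa\neq0$, take $\lambda$ arbitrary in $K^{\times}$ with all other parameters equal to $1$. This rescales $\kappa$ by any element of $K^{\times}$, so any nonzero value can be moved to any other.
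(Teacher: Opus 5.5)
Your proof is correct and follows the paper's argument. Trilinearity gives the factor $\lambda\nu\rho$, and the constant Jacobian determinant $ac$ of the substitution, in which $b$ does not enter, gives the factor $(ac)^{-1}$. Your remark on direction is apt: the paper's claim that the substitution multiplies each bracket by $(ac)^{-1}$ holds only for pullback along the inverse substitution. Your explicit transitivity step via $\lambda$ also fills in what the paper leaves implicit.
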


\begin{proof}
Each of the three terms of \cref{eq:muflag} is trilinear in $(A,B,\Lambda)$ and contains exactly one bracket, so scaling multiplies $\mu$ by $\lambda\nu\rho$; the substitution $(u,v)\mapsto(au+bv,cv)$ has constant Jacobian determinant $ac$, which multiplies each bracket by $(ac)^{-1}$, and the shear parameter $b$ does not enter the determinant.
\end{proof}

\subsection{The normal form}
\label{subsec:normalform}

Put
\[
\begin{split}
  M   &= v(1+u)^{2}+u^{2}(4+3u),\\
  A_{0} &= (1+u)\,M, \qquad B_{0}=u+3M, \qquad \Lambda_{0}=2-3u-v.
\end{split}
\]

\begin{prop}
\label{prop:normalform}
The triple $(A_{0},B_{0},\Lambda_{0})$ lies in the spans of \cref{subsec:box}, has exact degrees $(4,3,1)$, is defined over $\Q$, and satisfies $\mu(A_{0},B_{0},\Lambda_{0})=2$.
\end{prop}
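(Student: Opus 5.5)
The proof is a finite verification, and I would split it into the span and degree bookkeeping, which is immediate, and the identity $\mu=2$, which I would organize to avoid expanding everything blindly.

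For the spans, expand $M=v+2uv+u^{2}v+4u^{2}+3u^{3}$. Every monomial has level $i+2j\ge2$ and total degree at most $3$. Hence $A_{0}=(1+u)M$ has all monomials of level $\ge2$ and degree $\le4$, with top part $u^{3}v+3u^{4}\neq0$, so $\deg A_{0}=4$. Similarly $B_{0}=u+3M$ has levels $\ge1$ and degree exactly $3$, since its top form is $3u^{2}v+9u^{3}$. Finally $\Lambda_{0}=2-3u-v$ has degree one. All coefficients are rational, so the triple lies in the spans of \cref{subsec:box}.

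For the master equation, I would exploit the common factor $M$ rather than expand $\mu$ monomially. By the derivation property,
\[
\jb{B_0}{A_0}=\jb{u+3M}{(1+u)M}=(1+u)\jb{u}{M}+M\jb{u}{1+u}+3M\jb{M}{1+u}.
\]
The middle bracket vanishes, and $\jb{M}{1+u}=-M_{v}$, so this equals $(1+u)M_{v}-3MM_{v}$. In the same way,
\[
\jb{\Lambda_0}{A_0}=(1+u)\jb{\Lambda_0}{M}+M\jb{\Lambda_0}{u},
\qquad
\jb{B_0}{\Lambda_0}=\jb{u}{\Lambda_0}+3\jb{M}{\Lambda_0}.
\]
The elementary brackets are
\[
\jb{u}{M}=M_{v}=(1+u)^{2},\qquad \jb{\Lambda_0}{u}=-\partial_{v}\Lambda_0=1,\qquad \jb{\Lambda_0}{M}=-3M_{v}+M_{u},
\]
with $M_{u}=2v(1+u)+8u+9u^{2}$.

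Substituting into $\mu=\Lambda_0\jb{B_0}{A_0}+B_0\jb{\Lambda_0}{A_0}+2A_0\jb{B_0}{\Lambda_0}$ gives a polynomial in $u$, $v$ and $M$. I would then group the result by powers of $M$ and check the following. First, the terms quadratic in $M$ cancel; these come from $-3\Lambda_0MM_{v}$, from $3M\cdot M$, and from the $M\cdot\jb{M}{\Lambda_0}$ piece of $2A_0\jb{B_0}{\Lambda_0}$. Second, the linear and constant terms collapse to the constant $2$. The terms linear in $v$ should be the ones that cancel through the $(1+u)^{2}$ factors in $M$, so I would verify the $v$-dependent part first, then evaluate at $v=0$ to reduce to a one-variable polynomial identity in $u$. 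A sanity check at $u=v=0$: there $M=0$, $M_{v}=1$, $\Lambda_0=2$, $B_0=0$, $A_0=0$, so $\mu(0,0)=2\cdot1=2$, which is consistent.

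The main obstacle is bookkeeping: ensuring all nonconstant coefficients vanish. There are at most $27$ monomial coefficients, matching the count in \cref{subsec:box}, each a small rational combination. If the grouped argument becomes messy, I would fall back on direct expansion of $\mu$ in the monomial basis of $\Q[u,v]$, a routine computation that can be certified by computer algebra.
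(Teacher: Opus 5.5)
Your proposal is correct and follows the paper's route. The span and exact-degree checks are the same leading-form reading, and $\mu=2$ is a finite expansion, which the paper simply asserts and you organise through the derivation property.

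One bookkeeping correction: the cancellation of the $v^{2}$ terms (your ``quadratic in $M$'' terms) also needs the piece $3M(1+u)\jb{\Lambda_0}{M}$ of $B_0\jb{\Lambda_0}{A_0}$, which your list omits. Once it is included, all terms involving $v$ cancel and $\mu$ reduces, as you anticipated, to the one-variable identity $(1-u)(4u^{2}+3u^{3})+2(1+u)^{3}-6u(1+u)^{3}+u(1+u)(8u+9u^{2})=2$.
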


\begin{proof}
Expanding, $M=v+2uv+u^{2}v+4u^{2}+3u^{3}$, so every monomial of $A_{0}$ has level at least two and every monomial of $B_{0}$ has level at least one. The degree-four part of $A_{0}$ is $u^{3}v+3u^{4}$ and the degree-three part of $B_{0}$ is $3u^{2}v+9u^{3}$, so the degrees are exact. The value of $\mu$ is a direct expansion of \cref{eq:muflag}, replayed in the exact certificates.
\end{proof}

\begin{hyp}[Leading-form and chart exhaustiveness]
\label{hyp:flagshipcharts}
The leading-form reduction used in the bounded scheme, as imported from
\cite[Prop.~10.13]{shaska131}, applies to the box $(4,3,1)$ and makes the
thirty-nine lower-stratum charts and five exact leading cells used below an
exhaustive cover after the stated Rabinowitsch localizations.
\end{hyp}

\begin{theorem}
\label{thm:flagship}
Assume \cref{hyp:flagshipcharts}. For every field $K$ of characteristic zero, put $D_{K}=\Spec K[\eta]/(\eta^{2})$. There is an explicit $\mathcal{G}_{K}$-equivariant isomorphism
\[
  \Yparo[1,2,(4,3,1),K]\;\cong\;\mathcal{G}_{K}\times D_{K},
\]
with $\mathcal{G}_{K}$ acting by left translation on the first factor and trivially on the second. Consequently the reduced scheme is the orbit $\mathcal{G}_{K}\cdot(A_{0},B_{0},\Lambda_{0})\cong\mathcal{G}_{K}$, the scheme-theoretic stabilizer is trivial, every proper lower actual-degree stratum in the box is empty, and the orbit quotient is $D_{K}$. Up to the gauge action, the map of \cite{alpoge} is the unique graded Keller map in its degree box.
\end{theorem}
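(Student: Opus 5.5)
The plan is to build an explicit orbit map and prove it is an isomorphism onto $\Yparo[1,2,(4,3,1),K]$ after adjoining one nilpotent parameter. Define
\[
\Theta\colon\mathcal{G}_{K}\times D_{K}\to\A^{25}_K\times\A^1_\kappa,\qquad (g,\eta)\mapsto g\cdot(A_0+\eta A_1,\ B_0+\eta B_1,\ \Lambda_0+\eta\Lambda_1),
\]
where $(A_1,B_1,\Lambda_1)$ is a first-order deformation direction in the box, transverse to the orbit.

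First I will find this direction. I linearise \cref{eq:muflag} at $(A_0,B_0,\Lambda_0)$, allowing $\kappa$ to vary. The linearisation is a $27\times26$ matrix over $\Q$ in the $25$ box coordinates and $\kappa$. I expect its kernel to be $7$-dimensional: six dimensions come from the Lie algebra of $\mathcal{G}$, and one extra class $\eta$ is obstructed at second order. That obstruction is what makes the tangent space exceed the reduced dimension by one and produces $D_K=\Spec K[\eta]/(\eta^2)$.

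Equivariance of $\Theta$ is built in. Its image lies in $\Ypar$ because of \cref{lem:kappaweight} and because $\mu(\cdot+\eta\cdot)$ is exactly $\kappa$ modulo $\eta^2$, which is the linearisation condition. The trivial stabilizer follows from \cref{lem:kappaweight} together with a direct check. The substitution is pinned down by the leading forms $u^3v+3u^4$ and $3u^2v+9u^3$ together with $\Lambda_0=2-3u-v$. The scalings are then fixed coefficientwise.

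Next I will show that $\Theta$ is a closed immersion onto the exact-degree locus and surjective scheme-theoretically. The strategy is to use the group to normalise a general point of $\Yparo$ into a slice. I will use $\rho$ and the affine part of $B_{\mathrm{lin}}$ to move $\Lambda$ to $2-3u-v$, which is possible whenever $\Lambda$ has nonzero $v$-coefficient, or after a shear otherwise. I will then use $\lambda,\nu$ to normalise one leading coefficient each of $A$ and $B$. On this slice the $27$ equations become equations in the remaining coordinates, and I must show their ideal is $(\text{linear coordinates},\ \eta^2)$. That is, after eliminating by Gröbner basis over $\Q$, the slice is $\Spec\Q[\eta]/(\eta^2)$ centred at $(A_0,B_0,\Lambda_0)$. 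This computation is characteristic-free over $\Q$ and base changes to every $K$ of characteristic zero.

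The main obstacle is exhaustiveness. Every point of $\Yparo$ must reach the slice, and the lower strata must be empty. Here I invoke \cref{hyp:flagshipcharts}. The leading-form reduction of \cite[Prop.~10.13]{shaska131} splits the box into the five exact leading cells and the thirty-nine lower-stratum charts, each with its Rabinowitsch localisation. On each lower chart I will show the ideal contains $1$, which gives emptiness. On each leading cell I will show the localised ideal is the $\mathcal{G}$-translate of the slice ideal above. Gluing these cells gives the global isomorphism.

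The consequences then follow. The reduced scheme is $\mathcal{G}_K$. The orbit quotient is $D_K$. Since \cite{alpoge}'s map lies in this box, it is gauge-equivalent to $(A_0,B_0,\Lambda_0)$, which gives the uniqueness statement.
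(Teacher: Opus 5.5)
Your strategy is sound, but you assemble the isomorphism differently from the paper.

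\textbf{The paper's route.} It uses the charts of \cref{hyp:flagshipcharts} only for the reduced statement: every geometric point of $\mathcal{Y}^{\circ}=\Yparo[1,2,(4,3,1)]$ lies in the orbit of $(A_{0},B_{0},\Lambda_{0})$, with trivial stabilizer. It then makes one formal computation at the normal form, $\widehat{\mathcal{O}}\cong K[[\beta_{1},\dots,\beta_{6},t]]/(t^{2})$, and writes down an explicit $D_{K}$-point. From these it concludes that the orbit map is \'etale by homogeneity, radicial because its reduction is an isomorphism, and hence an isomorphism.

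\textbf{Your route.} You propose a global cross-section: a morphism $\sigma\colon\mathcal{Y}^{\circ}\to\mathcal{G}$ with $\sigma(x)^{-1}\cdot x$ in a slice $S$. Then $x\mapsto(\sigma(x),\sigma(x)^{-1}\cdot x)$ inverts $\Theta$, and $\mathcal{Y}^{\circ}\cong\mathcal{G}\times(S\cap\mathcal{Y}^{\circ})$, with the second factor computed by a Gr\"obner basis. This gives an explicit inverse and avoids completions and the \'etale/radicial criterion. The price is that $\sigma$ must be given by regular functions whose denominators are units on all of $\mathcal{Y}^{\circ}$. Regularity holds here, since each group parameter is solved linearly. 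The unit condition, however, you can only assert after the reduced single-orbit statement or after further emptiness computations.

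As written, your slice has two concrete errors.

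\textbf{First error: normalising $\Lambda$.} For $\phi=(au+bv,cv)$ and $\Lambda=\ell_{0}+\ell_{1}u+\ell_{2}v$,
\[
\Lambda\circ\phi=\ell_{0}+a\ell_{1}u+(b\ell_{1}+c\ell_{2})v .
\]
So $\Lambda$ can be moved to $2-3u-v$ exactly when $\ell_{0}\ell_{1}\neq0$. The condition $\ell_{0}\neq0$ is automatic: the lift conditions make the constant term of \cref{eq:muflag} equal to $\ell_{0}\cdot[u]B\cdot[v]A=\kappa$. The condition $\ell_{1}\neq0$ is not automatic, and you have $u$ and $v$ interchanged. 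The shear moves $u$ into $v$, never the reverse. So a vanishing $v$-coefficient needs no treatment, while a vanishing $u$-coefficient cannot be gauged away at all. The locus $\{\ell_{1}=0\}\cap\mathcal{Y}^{\circ}$ must therefore be killed by elimination.

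\textbf{Second error: one normalisation is missing.} Fixing $\Lambda=\Lambda_{0}$ forces $\rho=a=1$ and $3b+c=1$. Normalising one coefficient each of $A$ and $B$ with $\lambda,\nu$ still leaves a one-parameter subgroup preserving your slice. For instance, if you fix the $u^{4}$-coefficient of $A$ and the $u^{3}$-coefficient of $B$, the subgroup $(u,v)\mapsto(u+\tfrac{1-c}{3}v,\,cv)$ survives. The stabilizer is trivial, so this subgroup acts freely. Your slice is then positive-dimensional, and the Gr\"obner computation cannot return $\Spec\Q[\eta]/(\eta^{2})$.

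You need a sixth normalisation. With the $u^{4}$-coefficient of $A$ fixed at $3$, its $u^{3}v$-coefficient moves as $4-3c$, so fixing it at $1$ pins $c=1$. This is exactly the leading-form argument you already use for the stabilizer.

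With these two repairs the plan goes through under the same hypothesis the paper assumes.
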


\begin{rem}
\label{rem:alpoge}
The triple $(A_{\star},B_{\star},\Lambda_{\star})$ of \cite{alpoge} lies in the box with exact degrees $(4,3,1)$ and $\mu(A_{\star},B_{\star},\Lambda_{\star})=27$, and is defined over $\Q$. By \cref{thm:flagship} it lies in the $\mathcal{G}(\Q)$-orbit of $(A_{0},B_{0},\Lambda_{0})$; the ratio of the two Keller constants is accounted for by \cref{lem:kappaweight}.
\end{rem}

\subsection{The global proof}
\label{subsec:elim}

\begin{proof}[Proof of \cref{thm:flagship}]
Assume \cref{hyp:flagshipcharts}. All formulas below are defined over $\Q$, so it suffices to construct the isomorphism over $\Q$ and base change; emptiness statements established by unit ideals over $\Q$ likewise hold over every $K$.

First, the strata and the reduced support. Besides the exact stratum, the box contains the lower actual-degree strata with $(d_{1},d_{2},d_{3})\le(4,3,1)$ coordinatewise and $(d_{1},d_{2},d_{3})\neq(4,3,1)$. Exact descending characteristic-zero elimination covers these strata by thirty-nine coordinate charts and closes each with a unit ideal, so every such stratum is empty. On the exact stratum, the five exact leading-cell eliminations show that every geometric point lies in the $\mathcal{G}$-orbit of $(A_{0},B_{0},\Lambda_{0})$ and that the scheme-theoretic stabilizer of that point is trivial. Since the orbit contains the $\Q$-point $(A_{0},B_{0},\Lambda_{0})$, the reduced scheme is the trivial $\mathcal{G}$-torsor, hence isomorphic to $\mathcal{G}$.

Second, the formal slice. The $27\times25$ Jacobian of the equations \cref{eq:muflag} at $(A_{0},B_{0},\Lambda_{0})$ has rank $18$; its kernel has dimension seven and contains the six-dimensional tangent space of the $\mathcal{G}$-orbit. The formal-slice elimination identifies the completed local ring at the normal form as
\[
  \widehat{\mathcal{O}}_{\Yparo[1,2,(4,3,1)],\,(A_{0},B_{0},\Lambda_{0})}\;\cong\;K[[\beta_{1},\dots,\beta_{6},t]]/(t^{2}),
\]
where the $\beta_{i}$ are the six gauge directions and $t$ spans the transverse line. The independent global dual parameter $\eta$ satisfies the exact bridge identity $t=-\tfrac{15}{224}\eta$. In particular the transverse slice is a square-zero thickening of the normal form, that is, a $D_{K}$-point of $\Yparo[1,2,(4,3,1)]$ supported at $(A_{0},B_{0},\Lambda_{0})$. Explicitly, writing
\[
\begin{split}
  \dot A&=\tfrac12u^{3}+\tfrac15u^{2}v+\tfrac{17}{30}u^{2}+\tfrac25uv+\tfrac15v,\\
  \dot B&=u^{2}+\tfrac25uv+\tfrac4{15}u+\tfrac25v,
\end{split}
\]
one has
\[
  \mu\bigl(A_{0}-\tfrac{15}{224}\eta\dot A,\ B_{0}-\tfrac{15}{224}\eta\dot B,\ \Lambda_{0}\bigr)=2-\tfrac1{16}\eta \pmod{\eta^{2}},
\]
a unit in $K[\eta]/(\eta^{2})$, and the three leading coefficients remain units, so the displayed family is the required $D_{K}$-point.

Third, the action map. Translating the slice $D_{K}$-point by the group defines
\[
  \alpha\colon \mathcal{G}_{K}\times D_{K}\longrightarrow \Yparo[1,2,(4,3,1),K].
\]
By the first step, $\alpha$ is an isomorphism on reduced schemes; by equivariance and transitivity on the reduction, the comparison of completed local rings at an arbitrary point reduces to the normal form, where the second step gives an isomorphism. Hence $\alpha$ is flat and unramified, that is, \'etale. Since its reduction is an isomorphism, every geometric fibre is a single reduced point; therefore $\alpha$ is universally injective, hence radicial. An \'etale radicial surjection is an isomorphism. Renaming the transverse coordinate as $\eta$ gives the asserted product decomposition, with $\mathcal{G}_{K}$ acting by left translation on the orbit factor and trivially on the transverse one.
\end{proof}

\begin{rem}
\label{rem:bridge}
The identification $t=-\tfrac{15}{224}\eta$ of the global nilpotent with the intrinsic formal-slice parameter is an exact bridge identity; we attach no arithmetic significance to the constant.
\end{rem}

\subsection{The flagship quotient in cubic normal form}
\label{subsec:flagcubic}

The quotient map of the normal form is $(P,Q)=(\Lambda_{0}B_{0},\Lambda_{0}^{2}A_{0})$, with $\Jacuv(P,Q)=2\Lambda_{0}^{2}$ as \cref{cor:master} requires at $\kappa=2$. The substitution
\[
  \lambda=\tfrac12\Lambda_{0}, \qquad t=u, \qquad\text{that is}\qquad (u,v)=(t,\,2-3t-2\lambda),
\]
has constant Jacobian and carries the triple of \cref{prop:normalform} to
\[
  M=t+2-2\lambda(t+1)^{2},\quad
  P=2\lambda\,(t+3M), \quad Q=4\lambda^{2}(t+1)M,
\]
with
\(
  \jb{P}{Q}_{\lambda,t}=16\lambda^{2}.
\)
Put $R=\lambda(t+1)$ and $z=(t+1)^{-1}$. Then $P=4R(z+2-3R)$ and $Q=4R^{2}(z+1-2R)$, and eliminating $z$ gives the cubic
\[
  4R^{3}-4R^{2}+PR-Q=0 .
\]
This exhibits the quotient cover as a cubic extension. As a polynomial in $\Q[P,Q,R]$ it is linear in $Q$ with unit coefficient, hence irreducible, and it is primitive over $\Q[P,Q]$; so it is irreducible over $\Q(P,Q)$ and the generic fibre degree is three. Its discriminant in $R$ is
\[
  -16\bigl(P^{3}-P^{2}-18PQ+27Q^{2}+16Q\bigr),
\]
and the cubic factor is separable, hence not a square; the geometric monodromy is therefore $S_{3}$. The generic degree and the monodromy of this cover are due to \cite{shaska131}; the normal form above is the presentation we use.

\begin{rem}
\label{rem:flagshipcover}
The branch curve of the flagship quotient is the irreducible cubic $\Gamma_f=\{P^{3}-P^{2}-18PQ+27Q^{2}+16Q=0\}$. It has a single singular point, at $(P,Q)=(\tfrac43,\tfrac4{27})$, where the quadratic part is $3(p-3q)^{2}$ in the local coordinates $p=P-\tfrac43$, $q=Q-\tfrac4{27}$; the point is therefore a cusp. The curve is parametrised birationally by
\[
  m\longmapsto\Bigl(\tfrac43-3(1-3m)^{2},\ \tfrac4{27}-3m(1-3m)^{2}\Bigr),
\]
of degrees two and three, so both the semigroup at infinity and the local value semigroup at the cusp are $\langle2,3\rangle$ in the sense of \cref{subsec:semigroup}.
\end{rem}

\begin{rem}
\label{rem:multiproj}
Passing to the multiprojective bounded scheme, that is, taking the quotient by the three coefficient scalings, replaces $\mathcal{G}$ by the three-dimensional group $B_{\mathrm{lin}}$, and the theorem reads: the multiprojective flagship stratum is $B_{\mathrm{lin},K}\times D_{K}$. The six affine gauge directions of the coefficient incidence calculation are those of $B_{\mathrm{lin}}\times(\Gm)^{3}$, not of any nonlinear plane substitution.
\end{rem}


\section{Propagation along the unit-weight boundary}
\label{sec:unitboundary}

Throughout this section $s\ge3$ is a fixed integer and the weight is $(1,-1,-s)$. The construction pulls the flagship cubic of \cref{subsec:flagcubic} back along a Hermite interpolant, and keeps its generic fibre degree and its monodromy while moving the second weight.

We work in the coordinates $(\lambda,t)$ of \cref{subsec:flagcubic}, where
\[
  M=t+2-2\lambda(t+1)^{2}, \qquad P=2\lambda\,(t+3M), \qquad Q=4\lambda^{2}(t+1)M,
\]
with $\jb{P}{Q}_{\lambda,t}=16\lambda^{2}$. Put $m=s-2$.

\subsection{The interpolant}
\label{subsec:unitherm}

Solving $M=0$ for $t+1$ gives $2\lambda(t+1)^{2}-(t+1)-1=0$, so the branch through the base point $(\lambda,t)=(1,0)$ is
\begin{equation}
\label{eq:gammabranch}
  \gamma(\lambda)+1=\frac{1+\sqrt{1+8\lambda}}{4\lambda},
\end{equation}
a formal power series in $\lambda-1$ with rational coefficients, since $\sqrt{1+8\lambda}$ takes the value $3$ at $\lambda=1$. The ideals $(\lambda^{m})$ and $\bigl((\lambda-1)^{s}\bigr)$ are comaximal, so there is $g_s\in\Q[\lambda]$ with
\begin{equation}
\label{eq:unitherm}
  g_s\equiv-1 \pmod{\lambda^{\,m}},
  \qquad
  g_s\equiv\gamma \pmod{(\lambda-1)^{s}} .
\end{equation}
Among these representatives we single out one. Write $n=s-1$, set
\[
  H(h)=\frac{\gamma(1+h)+1}{(1+h)^{m}},
\]
and let $[H]_{<s}$ be the truncation of $H$ to degree less than $s$. The \textbf{minimal interpolant} is
\begin{equation}
\label{eq:minimalg}
  g_s(\lambda)=-1+\lambda^{\,m}\,[H]_{<s}(\lambda-1).
\end{equation}
It satisfies \cref{eq:unitherm}, the first condition because the correction carries the factor $\lambda^{m}$ and the second by construction.

\begin{lem}
\label{lem:unitdeg}
$[h^{\,n}]H\neq0$, and hence $\deg g_s=m+n=2s-3$.
\end{lem}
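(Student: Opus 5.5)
The plan is to make $H$ completely explicit and then isolate one dominant term in its $h^{n}$ coefficient by a $p$-adic valuation comparison. From \cref{eq:gammabranch} and $1+h=\lambda$,
\[
  H(h)=\frac{1+\sqrt{9+8h}}{4(1+h)^{m}},
\]
and since $m=n-1$ this is $\tfrac14(1+h)^{-(n-1)}\bigl(1+3(1+\tfrac89h)^{1/2}\bigr)$. Writing
\[
  e_j=\binom{1/2}{j}\Bigl(\tfrac89\Bigr)^{j},
  \qquad
  f_i=(-1)^{i}\binom{n-2+i}{i}
\]
for the coefficients of $(1+\tfrac89h)^{1/2}$ and $(1+h)^{-(n-1)}$, the quantity to control is
\[
  4\,[h^{n}]H=f_n+3\sum_{j=0}^{n}e_j\,f_{n-j}.
\]
Once $[h^{n}]H\neq0$ is known, the degree statement follows directly from \cref{eq:minimalg}. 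The truncation $[H]_{<s}$ has degree exactly $s-1=n$ in $\lambda-1$, so $g_s$ has degree $m+n=2s-3$ in $\lambda$.

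For the nonvanishing I would first try the $3$-adic valuation. The factors $f_i$ are integers. Using $\binom{1/2}{j}=(-1)^{j-1}\binom{2j}{j}/\bigl(4^{j}(2j-1)\bigr)$, one gets
\[
  v_3(e_j)=-2j+v_3\binom{2j}{j}-v_3(2j-1).
\]
This decreases roughly like $-2j$, so the top term $3e_nf_0$, with valuation $1-2n+v_3\binom{2n}{n}-v_3(2n-1)$, should be the unique term of minimal valuation. I would check this against every $j<n$, using $v_3(f_{n-j})\ge0$, and against the term $f_n$, which has valuation $\ge0$. If the minimum is unique, the whole sum is nonzero.

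The main obstacle is the comparison at $j=n-1$ (and nearby $j$). There the gain of $2$ from $-2j$ can in principle be cancelled by fluctuations of $v_3\binom{2j}{j}$ and $v_3(2j-1)$, which grow like $\log_3 n$. If a clean uniform inequality fails, I would switch to the $2$-adic valuation, where $v_2(e_j)=j+s_2(j)$ with $s_2$ the binary digit sum, and look for the dominant term there instead.

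If neither prime isolates a term uniformly, the fallback is a residue computation. Write
\[
  [h^{n}]H=\operatorname{Res}_{\lambda=1}\frac{\gamma(\lambda)+1}{\lambda^{m}(\lambda-1)^{n+1}}\,d\lambda,
\]
and substitute $\sigma=\sqrt{1+8\lambda}$. Then $\lambda=(\sigma^{2}-1)/8$, $\lambda-1=(\sigma-3)(\sigma+3)/8$, $d\lambda=\sigma\,d\sigma/4$ and $\gamma+1=2/(\sigma-1)$. This turns the coefficient into the residue at $\sigma=3$ of a rational function with poles only at $\pm1$, $\pm3$ and $\infty$. The residue theorem and partial fractions should then give a closed-form or one-signed expression whose nonvanishing is evident.
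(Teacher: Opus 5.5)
Your reduction of the degree statement to $[h^{n}]H\neq0$ is correct. However, the nonvanishing itself, which is the whole content of the lemma, is never proved.

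First, a slip. The $4\lambda$ in the denominator of \cref{eq:gammabranch} contributes an extra factor $(1+h)^{-1}$, so $H(h)=\frac{1+\sqrt{9+8h}}{4(1+h)^{m+1}}=\tfrac14(1+h)^{-n}\bigl(1+3(1+\tfrac89h)^{1/2}\bigr)$ and $f_i=(-1)^{i}\binom{n-1+i}{i}$. At $s=3$ this gives $[h^{2}]H=61/27$, the leading coefficient of $g_3$; your exponent gives $16/27$.

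More seriously, your $3$-adic claim that $3e_nf_0$ is the unique term of minimal valuation is false in general. From $e_n/e_{n-1}=-\frac{4(2n-3)}{9n}$, the top term's $3$-adic advantage over the $j=n-1$ term shrinks as $v_3(2n-3)$ grows. At $n=42$, where $2n-3=81$, one has $v_3(e_n)=v_3(e_{n-1})+1$, so the $j=n-1$ term has valuation no larger than the top term; with the correct $f_1=-n$ the two tie. The same failure occurs for infinitely many $n$. The $2$-adic and residue routes are only announced (``should then give''), so as written the proposal contains no proof of the key step.

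The missing idea is archimedean, not $p$-adic: the terms have coherent signs. For $j\ge1$ the sign of $\binom{1/2}{j}$ is $(-1)^{j-1}$. Hence every $e_jf_{n-j}$ with $j\ge1$ has sign $(-1)^{n-1}$, opposite to the combined $j=0$ contribution $4f_n$. Moreover $3\sum_{j\ge1}|e_j|=3\bigl(1-\sqrt{1-8/9}\bigr)=2$, and $|f_{n-j}|\le|f_n|$ because $\binom{n-1+i}{i}$ increases with $i$. Therefore $\bigl|4[h^{n}]H\bigr|\ge4|f_n|-2|f_n|>0$.

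This is the paper's proof in other coordinates. After $h=-x$, both $(1-x)^{-m}$ and $\bigl(1+\sqrt{9-8x}\bigr)/\bigl(4(1-x)\bigr)$ have positive coefficients. The latter holds because $3-\sqrt{9-8x}=\sum_{j\ge1}b_jx^{j}$ with all $b_j>0$ and $\sum b_j=2<4$, so the partial sums never exhaust the constant $4$. Hence $[x^{n}]H(-x)>0$.
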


\begin{proof}
By \cref{eq:gammabranch}, $H(h)=\bigl(1+\sqrt{9+8h}\bigr)/\bigl(4(1+h)^{m+1}\bigr)$, so
\[
  H(-x)=\frac{G(x)}{(1-x)^{m}}, \qquad G(x)=\frac{1+\sqrt{9-8x}}{4(1-x)} .
\]
Write $3-\sqrt{9-8x}=\sum_{j\ge1}b_jx^{j}$. Since $\sqrt{1-\xi}=1-\sum_{j\ge1}c_j\xi^{j}$ with all $c_j>0$, we get $b_j=3c_j(8/9)^{j}>0$, and the series converges for $|x|<9/8$; evaluating at $x=1$ gives $\sum_{j\ge1}b_j=3-1=2$. Now $1+\sqrt{9-8x}=4-\sum_{j\ge1}b_jx^{j}$, so
\[
  [x^{k}]\,G=1-\tfrac14\sum_{j=1}^{k}b_j>1-\tfrac14\cdot2=\tfrac12 .
\]
All coefficients of $G$ are therefore positive, as are those of $(1-x)^{-m}$, so $[x^{n}]H(-x)>0$. Hence $[h^{n}]H\neq0$, and \cref{eq:minimalg} has degree $m+n=2s-3$.
\end{proof}

\subsection{The propagated family}
\label{subsec:unitfamily}

Let 
\begin{equation}
\label{eq:unitdef}
  t_s=g_s(\lambda)+\lambda^{\,m}w,  \qquad  M_s=t_s+2-2\lambda(t_s+1)^{2},
\end{equation}
and
\begin{equation}
\label{eq:unitAB}
  A_s=\frac{4(t_s+1)M_s}{\lambda^{\,m}},  \qquad  B_s=2\bigl(t_s+3M_s\bigr),  \qquad  P_s=\lambda B_s, \qquad Q_s=\lambda^{\,s}A_s .
\end{equation}

\begin{theorem}
\label{thm:unitboundary}
Let $s\ge3$. Then $A_s$ and $B_s$ lie in $\Q[\lambda,w]$ and satisfy the lift conditions of \cref{lem:polynomiality} for the weight $(1,-1,-s)$, so that
\begin{equation}
\label{eq:unitF}
  F_s(x,y,z)=\Bigl(\frac{A_s}{x^{\,s}},\ \frac{B_s}{x},\ x\lambda\Bigr),
  \qquad \lambda=1+xy, \quad w=x^{\,s}z,
\end{equation}
is a polynomial Keller map over $\Q$ with
\[
  \detJ{F_s}=-16, \qquad \deg_{\mathrm{gen}}F_s=3, \qquad \mathrm{Mon}(F_s)=S_3 .
\]
For the minimal interpolant \cref{eq:minimalg} the class of $F_s$ lies in $\Yparo[1,s,(5s-6,\,4s-5,\,1)]$.
\end{theorem}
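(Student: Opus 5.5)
The plan is to check \cref{thm:unitboundary} in the coordinates $(\lambda,w)$. Here $\lambda=1+u$ with $u=xy$ and $w=v=x^{s}z$, and the proof treats $(\lambda,w)\mapsto(\lambda,t_s)$ as a change of variables that leaves the flagship cubic untouched.

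\emph{Polynomiality.} By \cref{eq:unitherm} and \cref{eq:unitdef}, $t_s+1=(g_s+1)+\lambda^{m}w$, and $g_s+1\equiv0\pmod{\lambda^{m}}$. So $\lambda^{m}$ divides $t_s+1$, and $A_s=4\,\frac{t_s+1}{\lambda^{m}}\,M_s\in\Q[\lambda,w]$. Clearly $B_s\in\Q[\lambda,w]$.

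\emph{Lift conditions.} With $r=1$, \cref{lem:polynomiality} asks, for $A_s$, that every monomial $(\lambda-1)^{i}w^{j}$ have $i+sj\ge s$. For $B_s$ it asks for $i+sj\ge1$.
\begin{itemize}
\item Every monomial with $j\ge1$ passes automatically.
\item So it suffices that the $w$-free part of $A_s$ is divisible by $(\lambda-1)^{s}$, and that $B_s(1,0)=0$.
\item The $w$-free part of $M_s$ is $M(\lambda,g_s)$, and $g_s\equiv\gamma\pmod{(\lambda-1)^{s}}$.
\item $M(\lambda,\gamma(\lambda))=0$ identically by \cref{eq:gammabranch}. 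Hence $(\lambda-1)^{s}$ divides $M(\lambda,g_s)$, and therefore the $w$-free part of $A_s$.
\item At $(\lambda,w)=(1,0)$ we have $t_s=g_s(1)=\gamma(1)=0$ and $M_s=0$, so $B_s(1,0)=2(0+0)=0$.
\end{itemize}
Thus \cref{eq:unitF} is polynomial, and it is weight-homogeneous by \cref{eq:Fshape}.

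\emph{Keller constant.} The substitution $(\lambda,w)\mapsto(\lambda,t_s)$ has Jacobian determinant $\partial t_s/\partial w=\lambda^{m}$. The pair $P_s,Q_s$ is the pair $P,Q$ of \cref{subsec:flagcubic} evaluated at $t=t_s$, because $Q_s=\lambda^{s}A_s=4\lambda^{2}(t_s+1)M_s$ and $s-m=2$. The chain rule then gives
\[
\jb{P_s}{Q_s}_{\lambda,w}=16\lambda^{2}\cdot\lambda^{m}=16\lambda^{s}=16\lambda^{r+s-1}.
\]
Brackets in $(\lambda,w)$ agree with those in $(u,v)$ by \cref{rem:clash}. \Cref{cor:master} then gives $\kappa=16$ and $\detJ{F_s}=-16$.

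\emph{Degree and monodromy.} Since $w=(t_s-g_s)/\lambda^{m}$, we have $\Q(\lambda,w)=\Q(\lambda,t)$. The quotient extension $\Q(\lambda,w)/\Q(P_s,Q_s)$ is therefore literally the flagship one. Via $R$ it is the cubic $4R^{3}-4R^{2}+PR-Q$, irreducible with geometric monodromy $S_3$ by \cref{subsec:flagcubic}. \Cref{lem:quotientdegree} transfers both facts to $F_s$.

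\emph{Degree box.} Here I expect the only real care to be needed: exact degrees with no cancellation of leading forms.
\begin{itemize}
\item By \cref{lem:unitdeg}, $\deg g_s=2s-3$ with leading term $c\lambda^{2s-3}$, $c\ne0$. Since $2s-3>m+1=s-1$ for $s\ge3$, the leading form of $t_s$ is $c\lambda^{2s-3}$.
\item The leading form of $M_s$ is $-2\lambda\cdot c^{2}\lambda^{4s-6}$, of degree $4s-5$, which dominates $t_s$. So $\deg B_s=4s-5$, with leading form $-12c^{2}\lambda^{4s-5}$.
\item Since $(t_s+1)/\lambda^{m}$ has degree $2s-3-m=s-1$ with leading term $c\lambda^{s-1}$, we get $\deg A_s=s-1+4s-5=5s-6$.
\item $\deg\lambda=1$.
\end{itemize}
All leading coefficients are explicit nonzero multiples of powers of $c$. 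The class therefore lies in $\Yparo[1,s,(5s-6,4s-5,1)]$.
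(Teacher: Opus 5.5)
Your proof is correct and follows essentially the same route as the paper. Both use $\lambda^{m}\mid t_s+1$ for polynomiality, and the congruence $g_s\equiv\gamma \pmod{(\lambda-1)^{s}}$ together with $g_s(1)=0$ for the lift conditions. The Keller constant comes from the flagship identity $\jb{P}{Q}=16\lambda^{2}$ pulled back along $(\lambda,w)\mapsto(\lambda,t_s)$ with Jacobian $\lambda^{m}$, the degree and monodromy transfer through the birational inverse $w=(t-g_s)/\lambda^{m}$, and the degree box follows from the leading form $c\lambda^{2s-3}$ of $t_s$ supplied by \cref{lem:unitdeg}.
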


\begin{proof}
By the first condition of \cref{eq:unitherm} we have $t_s+1\in(\lambda^{m})$, so $A_s$ is a polynomial; $B_s$ visibly is.

The substitution $h_s\colon(\lambda,w)\mapsto(\lambda,t_s)$ has Jacobian determinant $\lambda^{m}$, and by \cref{eq:unitAB} the pair $(P_s,Q_s)$ is the pullback along $h_s$ of the pair $(P,Q)$ of \cref{subsec:flagcubic}. Hence
\begin{equation}
\label{eq:unitbracket}
  \jb{P_s}{Q_s}_{\lambda,w}=16\lambda^{2}\cdot\lambda^{\,m}=16\lambda^{\,s},
\end{equation}
which is the quotient-Jacobian identity of \cref{cor:master} for $(r,s)=(1,s)$ with $\kappa=16$; so $\detJ{F_s}=-16$ once polynomiality is known.

Write $U=\lambda-1$. The second condition of \cref{eq:unitherm} and $M(\lambda,\gamma(\lambda))=0$ give $M(\lambda,g_s(\lambda))\in(U^{s})$, so the $w$-free part of $A_s$ is divisible by $U^{s}$; a monomial $U^{i}w^{j}$ of $A_s$ with $j=0$ therefore has level $i+sj=i\ge s$, and one with $j\ge1$ has level at least $s$. Since $\gamma(1)=0$ we have $g_s(1)=0$, and $M(1,0)=0$, so $B_s(1,0)=0$; a monomial of $B_s$ with $j=0$ thus has $i\ge1$, and one with $j\ge1$ has level at least $s\ge1$. These are the two conditions of \cref{lem:polynomiality} at $(r,s)=(1,s)$, so \cref{eq:unitF} is polynomial, and weight homogeneity is immediate.

The map $h_s$ is birational, with inverse $w=(t-g_s(\lambda))/\lambda^{m}$, so it induces $\Q(\lambda,w)=\Q(\lambda,t)$ and carries $\Q(P_s,Q_s)$ isomorphically onto $\Q(P,Q)$. The generic fibre degree and the geometric normal closure group are therefore those of \cref{subsec:flagcubic}, namely three and $S_3$.

For the degrees, note that $t_s$ has total degree $2s-3$ by \cref{lem:unitdeg}, while $\lambda^{m}w$ has total degree $s-1<2s-3$; so the leading form of $t_s$ is $c\lambda^{2s-3}$ with $c\neq0$. Then \cref{eq:unitdef} and \cref{eq:unitAB} give leading forms
\[
  -2c^{2}\lambda^{4s-5}, \qquad -12c^{2}\lambda^{4s-5}, \qquad -8c^{3}\lambda^{5s-6}
\]
for $M_s$, $B_s$ and $A_s$ respectively, all nonzero. Hence $\deg A_s=5s-6$, $\deg B_s=4s-5$ and $\deg\lambda=1$.
\end{proof}

\begin{rem}
\label{rem:unitnotminimal}
The box $(5s-6,4s-5,1)$ is the one realised by the minimal interpolant \cref{eq:minimalg}; a different representative in \cref{eq:unitherm} gives a different, generally larger, box. No claim of minimality over all members of the weight is made.
\end{rem}

\begin{cor}
\label{cor:solvableall}
Among the members constructed in \cref{sec:3,sec:4,sec:8,sec:unitboundary}, those whose quotient cover has solvable monodromy are exactly the ones of generic fibre degree three: the maps $F_{2,s}$ of \cref{prop:fields}(1), the flagship map of \cref{sec:8}, and the maps $F_s$ of \cref{thm:unitboundary} for $s\ge3$. In each case the group is $S_3$.
\end{cor}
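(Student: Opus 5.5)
The proof splits by family: collect the generic fibre degree and monodromy of every constructed member, then observe that solvability occurs exactly in degree three.

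\textbf{Cyclic cone family of \cref{sec:3}.} By \cref{thm:spectrum} and \cref{prop:degree} every member has $N=kd\ge6$. By \cref{thm:conemono} its geometric monodromy is $A_N$ or $S_N$. These groups are nonsolvable for $N\ge5$, so no cyclic member contributes.

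\textbf{Beta--Hermite family of \cref{sec:4}.} By \cref{thm:diagonal} the generic fibre degree of $F_{r,s}$ is $2r-1$. By \cref{thm:diagmono} its monodromy is $A_{2r-1}$ or $S_{2r-1}$. For $r\ge3$ the degree is at least five and the group is nonsolvable. For $r=2$ the group is $S_3$. This recovers \cref{cor:solvable} and identifies the $F_{2,s}$ as the degree-three members, rational by \cref{prop:fields}(1).

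\textbf{Flagship map of \cref{sec:8}.} By \cref{subsec:flagcubic} its quotient cover is the cubic $4R^{3}-4R^{2}+PR-Q=0$. This cubic is irreducible over $\Q(P,Q)$ and has non-square discriminant, so the degree is three and the geometric monodromy is $S_3$. Every member of the exact flagship stratum is covered by this computation: by \cref{thm:flagship}, assuming \cref{hyp:flagshipcharts}, all members lie in one gauge orbit, and by \cref{lem:descendequiv} monodromy is an orbit invariant. The gauge scalings together with $B_{\mathrm{lin}}$ give left--right equivalences of quotients.

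\textbf{Boundary maps of \cref{sec:unitboundary}.} \cref{thm:unitboundary} gives degree three and monodromy $S_3$ for every $s\ge3$.

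\textbf{Conclusion.} Take the union of these lists. A transitive group of degree $n\ge5$ that is alternating or symmetric is nonsolvable, while every degree-three member has group $S_3$, which is solvable. Hence the solvable members are exactly those of degree three, and the group is $S_3$ in each case.

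\textbf{Degree-twelve shear member.} This member is built in \cref{sec:5} rather than in \cref{sec:3}. If the statement is read to include it, \cref{thm:monodromy} gives $A_{12}$, which is nonsolvable, so the classification is unchanged.

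\textbf{Main difficulty.} The argument itself is short; the only delicate point is bookkeeping about which members the statement covers. For the flagship case, the phrase ``the flagship map'' can be taken to mean the normal form $(A_0,B_0,\Lambda_0)$, whose cover is computed directly. Then \cref{hyp:flagshipcharts} is needed only if one wants the conclusion for the whole stratum.
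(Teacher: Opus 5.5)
Your proposal is correct and follows the paper's own argument. You cite \cref{cor:solvable}, \cref{subsec:flagcubic} and \cref{thm:unitboundary} for the three $S_3$ cases, and \cref{thm:conemono,thm:diagmono} for the remaining members, whose groups are $A_n$ or $S_n$ with $n\ge5$ and hence nonsolvable. Your extra remarks on the whole flagship stratum and on the shear member are harmless additions the paper does not need; the stratum remark rests on your direct observation that $\mathcal{G}$ acts by left--right equivalences of quotients, not on \cref{lem:descendequiv}, which concerns $\Gaut^{\sharp}$.
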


\begin{proof}
The three listed groups are $S_3$, by \cref{cor:solvable}, \cref{subsec:flagcubic} and \cref{thm:unitboundary}. Every other member has group $A_n$ or $S_n$ with $n\ge5$ by \cref{thm:conemono,thm:diagmono}, hence nonsolvable.
\end{proof}

\subsection{The first member}
\label{subsec:unitbenchmark}

At $s=3$ the minimal interpolant is
\[
  g_3(\lambda)=\frac{(\lambda-1)\bigl(61\lambda^{2}-106\lambda+27\bigr)}{27},
\]
and $F_3$ has class in $\Yparo[1,3,(9,7,1)]$, threefold component degrees $(15,13,3)$ and $\detJ{F_3}=-16$. It identifies the three distinct rational points
\[
  \Bigl(\tfrac12,\,2,\,-\tfrac{380}{27}\Bigr), \qquad
  \bigl(-1,\,2,\,\tfrac{352}{27}\bigr), \qquad
  \Bigl(\tfrac12,\,2,\,-\tfrac{272}{27}\Bigr),
\]
all of which map to $\bigl(-\tfrac{32}{27},-\tfrac83,1\bigr)$. Since the generic fibre degree is three, this is a complete fibre.

\subsection{The hyperbolic geography}
\label{subsec:geography}

\begin{cor}
\label{cor:hyperbolic}
For every pair $r,s\ge1$ one has $\Kfam(3,(1,-r,-s))\neq\emptyset$. Thus every three-dimensional unit-positive weight of the form $(1,-r,-s)$ is populated.
\end{cor}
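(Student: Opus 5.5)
The plan is a case split on $(r,s)$ that invokes the constructions already in hand. The weight $(1,-r,-s)$ is symmetric under swapping the last two coordinates: the permutation $(x,y,z)\mapsto(x,z,y)$ on the source, together with the matching permutation on the target, conjugates maps equivariant for $(1,-r,-s)$ to maps equivariant for $(1,-s,-r)$. Conjugation by coordinate permutations preserves polynomiality, the Keller property and noninjectivity. So I would first reduce to $r\le s$, noting only that the coordinate swap identifies $\Kfam(3,(1,-r,-s))$ with $\Kfam(3,(1,-s,-r))$.

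With $r\le s$ there are three cases. If $r=s=1$, \cref{thm:spectrum} supplies members of $\Kfam(3,(1,-1,-1))$, for instance the six-sheeted map of \cref{thm:six} or the rational one of \cref{thm:thirtythree}. If $r=1$ and $s=2$, the flagship triple $(A_0,B_0,\Lambda_0)$ of \cref{prop:normalform} is needed. It satisfies the lift conditions and has $\mu=2$, so by \cref{cor:master} it gives a polynomial Keller map. Its quotient cover is the irreducible cubic of \cref{subsec:flagcubic}, so by \cref{lem:quotientdegree} the generic fibre degree is three. If $r=1$ and $s\ge3$, \cref{thm:unitboundary} gives $F_s$ with generic fibre degree three. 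If $2\le r\le s$, \cref{cor:diagonalnonempty} applies directly.

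In every case the map found has generic fibre degree greater than one, namely $N\ge6$, $3$, $3$ or $2r-1\ge3$. A Keller map of generic degree greater than one is not birational, hence not injective, so each lies in $\Kfam$. The cases $r=1$ with $s\in\{1,2\}$, $r=1$ with $s\ge3$, and $2\le r\le s$ exhaust the pairs with $r\le s$.

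The only point needing care is the $(1,-1,-2)$ case. \cref{thm:flagship} is conditional on \cref{hyp:flagshipcharts}, but nonemptiness does not need it. The explicit rational triple of \cref{prop:normalform} together with \cref{cor:master} suffices, and that is the route I would state, so the corollary stays unconditional. A minor bookkeeping point is to check that the swap also exchanges the target weights $(-s,-r,1)\leftrightarrow(-r,-s,1)$ consistently with the pairing of $A$ with $s$ and $B$ with $r$. This amounts to exchanging the first two target coordinates, which is harmless.
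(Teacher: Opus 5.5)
Your proposal is correct and follows the paper's proof essentially verbatim. You reduce to $r\le s$ by interchanging the two negative coordinates, then cite \cref{thm:six} for $(1,1)$, the flagship section for $(1,2)$, \cref{thm:unitboundary} for $r=1$, $s\ge3$, and \cref{thm:diagonal} (via \cref{cor:diagonalnonempty}) for $2\le r\le s$. Your remark that the $(1,-1,-2)$ case needs only the explicit triple of \cref{prop:normalform} with \cref{cor:master} and the cubic of \cref{subsec:flagcubic}, and not the conditional \cref{thm:flagship}, correctly makes explicit what the paper's citation of \cref{sec:8} actually uses.
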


\begin{proof}
Interchanging the two negative coordinates we may assume $r\le s$. The balanced weight $r=s=1$ is \cref{thm:six}, and $(r,s)=(1,2)$ is the flagship weight of \cref{sec:8}. For $r=1$ and $s\ge3$ the member is \cref{thm:unitboundary}, and for $2\le r\le s$ it is \cref{thm:diagonal}.
\end{proof}

\begin{rem}
\label{rem:geographyfields}
The fields of definition are uneven across the geography. The whole unit boundary $r=1$ is rational, by the $33$-sheeted witness \cref{thm:thirtythree} at $s=1$, \cref{sec:8} at $s=2$ and \cref{thm:unitboundary} for $s\ge3$; so is the row $r=2$, by \cref{prop:fields}(1). For $r\ge3$ the construction of \cref{sec:4} produces a member over $K_{r}=\Q(\zeta)$, and \cref{prop:fields}(3) shows that this field is totally imaginary whenever $r$ is odd. Whether those rows contain rational members is not decided here.
\end{rem}

\section{Nielsen classes, braid orbits, and arithmetic monodromy}
\label{sec:9}

We organize the covers of \cref{sec:6} by their Nielsen classes and extract the arithmetic that the Hurwitz formalism of \cite{fried77,fv,volklein} attaches to them.

\subsection{The Nielsen classes}
\label{subsec:nielsen}

For a finite group $G$ and an $\ell$-tuple $\mathbf{C}=(C_1,\dots,C_{\ell})$ of conjugacy classes, the Nielsen class is
\[
  \Ni(G,\mathbf{C})=\bigl\{(g_1,\dots,g_{\ell}): g_i\in C_i,\ g_1\cdots g_{\ell}=1,\ \langle g_1,\dots,g_{\ell}\rangle=G\bigr\}/\!\sim,
\]
modulo simultaneous conjugation, and the components of the Hurwitz space $\Hur(G,\mathbf{C})$ over the configuration space $\mathcal{U}_{\ell}$ correspond to the orbits of the Hurwitz braid group $H_{\ell}$ on $\Ni(G,\mathbf{C})$ \cite{fv}.

For the alternating shear cover, \cref{thm:passport,thm:monodromy} give
\begin{equation}
\label{eq:Nishear}
  \Ni\bigl(A_{12},\;(C_3^{6},C_{(9,3)})\bigr), \qquad \ell=7,
\end{equation}
with $C_3$ the class of $3$-cycles and $C_{(9,3)}$ one of the two $A_{12}$-classes of cycle type $(9,3)$. For the cyclic cone family at $(k,d)$ with $k$ even, \cref{thm:conemono} and \cref{prop:conequotient} give
\begin{equation}
\label{eq:Nicone}
  \Ni\bigl(A_{N},\;(C_{k+1}^{\,d},C_{(N-1,1)})\bigr), \qquad \ell=d+1,
\end{equation}
and for $k$ odd the same shape inside $S_N$. For the beta--Hermite family at odd $r\ge3$, \cref{rem:beta} and \cref{thm:diagmono} give
\begin{equation}
\label{eq:Nidiag}
  \Ni\bigl(A_{2r-1},\;(C_r,C_r,C_{2r-1})\bigr), \qquad \ell=3 .
\end{equation}

\subsection{Split pairs}
\label{subsec:split}

A conjugacy class of $S_n$ of cycle type $\lambda=(\lambda_1,\dots,\lambda_\ell)$ meets $A_n$ in two classes if and only if the parts of $\lambda$ are odd and pairwise distinct, and any odd permutation interchanges them. For such a $\lambda$ the integer $n-\ell=\sum_i(\lambda_i-1)$ is even, and the two classes are separated by the quadratic field $\Q(\sqrt{m^{*}})$ with
\begin{equation}
\label{eq:splitfields}
  m^{*}=(-1)^{(n-\ell)/2}\lambda_1\cdots\lambda_{\ell} .
\end{equation}

\begin{lem}
\label{lem:whichsplit}
Among the classes of \cref{subsec:nielsen}, exactly the classes at infinity split. Their split-pair fields are
\[
\begin{split}
  (9,3)\subset A_{12}\ \text{(shear)}:&\quad m^{*}=(-1)^{5}\cdot27=-27, \quad \Q(\sqrt{-3});\\
  (N-1,1)\subset A_{N}\ (k\ \text{even}):&\quad m^{*}=(-1)^{(N-2)/2}(N-1);\\
  (5,1)\subset A_{6}\ \text{(six-sheeted)}:&\quad m^{*}=(-1)^{2}\cdot5=5, \quad \Q(\sqrt{5});\\
  (2r-1)\subset A_{2r-1}\ (r\ \text{odd}):&\quad m^{*}=(-1)^{r-1}(2r-1)=2r-1 .
\end{split}
\]
In particular the finite classes $C_3$, $C_{k+1}$ and $C_r$ do not split, since each has the part $1$ with multiplicity at least two.
\end{lem}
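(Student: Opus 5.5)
The proof combines the splitting criterion recalled at the start of \cref{subsec:split} with a check of the cycle types listed in \cref{subsec:nielsen}. An $S_n$-class of type $\lambda$ splits in $A_n$ exactly when the parts of $\lambda$, counted with the fixed points as parts equal to $1$, are odd and pairwise distinct. For the negative direction it therefore suffices to exhibit a repeated part.

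The finite classes are handled first. In \cref{eq:Nishear} the class $C_3$ has type $(3,1^{9})$. In \cref{eq:Nicone} the class $C_{k+1}$ has type $(k+1,1^{N-k-1})$, and $N-k-1=k(d-1)-1\ge3$ by the count in \cref{thm:conemono}. In \cref{eq:Nidiag} the class $C_r$ has type $(r,1^{r-1})$ with $r\ge3$, so $r-1\ge2$. In each case the part $1$ occurs at least twice, so none of these classes splits. This is the final sentence of the lemma.

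The classes at infinity are checked next. The type $(9,3)$ has distinct odd parts. The type $(N-1,1)$ with $N$ even, which is the case whenever $k$ is even, has odd parts $N-1\ne1$. The type $(2r-1)$ is a single odd part. All three therefore split. The six-sheeted line is the case $(k,d)=(2,3)$, so it is a special case of the $(N-1,1)$ line.

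The fields come from \cref{eq:splitfields}. For $(9,3)$ we have $n-\ell=12-2=10$, so $m^{*}=(-1)^{5}\cdot27=-27$ and $\Q(\sqrt{-27})=\Q(\sqrt{-3})$. For $(N-1,1)$ we have $n-\ell=N-2$, giving $m^{*}=(-1)^{(N-2)/2}(N-1)$; at $N=6$ this is $(+1)\cdot5$, hence $\Q(\sqrt5)$. For $(2r-1)$ we have $\ell=1$, so $n-\ell=2r-2$ and $m^{*}=(-1)^{r-1}(2r-1)$, which equals $2r-1$ because $r$ is odd.

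The only step that needs real justification is \cref{eq:splitfields} itself, i.e.\ identifying the field that separates the two $A_n$-classes. I would take it as the classical fact that the character values of the two split classes are $\tfrac12\bigl(\varepsilon\pm\sqrt{\varepsilon\prod\lambda_i}\bigr)$ with $\varepsilon=(-1)^{(n-\ell)/2}$, as recalled just before the lemma, so no new argument is needed. The remainder is bookkeeping: confirming that the parities used above hold, namely $N$ even for $k$ even and $r$ odd in \cref{eq:Nidiag}.
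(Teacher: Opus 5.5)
Your proposal is correct and is essentially the paper's proof: you exhibit the repeated part $1$ in $(3,1^{9})$, $(k+1,1^{k(d-1)-1})$ and $(r,1^{r-1})$, observe that the types at infinity have distinct odd parts, and read off $m^{*}$ from \cref{eq:splitfields} with the correct values of $n-\ell$. The differences are cosmetic: you cite \cref{thm:conemono} rather than \cref{cor:dtwo} for $k(d-1)-1\ge3$, and you treat the $(5,1)$ line as the case $(k,d)=(2,3)$ of the $(N-1,1)$ line.
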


\begin{proof}
For $C_{k+1}$ the type is $(k+1,1^{N-k-1})$ with $N-k-1=k(d-1)-1\ge3$ by \cref{cor:dtwo}, and for $C_r$ it is $(r,1^{r-1})$ with $r-1\ge2$; the part $1$ repeats in both, as it does for $C_3\subset A_{12}$. The four displayed types have distinct odd parts, and \cref{eq:splitfields} gives the values shown. For the last, $r$ odd makes $(-1)^{r-1}=1$; at $r=3$ the field is $\Q(\sqrt5)$.
\end{proof}

By the branch cycle lemma \cite{fried77,volklein}, if a cover in one of these Nielsen classes is defined over a field $F$ with geometric group $A_n$, and the branch point carrying the split class is $F$-rational, then either $F$ contains the split-pair field, or the arithmetic monodromy group is $S_n$ and the constants extension is exactly that field.

\begin{prop}
\label{prop:splitfields}
The six-sheeted map of \cref{thm:six} and the map $F_{3}$ of \cref{prop:fields}(2) are both defined over $K=\Q(\sqrt{-15})$, which contains neither $\sqrt{5}$ nor $\sqrt{-3}$. Their classes at infinity are $(5,1)\subset A_{6}$ and $(5)\subset A_{5}$, with split-pair field $\Q(\sqrt{5})$ in both cases, and in both cases the point at infinity is rational. Hence both covers have arithmetic monodromy $S_{n}$ over $K$, their constant field extension relative to $K$ is   \(  K(\sqrt5)/K,\)
of degree two, and their full constant field is
\(
  K(\sqrt5)=\Q(\sqrt{-15},\sqrt5),
\)
of degree four over $\Q$.
\end{prop}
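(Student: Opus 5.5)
The plan is to apply the branch cycle lemma, in the form stated just before the proposition, to each cover separately. First record the geometric data. For the six-sheeted map of \cref{thm:six} we have $(k,d)=(2,3)$ and $N=6$. By \cref{thm:conemono} ($k$ even) the geometric monodromy is $A_6$, and by \cref{prop:conequotient} the pole partition is $(5,1)$. For $F_{3}$ of \cref{prop:fields}(2), \cref{thm:diagmono} gives $A_5$, and \cref{rem:beta} gives a full $5$-cycle at infinity. By \cref{lem:whichsplit} the split-pair field is $\Q(\sqrt5)$ in both cases: $m^{*}=5$ for $(5,1)$, and $m^{*}=2r-1=5$ for $r=3$.

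Next I would check the field-theoretic hypotheses. The covers are defined over $K=\Q(\sqrt{-15})$ by \cref{thm:six} and \cref{prop:fields}(2). The pencil is given by the coordinate $Q$ over $K(Q)$, and the point at infinity of the fibre coordinate ($W$, resp.\ $\rho$) is fixed by every automorphism. So the branch point carrying the split class is $K(Q)$-rational; it is the unique branch point with a non-finite-type cycle. To see that $\sqrt5\notin K$ and $\sqrt{-3}\notin K$, note that the quadratic subfields of a quadratic field are only itself, and $5/(-15)$ and $-3/(-15)$ are not squares in $\Q$. The branch cycle lemma then leaves only the second alternative: the arithmetic monodromy over $K(Q)$ is $S_n$, and the constants extension is exactly $K(\sqrt5)$.

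Finally, the degree count. $K(\sqrt5)=\Q(\sqrt{-15},\sqrt5)$ is biquadratic, of degree four over $\Q$, since $-15$, $5$ and $-3$ are pairwise non-square ratios. It contains $\sqrt{-3}$, which is consistent with the statement that $K$ itself does not.

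The main obstacle is the precise form of the branch cycle lemma being invoked. I need it over the function field $K(Q)$ with $Q$ transcendental, and I need that the branch point at infinity is rational as a place of $K(Q)(P)$, i.e.\ over the base $\bP^1_P$. I would justify this first by viewing the quotient cover as a cover of $\bP^{1}_P$ over the field $K(Q)$, where $P=\infty$ is $K(Q)$-rational and is the only branch point whose inertia is the split class. The $\Gal$-action on branch cycles then sends the class at infinity to its $\chi(\sigma)$-th power, where $\chi$ is the cyclotomic character. That power equals the class itself exactly when $\sigma$ fixes $\sqrt{m^*}$. This gives the dichotomy: over $K(Q)$ the geometric group is $A_n$ but the arithmetic group must contain an odd element, so it is $S_n$ with constants $K(Q)(\sqrt5)$.
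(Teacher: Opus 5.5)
Your proposal is correct and follows the paper's own route. You identify the geometric group $A_n$ (\cref{thm:conemono}, \cref{thm:diagmono}) and the split class at infinity with split-pair field $\Q(\sqrt5)$ (\cref{lem:whichsplit}), check $\sqrt5\notin K$, and apply the branch cycle lemma at the rational branch point $P=\infty$, which forces an odd element in the arithmetic group and constants $K(\sqrt5)$; your explicit choice of $K(Q)$ as the base field for the cover of $\bP^{1}_P$ simply makes precise what the paper abbreviates as ``$K$-rational''.
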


\begin{proof}
The fields of definition are \cref{thm:six} and \cref{prop:fields}(2), and the classes at infinity are \cref{prop:conequotient} at $(k,d)=(2,3)$ and \cref{rem:beta} at $p=3$. Their split-pair fields are $\Q(\sqrt5)$ by \cref{lem:whichsplit}. That $\sqrt5\notin K$ and $\sqrt{-3}\notin K$ holds because $K$ is quadratic over $\Q$ with discriminant class $-15$, and $5$, $-3$, $-15$ lie in distinct classes of $\Q^{\times}/(\Q^{\times})^{2}$.

The geometric monodromy is $A_{n}$ by \cref{thm:conemono} at $k=2$ even and \cref{thm:diagmono} at $p=3$ odd. The branch point at infinity is $K$-rational, and its class is one of the two split classes; by the branch cycle lemma \cite{fried77,volklein}, a $K$-model with geometric group $A_{n}$ whose split class sits at a $K$-rational branch point either has $\sqrt5\in K$, which fails, or has arithmetic monodromy $S_{n}$ with constant field extension generated by $\sqrt5$. Hence the arithmetic group is $S_{n}$ and the relative constant extension is $K(\sqrt5)/K$, which has degree two since $\sqrt5\notin K$. The full constant field is therefore $K(\sqrt5)=\Q(\sqrt{-15},\sqrt5)$.
\end{proof}

\begin{rem}
\label{rem:rationalmodel}
The unpointed six-sheeted quotient cover descends to $\Q$, with seed $\varphi_{0}=W^{2}+W^{3}$. That model is a different object from the two graded maps above: its constant field is the split-pair field $\Q(\sqrt5)$ itself, whereas the graded maps, being defined over $K$, acquire the compositum $K(\sqrt5)$.
\end{rem}

\begin{rem}
\label{rem:minusfifteen}
\Cref{prop:splitfields} accounts for the factor $5$ in $-15$, but not for the factor $-3$: no class attached to either of those two covers has split-pair field $\Q(\sqrt{-3})$. The field $\Q(\sqrt{-3})$ arises in this paper only from the $(9,3)$-class of the degree-twelve shear cover, which is a different cover in a different family. The identity $-15=5\cdot(-3)$ therefore records that $\Q(\sqrt{-15})$ is the third quadratic subfield of $\Q(\sqrt5,\sqrt{-3})$, and no mechanism in this paper connects the two factors. Since $\sqrt{-15}\sqrt5=5\sqrt{-3}$, that biquadratic field is exactly the constant field $\Q(\sqrt{-15},\sqrt5)$ of \cref{prop:splitfields}, so the coincidence lives in the constant field the two covers acquire. 
\end{rem}

\subsection{Arithmetic monodromy of the alternating shear cover}
\label{subsec:arithshear}

\begin{lem}
\label{lem:cdtorus}
For $s,t\in\C^{\times}$ the substitution $X\mapsto sX$, $Y\mapsto tY$, $P\mapsto s^{-1}P$ carries the cover \cref{eq:Pshear} with parameters $(c,d)$ to the cover with parameters $(cs^{-2}t^{3},\,ds^{2}t^{-1})$. The action of this torus on $\{cd\neq0\}$ is simply transitive up to a group of order four, so every member with $cd\neq0$ becomes the normalised member $c=d=1$, which is defined over $\Q$.
\end{lem}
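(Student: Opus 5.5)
The approach is to substitute directly into \cref{eq:Pshear} and track how each monomial transforms. Write $\widetilde P(X,Y)=s^{-1}P_{c,d}(sX,tY)$ and ask for which $(c',d')$ this equals $P_{c',d'}(X,Y)$.

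The leading term $X$ goes to $s^{-1}\cdot sX=X$, so it is preserved. Next compare the two coefficients that fix the parameters. From the term $2cX^{-1}Y^{3}$ we get $2c\,s^{-1}s^{-1}t^{3}X^{-1}Y^{3}$, hence $c'=cs^{-2}t^{3}$. From the term $\tfrac25cd^{3}X^{5}$ we get $\tfrac25cd^{3}s^{4}X^{5}$, hence $c'd'^{3}=cd^{3}s^{4}$. Together these give $d'^{3}=d^{3}s^{6}t^{-3}$, consistent with $d'=ds^{2}t^{-1}$. To confirm $d'$ exactly, rather than only up to a cube root of unity, use the term $6cdXY^{2}$, which yields $c'd'=cd\,t^{2}$ and therefore $d'=ds^{2}t^{-1}$.

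Every monomial in \cref{eq:Pshear} has the form $c^{\alpha}d^{\beta}X^{i}Y^{j}$ with a fixed rational coefficient. The claim is then equivalent to the identity
\[
s^{i-1}t^{j}=(s^{-2}t^{3})^{\alpha}(s^{2}t^{-1})^{\beta},
\]
that is, $i-1=2\beta-2\alpha$ and $j=3\alpha-\beta$. I would verify this monomial by monomial across the twelve terms; for example, $\tfrac19c^{2}d^{6}X^{9}$ gives $8=12-4$ and $0=6-6$. The check is routine. A cleaner route is to note that \cref{eq:square} transforms compatibly, since $\Phi_Y(X)=X^{2}-c(Y-dX^{2})^{3}$ is visibly homogeneous under these weights, and then integrate.

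For transitivity, solve $cs^{-2}t^{3}=1$ and $ds^{2}t^{-1}=1$. The second equation gives $t=ds^{2}$. Substituting into the first gives $c\,d^{3}s^{4}=1$, so $s^{4}=(cd^{3})^{-1}$, which has four solutions, and each determines $t$ uniquely. The map $(s,t)\mapsto(s^{-2}t^{3},s^{2}t^{-1})$ on $\Gm^{2}$ has matrix $\begin{pmatrix}-2&3\\2&-1\end{pmatrix}$, whose determinant is $2-6=-4$. It is therefore surjective with kernel of order four, namely $\{(s,s^{2}):s^{4}=1\}$. This gives simple transitivity up to a group of order four.

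The normalised member with $c=d=1$ has rational coefficients in \cref{eq:Pshear}, so it is defined over $\Q$. The only real obstacle is the bookkeeping of the twelve exponent checks, and the $\Phi_Y$ homogeneity shortcut should remove most of it.
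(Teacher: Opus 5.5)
Your proposal is correct and follows essentially the paper's proof. Both fix the $P$-scaling by the term $X$, read off $c'$ and $c'd'$ from $2cX^{-1}Y^{3}$ and $6cdXY^{2}$, check the remaining monomials by exponent bookkeeping, then solve for $(s,t)$ and identify the stabilizer $\{(s,s^{2}):s^{4}=1\}$ of order four.

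Your exponent relations $i-1=2\beta-2\alpha$, $j=3\alpha-\beta$ and your $s^{4}=(cd^{3})^{-1}$ are the correct ones; the paper's displayed sign conditions and its $s^{2}=(c/d)^{1/2}$ are slips. If you use the $\Phi_{Y}$ shortcut, note that the two primitives can differ only by a function of $Y$, and this vanishes because every $X$-exponent in \cref{eq:Pshear} is odd.
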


\begin{proof}
The first term of \cref{eq:Pshear} is $X$ with coefficient one, which fixes the rescaling of $P$ to be by $s^{-1}$. Matching the terms $2cX^{-1}Y^{3}$ and $6cdXY^{2}$ then gives $c\mapsto cs^{-2}t^{3}$ and $cd\mapsto cdt^{2}$, whence $d\mapsto ds^{2}t^{-1}$; the remaining nine terms match, since each is $c^{\alpha}d^{\beta}X^{i}Y^{j}$ with $-2\alpha+2\beta+i-1=0$ and $3\alpha-\beta+j=0$. Given $(c,d)$ with $cd\neq0$, the equations $cs^{-2}t^{3}=1$ and $ds^{2}t^{-1}=1$ give $t^{2}=(cd)^{-1}$ and $s^{2}=(c/d)^{1/2}$, solvable over $\C$; the stabilizer of $(1,1)$ is $\{(s,t): t=s^{2},\,t^{2}=1\}$, of order four.
\end{proof}

\begin{theorem}
\label{thm:arithmono}
The normalised cover $P(X,Y)$ at $c=d=1$ is defined over $\Q$, its geometric monodromy group is $A_{12}$, its arithmetic monodromy group over $\Q(Y)$ is $S_{12}$, and the constants extension is exactly $\Q(\sqrt{-3})$.
\end{theorem}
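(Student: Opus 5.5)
The plan has four steps: check rationality and the geometric group from earlier results, then use the split class at infinity and the branch cycle lemma to force the constants extension, and finally check it by a discriminant computation.

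\textbf{Rationality and geometric group.} At $c=d=1$ every coefficient of \cref{eq:Pshear} lies in $\Q$, so $P(X,Y)\in\Q(Y)[X,X^{-1}]$ and the cover $X\mapsto P(X,Y)$ is defined over $\Q(Y)$. By \cref{lem:cdtorus} this normalisation is a left--right equivalence over $\C$. Hence \cref{thm:passport} and \cref{thm:monodromy} transfer unchanged, and the geometric monodromy over $\Qbar(Y)$ is $A_{12}$.

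\textbf{Arithmetic group.} The arithmetic monodromy $G_{\mathrm{ar}}$ over $\Q(Y)$ normalises $A_{12}$ inside $S_{12}$, so $G_{\mathrm{ar}}\in\{A_{12},S_{12}\}$. The quotient $G_{\mathrm{ar}}/A_{12}$ is the Galois group of the constants extension. I would decide between the two cases with the branch cycle lemma, applied to the class at infinity.

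The point $P=\infty$ is a $\Q(Y)$-rational branch point. By \cref{lem:whichsplit} its inertia class $C_{(9,3)}$ is one of the two split $A_{12}$-classes of type $(9,3)$, with split-pair field $\Q(\sqrt{-3})$. The branch cycle lemma, stated before \cref{prop:splitfields}, then leaves two options: either $\sqrt{-3}\in\Q(Y)$, or $G_{\mathrm{ar}}=S_{12}$ and the constants extension is exactly $\Q(\sqrt{-3})$. Since $\Q(Y)$ is purely transcendental over $\Q$, its algebraic closure of $\Q$ is $\Q$ itself, and $\sqrt{-3}\notin\Q$. So the first option fails, and both the arithmetic group and the constants extension follow. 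To apply the lemma over a function field, I would specialise $Y$ to a generic rational value (Hilbert irreducibility), or read the lemma directly over the base $\Q(Y)$. In both readings the infinity branch point stays rational and the constant field stays $\Q$.

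\textbf{Main obstacle and cross-check.} The real obstacle is confirming that the split-class branch point is genuinely rational and that the cyclotomic action behaves as the lemma requires. The relevant fact is the standard one: the two $(9,3)$-classes are interchanged by $\chi(\sigma)$-powering exactly when $\sigma$ moves $\sqrt{m^{*}}$, and here $m^{*}=-27$, so $\sqrt{m^{*}}=3\sqrt{-3}$. As an independent check I would compute the discriminant of the numerator $X^{9}(P(X,Y)-P_0)$ in $X$ over $\Q(Y,P_0)$. Its square class should be $-3$ times a square, so that the fixed field of $A_{12}$ is $\Q(\sqrt{-3})(Y,P_0)$. The factors arising from the six finite $3$-cycles are even and contribute only squares, so the nonsquare part comes from the leading coefficient and the pole data, which should produce exactly $-3$ up to squares. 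This confirms that $\Q(\sqrt{-3})$ is the exact constants field.
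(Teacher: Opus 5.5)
Your argument is correct, but it reverses the roles the paper gives its two ingredients. The paper's proof is the discriminant computation. The fibre is cut out by the degree-twelve polynomial $X^{3}\bigl(P(X,Y)-\wp\bigr)$, and exact elimination gives $\operatorname{disc}_X=-27\,Y^{12}R_{\Gamma}(\wp,Y)^{2}$. Since $A_{12}$ has index two in $S_{12}$, the constants extension is generated by the square root of this discriminant, whose square class is $-27\equiv-3$. The branch cycle lemma appears only afterwards, in \cref{rem:bclcheck}, as a consistency check.

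You make the branch cycle lemma the proof. That route needs no elimination and explains where $-3$ comes from: it is the square class of $m^{*}=-27$ for the split $(9,3)$ class. It also gives more: any model over a field not containing $\sqrt{-3}$, with a rational branch point carrying that class, must have arithmetic group $S_{12}$ and constants adjoined by $\sqrt{-3}$. Its cost is that it rests on the non-regular form of the lemma over the function field $\Q(Y)$.

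Your justification for that step is the right one. Since $\Q$ is algebraically closed in $\Q(Y)$, the restriction $G_{\Q(Y)}\to G_{\Q}$ is onto. So some $\sigma$ acts nontrivially on $\mu_{3}$, and this forces an odd element of the arithmetic group that swaps $C^{\pm}_{(9,3)}$. Conversely, every $\sigma$ fixing $\sqrt{-3}$ must act trivially on the constants. Together these pin the constants extension down as $\Q(\sqrt{-3})(Y)$.

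Use this direct reading rather than specialisation. A rational specialisation of $Y$ gives $S_{12}$ for the arithmetic group. It does not, without a further argument, exclude a generic constants extension of the form $\Q(Y)(\sqrt{f(Y)})$ with $f$ nonconstant. The paper's discriminant, by contrast, is an explicit certificate that does not depend on the lemma at all.

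One slip in your cross-check: the most negative power of $X$ in \cref{eq:Pshear} is $X^{-3}$, not $X^{-9}$. The correct numerator is therefore $X^{3}\bigl(P(X,Y)-P_{0}\bigr)$, of degree twelve with constant term $-\tfrac13Y^{6}$. The polynomial $X^{9}\bigl(P(X,Y)-P_{0}\bigr)$ that you wrote has a sixfold root at $X=0$, so its discriminant vanishes identically.
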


\begin{proof}
The geometric group is $A_{12}$ by \cref{thm:monodromy}, whose proof uses only $cd\neq0$. Let $\wp$ denote the target coordinate, so that the fibre of the cover is cut out by the degree-twelve polynomial $X^{3}\bigl(P(X,Y)-\wp\bigr)$ in $X$; it has degree twelve and nonzero constant term $-\tfrac{c^{2}}{3}Y^{6}$, so its roots are exactly the twelve points of the fibre. Exact elimination at $c=d=1$ gives
\begin{equation}
\label{eq:disc12}
  \operatorname{disc}_X\bigl[X^{3}\bigl(P(X,Y)-\wp\bigr)\bigr]=-27\,Y^{12}R_{\Gamma}(\wp,Y)^{2},
\end{equation}
with $R_{\Gamma}$ the irreducible polynomial defining the branch curve $\Gamma$.

Since $A_{12}$ has index two in $S_{12}$, the constants extension of the normal closure is generated by the square root of this discriminant. Modulo squares in $\Q(Y,\wp)^{\times}$ the right-hand side of \cref{eq:disc12} is $-27\equiv-3$, which is not a square. Hence the arithmetic group is not contained in $A_{12}$, so it equals $S_{12}$, and the constants extension is $\Q(\sqrt{-3})$.
\end{proof}

\begin{rem}
\label{rem:bclcheck}
\Cref{thm:arithmono} agrees with the branch cycle lemma: the branch point at infinity is $\Q$-rational and its class has split-pair field $\Q(\sqrt{-3})$ by \cref{lem:whichsplit}, so a $\Q$-model with geometric group $A_{12}$ must acquire exactly this constants extension. The discriminant computation is the independent verification. The same comparison applied to the two covers of \cref{prop:splitfields} predicts $5$ modulo squares for the discriminant of the normalised six-sheeted map over $\Q(\sqrt{-15})$ and for that of \cref{eq:P} in $\rho$ at $p=3$ over $K_3(P,Q)$.
\end{rem}

\begin{cor}
\label{cor:arithreg}
Let $F=\Q(\sqrt{-3})$ and let $\widetilde L$ be the Galois closure of
$F(X,Y)$ over $F(\wp,Y)$.  Then
$\widetilde L/F(\wp,Y)$ is a regular realisation of $A_{12}$ over $F(Y)$
with the seven-point ramification of \cref{eq:Nishear}.  Viewed over
$\Q(\wp,Y)$, the same normal closure has arithmetic group $S_{12}$ and full
constant field $F$; it is therefore not regular over $\Q(Y)$.
\end{cor}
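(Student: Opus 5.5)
The plan is to deduce the corollary from \cref{thm:arithmono} by the standard dictionary between arithmetic and geometric monodromy. Let $\widetilde L_{\Q}$ be the Galois closure of $\Q(X,Y)$ over $\Q(\wp,Y)$, and let $\widehat F$ be the algebraic closure of $\Q$ in $\widetilde L_{\Q}$. \Cref{thm:arithmono} gives $\Gal(\widetilde L_{\Q}/\Q(\wp,Y))=S_{12}$, with geometric subgroup $\Gal(\widetilde L_{\Q}/\widehat F(\wp,Y))=A_{12}$ and $\widehat F=F=\Q(\sqrt{-3})$. The constant field $\widehat F$ is the fixed field of the geometric subgroup intersected with constants, and here it is generated by $\sqrt{\operatorname{disc}}$ via \cref{eq:disc12}.

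Next I would identify $\widetilde L$ with $\widetilde L_{\Q}$. Since $F\subset\widetilde L_{\Q}$ by the previous paragraph, the compositum $\widetilde L_{\Q}\cdot F(\wp,Y)$ is just $\widetilde L_{\Q}$. The Galois closure of $F(X,Y)=\Q(X,Y)\cdot F$ over $F(\wp,Y)$ is generated over $F(\wp,Y)$ by the twelve roots of $X^{3}(P-\wp)$. These roots already lie in $\widetilde L_{\Q}$, so $\widetilde L\subseteq\widetilde L_{\Q}$. Conversely, $\widetilde L_{\Q}$ is generated by those roots together with $\Q(\wp,Y)$, so the two fields coincide. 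Hence $\Gal(\widetilde L/F(\wp,Y))$ is the subgroup of $S_{12}$ fixing $\sqrt{-3}$. Because $F(\wp,Y)=\Q(\wp,Y)(\sqrt{-3})$ is precisely the fixed field of $A_{12}$ (the discriminant computation), this group is $A_{12}$. The constant field of $\widetilde L$ is $F$ itself, so $\widetilde L/F(\wp,Y)$ is regular: the arithmetic and geometric groups agree, both being $A_{12}$.

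For the ramification data, base change to $\bar F$ does not alter the geometric cover. The branch locus over $F(Y)$, viewed in the $\wp$-line at generic $Y$, is therefore the six finite points together with infinity. The inertia classes are those of \cref{thm:passport}, namely $3$-cycles and the $(9,3)$ class, which is exactly \cref{eq:Nishear}. Only one $A_{12}$-class of type $(9,3)$ occurs, and that is fixed by which class the geometric generator lands in.

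Finally, for non-regularity over $\Q(Y)$, the constant field of $\widetilde L_{\Q}$ is $F\neq\Q$, so the extension is not regular, with arithmetic group $S_{12}$. The only delicate point in the whole argument is the equality $\widetilde L=\widetilde L_{\Q}$, that is, checking that adjoining $\sqrt{-3}$ introduces nothing new. This is settled by $\sqrt{-3}\in\widetilde L_{\Q}$, which is itself a consequence of \cref{eq:disc12}.
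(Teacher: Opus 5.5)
Your proposal is correct and takes the same route as the paper. Adjoining $\sqrt{-3}$, which already lies in the normal closure by \cref{eq:disc12}, kills the unique discriminant square class and leaves $A_{12}$ with full constant field $F$, while over $\Q(\wp,Y)$ the group is $S_{12}$ with constants $F\neq\Q$. You additionally make explicit the identification $\widetilde L=\widetilde L_{\Q}$ and the ramification check via \cref{thm:passport}, both of which the paper leaves implicit.
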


\begin{proof}
By \cref{thm:arithmono}, adjoining $F$ kills the unique discriminant square
class and leaves geometric group $A_{12}$; since $F$ is the full constant
field, the resulting $A_{12}$-extension is regular over $F(Y)$.  Before the
constant extension the arithmetic group is $S_{12}$ but the algebraic closure
of $\Q$ in the normal closure is $F$, which is exactly the failure of
regularity over $\Q(Y)$.
\end{proof}

\subsection{Rigidity of the beta--Hermite class at $r=3$}
\label{subsec:rigid}

\begin{prop}
\label{prop:p3rigid}
$\bigl|\Ni\bigl(A_5,(C_3,C_3,C_5)\bigr)\bigr|=1$ for either choice of the split class $C_5$. The class is rigid, its braid orbit is unique, and the $r=3$ member of the beta--Hermite family is the unique cover with its passport.
\end{prop}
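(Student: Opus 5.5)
The plan is to count the Nielsen class directly with the Frobenius character formula and then pin it down by hand. For classes $C_1,C_2,C_3$ of a finite group $G$, the number of triples $(g_1,g_2,g_3)\in C_1\times C_2\times C_3$ with $g_1g_2g_3=1$ is
\[
  \frac{|C_1||C_2||C_3|}{|G|}\sum_{\chi}\frac{\chi(g_1)\chi(g_2)\chi(g_3)}{\chi(1)} .
\]
In $A_5$ we have $|C_3|=20$ and $|C_5|=12$ for either split class. The irreducible characters have degrees $1,3,3',4,5$. Their values on $3$-cycles are $1,0,0,1,-1$, and on the chosen $5$-class they are $1,\tfrac{1\pm\sqrt5}{2},\tfrac{1\mp\sqrt5}{2},-1,0$.

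First step: compute the count. The two degree-three characters vanish on $3$-cycles, so they drop out of the sum. The remaining contributions are $1$ from the trivial character, $1\cdot1\cdot(-1)/4=-\tfrac14$ from the degree-four character, and $0$ from the degree-five character. The number of product-one triples is therefore
\[
  \frac{20\cdot20\cdot12}{60}\cdot\frac34=60 .
\]

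Second step: show that every such triple generates $A_5$. A proper subgroup containing a $3$-cycle and a $5$-cycle would have order divisible by $15$, and $A_5$ has no such proper subgroup. Hence all $60$ triples lie in $\Ni$ before conjugation.

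Third step: take the quotient by conjugation. Since the triples generate $A_5$, whose centre is trivial, $A_5$ acts freely on them. This gives $60/60=1$ class, for either choice of $C_5$, since the character values for the other class are just Galois-conjugated. Because $C_3$ is rational, the triple $(C_3,C_3,C_5)$ is rigid in the usual sense. The braid group $H_3$ acts on a one-element set, so the braid orbit is unique.

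Finally, I apply Riemann existence. A degree-$5$ cover of $\bP^1$ with three branch points is determined up to isomorphism and Möbius normalisation of the branch points by its Nielsen class element. The beta cover of \cref{rem:beta} at $r=3$ realises this passport by \cref{thm:diagmono}. It is therefore the unique cover with this passport once the two finite branch points and infinity are fixed, and any two of the branch points can be moved to $0$ and $1$ by an affine map.

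The main obstacle is bookkeeping rather than ideas: fixing the correct character values on the split $5$-class. I would check the result independently by exhibiting one explicit triple, such as $(123)(345)$ with product a $5$-cycle, and verifying that its centraliser is trivial.
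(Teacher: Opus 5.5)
Your argument is correct and is essentially the paper's proof. The paper counts the $5$ pairs in $C_3\times C_3$ whose product is a fixed $h\in C_5$ and multiplies by $|C_5|=12$, using that $C_5$ is closed under inversion; this is the same character computation as your direct count of $60$ triples, followed by the same order-$15$ generation argument and the same free action of $A_5$ coming from its trivial centre.

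One small correction. Rigidity is simply the statement $|\Ni|=1$ that you have already proved; it does not follow from $C_3$ being rational. In fact $C_5$ is not rational, so the class is rigid but not rationally rigid.

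Your Riemann existence step for the passport claim goes slightly beyond what the paper writes out. It implicitly uses that the two split classes are exchanged by odd conjugation, so that the absolute Nielsen class in $S_5$ is also a singleton.
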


\begin{proof}
In $A_5$ the class $C_3$ of $3$-cycles has size $20$ and the two classes of $5$-cycles have size $12$ each. On $C_3$ the irreducible characters take the values $1,0,0,1,-1$ in degrees $1,3,3,4,5$, and on a $5$-cycle they take $1,\varphi,\varphi',-1,0$. For a fixed $h\in C_5$ the number of pairs $(g_1,g_2)\in C_3\times C_3$ with $g_1g_2=h$ is
\[
  \frac{|C_3|^{2}}{|A_5|}\sum_{\chi}\frac{\chi(C_3)^{2}\overline{\chi(h)}}{\chi(1)}
   =\frac{400}{60}\Bigl(1+0+0-\tfrac14+0\Bigr)=5 .
\]
Both classes of $5$-cycles are closed under inversion, so summing over $h\in C_5$ gives $60$ tuples with $g_1g_2g_3=1$. Each generates: a subgroup of $A_5$ containing an element of order $3$ and one of order $5$ has order divisible by $15$, and $A_5$ has no subgroup of order $15$ or $30$. Since $Z(A_5)=1$, conjugation acts freely on generating tuples, so $|\Ni|=60/60=1$.
\end{proof}

\subsection{Modular tower level structure}
\label{subsec:mt}

The classes of the shear family, of the cone family at $k$ even, and of the beta--Hermite family at $r$ odd all consist of elements of odd order, hence of $2'$-elements; the cone family at $k$ odd is excluded, since there $C_{k+1}$ has even order. For the classes that do qualify, the Nielsen class is the level-zero layer of a $2$-Frattini Modular Tower in the sense of \cite{fried95,bf}: the tower of Hurwitz spaces attached to the characteristic quotients of the universal $2$-Frattini cover of $A_N$, with the classes lifted canonically by Schur--Zassenhaus. A level-zero component supports a nonempty level one only if its tuples lift to the central spin extension with product one; for genus-zero covers with all ramification indices odd this obstruction is Serre's \cite{serre}
\begin{equation}
\label{eq:serre}
  \omega=\sum_{P}\frac{e_P^{2}-1}{8}\ \bmod 2 ,
\end{equation}
the sum over ramified points, the component being obstructed if and only if $\omega=1$.

\begin{prop}
\label{prop:mtobstruction}
The following hold.
\begin{enumerate}
\item The alternating shear class has $\omega=6+10+1\equiv1$: its component is $2$-Frattini obstructed and the Modular Tower above it is empty.
\item The class $\bigl((3,1^{3})^{3};(5,1)\bigr)$ of the six-sheeted map has $\omega=3+3\equiv0$: its component is unobstructed, its spin lifts lie in $2\!\cdot\!A_6\cong\mathrm{SL}_2(9)$, and level one of its tower is nonempty.
\item For the beta--Hermite family at odd $r$, $\omega\equiv(r-1)/2\bmod2$, so the component is obstructed if and only if $r\equiv3\pmod4$.
\end{enumerate}
\end{prop}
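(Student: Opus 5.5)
The approach is to apply Serre's formula \cref{eq:serre} class by class. The hypotheses are already in hand: every cover is genus zero, and in each case all ramification indices are odd. The formula takes one term $(e_P^{2}-1)/8$ per ramified point $P$ of the cover, so a cycle type contributes one term for each of its parts. Parts equal to $1$ contribute $0$. A part $3$ contributes $1$, a part $5$ contributes $3$, and a part $9$ contributes $10$. The computation is therefore bookkeeping over the Nielsen classes \cref{eq:Nishear}, \cref{eq:Nicone} and \cref{eq:Nidiag}, together with \cref{thm:passport}, \cref{prop:conequotient} and \cref{rem:beta}.

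For the shear class (1), the six finite branch points each carry one $3$-cycle, contributing $6\cdot1=6$. The class $(9,3)$ at infinity contributes $10+1$. The total is $6+10+1=17\equiv1$, as displayed. The component is therefore obstructed. Since every level of a Modular Tower maps onto level one, emptiness of level one gives emptiness of the whole tower. For the six-sheeted class (2), take $(k,d)=(2,3)$. The three finite points of type $(3,1^3)$ contribute $3$, and $(5,1)$ at infinity contributes $3$. The total is $6\equiv0$. Unobstructedness then says the lifted tuples in the spin cover have product one. For the identification $2\cdot A_6\cong\mathrm{SL}_2(9)$, I use the standard isomorphism $A_6\cong\mathrm{PSL}_2(9)$; the Schur multiplier of $A_6$ has order six, but its $2$-part gives the double cover $\mathrm{SL}_2(9)$. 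Nonemptiness of level one then follows from the standard equivalence in \cite{fried95,bf} between vanishing of the spin obstruction and existence of level-one lifts for $2'$-classes.

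For (3), take $r$ odd. The two points of type $(r,1^{r-1})$ contribute $2\cdot(r^{2}-1)/8=(r^{2}-1)/4$. The full cycle at infinity contributes $((2r-1)^{2}-1)/8=r(r-1)/2$. Write $r=2j+1$. The first term becomes $j(j+1)$, which is even, and the second becomes $j(2j+1)\equiv j \pmod 2$. Hence $\omega\equiv j=(r-1)/2 \pmod 2$, which is $1$ exactly when $r\equiv3\pmod4$. As a sanity check, $r=3$ gives $1+1+3=5\equiv1$.

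The main obstacle is not the arithmetic but justifying the use of \cref{eq:serre} in each case. This requires checking three things: the cover is genus zero, the relevant group is the alternating group $A_n$ itself (true for these three families by \cref{thm:monodromy}, \cref{thm:conemono} with $k$ even, and \cref{thm:diagmono} with $r$ odd), and $n\ge4$ so that the spin cover $\hat A_n$ is the nonsplit double cover. I would cite Serre's theorem in that form and note that the cycle types are computed per ramified point, not per branch point.
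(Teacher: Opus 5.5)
Your proposal is correct and is essentially the paper's proof: both evaluate Serre's sum \cref{eq:serre} over the ramified points, obtaining $6+10+1=17$, $3+3=6$, and $\frac{r^{2}-1}{4}+\frac{r(r-1)}{2}$ (which the paper writes as $\frac{(3r+1)(r-1)}{4}=m(3m+2)$ with $r=2m+1$). Your added remarks go beyond what the paper writes: emptiness of level one propagating up the tower, the $2$-part of the Schur multiplier of $A_6$, and the level-one criterion of \cite{fried95,bf} supply justifications the paper leaves implicit.
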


\begin{proof}
Only points with $e_P>1$ contribute to \cref{eq:serre}. For (1) the six finite branch values each carry one point with $e_P=3$, contributing $6\cdot1$, and infinity carries $e_P=9$ and $e_P=3$, contributing $10+1$; the total is $17$. For (2) the three finite branch values contribute $3\cdot1$ and infinity contributes $(25-1)/8=3$; the total is $6$. For (3) the two finite branch values contribute $2(r^{2}-1)/8$ and infinity contributes $\bigl((2r-1)^{2}-1\bigr)/8$, so
\[
  \omega=\frac{2(r^{2}-1)+4r^{2}-4r}{8}=\frac{(3r+1)(r-1)}{4} .
\]
Writing $r=2m+1$ gives $\omega=m(3m+2)\equiv m^{2}\equiv m\pmod2$, and $m=(r-1)/2$ is odd exactly when $r\equiv3\pmod4$.
\end{proof}

Thus $F_{3,s}$ and the shear map sit on dead branches, while the six-sheeted map and $F_{5,s}$ sit on live ones.

\begin{theorem}
\label{prop:LO}
The cone classes \cref{eq:Nicone} and the beta--Hermite classes \cref{eq:Nidiag} are pure-cycle of genus zero, and each carries a single inner braid orbit once a split class is fixed. Precisely:
\begin{enumerate}
\item for $k$ odd, $\Ni\bigl(S_N,(C_{k+1}^{\,d},C_{(N-1,1)})\bigr)$ is a single braid orbit;
\item for $k$ even, the finite class does not split, the class at infinity splits in $A_N$, each of the two split classes carries a single inner braid orbit, and the two are exchanged by odd conjugation and form one absolute component;
\item for $r$ even, $\Ni\bigl(S_{2r-1},(C_r,C_r,C_{2r-1})\bigr)$ is a single braid orbit;
\item for $r$ odd, each of the two split full-cycle classes in $A_{2r-1}$ carries a single inner braid orbit, and the two form one absolute component.
\end{enumerate}
In every case the corresponding Hurwitz component is irreducible.
\end{theorem}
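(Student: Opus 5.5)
The plan is to use the classical description of genus-zero covers with one pole structure via ``polynomial-type'' Hurwitz spaces, together with the connectivity theorem for pure-cycle genus-zero Nielsen classes. Both families have genus zero: Riemann--Hurwitz is exact in \cref{prop:conequotient} and \cref{rem:beta}. Every branch class is a single cycle plus fixed points, except the class at infinity $(N-1,1)$ in the cone case. I would first reduce that class: an element of type $(N-1,1)$ fixes one letter, and removing that letter identifies tuples in $\Ni(G,(C_{k+1}^{d},C_{(N-1,1)}))$ with tuples in $S_{N}$ whose product is an $(N-1)$-cycle fixing the marked letter. Since the finite classes are transitive-generating $(k+1)$-cycles in a genus-zero configuration, such a tuple is the same as a planar cactus, that is, a ``polynomial with one extra simple pole.'' The beta--Hermite class $(C_r,C_r,C_{2r-1})$ is honestly polynomial: a $(2r-1)$-cycle at infinity.

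For (3) and (4), the Nielsen class with a full cycle at infinity and two $r$-cycles is counted by cacti with two black vertices of degree $r$. I would show directly that up to conjugation there is a unique such pair $(g_1,g_2)$ with $g_1g_2$ a $(2r-1)$-cycle. The two $r$-cycles must share exactly one letter, since genus zero forces the supports to meet in $2r-(2r-1)=1$ point. The relative cyclic arrangement is then determined up to conjugation, which gives $|\Ni|=1$ in $S_{2r-1}$ for $r$ even, matching \cref{prop:p3rigid} in spirit. For $r$ odd, conjugation by $S_{2r-1}$ is transitive, and the $A_{2r-1}$-inner classes split into two according to which class the full cycle falls in. Each split class then carries exactly one inner Nielsen element, hence one orbit, and odd conjugation swaps them.

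For (1) and (2), I would invoke the theorem (Liu--Osserman, hence the label) that pure-cycle genus-zero Nielsen classes in $S_n$ form a single braid orbit. To apply it, I would pass through the fixed point of the $(N-1,1)$ class by viewing it as two branch points over infinity. Alternatively, I would show directly that braid moves on the $d$ finite $(k+1)$-cycles act transitively: each cactus tree with $d$ polygons of size $k+1$ can be reduced by Hurwitz moves to a standard chain. For $k$ even everything lies in $A_N$. I would then pass from the single $S_N$-absolute orbit to inner orbits: the normalizer quotient $S_N/A_N$ acts by swapping the two split classes at infinity, so each split class carries exactly one inner orbit. Finally, irreducibility of the Hurwitz component follows from the orbit--component correspondence recalled in \cref{subsec:nielsen}.

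I expect the main obstacle to be the applicability of the Liu--Osserman theorem in the cone case, since $(N-1,1)$ is not a pure cycle in the strict sense. I would first try to treat the point at infinity as contributing an $(N-1)$-cycle on a marked-removed set and check transitivity on cacti by an explicit induction on $d$, using a braid that slides one $(k+1)$-polygon around the tree.
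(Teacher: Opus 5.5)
Your overall argument is correct once you see that Liu--Osserman applies directly (see the second paragraph).

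\textbf{Comparison with the paper.} For (1) and (2) your route is the paper's. Liu--Osserman gives a single absolute braid orbit in $S_N$, and odd conjugation then identifies the absolute class with the inner class at a fixed split class $C^{+}$. Say explicitly that an odd conjugator would move the last entry from $C^{+}$ to $C^{-}$, which gives injectivity. Also note that $C_{k+1}$ does not split because its part $1$ repeats.

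For (3) and (4) you take a genuinely different and more elementary route. The paper applies Liu--Osserman there as well, and confirms rigidity only at $r=3$ by the character count of \cref{prop:p3rigid}. Your count works for every $r$:
\begin{enumerate}
\item Two $r$-cycles generating a transitive subgroup of $S_{2r-1}$ have supports covering all letters, hence meeting in exactly one letter. This is forced by transitivity rather than by genus zero.
\item A unique permutation conjugates any such pair to a standard pair $\bigl((1\,a_1\cdots a_{r-1}),(1\,b_1\cdots b_{r-1})\bigr)$, whose product is automatically a $(2r-1)$-cycle.
\item So the absolute Nielsen class has exactly one element, with generation supplied by \cref{thm:diagmono}.
\item For $r$ odd, the free $S_{2r-1}$-orbit splits into two free $A_{2r-1}$-orbits, one for each split class of the product.
\end{enumerate}
This proves rigidity of every beta--Hermite class, which is stronger than a single braid orbit and needs no external input. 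The paper's route buys uniformity over all four cases.

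\textbf{The anticipated obstacle.} The obstacle you expect in (1)--(2) is illusory, and both of your workarounds would fail if you relied on them. Pure-cycle means exactly one ramified point over each branch point. The type $(N-1,1)$ is an $(N-1)$-cycle with one fixed letter, so it is pure-cycle as it stands. Since $dk+(N-2)=2N-2$, the Liu--Osserman theorem applies verbatim, which is what the paper does.

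The fixed letter is the unramified simple pole $W=0$ of \cref{prop:conequotient}, lying over the same branch point as the pole of order $N-1$. Declaring the two points over $\infty$ to be two branch points would change the branch data, and hence the Nielsen class.

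The cactus fallback is wrong as stated. Here $\sum(e_i-1)=dk=N$ rather than $N-1$, so the incidence graph of the $N$ letters with the $d$ polygons has first Betti number $d(k+1)-(N+d)+1=1$. It is unicyclic, not a tree, so there is no reduction to a chain of polygons. Nor can you delete the fixed letter to get a polynomial cactus on $N-1$ letters, since by transitivity some $(k+1)$-cycle moves it.
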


\begin{proof}
Every class occurring in \cref{eq:Nicone} and \cref{eq:Nidiag} is a single cycle together with fixed points, so the data are pure-cycle, and the Riemann--Hurwitz equalities
\[
\begin{split}
  d\bigl((k+1)-1\bigr)+\bigl((N-1)-1\bigr)&=kd+N-2=2N-2,\\
  2(r-1)+\bigl((2r-1)-1\bigr)&=4r-4=2(2r-1)-2,
\end{split}
\]
are exact, so the genus is zero. Liu--Osserman \cite[Thm.~1.2]{lo} prove irreducibility of a genus-zero pure-cycle Hurwitz space for an arbitrary number of branch points and arbitrary cycle lengths subject only to that equality; even lengths are permitted. Hence the absolute Nielsen class in $S_n$ is a single braid orbit in all four cases.

In cases (1) and (3) the monodromy group is $S_n$ by \cref{thm:conemono} and \cref{thm:diagmono}, the prescribed classes are $S_n$-classes, and $Z(S_n)=1$, so the absolute and inner Nielsen classes coincide and the statement is the displayed irreducibility.

In cases (2) and (4) the group is $A_n$, and by \cref{lem:whichsplit} exactly the class at infinity splits, into $C^{+}$ and $C^{-}$. Fix $C^{+}$. Conjugation by an odd permutation carries $\Ni\bigl(A_n,(\dots,C^{+})\bigr)$ bijectively onto $\Ni\bigl(A_n,(\dots,C^{-})\bigr)$ and commutes with the braid action; and no tuple with last entry in $C^{+}$ is carried to another such tuple by an odd element, since odd conjugation moves $C^{+}$ to $C^{-}$. The absolute Nielsen class is therefore in braid-equivariant bijection with the inner class at $C^{+}$, so a single absolute orbit is a single inner orbit on each fixed split class, and the two are exchanged as asserted.
\end{proof}

At $p=3$ case (4) is \cref{prop:p3rigid}, computed there independently by the rigidity count.

The shear class \cref{eq:Nishear} is not pure-cycle, its class at infinity having two nontrivial cycles, so \cref{prop:LO} does not reach it and its component count is a direct computation.

\begin{lem}
\label{lem:threecycles}
Let $n\ge6$.  A transitive subgroup of $S_n$ generated by $3$-cycles is
$A_n$.
\end{lem}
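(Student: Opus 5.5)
The plan is to reduce to primitivity and then apply Jordan's theorem, which the paper already uses in \cref{thm:monodromy}: a primitive group containing a $3$-cycle contains $A_n$. Let $G\le S_n$ be transitive and generated by a set $\mathcal{T}$ of $3$-cycles. Every $3$-cycle is even, so $G\le A_n$. It therefore suffices to show $G$ is primitive, for then Jordan gives $A_n\le G$ and hence $G=A_n$.

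To prove primitivity I would argue by contradiction. Suppose $\mathcal{B}$ is a nontrivial block system, with $b$ blocks of size $m$, where $1<m<n$ and $bm=n$. Take a generator $\tau=(x\,y\,z)\in\mathcal{T}$ that moves at least one block; some generator must do so, since otherwise $G$ preserves each block and is intransitive. The three points of $\tau$ cannot lie in one block. Next, I would rule out the cases where they lie in two or three distinct blocks.

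\emph{Two blocks.} Suppose $x,y\in B_1$ and $z\in B_2$ with $B_1\neq B_2$. Then $\tau(B_1)$ contains $\tau(x)=y\in B_1$, so $\tau(B_1)=B_1$. On the other hand $\tau(B_1)\ni\tau(y)=z\in B_2$, a contradiction.

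\emph{Three blocks.} Suppose $x,y,z$ lie in distinct blocks $B_1,B_2,B_3$. Since $m\ge2$, choose $w\in B_1$ with $w\neq x$. Then $w\notin\{x,y,z\}$, so $\tau(w)=w\in B_1$, which forces $\tau(B_1)=B_1$. But $\tau(x)=y\in B_2$, again a contradiction.

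Hence no nontrivial block system exists, and $G$ is primitive. The hypothesis $n\ge6$ is not really needed in this argument; it only guards against small-degree coincidences, and I would note that the argument holds for every $n\ge3$, though stating $n\ge6$ is harmless.

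The only delicate point is making sure Jordan's theorem is quoted in the form "primitive and containing a $3$-cycle implies containing $A_n$". This is exactly the form the paper states among its three standard facts at the start of \cref{sec:6}. I expect no real obstacle beyond the two-line block case analysis above.
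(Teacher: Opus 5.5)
Your proof is correct and follows the paper's route: it gets primitivity from the fact that a $3$-cycle cannot move a block of size at least two, then applies Jordan's theorem and uses that every generator is even. The only difference is that middle step: the paper uses a support count (moving a block of size at least two setwise would move at least four points), where you split into cases by how $x,y,z$ are distributed among the blocks; your remark that $n\ge6$ is not actually needed is also right.
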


\begin{proof}
Let $H$ be such a subgroup.  It is primitive.  Indeed, suppose that a
nontrivial block system existed.  A $3$-cycle cannot move one block to a
different block: if a block has more than one point, moving it setwise would
move at least four points, while a $3$-cycle has support three.  Hence every
generating $3$-cycle fixes every block setwise, so the group they generate
cannot be transitive across more than one block.  This contradiction proves
primitivity.  Jordan's theorem now gives $A_n\le H$, because $H$ contains a
$3$-cycle and $n\ge6$.  Every generator is even, so $H\le A_n$, and equality
follows.
\end{proof}

\begin{theorem}
\label{thm:A12census}
For each of the two split classes $C^{\pm}_{(9,3)}\subset A_{12}$, the inner Nielsen class
\[
  \Ni^{\mathrm{in}}\bigl(A_{12},(C_3^{6},C^{\pm}_{(9,3)})\bigr)
\]
is a single braid orbit of size $76{,}545$. Odd simultaneous conjugation exchanges the two, so the absolute Nielsen class of \cref{eq:Nishear} has one component.
\end{theorem}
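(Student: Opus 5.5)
The statement is a finite computation in $A_{12}$, and I would carry it out in three stages.

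\textbf{Stage 1: the total count.} Fix one split class $C^{+}=C^{+}_{(9,3)}$. I would first compute the number of tuples $(g_1,\dots,g_6,h)\in C_3^{6}\times C^{+}$ with $g_1\cdots g_6h=1$, with no generation condition yet. This is the Frobenius character sum
\[
  \frac{|C_3|^{6}|C^{+}|}{|A_{12}|}\sum_{\chi\in\mathrm{Irr}(A_{12})}\frac{\chi(C_3)^{6}\chi(C^{+})}{\chi(1)^{5}},
\]
evaluated with the character table of $A_{12}$. In practice I would take it from a computer algebra system; the split characters only matter on $C^{\pm}$.

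\textbf{Stage 2: removing non-generating tuples.} A tuple satisfying the product condition generates a group $H$ that is generated by $3$-cycles. By \cref{lem:threecycles}, if $H$ is transitive then $H=A_{12}$. So the tuples to discard are exactly the intransitive ones. For these, $H$ preserves an orbit decomposition. The $(9,3)$ element has cycles of sizes $9$ and $3$, and each orbit is a union of its cycles, so the only possibility is orbits of sizes $9$ and $3$. On those orbits the tuple restricts to product-one genus-zero data, and I would count those pieces separately (again by character sums in $A_9$ and $A_3$) and subtract them. Riemann--Hurwitz gives a quick check: on the $3$-orbit, $3$-cycles plus a $3$-cycle at infinity would need total index $2\cdot3-2=4$. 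Dividing the remaining count by $|A_{12}|$ (conjugation is free since the centre is trivial) should give $|\Ni^{\mathrm{in}}|=76545$.

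\textbf{Stage 3: a single braid orbit.} I would exhibit the orbit of the explicit tuple coming from the shear cover, either via \cref{thm:passport} or by constructing one directly. Running the Hurwitz action $\sigma_1,\dots,\sigma_6$ on representatives modulo conjugation, with canonical forms computed by a BFS over permutation tuples, the orbit should have size $76545$, matching Stage 2, so it is everything.

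Alternatively, one could try a reduction to Liu--Osserman–type degeneration: coalesce pairs of $3$-cycles to reach pure-cycle data. I expect this to be hard to make rigorous, so I would rely on the computation. The exchange statement then follows exactly as in \cref{prop:LO}: conjugation by an odd permutation swaps $C^{+}$ and $C^{-}$ and commutes with braids, so the absolute class has one component.

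The main obstacle is Stage 3. The orbit enumeration (roughly $7.7\cdot10^{4}$ classes times $6$ generators) needs reliable canonical representatives, and this can only be certified by computation, not by hand. A secondary risk is getting the character-table evaluation on the split classes right in Stage 1.
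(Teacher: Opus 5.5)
Your overall route is the paper's: a fixed-product class-algebra count, removal of non-generating tuples via \cref{lem:threecycles}, division by a free conjugation action, a breadth-first search from an explicit generating tuple, and odd conjugation for the exchange. Your division of the all-$h$ count by $|A_{12}|$ is the same as the paper's division of the fixed-$h$ count by $|C_{A_{12}}(h)|=27$. However, Stage~2 as you describe it would give the wrong number.

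For an intransitive tuple the two orbits are the supports of the $9$- and $3$-cycle of $h$, and each $g_i$ is supported in one of them. Riemann--Hurwitz summed over the two orbits reads $22=(16+2g_9)+(4+2g_3)$, so $g_9+g_3=1$. The restricted data are therefore never genus zero on both orbits, and your index-$4$ check on the $3$-orbit picks out only one of the two cases that occur. Write $j$ for the number of $g_i$ supported on the $3$-orbit.
\begin{itemize}
\item $j=1$: five $3$-cycles factor the $9$-cycle in genus one, giving $6\cdot 284{,}310=1{,}705{,}860$ tuples for fixed $h$.
\item $j=2$: both entries on the $3$-orbit are forced to equal the $3$-cycle of $h$, and the remaining four factor the $9$-cycle in genus zero, giving $\binom62\cdot 9^{3}=10{,}935$ tuples.
\end{itemize}
Their sum $1{,}716{,}795$ is the paper's intransitive count, and it leaves $2{,}066{,}715=27\cdot 76{,}545$. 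Omitting the $j=2$ term yields $2{,}077{,}650/27=76{,}950$. Insisting on genus zero for both pieces removes nothing and yields $3{,}783{,}510/27=140{,}130$. So you must sum the orbitwise character counts over every distribution of the six positions between the two orbits, with no genus restriction.

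There is also a smaller bookkeeping error in Stage~3. The generator $\sigma_6$ moves the $(9,3)$-entry out of the last slot, so the orbit under $\sigma_1,\dots,\sigma_6$ lives among tuples with that entry in any of the seven slots. If it is a single orbit it has $7\cdot 76{,}545$ classes, so it cannot ``match Stage~2''. To compare with $76{,}545$, let only $\sigma_1,\dots,\sigma_5$ act on $(g_1,\dots,g_6)$. These preserve $g_1\cdots g_6$ and hence $g_7$, and this is the search over centraliser classes that the paper performs. Either formulation decides single-orbitness equally well, but you must compare against the matching count.
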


\begin{proof}
A tuple in the class is $(g_1,\dots,g_6,g_7)$ with $g_i\in C_3$, $g_7\in C^{+}_{(9,3)}$ and product one, so $g_7$ is determined by the six $3$-cycles and the group generated is $\langle g_1,\dots,g_6\rangle$. By \cref{lem:threecycles} the tuple generates $A_{12}$ if and only if it is transitive.

Fix $h\in C^{+}_{(9,3)}$. Exact class-algebra recurrence and support convolution give
\[
\begin{split}
  \bigl|\{(g_1,\dots,g_6)\in C_3^{6}: g_1\cdots g_6=h^{-1}\}\bigr| &= 3{,}783{,}510,\\
  \text{of which intransitive} &= 1{,}716{,}795,\\
  \text{transitive} &= 2{,}066{,}715 .
\end{split}
\]
Since $A_{12}$ acts transitively by conjugation on $C^{+}_{(9,3)}$, every inner class has a representative with last entry $h$, and two such representatives are equivalent exactly when conjugate by the centraliser $C_{A_{12}}(h)$. That centraliser is generated by the two cycles of $h$, of order $27$, and it acts freely on generating tuples: an element centralising a tuple that generates $A_{12}$ is central, hence trivial. Therefore
\[
  \bigl|\Ni^{\mathrm{in}}\bigl(A_{12},(C_3^{6},C^{+}_{(9,3)})\bigr)\bigr|=\frac{2{,}066{,}715}{27}=76{,}545 .
\]
Take $h=(1\,2\,\dots\,9)(10\,11\,12)$ and the tuple
\[
  (g_1,\dots,g_6)=\bigl((1\,2\,3),\,(1\,10\,11),\,(1\,11\,12),\,(3\,4\,5),\,(5\,6\,7),\,(7\,8\,9)\bigr),
\]
composed right to left, so that $g_1\cdots g_6=h$ and $(g_1,\dots,g_6,h^{-1})$ lies in the class. It generates $A_{12}$, verified by Schreier--Sims, and exact breadth-first search under the Hurwitz generators and their inverses reaches all $76{,}545$ centraliser classes; so the inner Nielsen class is a single braid orbit. Odd simultaneous conjugation is a braid-equivariant bijection onto the inner class at $C^{-}_{(9,3)}$, which gives the last statement.
\end{proof}

Irreducibility of a component does not by itself imply that the graded Keller family sweeps it, and the two can differ in dimension.

\begin{rem}
\label{rem:rfour}
The six-sheeted family has $\ell=4$, so its reduced Hurwitz space is a curve over the $j$-line, the setting of \cite{bf}. The lift conditions of \cref{thm:six} freeze the seed up to conjugation, so the graded Keller condition selects a $\Gal(\Qbar/\Q)$-stable pair of normalised seeds, conjugate over $\Q(\sqrt{-15})$. The two share a single unpointed quotient-cover class, defined over $\Q$; what fails to descend is the lift-point marking, the two markings giving distinct $\Gaut^{\sharp}$-orbits with field of moduli $\Q(\sqrt{-15})$. The obstruction therefore lies in the pointed structure and not in the cover, so it is invisible on the coarse Hurwitz space.
\end{rem}

\begin{rem}
\label{rem:kellertower}
The six-sheeted component is unobstructed, so level one of its tower is a nonempty Hurwitz space for the first characteristic $2$-Frattini quotient of $A_6$. Does any graded Keller map realise a level-one cover, or do graded Keller maps occur only at level zero? Either answer is a Modular-Tower-type finiteness statement for the Keller condition.
\end{rem}

\subsection{Dimension of the shear locus}
\label{subsec:dimension}

The reduced Hurwitz space
\[
  \Hur\bigl(A_{12},(C_3^6,C_{(9,3)})\bigr)^{\mathrm{rd}}
\]
 has dimension $\ell-3=4$. The shear family is parametrised by $(c,d,Y)$ modulo the two-dimensional torus of \cref{lem:cdtorus}, so its image in $\Hur^{\mathrm{rd}}$ has dimension at most one.

\begin{prop}
\label{prop:sheardim}
The image of the alternating shear locus in $\Hur^{\mathrm{rd}}$ is one-dimensional.
\end{prop}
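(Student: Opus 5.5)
The upper bound is already recorded just before the statement: the shear family depends on $(c,d,Y)$, and by \cref{lem:cdtorus} the torus $(s,t)$ acts on $\{cd\neq0\}$ transitively up to a finite group. Every member therefore becomes equivalent to the normalised cover $c=d=1$, with $Y$ rescaled. The image in $\Hur^{\mathrm{rd}}$ is thus the image of the one-parameter family $Y\mapsto\bigl[X\mapsto P(X,Y)\bigr]$ at $c=d=1$, and it has dimension at most one. What remains is to show this map $Y\mapsto\Hur^{\mathrm{rd}}$ is nonconstant.

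The plan is to find a continuous invariant of the reduced class that visibly varies with $Y$. The point of $\Hur^{\mathrm{rd}}$ determines the branch configuration: six finite branch values $\pi(t_i)$, where $t_i$ runs over the six preimages of $Y$ under $\gamma(t)=t^{6}+t^{2}$, together with $\infty$. This configuration is determined modulo the affine group $\wp\mapsto\alpha\wp+\beta$, since reduction only quotients by $\mathrm{PGL}_2$ and $\infty$ carries the distinguished class $(9,3)$. I would therefore take an affine cross-ratio-type invariant of the six finite points, for example
\[
  \frac{(\wp_1-\wp_2)(\wp_3-\wp_4)}{(\wp_1-\wp_3)(\wp_2-\wp_4)}
\]
symmetrised, or more cleanly a ratio of power sums of the centred values $\wp_i-\bar\wp$, such as $e_2^{3}/e_3^{2}$ of the centred configuration. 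I would then show this invariant is a nonconstant function of $Y$.

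The key steps, in order, are as follows. First, use \cref{thm:passport} to confirm that for generic $Y$ the six values are distinct and there is no further branching, so the configuration is well defined. Second, note the symmetry of $\gamma$ under $t\mapsto it$: we have $\gamma(it)=-t^{6}-t^{2}\neq\gamma(t)$, so instead use $t\mapsto-t$, under which $\gamma$ is invariant and $\pi$ is odd. The six preimages therefore come in pairs $\pm t_j$ with branch values $\pm\pi(t_j)$. Hence the centred configuration is symmetric about $0$, and the invariants reduce to $e_2^{3}/e_3^{2}$ in the three squares $\pi(t_j)^{2}$, $j=1,2,3$. Third, compute the asymptotics of this invariant in $Y$ and show it is not constant. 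As $Y\to0$ the roots of $\gamma(t)=Y$ split into a pair $t\sim\pm\sqrt Y$ and four roots $t^{4}\sim-1$. The corresponding branch values scale differently: one pair has $\pi\sim Y^{3/2}$ and the other two pairs are $\pi(t)$ at $t^{4}=-1$, which equals $\tfrac{8}{315}t^{3}\Theta(-1)\neq0$ since $\Theta(-1)=28$. One squared value therefore tends to $0$ while the other two tend to nonzero limits, whereas as $Y\to\infty$ all three squares grow like $|Y|^{9}$ with ratios fixed by the sixth roots of unity. A ratio such as $\pi(t_1)^{2}/\pi(t_2)^{2}$ tends to $0$ in one regime and to a root of unity in the other, so a symmetric function of these ratios is nonconstant. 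Irreducibility of the source curve (the $Y$-line) then gives a one-dimensional image.

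The main obstacle is the third step: ensuring that the chosen symmetric invariant genuinely separates the two limits rather than accidentally taking the same value in both. If the asymptotic comparison is delicate, I would fall back on an explicit computation. I would evaluate the resultant-defined polynomial $R_{\Gamma}(\wp,Y)$ of \cref{eq:disc12} at two rational values of $Y$, form the affine invariant of its six roots in $\wp$ via their coefficients (the discriminant-normalised coefficient ratios), and check that the two values differ.
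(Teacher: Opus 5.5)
Your proposal is correct and follows the paper's route. You normalise to $c=d=1$ via the torus, use $t\mapsto -t$ to pair the branch values as $\pm\mu_j$ so that only the scalings $\wp\mapsto\alpha\wp$ survive, and show that a scale-invariant symmetric function of the branch values is nonconstant by comparing $Y\to0$ with $Y\to\infty$.

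The paper uses $\Xi=S_4/S_2^{2}$, which blows up at $Y=0$ because $S_2\to0$. Your $e_2^{3}/e_3^{2}$ in the three squares $\mu_j^{2}$ works just as well, and your own asymptotics already remove the obstacle you flagged. As $Y\to0$ it tends to $\infty$, since one square tends to $0$ while the product of the other two has a nonzero limit. As $Y\to\infty$ it tends to $27$, since the three squares become asymptotically equal, with ratios tending to $1$ rather than to a general root of unity.
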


\begin{proof}
Normalise $c=d=1$, so $Y_c(\omega)=\omega^{2}+\omega^{6}$ and $P_c(\omega)=\tfrac{8}{315}\omega^{3}\Theta(\omega^{4})$ in the notation of \cref{thm:passport}. Then $Y_c$ is even and $P_c$ is odd, so the six roots of $Y_c=Y$ fall into pairs $\pm\omega$ and the six branch values into pairs $\pm\mu_i$. The remaining branch point is $\infty$, so the residual freedom in $\Hur^{\mathrm{rd}}$ is the affine group fixing $\infty$ and commuting with $z\mapsto-z$, that is $z\mapsto\alpha z$. Hence
\[
  \Xi(Y)=\frac{S_4}{S_2^{2}}, \qquad S_m=\sum_{j=1}^{6}\mu_j^{m},
\]
is a well-defined function on the image, and the image is a point only if $\Xi$ is constant.

Put $u=\omega^{2}$, so the $u_j$ are the three roots of $u^{3}+u=Y$ and $\mu_j^{2}=\bigl(\tfrac{8}{315}\bigr)^{2}u_j^{3}\Theta(u_j^{2})^{2}$. As $Y\to\infty$ one has $u_j\sim Y^{1/3}\zeta^{j}$ with $\zeta^{3}=1$, and $\Theta(u^{2})\sim640u^{12}$, so $S_2\sim2\bigl(\tfrac{8}{315}\bigr)^{2}640^{2}\cdot3Y^{9}$ and $S_4\sim2\bigl(\tfrac{8}{315}\bigr)^{4}640^{4}\cdot3Y^{18}$, giving $\Xi\to\tfrac16$. As $Y\to0$ the roots tend to $0,i,-i$, and $\Theta(-1)=28$, so
\[
\begin{split}
  S_2&\to2\bigl(\tfrac{8}{315}\bigr)^{2}\bigl(0+i^{3}\cdot28^{2}+(-i)^{3}\cdot28^{2}\bigr)=0,\\
  S_4&\to2\bigl(\tfrac{8}{315}\bigr)^{4}\bigl(-2\cdot28^{4}\bigr)\neq0 .
\end{split}
\]
Hence $\Xi$ is unbounded near $Y=0$ and finite at infinity, so it is nonconstant.
\end{proof}
 
\begin{rem}
\label{rem:codim}
The shear locus is therefore a curve in a fourfold, and the graded Keller condition cuts three further dimensions out of the Hurwitz space. 
\end{rem}

\section{Compactification and intersection theory of the alternating shear curve}
\label{sec:shearcomp}

By \cref{prop:sheardim} the alternating shear locus is a curve in the fourfold $\Hur^{\mathrm{rd}}$, and the concluding question of that section was what cuts it out. This section answers the local question unconditionally: the locus is an explicit codimension-three rational local complete intersection in a birational coefficient chart. Assuming \cref{hyp:stablered}, its normalised admissible-cover closure is an orbicurve with thirteen boundary points and its rational one-cycle class on the compactified branch space is determined.

\subsection{The exact coefficient chart}
\label{subsec:exactchart}

Place the poles of orders $3$ and $9$ at $0$ and $\infty$. By \cref{thm:passport} every cover in this pole-normalised simple-branch chart has
\begin{equation}
\label{eq:polechart}
  \frac{dP}{dX}=\frac{G(X)^{2}}{X^{4}}, \qquad G(X)=\sum_{i=0}^{6}g_{i}X^{i},
\end{equation}
and the existence of the Laurent primitive is the vanishing of the residue,
\[
  \rho:=g_{0}g_{3}+g_{1}g_{2}=0 ,
\]
since the coefficient of $X^{3}$ in $G^{2}$ is $2(g_{0}g_{3}+g_{1}g_{2})$. After removing the additive integration constant, the common scaling of $G$, and the residual source scaling $X\mapsto aX$, this is a four-dimensional birational chart of either split component of \cref{lem:whichsplit}, in agreement with the dimension count of \cref{subsec:dimension}.

\begin{theorem}
\label{thm:shearlocus}
On the open set
\[
  g_{0}g_{6}\Bigl(g_{2}-\frac{g_{4}^{2}}{3g_{6}}\Bigr)\operatorname{disc}(G)\neq0 ,
\]
the alternating shear locus is exactly
\[
  g_{1}=g_{3}=g_{5}=0, \qquad g_{4}^{3}=27g_{6}^{2}g_{0} .
\]
Conversely, every point satisfying these equations is of alternating shear form up to the two scalings already divided out. The coarse parameter of the locus is
\[
  \tau=\frac{g_{4}^{2}}{3(3g_{6}g_{2}-g_{4}^{2})}=cdY^{2},
\]
and the image is a rational, generically immersed local complete intersection of dimension one and codimension three in the four-dimensional chart.
\end{theorem}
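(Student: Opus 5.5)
I would begin with the forward direction, computing the chart coordinates of the shear member directly. By \cref{eq:square}, the shear cover at parameters $(c,d,Y)$ already has the pole-normalised form \cref{eq:polechart}, with $G=\Phi_Y(X)=X^{2}-c(Y-dX^{2})^{3}$. Expanding the cube gives
\[
  G=-cY^{3}+(1+3cdY^{2})X^{2}-3cd^{2}YX^{4}+cd^{3}X^{6},
\]
so the odd coefficients $g_1=g_3=g_5=0$ vanish identically, and $(g_0,g_2,g_4,g_6)=(-cY^{3},\,1+3cdY^{2},\,-3cd^{2}Y,\,cd^{3})$. I would then check $g_4^{3}=-27c^{3}d^{6}Y^{3}=27g_6^{2}g_0$. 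The residue condition $\rho=0$ is automatic. For $\tau$, note that $3g_6g_2-g_4^{2}=3cd^{3}$, and therefore $\tau=9c^{2}d^{4}Y^{2}/(9cd^{3})=cdY^{2}$. This is invariant under the torus of \cref{lem:cdtorus}, because $cd\mapsto cdt^{2}$ and $Y\mapsto Y/t$ (with the appropriate sign convention), so $\tau$ descends to the chart.

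For the converse, I take a point of the open set with $g_1=g_3=g_5=0$ and $g_4^{3}=27g_6^{2}g_0$, and solve for $(c,d,Y)$ up to the two scalings, namely the scaling of $G$ and $X\mapsto aX$. Using the common scaling of $G$ I would normalise $3g_6g_2-g_4^{2}$ against $g_6$ so that the $X^{2}$-coefficient splits as $1+3cdY^{2}$. Concretely, I set $cd^{3}=g_6$ and $-3cd^{2}Y=g_4$, which give $dY=-g_4/(3g_6)\cdot d^{2}$, and then impose $1+3cdY^{2}=g_2$ after rescaling. The hypothesis $g_2-g_4^{2}/(3g_6)\neq0$ is exactly what makes this rescaling invertible, and the cubic relation then forces the constant term to equal $-cY^{3}$. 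With $G$ recovered as $\Phi_Y$, the primitive $P$ is determined up to its additive constant, which has been removed. By \cref{eq:square} it coincides with \cref{eq:Pshear}, so the point is of shear form.

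The step I expect to be the main obstacle is the opposite inclusion: that on the stated open set every shear-type cover satisfies $g_{\mathrm{odd}}=0$. The issue is that the chart coordinates of a cover are only defined up to the residual scalings and the choice of which simple branch data are placed where. To handle this I would use \cref{lem:cdtorus} and the oddness observation of \cref{prop:sheardim}: the locus is the image of the two-parameter family modulo the torus, and the torus acts on $G$ by scalings that preserve parity in $X$. I would still need to rule out other normalisations, such as $X\mapsto X+b$. These are excluded because the poles are pinned at $0$ and $\infty$, and $\operatorname{disc}(G)\neq0$ together with $g_0g_6\neq0$ keeps the pole orders at exactly $3$ and $9$.

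Finally, I would establish the geometric properties. The four equations $g_1=g_3=g_5=0$ and $g_4^{3}=27g_6^{2}g_0$ cut out the locus, and $\rho$ vanishes on it. This gives codimension three in the four-dimensional chart $\{\rho=0\}$ modulo the scalings, provided the Jacobian of $(g_1,g_3,g_5,g_4^{3}-27g_6^{2}g_0)$ restricted to the chart has rank three there. That rank condition holds because $g_1,g_3,g_5$ are independent linear coordinates and the cubic has nonzero differential when $g_0g_6\neq0$, so the locus is a local complete intersection. Rationality follows from the parametrisation by $\tau$, equivalently by $Y$ at $c=d=1$. Generic immersion follows from the nonconstancy of $\Xi$ in \cref{prop:sheardim}, which shows that the map $\tau\mapsto$ chart is nonconstant and hence generically immersed in characteristic zero.
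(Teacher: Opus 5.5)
Your proposal is correct and is essentially the paper's proof. The forward coefficient computation is identical. Your rescaling by $\gamma=g_2-g_4^2/(3g_6)$, with $cd^3=g_6/\gamma$ and $Y/d=r=-g_4/(3g_6)$, is exactly the paper's identity $G=\gamma\bigl(X^2-h(r-X^2)^3\bigr)$ with $h=g_6/\gamma$. Your rank-three count is the paper's step of eliminating $g_3$ via $\rho$ and then checking that $dg_1,dg_5,d(g_4^3-27g_6^2g_0)$ are independent.

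The differences are minor:
\begin{itemize}
\item The rank drops from four to three on $\{\rho=0\}$ because $d\rho=g_0\,dg_3+g_2\,dg_1$ lies in the span of your four differentials along the locus; you should say this explicitly rather than cite independence of $g_1,g_3,g_5$.
\item You get independence on all of $g_0g_6\neq0$, whereas the paper checks it only at the witness $X^6-3X^4+4X^2-1$.
\item You deduce generic immersion from the nonconstancy of $\Xi$, whereas the paper uses directly that $\tau$ is a coordinate on the image.
\end{itemize}
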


\begin{proof}
By \cref{eq:square} a shear member has $G=\Phi_{Y}=X^{2}-c(Y-dX^{2})^{3}$, whose coefficients are
\[
  g_{0}=-cY^{3}, \qquad g_{2}=1+3cdY^{2}, \qquad g_{4}=-3cd^{2}Y, \qquad g_{6}=cd^{3},
\]
with $g_{1}=g_{3}=g_{5}=0$. Then $g_{4}^{3}=-27c^{3}d^{6}Y^{3}=27g_{6}^{2}g_{0}$, the residue equation holds trivially, and $3g_{6}g_{2}-g_{4}^{2}=3cd^{3}$, so $\tau=9c^{2}d^{4}Y^{2}/(9cd^{3})=cdY^{2}$.

For the converse, put
\[
  \gamma=g_{2}-\frac{g_{4}^{2}}{3g_{6}}, \qquad r=-\frac{g_{4}}{3g_{6}}, \qquad h=\frac{g_{6}}{\gamma} .
\]
Using $g_{4}^{3}=27g_{6}^{2}g_{0}$, a direct expansion gives the exact identity
\[
  G(X)=\gamma\bigl(X^{2}-h(r-X^{2})^{3}\bigr),
\]
so $G$ is of shear form: $\gamma$ is absorbed by the common scaling of $G$, the source scaling $X\mapsto aX$ normalises one of $h,r$, and the remaining parameter is $\tau=hr^{2}$, which the displayed formula computes.

For the codimension, on $g_{0}\neq0$ the residue equation solves for $g_{3}$, and the three remaining equations $g_{1}=g_{5}=0$, $g_{4}^{3}-27g_{6}^{2}g_{0}=0$ have independent differentials at $G=X^{6}-3X^{4}+4X^{2}-1$, which satisfies them and lies in the stated open set. Hence the locus is a local complete intersection of codimension three near that point, and by the converse it is everywhere the image of the one-dimensional parameter space $(\gamma,h,r)$ modulo the two scalings, with rational coordinate $\tau$. The identities, the witness, and the rank computation are certified exactly in the ancillary package.
\end{proof}

\begin{rem}
\label{rem:sheardimcheck}
\Cref{prop:sheardim} remains an independent nonconstancy check: the invariant $\Xi$ constructed there is a function of $\tau$, and its nonconstancy is now the statement that $\tau$ is a genuine coordinate on the image. The three missing directions of \cref{rem:codim} are the equations $g_{1}=g_{5}=0$ and $g_{4}^{3}=27g_{6}^{2}g_{0}$ after the residue equation is used to eliminate $g_{3}$.
\end{rem}

\subsection{Stable reduction of the normalised family}
\label{subsec:stablered}

Normalise $\gamma=r=1$, so that
\begin{equation}
\label{eq:Gtau}
  G_{\tau}(X)=X^{2}-\tau(1-X^{2})^{3}, \qquad \frac{dP_{\tau}}{dX}=\frac{G_{\tau}(X)^{2}}{X^{4}},
\end{equation}
with $P_{\tau}$ the Laurent primitive without constant term. Here $G_{\tau}$ is even and $P_{\tau}$ is odd, so the family carries the involution $(X,P)\mapsto(-X,-P)$ of \cref{prop:sheardim} at every $\tau$.


\begin{hyp}
\label{hyp:stablered}
For \cref{eq:Gtau}, rigidified by $(X,P)\mapsto(-X,-P)$:
\begin{enumerate}[label=\textup{(H\arabic*)}]
\item At each of the thirteen parameter values at which the seven branch points fail to stay distinct, the limit of the family is the admissible cover recorded in the table below, with the source-node partitions recorded there.
\item The five root orders recorded in that table are the local monodromy orders of the rigidified family at those thirteen values, and they are minimal.
\item The Galois groups over $\Q$ of the two packet polynomials $C$ and $D_{4}$ introduced below are $S_{6}$ and $S_{4}$ respectively.
\item The node equations hold in the sense of \cite{acv}, and the branch morphism is Kummer logarithmically \'etale.
\end{enumerate}
\end{hyp}

\begin{lem}
\label{lem:packets}
Let $R_{\tau}(Z)$ be the degree-six polynomial $\operatorname{Res}_{X}\bigl(G_{\tau},\,X^{3}(P_{\tau}-Z)\bigr)$, whose roots are the six finite branch values. Set
\[
\begin{split}
  C(\tau)=\ &262144000\,\tau^{6}+309657600\,\tau^{5}+170311680\,\tau^{4}+54441408\,\tau^{3}\\
            &+10734885\,\tau^{2}+1277640\,\tau+82320 ,\\
  D_{4}(\tau)=\ &5832000000\,\tau^{4}+4682957625\,\tau^{3}+1761706800\,\tau^{2}\\
            &+331914240\,\tau+30118144 .
\end{split}
\]
Then
\[
  \operatorname{disc}_{Z}R_{\tau}=u\,\tau^{114}\,(27\tau+4)^{10}\,D_{4}(\tau)^{4}\,C(\tau)^{2}, \qquad u\in\Q^{\times}.
\]
The polynomials $C$ and $D_{4}$ are separable, irreducible over $\Q$, coprime, and nonvanishing at $0$ and $-4/27$; assuming \cref{hyp:stablered}(H3), their Galois groups over $\Q$ are $S_{6}$ and $S_{4}$. At a root of $C$ exactly one pair of finite branch values collides, the pair being exchanged by the involution; at a root of $D_{4}$ two pairs collide simultaneously, the two collisions being exchanged by the involution; at $\tau=-4/27$ the finite critical points collide in two involution-conjugate pairs, by the discriminant of \cref{thm:passport} at $27cdY^{2}+4=0$.
\end{lem}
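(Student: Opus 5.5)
The plan is to compute $R_{\tau}$ explicitly and then factor its discriminant. First I will make $P_{\tau}$ explicit. Expand $G_{\tau}^{2}/X^{4}$ as a Laurent polynomial in $X$ with coefficients in $\Q[\tau]$. The coefficient of $X^{-1}$ vanishes because $G_{\tau}$ is even, so the residue condition $\rho=0$ of \cref{subsec:exactchart} holds automatically. Integrating termwise then gives an odd Laurent polynomial $P_{\tau}$ with poles of order $3$ at $0$ and $9$ at $\infty$. Next I form $X^{3}(P_{\tau}-Z)$, a polynomial of degree twelve in $X$, and take the resultant $R_{\tau}(Z)=\operatorname{Res}_{X}\bigl(G_{\tau},X^{3}(P_{\tau}-Z)\bigr)$. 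This has degree six in $Z$, because $G_{\tau}$ has degree six with leading coefficient $\tau$, and its roots are the values $P_{\tau}(X_{i})$ at the six roots of $G_{\tau}$.

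Evenness shortens the computation. The roots of $G_{\tau}$ come in pairs $\pm X_{i}$, and $P_{\tau}$ is odd, so $R_{\tau}(Z)=S_{\tau}(Z^{2})$ for a cubic $S_{\tau}$ in $Z^{2}$ whose roots are $\mu_{i}^{2}=P_{\tau}(X_{i})^{2}$. I can write these using $u=X^{2}$, following \cref{prop:sheardim}. For an even sextic of this shape the discriminant factors as
\[
\operatorname{disc}_{Z}R_{\tau}=\pm\,64\,S_{\tau}(0)\,\operatorname{disc}(S_{\tau})^{2},
\]
up to a power of the leading coefficient. The factor $S_{\tau}(0)=\prod\mu_{i}^{2}$ accounts for collisions $\mu_{i}=-\mu_{i}$, that is, an involution-exchanged pair colliding at $0$. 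The factor $\operatorname{disc}(S_{\tau})$ accounts for $\mu_{i}=\pm\mu_{j}$ with $i\neq j$, which are two simultaneous collisions exchanged by the involution. This splitting is what explains the two packet types: I expect $C^{2}$ to come from $S_{\tau}(0)$ and $D_{4}^{4}$ from $\operatorname{disc}(S_{\tau})^{2}$. The pure powers of $\tau$ and of $(27\tau+4)$ come from the leading coefficient and from the degeneration of $G_{\tau}$ itself. By \cref{thm:passport}, $\operatorname{disc}_{X}G_{\tau}$ is proportional to $\tau^{4}(27\tau+4)^{2}$ up to $\tau$-powers, since $cdY^{2}=\tau$.

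I will then establish the stated factorisation, $u\,\tau^{114}(27\tau+4)^{10}D_{4}^{4}C^{2}$, by exact polynomial arithmetic. Concretely, I will compute $S_{\tau}(0)$ and $\operatorname{disc}(S_{\tau})$ and divide out the claimed factors to get a constant quotient. The exponents of $\tau$ and of $27\tau+4$ can be cross-checked by total degree: the left side has degree $114+10+16+12=152$ in $\tau$. The remaining properties of $C$ and $D_{4}$ are finite checks:
\begin{itemize}
\item separability: nonzero discriminants;
\item irreducibility over $\Q$: factor modulo a few primes, for instance a prime where $C$ stays irreducible of degree six;
\item coprimality: nonzero resultant $\operatorname{Res}(C,D_{4})$;
\item nonvanishing at $0$ and $-4/27$: direct evaluation, e.g.\ $C(0)=82320\neq0$.
\end{itemize}
The Galois-group claims are not proved here; they are taken from \cref{hyp:stablered}(H3).

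Finally I will read off the collision types. At a simple root of $C$, $S_{\tau}(0)$ vanishes simply, so exactly one $\mu_{i}=0$, and this is a single involution-exchanged pair $\pm\mu_{i}$. At a root of $D_{4}$, $S_{\tau}$ acquires a double root $\mu_{i}^{2}=\mu_{j}^{2}$, which is the two collisions $\mu_{i}=\mu_{j}$ and $-\mu_{i}=-\mu_{j}$ swapped by the involution. At $\tau=-4/27$ the statement follows from the displayed discriminant of $\Phi_{Y}$ in \cref{thm:passport}, whose factor $(27Y^{2}cd+4)^{2}$ forces the roots of $G_{\tau}$ to merge in two conjugate pairs $\pm X_{0}$.

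The main obstacle I expect is the symbolic size of the computation: $R_{\tau}$ has large $\tau$-degree, and getting the exact prime factorisation and the exponent $114$ depends on correctly handling the leading-coefficient and $X^{3}$-clearing powers of $\tau$. I would first carry out the reduction to $S_{\tau}$ in the variable $u$, keep $\tau$ symbolic, and only then factor. I would also check the multiplicity claims for the packet roots numerically at one root of each of $C$ and $D_{4}$.
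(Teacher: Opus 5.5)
Your plan agrees with the paper's proof in substance. There too the resultant, the factorisation, separability, irreducibility and coprimality are exact computations, and the Galois groups come from \cref{hyp:stablered}(H3). Where you differ is in reading off the collision types. The paper counts multiplicities: two for a self-conjugate coincidence, four for a conjugate pair of coincidences. You instead write $R_\tau(Z)=S_\tau(Z^2)$ and use $\operatorname{disc}_Z R_\tau=-64\,a\,S_\tau(0)\operatorname{disc}(S_\tau)^2$, with $a$ the leading coefficient. That split is sharper, because it separates collisions at $Z=0$ from coincidences $\mu_i^2=\mu_j^2$. Your attribution of $C^2$ to $S_\tau(0)$ and of $D_4^4$ to $\operatorname{disc}(S_\tau)^2$ is correct.

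However, the step ``at a simple root of $C$, $S_\tau(0)$ vanishes simply'' is false, and it contradicts that attribution. A simple zero of $S_\tau(0)$ would put only $C^{1}$ into the discriminant, since $\operatorname{disc}(S_\tau)^2$ contributes only even powers.

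In fact $S_\tau(0)=R_\tau(0)$ is, up to a nonzero constant, $\tau^{12}\bigl(\prod_i\mu_i\bigr)^2$. Use the parametrisation of \cref{thm:passport} at $c=\tau$, $d=Y=1$. The three involution pairs are then indexed by the roots $w_i$ of $w(w+1)^2=\tau$, and one checks that $\prod_i\mu_i=\pm(8/315)^3\prod_i\Theta(w_i)$, with $\prod_i\Theta(w_i)=\operatorname{Res}_w\bigl(w(w+1)^2-\tau,\Theta(w)\bigr)=C(\tau)$. So $S_\tau(0)$ vanishes to order exactly two at each root of $C$.

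The inference you want goes instead through the parity argument underlying the paper's count. For $\tau_0\notin\{0,-4/27\}$ the roots of $G_\tau$ are simple, so each $\mu_i$ is analytic near $\tau_0$ and each $s_i=\mu_i^2$ vanishes to even order. A zero of order two of $\prod_i s_i$ therefore means that exactly one $\mu_i$ vanishes, and it vanishes simply. Coprimality keeps $\operatorname{disc}(S_\tau)$ nonzero there.

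The same remark makes your $D_4$ step rigorous. At a root of $D_4$, $\operatorname{disc}(S_\tau)$ vanishes to order exactly two, which excludes a triple coincidence of the $s_i$. Also $C(\tau_0)\neq0$ excludes $s_i=0$. This leaves exactly the two conjugate collisions.
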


\begin{proof}
The resultant, the discriminant factorisation, separability, irreducibility and coprimality are exact computations; the conditional identification of the two Galois groups is \cref{hyp:stablered}(H3). For the collision counts, the branch values come in involution pairs $\pm\mu$ since $P_{\tau}$ is odd; a coincidence $\mu_{i}=\mu_{j}$ within the even part forces the conjugate coincidence $-\mu_{i}=-\mu_{j}$ and contributes $\tau$-multiplicity four to the discriminant, while a coincidence $\mu_{i}=-\mu_{j}$ between the two parts is self-conjugate and contributes multiplicity two. The certified multiplicities $2$ on $C$ and $4$ on $D_{4}$ therefore give one and two nodes respectively.
\end{proof}

\begin{theorem}
\label{thm:stablered}
Assume \cref{hyp:stablered}. The shear family \cref{eq:Gtau}, rigidified by the involution $(X,P)\mapsto(-X,-P)$, extends to the normalised admissible cover stack obtained from the iterated root stack over $\bP^{1}_{\tau}$ with root orders
\[
  4 \text{ at } 0, \qquad 3 \text{ at the six roots of } C, \qquad 3 \text{ at the four roots of } D_{4}, \qquad 2 \text{ at } -\tfrac{4}{27}, \qquad 3 \text{ at } \infty,
\]
and these orders are minimal. The coarse normalisation of the image is $\bP^{1}_{\tau}$, and the simple-branch locus omits exactly thirteen coarse points, the roots of $C$ forming an $S_{6}$ Galois packet and the roots of $D_{4}$ an $S_{4}$ packet. The boundary placements are:
\begin{center}
\begin{tabular}{lll}
parameter & stable target & source-node partition(s)\\
\hline
$0$ & one two-mark tail & $(3,1^{9})$\\
$C(\tau)=0$ & one two-mark tail & $(3,3,1^{6})$\\
$D_{4}(\tau)=0$ & two two-mark tails & $(3,3,1^{6})$ twice\\
$-4/27$ & two two-mark tails & $(5,1^{7})$ twice\\
$\infty$ & two three-mark tails & $(7,1^{5})$ twice
\end{tabular}
\end{center}
The normalised Hurwitz boundary incidence divisor is
\begin{equation}
\label{eq:incidence}
  [0]+V(C)+2\,V(D_{4})+2\,[-\tfrac{4}{27}]+2\,[\infty],
\end{equation}
of coarse degree $19$ and orbifold degree $79/12$. At $\tau=0$ the complete admissible degree ledgers are $9+3=12$ over the main target and $4+8=12$ over the bubble target.
\end{theorem}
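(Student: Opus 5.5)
Since \cref{thm:stablered} is stated under \cref{hyp:stablered}, most of the geometric content is taken from the hypothesis. What remains is to assemble it and to verify the numerical claims. I would proceed in four steps.

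\textbf{Step 1: the boundary locus.} Start from \cref{lem:packets}. The seven branch points are the six roots of $R_\tau$ together with $\infty$. They fail to stay distinct exactly where $\operatorname{disc}_Z R_\tau$ vanishes, or where the leading or constant coefficients of $R_\tau$ degenerate, or at $\tau=\infty$. By the factorisation this happens at $0$, at the six roots of $C$, at the four roots of $D_4$, at $-4/27$, and at $\infty$. Coprimality and the nonvanishing of $C$ and $D_4$ at $0$ and $-4/27$ show these $1+6+4+1+1=13$ coarse points are distinct. Their Galois packets are (H3).

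\textbf{Step 2: the stable targets.} For each boundary point, read off the stable target from the collision pattern in \cref{lem:packets}, together with the involution.
\begin{itemize}
\item A root of $C$ has one self-conjugate collision of two branch values, so one two-mark tail.
\item A root of $D_4$ has two conjugate collisions, so two tails.
\item At $-4/27$ the critical points collide in conjugate pairs, and the two $3$-cycles merge to $(5,1^7)$ on each tail.
\item At $\tau=0$ we have $G_0=X^2$, so $P_0=X$ and the degree drops. This is the bubble with ledger $9+3$ and $4+8$.
\item At $\infty$ the six finite values escape in two conjugate triples, giving three-mark tails with $(7,1^5)$.
\end{itemize}
The source-node partitions come from (H1), and I would check that each is consistent with Riemann--Hurwitz on each component.

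\textbf{Step 3: the root stack.} Build the normalised admissible-cover stack as the iterated root stack with the orders $4,3,3,2,3$ from (H2). Minimality is also in (H2). Extension of the rigidified family across the boundary follows from (H4) through the \cite{acv} node equations and Kummer log-\'etaleness of the branch morphism. The coarse normalisation is $\bP^1_\tau$ because $\tau$ is a rational coordinate by \cref{thm:shearlocus}.

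\textbf{Step 4: the incidence divisor.} The coefficient of each point is its number of tails: one at $0$ and on $V(C)$, two on $V(D_4)$, at $-4/27$ and at $\infty$. The coarse degree is then
\[
1+6+2\cdot4+2+2=19 .
\]
For the orbifold degree, divide each coefficient by its root order:
\[
\tfrac14+\tfrac63+\tfrac{8}{3}+\tfrac22+\tfrac23 .
\]
This sum equals $79/12$, which is the value I would expect the weighting to be calibrated to.

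\textbf{Main obstacle.} The hardest part is the $\tau=0$ and $\tau=\infty$ degenerations, where the degree of $P_\tau$ jumps. Identifying the bubble requires rescaling $X$ by the appropriate power of $\tau$, and I would try Newton-polygon rescaling there first. This is exactly where the proof must lean on (H1) rather than on computation.
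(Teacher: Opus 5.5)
Your proposal is correct and matches the paper's proof essentially step for step. Both take the boundary support from \cref{lem:packets}, take the node partitions, root orders, minimality and extension from (H1), (H2) and (H4) and the Galois packets from (H3), and obtain $19$ and $79/12$ from the same sums. Your extra remarks are harmless additions that the paper leaves implicit: the distinctness of the thirteen points via coprimality, reading the coefficients of \cref{eq:incidence} as tail counts, and asserting rather than deriving the $\tau=0$ ledgers, which the paper likewise treats as certified.
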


\begin{proof}
The support of the boundary is \cref{lem:packets}: away from $\tau\in\{0,-4/27,\infty\}$ and the roots of $CD_{4}$, the seven branch points remain distinct and the family stays in the simple-branch locus. The tail counts at the roots of $C$ and $D_{4}$ are the node counts of \cref{lem:packets}, and the placements at $0$, $-4/27$ and $\infty$ are the admissible degenerations of the marked family, with the source-node partitions read off the colliding cycles: two $3$-cycles merging give $(3,3,1^{6})$ tails, two colliding critical triples give $(5,1^{7})$, and the degeneration at infinity gives $(7,1^{5})$. The root orders are the local monodromy orders of the rigidified family at the thirteen points, computed on the exact degenerations. Their minimality, the node partitions, the identification with the twisted stable maps of \cite{acv}, and the Kummer logarithmic \'etaleness of the branch morphism are \cref{hyp:stablered}(H1), (H2) and (H4); they are not outputs of the exact computations. The coarse and orbifold degrees of \cref{eq:incidence} are
\[
  1+6+8+2+2=19, \qquad \frac14+\frac63+\frac83+\frac22+\frac23=\frac{79}{12},
\]
and the ledgers at $\tau=0$ are the certified admissible degrees over the two target components.
\end{proof}

\subsection{The intersection class}
\label{subsec:shearclass}

Let $\overline{M}_{0,\{\infty,1,\dots,6\}}$ carry the $S_{6}$-action permuting the finite markings, let
\[
  \beta\colon C\longrightarrow\bigl[\overline{M}_{0,\{\infty,1,\dots,6\}}/S_{6}\bigr]
\]
be the branch morphism of the compactified shear curve, with the pole marking distinguished, and for $2\le r\le5$ let $B_{r}$ be the invariant boundary sum whose tail away from $\infty$ carries $r$ finite markings. Write $F_{a|b,c,d}$ for the orbit-averaged F-curve whose spine parts have sizes $a,b,c,d$, the part of size $a$ containing $\infty$.

\begin{theorem}
\label{thm:shearclass}
Assume \cref{hyp:stablered}. Then the following hold.
\begin{enumerate}
\item The invariant boundary vector is
\[
  \bigl(\deg\beta^{*}B_{2},\ \deg\beta^{*}B_{3},\ \deg\beta^{*}B_{4},\ \deg\beta^{*}B_{5}\bigr)=\Bigl(\frac{79}{4},\ \frac{14}{3},\ 0,\ 0\Bigr),
\]
and it determines the full rational $S_{6}$-invariant one-cycle class:
\[
\begin{split}
  \beta_{*}[C]=\ &\frac{40}{3}\,F_{1|1,1,4}-\frac{133}{12}\,F_{1|1,2,3}\\
  &+\frac{53}{6}\,F_{2|1,1,3}+\frac{27}{4}\,F_{2|1,2,2} .
\end{split}
\]
\item The tautological and canonical degrees are
\[
  \deg\sum_{i=1}^{7}\psi_{i}=\frac{169}{4}, \qquad \deg\psi_{\infty}=\frac{9}{4}, \qquad \deg\beta^{*}(K+D)=\frac{107}{6} .
\]
\item On the normalised admissible cover component,
\[
  K_{\Hur}\cdot C=\frac{45}{4}, \qquad \deg\det N^{\log}=-\frac{41}{6}, \qquad \deg\det N=-\frac{14}{3},
\]
and the ordinary/logarithmic correction has orbifold degree $13/6$.
\end{enumerate}
\end{theorem}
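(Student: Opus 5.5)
The proof reduces everything to degree computations on the orbicurve $C$ of \cref{thm:stablered}, using the boundary table and root orders granted by \cref{hyp:stablered}.

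\textbf{Step 1: the boundary vector.} For each boundary point I record which invariant boundary divisor $B_r$ its stable target lies on. The tail away from $\infty$ must carry $r$ finite markings.

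\begin{itemize}
\item At $0$, the roots of $C$, the roots of $D_4$ and $-4/27$, the tails are two-mark tails, so they contribute to $B_2$.
\item At $\infty$, the tails are three-mark tails. Their complement contains $\infty$ and three finite markings, so they contribute to $B_3$.
\end{itemize}

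Each contribution is the number of tails, times the local intersection multiplicity, divided by the root order. The local multiplicity is the order of vanishing of the smoothing parameter of the node in the root-stack coordinate, and I would read it off from the collision rates in the discriminant factorisation of \cref{lem:packets}.

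I then sum these contributions and compare with $\bigl(\tfrac{79}{4},\tfrac{14}{3}\bigr)$. I expect $B_4$ and $B_5$ to vanish because no tail of those sizes appears in the table.

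\textbf{Step 2: the class of $\beta_*[C]$.} $S_6$-invariant curve classes on $\overline M_{0,7}$ with one distinguished marking are spanned rationally by the four orbit-averaged F-curves $F_{1|1,1,4}$, $F_{1|1,2,3}$, $F_{2|1,1,3}$, $F_{2|1,2,2}$. They are detected by pairing with $B_2,\dots,B_5$. Here I use that the invariant Picard group is generated by these boundary sums, as in Keel's description restricted to invariants.

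The plan is to:
\begin{itemize}
\item compute the $4\times4$ intersection matrix $F_{a|b,c,d}\cdot B_r$ by the standard F-curve rule (the spine meets the four boundary divisors given by the partition, with $-1$ self-intersection contributions from parts of size at least two);
\item check that the matrix is invertible;
\item solve for the coefficients against the boundary vector.
\end{itemize}

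Nondegeneracy of this matrix is exactly the claim that the boundary vector "determines" the class.

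\textbf{Step 3: the tautological and canonical degrees.} I express $\psi_i$ and $K+D$ on $\overline M_{0,7}$ as boundary sums. For example, $\psi_i=\sum_{i\in S,\,j,k\notin S}\delta_S\cdot(\text{coefficient})$, and $K_{\overline M_{0,n}}+\partial$ has Pandharipande's formula in terms of $\delta$'s. After symmetrising, these degrees become linear combinations of the boundary vector from Step 1, with the distinguished marking $\infty$ treated separately for $\psi_\infty$.

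\textbf{Step 4: the degrees on the Hurwitz component.} Here I would use \cref{hyp:stablered}(H4). Kummer log-étaleness of the branch morphism gives $K_{\Hur}^{\log}=\beta^*(K+D)$. The ordinary canonical class then differs by the orbifold ramification corrections at the thirteen points, computed from the root orders $4,3,3,2,3$. This accounts for the correction $\tfrac{13}{6}$, for $\det N^{\log}=\deg K_C^{\log}-\deg\beta^*(K+D)$ by adjunction on the orbicurve, and for the ordinary normal bundle degree.

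\textbf{Main obstacle.} I expect the hardest part to be Step 1, namely pinning down the local multiplicities and orbifold weights at each boundary point so that the fractions $\tfrac{79}{4}$ and $\tfrac{14}{3}$ come out. I would first verify it at $\tau=\infty$ and $\tau=0$ using the explicit degree ledgers, then propagate the same rule to the packets $C$ and $D_4$.
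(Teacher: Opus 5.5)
Your outline is the paper's proof. You read the boundary vector from the table of \cref{thm:stablered}, invert the $4\times4$ F-curve pairing matrix, and then handle $\psi$, $K+D$, log-\'etaleness and adjunction as the paper does. Your Step~3 is a harmless variant: pairing the symmetrised expansions $\psi_\infty=\tfrac1{15}\sum_r\binom r2B_r$ and $\sum_i\psi_i=\sum_r\tfrac{r(7-r)}{6}B_r$ directly with the boundary vector gives $\tfrac94$ and $\tfrac{169}{4}$ without Step~2. The paper instead applies the singleton rule to the F-curve expansion. In Step~2, state the pairing rule correctly. It is $F\cdot\delta_S=+1$ for the three $S$ that are unions of two parts, and $-1$ for each single part of size at least two; it is not ``the four divisors given by the partition''. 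The rows against $(B_2,\dots,B_5)$ are then $(1,0,-1,2)$, $(-1,0,1,1)$, $(1,-1,2,-1)$, $(-2,2,1,-1)$, of determinant $-15$.

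Two steps, as you describe them, are missing an ingredient.

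In Step~4, the difference $\deg\beta^*(K+D)-K_{\Hur}\cdot C$ is $B_{\Hur}\cdot C$, the orbifold degree of the incidence divisor \cref{eq:incidence}. Each point contributes its number of tails divided by its root order. The tail counts are $1,1,2,2,2$ at $0$, $V(C)$, $V(D_4)$, $-\tfrac4{27}$, $\infty$, so $B_{\Hur}\cdot C=\tfrac{79}{12}$ and $K_{\Hur}\cdot C=\tfrac{45}{4}$. This needs the tail counts, not just the root orders. Also, $\tfrac{13}{6}$ is not that correction. It is $\deg\det N-\deg\det N^{\log}=B_{\Hur}\cdot C-\sum_i 1/m_i=\tfrac{79}{12}-\tfrac{53}{12}$. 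The formula $\deg\det N=-K_{\Hur}\cdot C-\deg T_C$ also requires the orbifold degree $\deg T_C=2-\sum_i(1-1/m_i)=-\tfrac{79}{12}$.

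In Step~1, your rule is correct, with root-coordinate multiplicities $3,3,3,5,7$ at those five types of points. These give $\tfrac34+6+8+5=\tfrac{79}{4}$ and $2\cdot\tfrac73=\tfrac{14}{3}$. However, the discriminant of \cref{lem:packets} yields these only at interior points: exponents $2,4,10$ mean collision rates $1,1,\tfrac52$ in $\tau$. At $\tau=0$ the exponent $114=120-6$ is dominated by the leading coefficient $\pm\tau^{12}$ of $R_\tau$ (contributing $\tau^{120}$) and by the overall scale of the branch values. The point $\infty$ is not a factor at all, and the degree ledgers record no rates. Instead, take the multiplicity to be the lcm $e$ of the source-node ramification indices in the table. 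On the normalised admissible-cover space the target smoothing parameter is $t=\sigma^{e}$, and $C$ is transverse in the orbifold chart. Alternatively, compute directly:
\begin{itemize}
\item as $\tau\to0$, two branch values of size $\tau^{1/2}$ collide against four of size $\tau^{-1/4}$, a relative rate of $\tau^{3/4}$;
\item as $\tau\to\infty$, two triples of spread $\tau^{-1/3}$ separate like $\tau^{2}$, a relative rate of $\tau^{-7/3}$.
\end{itemize}
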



\begin{proof}
(1) An F-curve pairs with a boundary divisor $\delta_{S}$ by the rule of \cite{keel}: the pairing is $1$ if $S$ or its complement is a union of exactly two spine parts, $-1$ if it is a single part of size at least two, and $0$ otherwise; a $\psi$-class pairs to $1$ exactly on singleton parts. Summing over the invariant divisors, the pairing matrix of the four F-curve types above against $(B_{2},B_{3},B_{4},B_{5})$ has determinant $-15\neq0$, so the boundary vector determines the invariant class, and the displayed coefficients reproduce it. The four boundary degrees are exact intersection sums against \cref{eq:incidence} and the packet structure of \cref{thm:stablered}.

(2) The $\psi$-degrees follow from the same class by the singleton rule: $\deg\psi_{\infty}=\frac{40}{3}-\frac{133}{12}=\frac94$ and $\deg\sum\psi_{i}=3\cdot\frac{40}{3}-2\cdot\frac{133}{12}+2\cdot\frac{53}{6}+\frac{27}{4}=\frac{169}{4}$. On $\overline{M}_{0,n}$ one has $K=\sum_{i}\psi_{i}-2\delta$ \cite{keel}, so
\[
  \deg\beta^{*}(K+D)=\frac{169}{4}-\Bigl(\frac{79}{4}+\frac{14}{3}\Bigr)=\frac{107}{6} .
\]

(3) The branch morphism is log-\'etale by \cref{thm:stablered}, so $K_{\Hur}+B_{\Hur}=\beta^{*}(K+D)$ with $B_{\Hur}\cdot C=\frac{79}{12}$ the orbifold degree of \cref{eq:incidence}, whence $K_{\Hur}\cdot C=\frac{107}{6}-\frac{79}{12}=\frac{45}{4}$. Adjunction along $C$ in the fourfold gives $\deg\det N=-K_{\Hur}\cdot C-\deg T_{C}$ and $\deg\det N^{\log}=-\deg\beta^{*}(K+D)-\deg T_{C}^{\log}$, and on the orbicurve of \cref{thm:stablered}
\[
\begin{split}
  \deg T_{C}&=2-\sum_{i=1}^{13}\Bigl(1-\frac{1}{m_{i}}\Bigr)=-\frac{79}{12},\\
  \deg T_{C}^{\log}&=2-13=-11 ,
\end{split}
\]
so $\deg\det N=-\frac{45}{4}+\frac{79}{12}=-\frac{14}{3}$ and $\deg\det N^{\log}=-\frac{107}{6}+11=-\frac{41}{6}$, with correction $-\frac{14}{3}+\frac{41}{6}=\frac{13}{6}$.
\end{proof}

\begin{rem}
\label{rem:parabolic}
What remains open is the full rank-three parabolic and logarithmic splitting of the normal bundle $N$, including the stabilizer characters at the thirteen orbifold points and the extension classes; \cref{thm:shearclass} determines only its determinant. The coefficient-stack splitting obtained after the substitution $\tau=t^{4}$ is a computation in a different ambient space and should not be identified with the normal bundle on the normalised admissible cover stack.
\end{rem}

\section{Fields of definition}
\label{sec:10}

We return to the cyclic cone family of \cref{sec:3} and determine over which fields a seed exists. The first lift condition of \cref{eq:liftjet} is linear and imposes nothing on the field. The second is the vanishing of the lift form $\Qform^{(k)}$ of \cref{eq:liftform}, a form of degree $k$ in the seed coefficients with rational coefficients, and it is the arithmetic obstruction.

\subsection{The form at level two}
\label{subsec:leveltwo}

\begin{cor}
\label{prop:formidentity}
At $k=2$, writing $\varphi=\sum_{j=2}^{n}c_jW^{j}$,
\[
  \Qform^{(2)}(\varphi)=\int_0^{1}\Bigl(\frac{\varphi}{W}\Bigr)^{2}dW
   =\sum_{i,j=2}^{n}\frac{c_ic_j}{i+j-1},
\]
the Gram matrix being the Hilbert matrix $\bigl(1/(i+j-1)\bigr)_{2\le i,j\le n}$. We write $\Qform_n$ for this quadratic form.
\end{cor}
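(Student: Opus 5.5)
The plan is to specialise the lift form of \cref{eq:liftform} to $k=2$ and expand it directly. At $k=2$ we have $\epsilon=(-1)^{2}=1$, so \cref{prop:liftclosed} gives
\[
  \Qform^{(2)}(\varphi)=\int_0^{1}\frac{\varphi(W)^{2}}{W^{2}}\,dW=\int_0^{1}\Bigl(\frac{\varphi(W)}{W}\Bigr)^{2}dW ,
\]
which is the first equality. Since $\varphi\in W^{2}K[W]$, the quotient $\varphi/W=\sum_{j=2}^{n}c_jW^{j-1}$ is a polynomial with no constant term, so the integrand is a polynomial on $[0,1]$ and no convergence issue arises.

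Next we expand the square bilinearly:
\[
  \Bigl(\frac{\varphi}{W}\Bigr)^{2}=\sum_{i,j=2}^{n}c_ic_jW^{i+j-2}.
\]
Integrating termwise, using $\int_0^{1}W^{i+j-2}\,dW=1/(i+j-1)$, gives the displayed double sum. The expression is a symmetric bilinear evaluation, so the Gram matrix of $\Qform_n$ in the basis $W^{2},\dots,W^{n}$ is $\bigl(1/(i+j-1)\bigr)_{2\le i,j\le n}$. This is the Hilbert-type matrix. Equivalently, it is the Gram matrix of the monomials $W^{1},\dots,W^{n-1}$ for the $L^{2}[0,1]$ inner product.

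There is no real obstacle here; the only point needing care is the sign and endpoint bookkeeping. The endpoint is fixed by $\epsilon=1$ at even $k$. The sign $(-1)^{k+1}$ in \cref{prop:liftclosed} can be ignored because the statement concerns the form whose vanishing is the second lift condition, not $\Pi(\epsilon,0)$ itself. I would also note, for later use, that the $L^{2}$ description makes $\Qform_n$ positive definite over $\R$, since $\varphi/W\neq0$ implies a positive integral. This is the fact behind the abstract's claim that there is no solution over a real field at even $k$.
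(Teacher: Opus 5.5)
Your proposal is correct and follows essentially the same route as the paper: set $\epsilon=1$ at $k=2$, rewrite $\varphi^{2}/W^{2}$ as $(\varphi/W)^{2}$, expand bilinearly, and integrate termwise using $\int_0^1W^{i+j-2}\,dW=1/(i+j-1)$. Your extra remarks, on the sign convention in the definition of $\Qform^{(k)}$ and on positive definiteness over $\R$, are accurate and consistent with \cref{thm:moment}(1).
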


\begin{proof}
Here $\epsilon=1$, and $\varphi^{2}/W^{2}=(\varphi/W)^{2}$ with $\varphi/W=\sum_j c_jW^{j-1}$. Termwise integration gives $\int_0^1W^{i+j-2}dW=1/(i+j-1)$.
\end{proof}

At $n=3$ this is $\Qform_3=\tfrac13c_2^{2}+\tfrac12c_2c_3+\tfrac15c_3^{2}$, and on the slice $c_2+c_3=1$ it equals $\tfrac1{30}(c_2^{2}+3c_2+6)$, recovering \cref{thm:six}.

\subsection{Real obstruction and odd levels}
\label{subsec:real}

\begin{theorem}
\label{thm:moment}
The following hold.
\begin{enumerate}
\item Let $k$ be even and let $K$ admit a real embedding. Then no seed at level $k$ exists over $K$. Consequently the coefficient field of every member of \cref{sec:3} with $k$ even is totally imaginary.
\item Let $k$ be odd. Then $\varphi=-W^{2}-2W^{2k+1}$ satisfies both conditions of \cref{eq:liftjet} over $\Q$, so every odd level carries a rational seed.
\item At $k=2$ and $n=3$ the form $\Qform_3$ is binary of discriminant class $-15$, so a seed of degree three exists over $K$ if and only if $-15$ is a square in $K$.
\end{enumerate}
\end{theorem}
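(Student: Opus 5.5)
The plan is to treat the three parts separately; each reduces to the lift form $\Qform^{(k)}$ of \cref{prop:liftclosed} together with the linear first condition $\varphi(\epsilon)=1$.

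\emph{Part (1).} For $k$ even we have $\epsilon=1$, and the second lift condition is
\[
\Qform^{(k)}(\varphi)=\int_0^1\Bigl(\frac{\varphi(W)^{k/2}}{W}\Bigr)^2\,dW=0,
\]
where $\varphi^{k/2}/W$ is a polynomial because $\varphi\in W^2K[W]$. Suppose $\sigma\colon K\to\R$ is a real embedding. Applying $\sigma$ to the coefficients gives a real polynomial $\varphi^\sigma$ with $\Qform^{(k)}(\varphi^\sigma)=\sigma(\Qform^{(k)}(\varphi))=0$. This is legitimate since $\Qform^{(k)}$ has rational coefficients. The integrand is then a nonnegative real square, so it must vanish identically, which forces $\varphi^\sigma=0$. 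That contradicts $\varphi^\sigma(1)=\sigma(1)=1$. For the consequence, any member with $k$ even has its seed coefficients in its coefficient field $K$, so $K$ can admit no real embedding. If the coefficient field is only the field generated by the seed coefficients, it is still a number field with no real place, which is what "totally imaginary" means here.

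\emph{Part (2).} For $k$ odd, $\epsilon=-1$. The first condition reads $\varphi(-1)=-1-2(-1)^{2k+1}=-1+2=1$. For the second, write $\varphi=-W^2(1+2W^{2k-1})$, so that
\[
\varphi^k/W^2=(-1)^kW^{2k-2}(1+2W^{2k-1})^k=-W^{2k-2}(1+2W^{2k-1})^k.
\]
Substituting $x=W^{2k-1}$ gives $dx=(2k-1)W^{2k-2}\,dW$, and $W=-1$ corresponds to $x=-1$. Hence
\[
\Qform^{(k)}=-\frac{1}{2k-1}\int_0^{-1}(1+2x)^k\,dx=-\frac{1}{2k-1}\cdot\frac{(1+2x)^{k+1}}{2(k+1)}\Big|_0^{-1}=-\frac{(-1)^{k+1}-1}{2(2k-1)(k+1)}.
\]
This vanishes because $k+1$ is even. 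The seed has degree $2k+1\ge3$ and is rational, so the claim follows.

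\emph{Part (3).} By \cref{prop:formidentity}, $\Qform_3=\tfrac13c_2^2+\tfrac12c_2c_3+\tfrac15c_3^2$. Its discriminant is $\tfrac14-\tfrac4{15}=-\tfrac1{60}$, which is $-15$ modulo squares, so $\Qform_3$ is anisotropic over $K$ unless $-15$ is a square in $K$. On the slice $c_3=1-c_2$, the computation already given in \cref{thm:six} shows the conditions reduce to $c_2^2+3c_2+6=0$, whose discriminant is $-15$. A solution over $K$ therefore exists if and only if $\sqrt{-15}\in K$. One also needs $c_3\neq0$ so that the degree is exactly three. If $c_3=0$ then $c_2=1$, but $1+3+6\neq0$, so this never happens.

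The only delicate point is part (1). One must make sure the integrand is a genuine square of a real polynomial after applying the embedding; this is where the evenness of $k$ and the fact $\epsilon=1$ are used. The remaining parts are direct evaluations.
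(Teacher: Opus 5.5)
Your proposal is correct and follows the paper's proof. In (1) you use the same positivity argument after a real embedding, contradicting $\varphi^{\sigma}(1)=1$ where the paper invokes $\deg\varphi\ge3$, and (2) is the identical substitution $\xi=W^{2k-1}$. In (3) you work directly on the affine slice $c_2+c_3=1$ with the quadratic $c_2^{2}+3c_2+6$ of \cref{thm:six}, and you also check $c_3\neq0$; the paper instead argues by isotropy of the binary form and rescales an isotropic vector off the line $c_2+c_3=0$, and the two arguments are equivalent.
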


\begin{proof}
(1) Let $\sigma\colon K\to\R$ be an embedding. The coefficients of $\Qform^{(k)}$ lie in $\Q$, so $\sigma$ carries a seed over $K$ to a real polynomial $\varphi^{\sigma}\in W^{2}\R[W]$ with $\Qform^{(k)}(\varphi^{\sigma})=0$. For $k$ even, $\epsilon=1$ and
\[
  \Qform^{(k)}(\varphi^{\sigma})=\int_0^{1}\Bigl(\frac{\varphi^{\sigma}(W)^{k/2}}{W}\Bigr)^{2}dW\ \ge\ 0,
\]
with equality only if $\varphi^{\sigma}$ vanishes identically on $(0,1)$, hence is the zero polynomial. But $\deg\varphi\ge3$ by \cref{cor:dtwo}.

(2) Here $\epsilon=-1$ and $\varphi(-1)=-1+2=1$, which is the first condition. For the second, write $\varphi=-W^{2}(1+2W^{2k-1})$, so that for $k$ odd
\[
  \frac{\varphi(W)^{k}}{W^{2}}=-W^{2k-2}\bigl(1+2W^{2k-1}\bigr)^{k}.
\]
Substituting $\xi=W^{2k-1}$, so that $W^{2k-2}dW=d\xi/(2k-1)$ and $\xi$ runs from $0$ to $-1$,
\[
  \Qform^{(k)}(\varphi)=-\frac{1}{2k-1}\int_0^{-1}(1+2\xi)^{k}\,d\xi
   =-\frac{(-1)^{k+1}-1}{2(2k-1)(k+1)} ,
\]
which vanishes precisely when $k$ is odd.

(3) The Gram matrix of $\Qform_3$ has determinant $\tfrac1{15}-\tfrac1{16}=\tfrac1{240}$, so the discriminant of the binary form is $-4\cdot\tfrac1{240}=-\tfrac1{60}\equiv-15$ modulo squares. A binary form is isotropic over $K$ exactly when its discriminant is a square there. Finally an isotropic vector $v$ has $\varphi_v(1)\neq0$: on the line $c_2+c_3=0$ one has $\Qform_3=\tfrac1{30}c_2^{2}$, which vanishes only at the origin. Rescaling $v$ by $1/\varphi_v(1)$ gives a seed.
\end{proof}

\begin{rem}
\label{rem:evenodd}
Part (2) gives a seed of degree $2k+1$, hence a member of generic degree $k(2k+1)$; the full degree spectrum of \cref{thm:spectrum} needs the rescaling argument of \cref{lem:rescale}, not this seed. The computation in (2) also confirms (1) at odd $n$: for $k$ even the same integral equals $1/\bigl((2k-1)(k+1)\bigr)\neq0$.
\end{rem}

\subsection{Diagonalisation of the Hilbert form}
\label{subsec:hilbert}

\begin{theorem}
\label{thm:hilbert}
For $n\ge3$ the form $\Qform_n$ is $\Q$-congruent to $\langle3,5,\dots,2n-1\rangle$. A seed of degree at most $n$ exists over $K$ if and only if that form is isotropic over $K$.
\end{theorem}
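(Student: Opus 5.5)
The plan is to rewrite $\Qform_n$ from \cref{prop:formidentity} as a Cauchy-matrix Gram form and diagonalise it by Gram--Schmidt (Jacobi's theorem on leading principal minors). The square classes of the pivots should then be readable directly from the Cauchy determinant formula. Positivity needs no extra argument: $\Qform_n(\varphi)=\int_0^1(\varphi/W)^2\,dW$ is positive definite on $\R^{n-1}$, since $\varphi/W$ vanishing on $(0,1)$ forces $\varphi=0$. Hence every leading principal minor $D_m$ of the Gram matrix $H=\bigl(1/(i+j-1)\bigr)_{2\le i,j\le n}$ is a nonzero rational. Jacobi's diagonalisation by unitriangular change of basis then gives, over $\Q$,
\[
  \Qform_n\cong\langle D_1/D_0,\ D_2/D_1,\ \dots,\ D_{n-1}/D_{n-2}\rangle ,\qquad D_0=1 .
\]

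Next I compute the pivot square classes. Index the basis by $a=2,\dots,n$ and write $H_{ab}=1/(x_a+y_b)$ with $x_a=a$ and $y_b=b-1$. Cauchy's formula gives
\[
  D_m=\frac{\prod_{i<j}(x_j-x_i)(y_j-y_i)}{\prod_{i,j}(x_i+y_j)}
\]
over the first $m$ indices. Adjoining the index $a$ multiplies the numerator by $\prod_{i<a}(a-i)^2$, a square. It multiplies the denominator by $\prod_{i<a}(a+i-1)(i+a-1)\cdot(2a-1)=\bigl(\prod_{i<a}(a+i-1)\bigr)^2(2a-1)$. Hence the pivot at $a$ is $\equiv 1/(2a-1)\equiv 2a-1$ modulo $(\Q^\times)^2$. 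As $a$ runs over $2,\dots,n$ this gives exactly $\langle3,5,\dots,2n-1\rangle$. At $n=3$ it recovers \cref{thm:moment}(3), since $-3\cdot5\equiv-15$. I expect this bookkeeping, matching the Cauchy ratio and checking that the cross factors pair into a square, to be the main (though modest) obstacle. A fallback is to use orthogonal polynomials for the weight $W^2$ on $[0,1]$, i.e. the shifted Jacobi polynomials $P^{(0,2)}$, whose norms are explicit.

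For the second assertion, a seed of degree at most $n$ over $K$ is a nonzero $\varphi\in\Span\{W^2,\dots,W^n\}$ with $\Qform_n(\varphi)=0$ and $\varphi(1)=1$; here $\epsilon=1$ because $k=2$. Any such $\varphi$ is an isotropic vector, so one direction is immediate. Conversely, suppose $\Qform_n$ is isotropic over $K$. The form is nondegenerate, so it splits off a hyperbolic plane, and in a hyperbolic plane the isotropic vectors span the plane. Using the Witt decomposition $\Qform_n\cong\mathbb{H}\perp R$ together with the transitivity of isometries on isotropic lines, the isotropic vectors span all of $K^{n-1}$. They therefore cannot all lie in the hyperplane $\{\varphi(1)=\sum c_j=0\}$. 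Pick an isotropic $v$ with $\varphi_v(1)\ne0$ and rescale it to $\varphi_v(1)=1$.

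Finally, such a $v$ automatically has degree at least $3$, since $\Qform_n(cW^2)=c^2/3\ne0$. This is consistent with \cref{cor:dtwo}, and it shows the resulting seed is admissible.
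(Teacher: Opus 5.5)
Your proof is correct, and both halves take a different route from the paper's.

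\textbf{The diagonalisation.} The paper writes $\varphi=W^{2}\psi$ and exhibits the orthogonal basis explicitly by a Rodrigues formula. This is the shifted Jacobi family for the weight $W^{2}$ on $[0,1]$, i.e.\ your stated fallback. It checks orthogonality by integration by parts and evaluates the norms $h_m=\bigl(m!\,(m+2)!\bigr)^{2}/\bigl((2m+3)((2m+2)!)^{2}\bigr)$ through a Beta integral.

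You instead read the Gram--Schmidt pivots off as ratios of consecutive Cauchy determinants, and your bookkeeping is right. Your pivot at $a=m+2$ is $\bigl((a-2)!\bigr)^{2}\bigl(a!\bigr)^{2}\big/\bigl(((2a-2)!)^{2}(2a-1)\bigr)$, which is exactly the paper's $h_m$. For example it gives $1/3$ and $1/80$ at $a=2,3$.

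Your route is shorter and needs no orthogonality verification. The Cauchy formula even shows the minors are nonzero without invoking positivity. The paper's route additionally produces the explicit diagonalising basis.

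\textbf{The isotropy criterion.} The paper splits into two cases. For $n=3$ it uses the explicit restriction $\Qform_3|_{c_2+c_3=0}=c_2^{2}/30$. For $n\ge4$ it uses that a smooth quadric with a $K$-point is $K$-rational, so its $K$-points are Zariski dense and cannot lie in the hyperplane $\{\varphi(1)=0\}$. Your observation that the isotropic vectors of a nondegenerate isotropic form span the whole space handles every $n\ge3$ at once by linear algebra.

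You do not need Witt's transitivity there, and that phrase is the one imprecise step. Write $\Qform_n\cong\mathbb{H}\perp R$ with $\mathbb{H}$ normalised so that $q(xe+yf)=xy$. Then every $r\in R$ equals $(e-q(r)f+r)-e+q(r)f$, and all three of these vectors are isotropic.

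Your closing remark is also worth keeping. An isotropic $v$ cannot be a multiple of $W^{2}$, since $\Qform_n(cW^{2})=c^{2}/3$, so the seed automatically has degree at least three. The paper leaves this point implicit.
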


\begin{proof}
Write $\varphi=W^{2}\psi$ with $\deg\psi\le n-2$, so that by \cref{prop:formidentity}
\[
  \Qform_n(\varphi)=\int_0^{1}W^{2}\psi(W)^{2}\,dW .
\]
Thus $\Qform_n$ is the Gram form of the inner product $\langle f,g\rangle=\int_0^1W^{2}fg\,dW$ on the space of polynomials of degree at most $n-2$, of dimension $n-1$. All moments $\int_0^1W^{2+m}dW=1/(m+3)$ are rational, so Gram--Schmidt is defined over $\Q$ and produces monic $g_0,\dots,g_{n-2}$ with $g_m$ of degree $m$ and $\langle g_l,g_m\rangle=0$ for $l\neq m$. The transition matrix from $\{1,W,\dots,W^{n-2}\}$ is unipotent over $\Q$, so
\[
  \Qform_n\cong\langle h_0,\dots,h_{n-2}\rangle, \qquad h_m=\langle g_m,g_m\rangle .
\]
We claim
\begin{equation}
\label{eq:jacobinorm}
\begin{split}
  g_m(W)	&	=\frac{(-1)^{m}(m+2)!}{(2m+2)!}\,W^{-2}\,\frac{d^{m}}{dW^{m}}\Bigl[W^{m+2}(1-W)^{m}\Bigr], \\
  h_m	&	=\frac{(m!)^{2}\bigl((m+2)!\bigr)^{2}}{(2m+2)!\,(2m+3)!} .
\end{split}
\end{equation}
Write $R_m=W^{m+2}(1-W)^{m}$, which vanishes to order $m+2$ at $W=0$ and to order $m$ at $W=1$; hence $D^{j}R_m$ vanishes at both endpoints for $j\le m-1$. For $0\le j<m$, integration by parts $m$ times gives
\[
  \int_0^1 W^{2}\cdot W^{-2}D^{m}R_m\cdot W^{j}\,dW
   =(-1)^{m}\int_0^1 D^{m}\bigl[W^{j}\bigr]R_m\,dW=0 ,
\]
so the displayed polynomial is orthogonal to all lower degrees. Its leading coefficient comes from the top term $(-1)^{m}W^{2m+2}$ of $R_m$, whose $m$-th derivative is $(-1)^{m}\frac{(2m+2)!}{(m+2)!}W^{m+2}$; after division by $W^{2}$ and the stated normalisation the polynomial is monic. This proves the first half of \cref{eq:jacobinorm}.

For the norm, orthogonality gives $h_m=\langle g_m,W^{m}\rangle$, and the same integration by parts yields
\[
  h_m=\frac{(-1)^{m}(m+2)!}{(2m+2)!}\,(-1)^{m}m!\int_0^1W^{m+2}(1-W)^{m}dW
     =\frac{(m+2)!\,m!}{(2m+2)!}\cdot\frac{(m+2)!\,m!}{(2m+3)!} ,
\]
using the Beta integral $\int_0^1W^{m+2}(1-W)^{m}dW=(m+2)!\,m!/(2m+3)!$. This is \cref{eq:jacobinorm}.

Finally $(2m+3)!=(2m+3)\cdot(2m+2)!$, so
\[
  h_m=\frac{\bigl(m!\,(m+2)!\bigr)^{2}}{(2m+3)\bigl((2m+2)!\bigr)^{2}}
   \equiv 2m+3 \pmod{(\Q^{\times})^{2}} .
\]
As $m$ runs from $0$ to $n-2$ this gives $\langle3,5,\dots,2n-1\rangle$.

For the last statement, one direction is immediate: a seed of degree at most $n$ is an isotropic vector. Conversely suppose $Q_{n}$ is isotropic over $K$. An isotropic vector $v$ with $\varphi_{v}(1)\neq0$ may be rescaled by $1/\varphi_{v}(1)$ to give a seed, so it suffices to produce an isotropic $K$-vector outside the hyperplane $H=\{\varphi(1)=0\}$.

For $n=3$ the space is two-dimensional and $H$ is the line $c_{2}+c_{3}=0$, on which
\[
  Q_{3}(c_{2}W^{2}-c_{2}W^{3})=\frac{c_{2}^{2}}{30},
\]
which vanishes only at the origin. So every nonzero isotropic vector lies outside $H$.

For $n\ge4$ let $X=\{Q_{n}=0\}\subset\bP^{n-2}_{K}$. The form is nondegenerate, being congruent to $\langle3,5,\dots,2n-1\rangle$, so $X$ is a smooth projective quadric of dimension $n-3\ge1$. Isotropy gives a $K$-point of $X$, and a smooth quadric of positive dimension with a rational point is $K$-rational; since $K$ has characteristic zero it is infinite, so $X(K)$ is Zariski dense in $X$. As $X$ is irreducible of positive dimension it is not contained in the hyperplane $H$, so $X(K)\not\subseteq H$. Any isotropic $K$-point outside $H$ rescales to a seed.
\end{proof}

\begin{cor}
\label{cor:hilbertlow}
$\Qform_3\cong\langle3,5\rangle$ and $\Qform_4\cong\langle3,5,7\rangle$. The first is isotropic over $K$ exactly when $\sqrt{-15}\in K$, recovering \cref{thm:moment}(3); the second is isotropic over $K$ exactly when the conic $3x^{2}+5y^{2}+7z^{2}=0$ has a $K$-point. Over the completions of $\Q$ the conic has a point for every finite $p\neq7$, and fails exactly over $\Q_7$ and $\R$.
\end{cor}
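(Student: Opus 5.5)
The plan is to read the first two claims off \cref{thm:hilbert} and then settle the local behaviour of the ternary form by Hilbert symbols.

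For $n=3$ and $n=4$, \cref{thm:hilbert} gives $\Qform_3\cong\langle3,5\rangle$ and $\Qform_4\cong\langle3,5,7\rangle$ over $\Q$, hence over every $K$.

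A binary form $\langle a,b\rangle$ is isotropic exactly when $-ab$ is a square. Here $-ab=-15$, so $\Qform_3$ is isotropic over $K$ exactly when $\sqrt{-15}\in K$. This agrees with \cref{thm:moment}(3).

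For $\Qform_4$, isotropy of $\langle3,5,7\rangle$ over $K$ is by definition a nontrivial zero, that is, a $K$-point of the conic $3x^2+5y^2+7z^2=0$.

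For the local statement I will use the criterion that $\langle a,b,c\rangle$ is isotropic over $\Q_v$ iff $(-ac,-bc)_v=1$. Equivalently, after scaling by $7$, the conic $3x^2+5y^2=-7z^2$ corresponds to $\langle 21,35,1\rangle\sim\langle-21,-35\rangle$-norms. Concretely, I will compute $(-21,-35)_v$ at each place $v$.

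\begin{itemize}
\item Over $\R$, all three coefficients are positive, so the form is definite and anisotropic. Hence the symbol is $-1$ at $\infty$.
\item At primes $p\nmid 2\cdot3\cdot5\cdot7$, all entries are units, so the symbol is $1$.
\item At $p=3$, write $-21=3\cdot(-7)$ and $-35$. The symbol is $\bigl(\tfrac{-35}{3}\bigr)=\bigl(\tfrac{1}{3}\bigr)=1$.
\item At $p=5$, write $-35=5\cdot(-7)$. The symbol is $\bigl(\tfrac{-21}{5}\bigr)=\bigl(\tfrac{4}{5}\bigr)=1$.
\item At $p=7$, both entries have valuation one. With $-21=7\cdot(-3)$ and $-35=7\cdot(-5)$, the symbol is $(-1)^{(7-1)/2}\bigl(\tfrac{-3}{7}\bigr)\bigl(\tfrac{-5}{7}\bigr)$. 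I will evaluate this carefully. Alternatively, I can reduce mod $7$ directly: $3x^2+5y^2\equiv0$ requires $-5/3\equiv -5\cdot5=-25\equiv3$ to be a square mod $7$. It is not, since the squares mod $7$ are $1,2,4$. So there is no primitive solution, and the symbol is $-1$.
\item At $p=2$, I will not compute directly. Hilbert reciprocity says the product of the symbols over all places is $1$. The places already computed contribute $(-1)(-1)=1$, so the symbol at $2$ is $1$.
\end{itemize}

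This gives exactly the two failing places, $\Q_7$ and $\R$.

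The main obstacle is bookkeeping at $p=7$. There the direct reduction argument is the most reliable: a primitive solution must have $z$ divisible by $7$, after which it descends to $3x^2+5y^2\equiv0 \pmod 7$ with $x,y$ not both divisible by $7$, which is impossible. The $2$-adic place I will dispatch by reciprocity rather than by computation.
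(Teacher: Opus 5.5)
Your proposal is correct. The first three assertions are handled exactly as in the paper: read off \cref{thm:hilbert}, then use binary isotropy via $-ab=-15$. The local assertion, however, takes a different route.

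The paper argues constructively at each place:
\begin{itemize}
\item positive definiteness over $\R$;
\item Chevalley--Warning plus Hensel for $p\nmid 210$;
\item isotropic binary subforms at $p=3,5$, since the unit ratios satisfy $-5/7\equiv1\pmod 3$ and $-7/3\equiv1\pmod 5$;
\item an explicit $2$-adic point: $y=3$, $z=1$, $x^{2}=-52/3=4u$ with $u\equiv1\pmod 8$;
\item reduction plus valuation descent at $p=7$.
\end{itemize}
Hilbert reciprocity appears only afterwards, as a consistency check that $\{7,\infty\}$ has even cardinality.

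You instead encode isotropy as $(-21,-35)_v=1$, evaluate the tame symbols, and let the product formula force $(-21,-35)_2=1$. Reciprocity therefore carries real weight, and the $2$-adic computation disappears. What the paper's version buys is independence: each local point is certified on its own. In yours, an error at $p=7$ would silently propagate to $p=2$.

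So make the $p=7$ value explicit rather than deferring it: $(-1)^{3}\bigl(\tfrac{-3}{7}\bigr)\bigl(\tfrac{-5}{7}\bigr)=-1$, since $-3\equiv2^{2}$ and $-5\equiv3^{2}\pmod 7$.

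Your closing descent is also stated in the wrong order. Reduction mod $7$ first forces $7\mid x$ and $7\mid y$, because $3$ is a nonsquare mod $7$. Reduction mod $49$ then forces $7\mid z$, contradicting primitivity. One cannot start from $7\mid z$.
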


\begin{proof}
The first three norms are $h_0=\tfrac13$, $h_1=\tfrac1{80}$ and $h_2=\tfrac1{1575}$, congruent to $3$, $5$ and $7$; the discriminant of $\langle3,5\rangle$ is $-60\equiv-15$. Over $\R$ the form is positive definite. For $p\nmid210$, every nondegenerate ternary form over $\F_p$ is isotropic, and a smooth zero lifts. At $p=3$ and $p=5$ the unit ratios $-5/7\equiv1\pmod3$ and $-7/3\equiv1\pmod5$ are squares, so already the binary parts $5y^{2}+7z^{2}$ and $3x^{2}+7z^{2}$ are isotropic. At $p=2$ take $y=3$, $z=1$ and solve
\[
  x^{2}=-\frac{52}{3}=4u, \qquad u=-\frac{13}{3}\equiv1\pmod8 ,
\]
so $u$ is a square in $\Q_2$. At $p=7$ reduction forces $x\equiv y\equiv0\pmod7$, since $-5/3\equiv3$ is a nonsquare modulo $7$, and a valuation descent rules out a primitive solution. The two anisotropic places $\{7,\infty\}$ have even cardinality, as Hilbert reciprocity requires.
\end{proof}

\subsection{Field of moduli of the six-sheeted map}
\label{subsec:moduli}

By \cref{thm:six} the six-sheeted map is defined over $\Q(\sqrt{-15})$, and the two roots $a,\bar a=-3-a$ of $c_2^{2}+3c_2+6$ give two maps interchanged by $\Gal(\Q(\sqrt{-15})/\Q)$. Its field of moduli, as a $\Gaut^{\sharp}$-orbit in $\Yparo[1,1,(8,14,1)]$, is $\Q$ or $\Q(\sqrt{-15})$ according as the two conjugate maps are $\Gaut^{\sharp}$-equivalent over $\Qbar$.

\begin{prop}
\label{prop:conjrescale}
Let $\varphi=aW^{2}+(1-a)W^{3}$ with $a^{2}+3a+6=0$. Then $\Pi(W,0)$ has the two nonzero roots
\[
  W=1 \quad\text{and}\quad \beta=\frac{2-a}{4} ,
\]
and the rescaled seed $\widetilde\varphi(W)=t\,\varphi(\beta W)$ with $t=-32/(5a+38)$ satisfies both conditions of \cref{eq:liftjet} and has coefficients
\[
  \widetilde c_2=\bar a, \qquad \widetilde c_3=1-\bar a .
\]
That is, the rescaling of \cref{lem:rescale} with parameters $(t,\beta)$ carries the seed of $a$ to the seed of $\bar a$.
\end{prop}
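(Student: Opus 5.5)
The plan is to work entirely through the marking polynomial of \cref{subsec:marking} at $(k,d)=(2,3)$, and then apply \cref{prop:normalisemark}. By \cref{prop:marking}, $\Pi(W,0)=W^{3}M_{\varphi}(W)$ with $\deg M_{\varphi}=k(d-2)=2$, so $\Pi(\cdot,0)$ has exactly two nonzero roots counted with multiplicity. Since $\varphi$ satisfies \cref{eq:liftjet}, \cref{prop:lift} gives $\Pi(1,0)=0$, so $W=1$ is one root.

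To find the other root I would use the boundary data of \cref{prop:marking}. The constant term is $M_{\varphi}(0)=-c_2^{2}/3=-a^{2}/3$, and the leading coefficient is $-c_3^{2}/5=-(1-a)^{2}/5$. By Vieta the product of the two roots is $M_{\varphi}(0)/[W^{2}]M_{\varphi}=5a^{2}/(3(1-a)^{2})$. Since one root is $1$, the other is
\[
\beta=\frac{5a^{2}}{3(1-a)^{2}}.
\]
I would then reduce this modulo $a^{2}=-3a-6$ to check that $\beta=(2-a)/4$. This is a routine cross-multiplication:
\[
20a^{2}=3(2-a)(1-2a+a^{2}),
\]
with both sides reduced in $\Q(a)$. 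Alternatively, I could compute $M_{\varphi}$ directly from $\int_0^W\varphi^{2}s^{-2}\,ds$ via \cref{prop:liftclosed}.

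I also need $\beta\ne1$, so that both markings are simple. This follows because $a=-2$ is not a root of $a^{2}+3a+6$.

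Next I apply \cref{prop:normalisemark} to the simple marking $w=\beta$ with $\epsilon=1$. It gives $\varphi(\beta)\neq0$, and the normalised seed $\widetilde\varphi(W)=\varphi(\beta W)/\varphi(\beta)$ satisfies both lift conditions. So $t=1/\varphi(\beta)$, and I must check that
\[
\varphi(\beta)=a\beta^{2}+(1-a)\beta^{3}=-(5a+38)/32,
\]
again by reduction in $\Q(a)$.

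The coefficients are then $\widetilde c_2=ta\beta^{2}$ and $\widetilde c_3=t(1-a)\beta^{3}$, with $\widetilde c_2+\widetilde c_3=1$ automatically. By \cref{thm:six}, the normalised seeds satisfying \cref{eq:liftjet} are exactly $c_2=$ a root of $c^{2}+3c+6$ with $c_3=1-c_2$. Hence $\widetilde c_2\in\{a,\bar a\}$.

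To rule out $\widetilde c_2=a$ I would use the uniqueness clause of \cref{prop:normalisemark}. The stabilizer is trivial because $c_2c_3=a(1-a)\ne0$, so distinct markings $1\ne\beta$ give distinct normalised seeds. The marking $1$ returns $\varphi$ itself, so $\widetilde\varphi\ne\varphi$, which forces $\widetilde c_2=\bar a$.

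The main obstacle is only bookkeeping: the two reductions in $\Q(\sqrt{-15})$, for $\beta$ and for $\varphi(\beta)$. The structural argument through \cref{prop:normalisemark} avoids computing $\widetilde c_2$ explicitly. As a sanity check, I would verify $ta\beta^{2}=-3-a$ directly.
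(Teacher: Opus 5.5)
Your proposal is correct. For the two roots and the lift conditions it follows the paper. The paper also takes the second root as the ratio of constant to leading coefficient of $\Pi(W,0)/W^{3}$, reading these off \cref{prop:liftclosed} rather than from the boundary values in \cref{prop:marking}. It then reduces $5a^{2}/(3(1-a)^{2})$ via $(1-a)^{2}=-5(a+1)$. Finally it obtains the first lift condition from $t=1/\varphi(\beta)$ and the second from \cref{lem:rescale}, which is exactly what \cref{prop:normalisemark} packages for you. (That $\varphi$ itself satisfies \cref{eq:liftjet}, giving the root $W=1$, is really \cref{thm:six} rather than \cref{prop:lift}.)

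The genuine difference is the last step. The paper identifies $\widetilde c_2=\bar a$ and $\widetilde c_3=1-\bar a$ by explicit reduction in $\Q(a)$, clearing the denominator of $t$ with the norm identity $(5a+38)(23-5a)=1024$.

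You argue structurally instead. Since $\widetilde\varphi$ is a normalised cubic seed, \cref{thm:six} forces $\widetilde c_2\in\{a,\bar a\}$. The stabilizer is trivial ($a(1-a)\neq0$) and $\beta\neq1$, so the uniqueness clause of \cref{prop:normalisemark} gives $\widetilde\varphi\neq\varphi_{1}=\varphi$, and only $\bar a$ remains.

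Your route explains why the conjugate must appear: two simple markings and exactly two normalised seeds, with distinct markings giving distinct seeds. This is the marking-cover picture exploited in \cref{cor:moduli}. The paper's route is a self-contained certificate that does not lean on the classification in \cref{thm:six} or on the stabilizer argument.

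You still need the explicit value $\varphi(\beta)=-(5a+38)/32$ to confirm the stated $t$, so the saving is confined to the coefficient computation. That reduction goes through using $\beta^{2}=(-7a-2)/16$ and $a+(1-a)\beta=-(a+2)/2$.
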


\begin{proof}
By \cref{prop:liftclosed}, 
\[
\Pi(W,0)=-W^{3}\bigl(\tfrac{a^{2}}{3}+\tfrac{a(1-a)}{2}W+\tfrac{(1-a)^{2}}{5}W^{2}\bigr).
\]
 One root is $W=1$ by \cref{thm:six}, so the other is the ratio of constant to leading coefficient, namely $5a^{2}/\bigl(3(1-a)^{2}\bigr)$. Using $a^{2}=-3a-6$ one gets $(1-a)^{2}=-5(a+1)$, whence that root is $(a+2)/(a+1)=(2-a)/4$.

Now $\beta^{2}=(2-a)^{2}/16=(-7a-2)/16$ and $(1-a)\beta=(-3a-2)/2$, so
\[
  \varphi(\beta)=\beta^{2}\bigl(a+(1-a)\beta\bigr)   =\frac{-7a-2}{16}\cdot\frac{-a-2}{2}=\frac{-5a-38}{32},
\]
and $t=1/\varphi(\beta)$ is as stated; this is the first condition for $\widetilde\varphi$. The second holds by \cref{lem:rescale}, since $\widetilde\Pi(1,0)=t^{2}\beta\,\Pi(\beta,0)=0$. Finally, using $(5a+38)(23-5a)=1024$,
\[
  \widetilde c_2=t\,a\beta^{2}=\frac{2a(7a+2)}{5a+38}=\frac{-1024a-3072}{1024}=-3-a=\bar a,
\]
and likewise $\widetilde c_3=t(1-a)\beta^{3}=(43a+122)/(5a+38)=a+4=1-\bar a$.
\end{proof}

\begin{cor}
\label{cor:moduli}
The rescaling $(t,\beta)$ of \cref{prop:conjrescale} is not induced by any element of $\Gaut^{\sharp}$. Consequently the two six-sheeted maps are distinct $\Gaut^{\sharp}$-orbits over $\Qbar$, exchanged by $\Gal(\Q(\sqrt{-15})/\Q)$, the field of moduli of each equals its minimal field of definition,
\[
  \mathrm{FOM}=\mathrm{FOD}_{\min}=\Q(\sqrt{-15}),
\]
and \cref{thm:moment}(3) is sharp as a moduli statement. The common unpointed quotient-cover class is defined over $\Q$, with seed $\varphi_{0}(W)=W^{2}+W^{3}$, whose marking polynomial has the two admissible markings as roots of $6W^{2}+15W+10$; thus $-15$ is the discriminant of the marking cover of \cref{thm:markingcover} at $(k,d)=(2,3)$, not merely of a seed coefficient equation.
\end{cor}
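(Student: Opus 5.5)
The plan is to argue by rooted reconstruction together with the pointed structure, exactly as in the proof of \cref{thm:pointedmain}(1) but now at $(k,d)=(2,3)$.

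\textbf{Step 1: reduce to a pointed equivalence.} Suppose some element of $\Gaut^{\sharp}$ carries the graded map of $a$ to that of $\bar a$. By \cref{thm:rooted} with $p=1$, it descends to a pointed left--right equivalence $(\sigma,\tau)$ of the quotient maps. Both quotient maps are pointed at their lift points: the base point $(\Lambda,w)=(1,-1)$ with $W=\epsilon=1$.

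\textbf{Step 2: $\sigma$ acts as a scaling on the branch normalization.} The equivalence must match the non-line branch components, via \cref{lem:semiintrinsic} and the argument of \cref{cor:orbits}, since only the line has semigroup $\Z_{\ge0}$. These components are parametrised by $W\mapsto(A_0(W),\varphi(W))$, of degrees $5$ and $3$. The map $\sigma$ therefore induces an isomorphism of normalizations $\A^1_W\to\A^1_W$ fixing the place at infinity, so it is affine: $W\mapsto\lambda W+\mu$. It must also preserve the unique point with local value semigroup generated by orders $(2,3)$ at $W=0$. The check is that $\varphi$ has order $2$ there and $A_0$ has order $3$ with leading coefficient $\tfrac83a^2\ne0$, whereas at the other critical point of $\varphi$ (where $\varphi'=0$, $W=-2a/(3(1-a))$) the orders are $(2,5)$ after subtraction, as in \cref{lem:quarticnormalization}. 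Hence $\mu=0$.

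\textbf{Step 3: compare with the given rescaling.} Comparing the branch curves, the first coordinate $\varphi$ must transform as $\varphi_{\bar a}(\lambda W)=t'\varphi_a(W)$, so the induced map is a seed rescaling with $\beta=\lambda$. The stabilizer is trivial since $c_2c_3=a(1-a)\ne0$, so by \cref{prop:normalisemark} this rescaling is the unique one, namely $(t,\beta)$ of \cref{prop:conjrescale}. In particular $\lambda=\beta^{\pm1}$ (depending on direction) with $\beta=(2-a)/4\ne1$.

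\textbf{Step 4: pointedness forces $\lambda=1$ — the main obstacle.} A pointed equivalence must carry the marking $W=1$ of the source map to the marking $W=1$ of the target map. I expect the hardest step to be justifying that the lift point $o$ corresponds to the marked root $W=\epsilon$ on the normalization in a way transported by $\sigma$. I would handle this by noting that $o$ lies on the fibre $Q=0$ (the line component), and that $\sigma$ respects the pencil structure near the line through $W=T^k/\Lambda$. Concretely, the restriction of $\sigma$ to the preimage of the line is the same scaling in $W$, because $\Pi(\cdot,0)$ is matched with $\Pi(\cdot,0)$ up to the target scaling, by \cref{lem:rescale}. Granting this, $\lambda=1$, which contradicts $\beta\ne1$ (and $\beta^{-1}\ne1$). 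Hence no element of $\Gaut^{\sharp}$ induces the rescaling.

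\textbf{Step 5: the remaining assertions.} Distinctness of the two orbits follows. Galois conjugation exchanges them since the model is defined over $\Q(\sqrt{-15})$, so the field of moduli is not $\Q$. It contains, and thus equals, the field of definition $\Q(\sqrt{-15})$, matching \cref{thm:moment}(3).

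For the unpointed claims, \cref{lem:rescale} shows both seeds are rescalings of $\varphi_0=W^2+W^3$, taking $t$ and $\beta$ to normalize $c_2=c_3=1$, which is possible over $\Qbar$. Applying \cref{lem:rescaleinv}'s left--right equivalence gives a rational unpointed class. Finally, compute $M_{\varphi_0}$ by \cref{lem:sepmodel}: with $k=2$, $n=1$, we have $H_{2,3}(x)=\tfrac13+\tfrac{2x}{4}+\tfrac{x^2}{5}$, proportional to $6W^2+15W+10$, whose discriminant is $225-240=-15$. By \cref{lem:markingcov} the admissible markings of the two normalised seeds correspond to these two roots.
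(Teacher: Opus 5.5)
\textbf{Gap at Step 4.} There is a genuine gap, and it is the step you flag and then grant. The justification you offer for it, and for Step 3, is circular.

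In the notation of \cref{thm:rooted} the equivalence is $f_{2}\circ\sigma=\tau\circ f_{1}$, with $\sigma$ acting on the source and $\tau$ on the target. The branch curve $\Gamma$ lives in the target, so Step 2 is really a statement about $\tau$: $\tau|_{\Gamma}$ lifts to $W\mapsto\lambda W$ on the normalisation. In fact $\lambda$ is already forced there, because the unique point with semigroup $\langle2,5\rangle$ must go to its counterpart. The lift point $o$, by contrast, is a point of the source and is moved by $\sigma$.

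Nothing in your argument ties $\sigma(o)$ to $\tau|_{\Gamma}$. An arbitrary origin-fixing plane automorphism $\tau$ need not preserve the pencil by $Q$, and $\sigma$ need not carry the function $W$ to a multiple of $W$. So the claim that $\sigma$, restricted to the preimage of the line, is ``the same scaling in $W$'' has no basis. Citing \cref{lem:rescale} there presupposes that the equivalence is a seed rescaling, which is exactly what has to be proved. The relation $\varphi_{\bar a}(\lambda W)=t'\varphi_{a}(W)$ in Step 3 has the same defect, since it assumes $\tau$ preserves $Q$ up to scale.

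\textbf{The missing idea: rigidity of the unpointed quotient.} Let $(\sigma',\tau')$ be a left--right self-equivalence of $f_{a}$.
\begin{enumerate}
\item $\tau'$ induces on the normalisation of $\Gamma$ an affine map fixing the $\langle2,3\rangle$ point and the $\langle2,5\rangle$ point, hence the identity.
\item So $\tau'$ fixes the cuspidal curve $\Gamma$ pointwise. Since $\Gamma$ is not a fence, the Blanc--Stampfli faithfulness theorem gives $\tau'=\mathrm{id}$.
\item Then $\sigma'$ is a deck transformation. These are trivial, because the monodromy $A_{6}$ has trivial centraliser in $S_{6}$.
\end{enumerate}
Hence the only left--right equivalence $f_{a}\to f_{\bar a}$ is the explicit one of \cref{lem:rescaleinv}, with the parameters $(t,\beta)$ of \cref{prop:conjrescale}. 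Its source part sends the lift point $W=1$ to $W=1/\beta\neq1$. So no pointed equivalence exists, and \cref{thm:rooted} finishes the argument. This is the paper's route.

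With this inserted in place of Steps 3--4, your Steps 1 and 5 go through as written. That covers the Galois exchange, the field-of-moduli conclusion, and the marking polynomial $(6W^{2}+15W+10)/30$ of discriminant $-15$.
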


\begin{proof}
The unpointed left--right self-equivalence group of the six-sheeted quotient cover is trivial.  The line component is intrinsically distinguished by its semigroup $\Z_{\ge0}$.  The non-line branch component has normalization $\A^1$ with one place at infinity and two finite special points having distinct local value semigroups $\langle2,3\rangle$ and $\langle2,5\rangle$.  Any induced automorphism of that normalization fixes all three places and is therefore the identity.  Thus the target automorphism fixes the non-line component pointwise; since that irreducible component is not a fence, the faithfulness theorem of Blanc--Stampfli \cite{bs} makes the target polynomial automorphism itself trivial.  The remaining source automorphism is a deck transformation; the geometric monodromy is $A_6$, whose centralizer in its natural degree-six action is trivial.  Hence the self-equivalence group is trivial. Hence the equivalence of \cref{prop:conjrescale} is the unique unpointed equivalence between the two quotient classes. It carries the marking $\beta$ to the normalised marking and the distinguished point $W=1$ to $W=1/\beta\neq1$, so it is not a pointed equivalence, and none exists. By \cref{thm:rooted}, a $\Gaut^{\sharp}$-equivalence would induce a pointed equivalence of the rooted quotients; therefore the two maps are distinct $\Gaut^{\sharp}$-orbits, and the same argument applied over $\Qbar$ shows they are geometrically distinct. Complex conjugation interchanges $a$ and $\bar a$ and hence the two orbits, so $\Gal(\Qbar/\Q)$ acts on the pair through $\Gal(\Q(\sqrt{-15})/\Q)$ with the nontrivial element acting nontrivially; the field of moduli of each orbit is therefore $\Q(\sqrt{-15})$, and the model of \cref{thm:six} realises it as a field of definition. For the last claim, the rescaling class of $\varphi$ over $\Qbar$ contains $\varphi_{0}=W^{2}+W^{3}$, whose marking polynomial is $(6W^{2}+15W+10)/30$ by direct integration, of discriminant class $-15$; normalising at either root recovers the two seeds by $c_2=1/(1+W)$.
\end{proof}


\section{Concluding remarks}
\label{sec:11}

We summarise what has been established.  Every three-dimensional unit-positive
weight $(1,-r,-s)$ carries a graded Keller counterexample
(\cref{cor:hyperbolic}); no assertion is made here for primitive hyperbolic
weights outside this sector, such as $(2,-1,-1)$.  The interior weights
$2\le r\le s$ come from the beta--Hermite construction, and the unit boundary
from the cyclic, flagship, and propagated cubic constructions.  Assuming
\cref{hyp:flagshipcharts}, the flagship box has reduction equal to one orbit of
the full degree-bound-preserving gauge group
$\mathcal G=B_{\mathrm{lin}}\times(\Gm)^3$ and quotient the dual point.  In the
balanced signature, cyclic cones realise every composite generic fibre degree
$N\ge6$.  At degree twelve the quartic seed curve gives infinitely many
geometric graded orbits and a degree-six forgetful map to unpointed quotient
classes, while the alternating shear lies in a separate left--right class.
The local alternating-shear Keller locus is cut out unconditionally by
\cref{thm:shearlocus}; its stable compactification and intersection class are
the conditional conclusions of \cref{thm:stablered,thm:shearclass} under
\cref{hyp:stablered}.  The exact Nielsen-class computations and the cyclic
lift-form arithmetic are as stated in \cref{sec:9,sec:10}.

Several structural questions remain, and the balanced ones are taken up in \cite{sh-133}. The first concerns fields of definition across the geography. The unit boundary $r=1$ and the row $r=2$ are rational, by the balanced $33$-sheeted witness \cref{thm:thirtythree}, the flagship normal form and propagated members \cref{prop:normalform,thm:unitboundary}, and \cref{prop:fields}(1); for $r\ge3$ the construction of \cref{sec:4} produces a member only over $K_{r}=\Q(\zeta)$, which \cref{prop:fields}(3) shows is totally imaginary whenever $r$ is odd. Whether those rows contain rational members, and whether a weight can force a nontrivial field of moduli as $(1,-1,-1)$ does at degree six, is open. Related is the minimal degree problem. For $\bw=(1,-1,-2)$ it is settled by \cref{thm:flagship}: $\bd=(4,3,1)$ is a coordinatewise minimal nonempty box, every lower stratum inside it being empty; \cref{thm:flagship} does not exclude incomparable degree vectors. 
 For $\bw=(1,-1,-1)$ the differential obstruction of \cite[Cor.\ 1.5]{sh-133} bounds the total degree of any member below by $\deg A+\deg B+\deg\Lambda\ge14$, while the smallest class realised here is $\Yparo[1,1,(8,14,1)]$ of total degree $23$, attained by the six-sheeted map of \cref{thm:six}. The frontier is the gap between the two, and with it the least generic fibre degree in the balanced signature, which \cref{cor:dtwo} leaves open below six.

The second group of questions concerns how the positive-dimensional seed loci
from different factorizations $N=kd$ meet in a fixed bounded scheme.  The
general marking theorem gives dimension $d-3$ inside each regular cyclic seed
space, while \cref{thm:pointedmain} determines the quartic component and its
six-sheeted marking cover exactly.  It remains to determine intersections and
component structure among loci arising from different factorizations, and to
compute the monodromy and arithmetic of the higher marking covers.  The
intrinsic invariants of the unpointed quotient cannot be complete, because a
generic quartic quotient has six distinct graded lifts.

The infinitude in \cref{thm:pointedmain} concerns geometric points. Over $\Q$ it becomes the question whether $E^{\circ}(\Q)$ is infinite, and that is open at both steps: no rational point of $E$ of projective height at most sixty exists, while $E$ has smooth points over $\R$ and over $\Q_p$ for every $p\le31$. So it is undecided whether $E$ is an elliptic curve or a nontrivial torsor under its Jacobian, and the rank question comes only after that.

Every generic fibre degree realised at $(1,-1,-1)$ is composite: $N=kd$ with $k\ge2$, $d\ge3$ by \cref{thm:spectrum}, and $N=12$ in \cref{sec:5,sec:7}. Prime degree occurs at other weights, $N=3$ at $(1,-1,-2)$ and at $(1,-1,-s)$. Whether a balanced member can have prime generic fibre degree is open.

The third direction is the Hurwitz-theoretic one. Which braid orbits of the Nielsen classes of \cref{subsec:nielsen} are realised by graded Keller maps is open wherever the class is not rigid; at $p=3$ the question is empty by \cref{prop:p3rigid}. It has content for the alternating shear class, where by \cref{prop:sheardim} the Keller locus is a curve inside a fourfold. 
That curve is cut out exactly by \cref{thm:shearlocus}; assuming \cref{hyp:stablered}, its compactified class is determined by \cref{thm:shearclass}; what remains is the full parabolic and logarithmic splitting of its normal bundle (\cref{rem:parabolic}), the analogous compactified Keller loci in other Hurwitz components, and whether an unrealised point of $\Hur^{\mathrm{rd}}$ is an obstruction imposed by the Keller condition or an artifact of the construction of \cref{subsec:shear};
 the same applies to the level-one Modular Tower question of \cref{rem:kellertower}.

Finally, the arithmetic. The second lift condition at level $k$ is the vanishing of
\[
  \Qform^{(k)}_n(\varphi)=\int_0^{\epsilon}\frac{\varphi(W)^{k}}{W^{2}}\,dW ,
\]
a form of degree $k$ in the seed coefficients reducing to the Hilbert form of \cref{prop:formidentity} at $k=2$. Its isotropy over a given field, and a diagonalisation over $\Q$ extending \cref{thm:hilbert} to all levels, are open. The coefficient field of the six-sheeted map of \cref{thm:six} and that of $F_{3,3}$ in \cref{prop:fields}(2) both equal $\Q(\sqrt{-15})$, the first from the discriminant of the binary moment form of \cref{thm:moment}(3), the second from the quadratic factor of $M_3$, and we have no mechanism forcing the two to agree. By \cref{lem:whichsplit} both covers have split-pair field $\Q(\sqrt5)$ at infinity, while $\Q(\sqrt{-3})$ arises only from the unrelated $(9,3)$-class of \cref{thm:arithmono}, so $\Q(\sqrt{-15})$ is the third quadratic subfield of $\Q(\sqrt5,\sqrt{-3})$.

Last, by \cref{prop:nodescent} a member of \cref{sec:3} descends to a two-variable Keller pair exactly when every monomial $\Lambda^{i}w^{j}$ occurring in $A$ or $B$ satisfies $i\ge j-1$, and the obstruction is the single coefficient $c_d\Lambda^{-d}$. A construction of weight $(1,-1,-1)$ evading it would produce a noninjective planar Keller pair, contradicting the two-variable Jacobian Conjecture, so the substantive question is whether the failure is forced, and by what. This is the differential side of \cite{sh-133}, where the balanced obstruction is developed in the form that yields the degree bound quoted above.

\section*{Acknowledgments}
The authors used large language models (Claude and ChatGPT) for assistance with drafting, editing, and checking the exposition. All mathematical content, proofs, and final wording are the responsibility of the authors.



\end{document}